\theoremstyle{plain}
\newtheorem{thm}{Theorem}[section]
\newtheorem{cor}[thm]{Corollary}
\newtheorem{lem}[thm]{Lemma}
\newtheorem{prop}[thm]{Proposition}
\newtheorem{notation}[thm]{Notation}
\def\@rst #1 #2other{#1}
\newcommand\MR[1]{\relax\ifhmode\unskip\spacefactor3000 \space\fi
  \MRhref{\expandafter\@rst #1 other}{#1}}
\newcommand{\MRhref}[2]{\href{http://www.ams.org/mathscinet-getitem?mr=#1}{MR#2}}
\theoremstyle{definition}
\newtheorem{defn}[thm]{Definition}
\newtheorem{remark}[thm]{Remark}
\numberwithin{equation}{section}
\newcommand{\dsb}{\begin{adjustwidth}{2.5em}{0pt}
\begin{footnotesize}}
\newcommand{\dse}{\end{footnotesize}
\end{adjustwidth}}
\newcommand{\ssb}{\begin{adjustwidth}{2.5em}{0pt}}
\newcommand{\sse}{\end{adjustwidth}}
\newcommand{\aryb}{\begin{eqnarray*}}
\newcommand{\arye}{\end{eqnarray*}}
\def\alb#1\ale{\begin{align*}#1\end{align*}}
\def\allb#1\alle{\begin{align}#1\end{align}}
\newcommand{\eqb}{\begin{equation}}
\newcommand{\eqe}{\end{equation}}
\newcommand{\eqbn}{\begin{equation*}}
\newcommand{\eqen}{\end{equation*}}
\newcommand{\BB}{\mathbbm}
\newcommand{\ol}{\overline}
\newcommand{\op}{\operatorname}
\newcommand{\frk}{\mathfrak}
\newcommand{\ep}{\varepsilon}
\newcommand{\rta}{\rightarrow}
\newcommand{\wt}{\widetilde}
\newcommand{\wh}{\widehat}
\newcommand{\mcl}{\mathcal}
\newcommand{\bdy}{\partial}
\newcommand{\rng}{\mathring}
\newcommand{\ccM}{{\mathbf{c}_{\mathrm M}}}
\let\originalleft\left
\let\originalright\right
\renewcommand{\left}{\mathopen{}\mathclose\bgroup\originalleft}
\renewcommand{\right}{\aftergroup\egroup\originalright}
\title{Regularity and confluence of geodesics for the supercritical Liouville quantum gravity metric}
 \date{ }
 \author{
\begin{tabular}{c} Jian Ding\footnote{dingjian@math.pku.edu.cn}\\[-5pt]\small Peking University \end{tabular}
\begin{tabular}{c} Ewain Gwynne\footnote{ewain@uchicago.edu}\\[-5pt]\small University of Chicago \end{tabular} 
}
\begin{document}

\maketitle

\begin{abstract} 
Let $h$ be the planar Gaussian free field and let $D_h$ be a supercritical Liouville quantum gravity (LQG) metric associated with $h$. Such metrics arise as subsequential scaling limits of supercritical Liouville first passage percolation (Ding-Gwynne, 2020) and correspond to values of the matter central charge $\mathbf{c}_{\mathrm M} \in (1,25)$. We show that a.s.\ the boundary of each complementary connected component of a $D_h$-metric ball is a Jordan curve and is compact and finite-dimensional with respect to $D_h$. This is in contrast to the \emph{whole} boundary of the $D_h$-metric ball, which is non-compact and infinite-dimensional with respect to $D_h$ (Pfeffer, 2021). Using our regularity results for boundaries of complementary connected components of $D_h$-metric balls, we extend the confluence of geodesics results of Gwynne-Miller (2019) to the case of supercritical Liouville quantum gravity. These results show that two $D_h$-geodesics with the same starting point and different target points coincide for a non-trivial initial time interval. 
\end{abstract}
 

\tableofcontents

\section{Introduction}
\label{sec-intro}

\subsection{Overview}
\label{sec-overview}

\emph{Liouville quantum gravity} (LQG) is a family of models of random surfaces originating in the physics literature in the 1980s~\cite{polyakov-qg1,david-conformal-gauge,dk-qg}. 
One way to define LQG surfaces is in terms of the \emph{matter central charge}, a parameter $\ccM \in (-\infty,25)$. 
Let $U\subset\BB C$ be open. For a Riemannian metric tensor $g$ on $U$, let $\Delta_g$ be the associated Laplace-Beltrami operator and let $\det\Delta_g$ denote its determinant. 
Heuristically speaking, an LQG surface parametrized by $U$ is the random two-dimensional Riemannian manifold $(U,g)$, where $g$ is sampled from the ``uniform measure on Riemannian metric tensors on $U$ weighted by $(\det\Delta_g)^{-\ccM/2}$". We refer to the case when $\ccM  < 1$ as the \emph{subcritical case} and the case when $\ccM \in (1,25)$ as the \emph{supercritical case}. 

The above definition of LQG is very far from rigorous, but it is nevertheless possible to define LQG surfaces rigorously. One way to do this is via the \emph{David-Distler-Kawai (DDK) ansatz}~\cite{david-conformal-gauge,dk-qg}, which says that, at least in the subcritical phase, the Riemannian metric tensor associated with an LQG surface can be expressed in terms of the exponential of a variant of the Gaussian free field (GFF) $h$ on $U$. We refer to~\cite{shef-gff,berestycki-lqg-notes,pw-gff-notes} for background on the GFF. The GFF $h$ is a random distribution, not a function, so its exponential is not well-defined. But, one can construct objects associated with the exponential of $h$ by replacing $h$ by a family of continuous functions $\{h_\ep\}_{\ep >0}$ which approximate $h$, then taking a limit as $\ep\rta 0$. 

In the subcritical and critical cases, i.e., when $\ccM \leq  1$, this approach has been used to construct the \emph{Liouville quantum gravity area measure} (i.e., the volume form) as a limit of regularized versions of $e^{\gamma h}$ integrated against Lebesgue measure~\cite{kahane,shef-kpz,rhodes-vargas-log-kpz,shef-deriv-mart,shef-renormalization}, where $\gamma \in (0,2]$ is related to the central charge by 
\eqb
\ccM = 25-6Q^2 ,\quad Q =\frac{2}{\gamma} + \frac{\gamma}{2} .
\eqe
Most mathematical works on LQG consider only the case when $\ccM \leq 1$ and use $\gamma$, rather than $\ccM$, as the parameter for the model. 

The focus of the present paper is the \emph{metric} (Riemannian distance function) associated with an LQG surface, which we can be defined for all $\ccM \in (-\infty,25)$. Let us explain the construction of this metric for the GFF on the whole plane.  
For $t  > 0$ and $z\in\BB C$, we define the heat kernel $p_t(z) := \frac{1}{2\pi t} e^{-|z|^2/2t}$ and we denote its convolution with the whole-plane GFF\footnote{
The whole-plane GFF $h$ is only defined modulo additive constant. Throughout the paper, we assume that the additive constant is chosen so that the average of $h$ over the unit circle is zero unless otherwise stated.} 
$h$ by
\eqb \label{eqn-gff-convolve}
h_\ep^*(z) := (h*p_{\ep^2/2})(z) = \int_{\BB C} h(w) p_{\ep^2/2} (z  - w) \, dw^2 ,\quad \forall z\in \BB C  
\eqe
where the integral is interpreted in the sense of distributional pairing. 

For a parameter $\xi > 0$, we define the $\ep$-\emph{Liouville first passage percolation} (LFPP) metric associated with $h$, with parameter $\xi$, by
\eqb \label{eqn-gff-lfpp}
D_{h}^\ep(z,w) := \inf_P \int_0^1 e^{\xi h_\ep^*(P(t))} |P'(t)| \,dt ,\quad \forall z,w\in\BB C 
\eqe
where the infimum is over all piecewise continuously differentiable paths $P : [0,1]\rta \BB C$ from $z$ to $w$. 

To extract a non-trivial limit of the metrics $D_h^\ep$, we need to re-normalize. We define our renormalizing factor by
\eqb \label{eqn-gff-constant}
\frk a_\ep := \text{median of} \: \inf\left\{ \int_0^1 e^{\xi h_\ep^*(P(t))} |P'(t)| \,dt  : \text{$P$ is a left-right crossing of $[0,1]^2$} \right\} ,
\eqe  
where a left-right crossing of $[0,1]^2$ is a piecewise continuously differentiable path $P : [0,1]\rta [0,1]^2$ joining the left and right boundaries of $[0,1]^2$. 
We emphasize that $\frk a_\ep$ is the median of a random variable (the inf of the lengths of the left-right crossings) so is deterministic.

It was shown in~\cite[Proposition 1.1]{dg-supercritical-lfpp} that for each $\xi > 0$, there exists $Q = Q(\xi) > 0$ such that 
\eqb  \label{eqn-Q-def}
\frk a_\ep = \ep^{1 - \xi Q  + o_\ep(1)} ,\quad \text{as} \quad \ep \rta 0 . 
\eqe 
Furthermore, $Q$ is a non-increasing function of $\xi$ and satisfies $\lim_{\xi\rta 0} Q(\xi) =\infty$ and $\lim_{\xi\rta\infty} Q(\xi) = 0$.

As explained in~\cite{dg-supercritical-lfpp} (see also~\cite{ghpr-central-charge}), the parameter $\xi$ is (heuristically) related to the matter central charge by
\eqb \label{eqn-Q-c}
\ccM = 25 - 6Q(\xi)^2 .
\eqe
The dependence of $Q$ on $\xi$, or equivalently the dependence of $\xi$ on $\ccM$, is not known explicitly except that $Q(1/\sqrt 6) = 5/\sqrt 6$, which corresponds to $\ccM = 0$~\cite{dg-lqg-dim}. Define
\eqb \label{eqn-xi_crit}
\xi_{\op{crit}} := \inf\{\xi > 0 : Q(\xi) = 2\} .
\eqe
We do not know $\xi_{\op{crit}}$ explicitly, but the bounds from~\cite[Theorem 2.3]{gp-lfpp-bounds} give the reasonably good approximation $\xi_{\op{crit}}  \in [0.4135 , 0.4189]$. 
By~\eqref{eqn-Q-c} and the properties of $Q(\xi)$ from~\cite[Proposition 1.1]{dg-supercritical-lfpp}, we have 
\eqb \label{eqn-xi-phases}
\ccM < 1 \quad \Leftrightarrow \quad \xi < \xi_{\op{crit}} \quad\text{and} \quad \ccM \in (1,25) \quad \Leftrightarrow \quad \xi > \xi_{\op{crit}} .
\eqe

In the subcritical case, it was shown in~\cite{dddf-lfpp} that for $\xi < \xi_{\op{crit}}$, the re-scaled LFPP metrics $\frk a_\ep^{-1} D_h^\ep$ admit non-trivial subsequential scaling limits w.r.t.\ the topology of uniform convergence on compact subsets of $\BB C\times \BB C$. Subsequently, it was shown in~\cite{gm-uniqueness} that the subsequential limit is unique and is characterized by a certain list of natural axioms. 
The limit $D_h$ of $\frk a_\ep^{-1} D_h^\ep$ is called the \emph{LQG metric} with parameter $\xi$. 

The LQG metric in the subcritical case induces the same topology as the Euclidean metric, but its geometric properties are very different. For example, the Hausdorff dimension of the metric space $(\BB C , D_h)$ is $\gamma/ \xi > 2$~\cite{gp-kpz}. Another important property of $D_h$ is \emph{confluence of geodesics}, which states that two $D_h$-geodesics (i.e., paths of minimal $D_h$-length) with the same starting point and different target points typically coincide for a non-trivial initial time interval. Note that this is not true for geodesics for a smooth Riemannian metric. Confluence of geodesics for the subcritical LQG metric was first established in~\cite{gm-confluence} and played a key role in the uniqueness proof in~\cite{gm-uniqueness}. See also~\cite{lqg-zero-one,gwynne-geodesic-network} for extensions of the confluence property for subcritical LQG, \cite{legall-geodesics} for an earlier proof of confluence of geodesics for the Brownian map (which is equivalent to LQG with $\ccM = 0$~\cite{lqg-tbm1,lqg-tbm2}), and~\cite{akm-geodesics,mq-strong-confluence,legall-geodesic-stars} for stronger confluence results in the Brownian map setting. 

In this paper, we will mainly be interested in the supercritical and critical cases, i.e., $\xi\geq \xi_{\op{crit}}$. It was shown in~\cite{dg-supercritical-lfpp} that for this range of parameter values, the re-scaled LFPP metrics $\frk a_\ep^{-1} D_h^\ep$ are tight with respect to the topology on lower semicontinuous functions on $\BB C\times\BB C$ introduced by Beer~\cite{beer-usc} (see Definition~\ref{def-lsc}). 
Later, after this paper appeared on the arXiv, it was shown in~\cite{dg-uniqueness} that the subsequential limit is unique. The proof in~\cite{dg-uniqueness} uses some of the results in this paper (in particular, those in Section~\ref{sec-hit-ball}), so throughout this paper we will work with subsequential limits.

If $D_h$ is a subsequential limit of LFPP for $\xi > \xi_{\op{crit}}$, then $D_h$ is a metric on $\BB C$ which is allowed to take on infinite values. This metric does not induce the Euclidean topology: rather, there is an uncountable, Euclidean-dense set of \emph{singular points} $z\in \BB C$ such that 
\eqb \label{eqn-singular-pt}
D_h(z,w) = \infty,\quad \forall w\in\BB C\setminus\{z\} .
\eqe
On the other hand, for two fixed points $z,w\in \BB C$, a.s.\ $D_h(z,w) < \infty$, and the restriction of $D_h$ to the complement of the set of singular points defines a complete metric~\cite{pfeffer-supercritical-lqg}.
Roughly speaking, singular points for $D_h$ correspond to $\alpha$-thick points of $h$ for $\alpha > Q$, i.e., points $z\in\BB C$ for which $h_\ep^*(z)$ behaves like $\alpha\log\ep^{-1}$ as $\ep\rta 0$~\cite{dg-supercritical-lfpp,pfeffer-supercritical-lqg}. 
It was shown in~\cite{dg-critical-lqg} that the metric $D_h$ induces the Euclidean topology on $\BB C$ for $\xi = \xi_{\op{crit}}$. In particular, there are no singular points in this case. 

Due to the existence of singular points, $D_h$-metric balls in the supercritical case are highly irregular objects. A $D_h$-ball has empty Euclidean interior (since the singular points are Euclidean dense). Moreover, the $D_h$-boundary of a $D_h$-metric ball is not $D_h$-compact and has infinite Hausdorff dimension w.r.t.\ $D_h$~\cite{pfeffer-supercritical-lqg} (see Theorem~\ref{thm-non-compact}). See Figure~\ref{fig-ball-sim} for a simulation of a supercritical LQG metric ball. 

In contrast, we will show that the boundary of a \emph{filled} $D_h$-metric ball (i.e., the union of the ball and the points which it disconnects from some specified target point) is a Jordan curve and is compact and finite-dimensional w.r.t.\ $D_h$ (Theorem~\ref{thm-outer-bdy}). 

Using our regularity results for outer boundaries of $D_h$-metric balls, we will then extend the confluence of geodesic results from~\cite{gm-confluence} to the critical and supercritical cases (Theorems~\ref{thm-clsce} and~\ref{thm-finite-geo0}). 
Unlike in the subcritical case~\cite{gm-uniqueness}, these confluence results are not needed for the proof of the uniqueness of the critical and supercritical LQG metrics in~\cite{dg-uniqueness}. However, they are of independent interest.

An important tool in our work is the paper~\cite{pfeffer-supercritical-lqg}, which shows that subsequential limits of supercritical LFPP satisfy a list of axioms similar to the axioms for a weak LQG metric from~\cite{lqg-metric-estimates} (see Definition~\ref{def-metric}), and establishes a number of estimates for any metric satisfying these axioms. 
All of the results in this paper are valid for any metric satisfying the axioms from~\cite{pfeffer-supercritical-lqg}.
\bigskip

\noindent \textbf{Acknowledgments.} We thank two anonymous referees for helpful comments on an earlier version of this article. We thank Jason Miller and Josh Pfeffer for helpful discussions. J.D.\ was partially supported by NSF grants DMS-1757479 and DMS-1953848. E.G.\ was partially supported by a Clay research fellowship.

\begin{figure}[t!]
 \begin{center}
\includegraphics[width=0.8\textwidth]{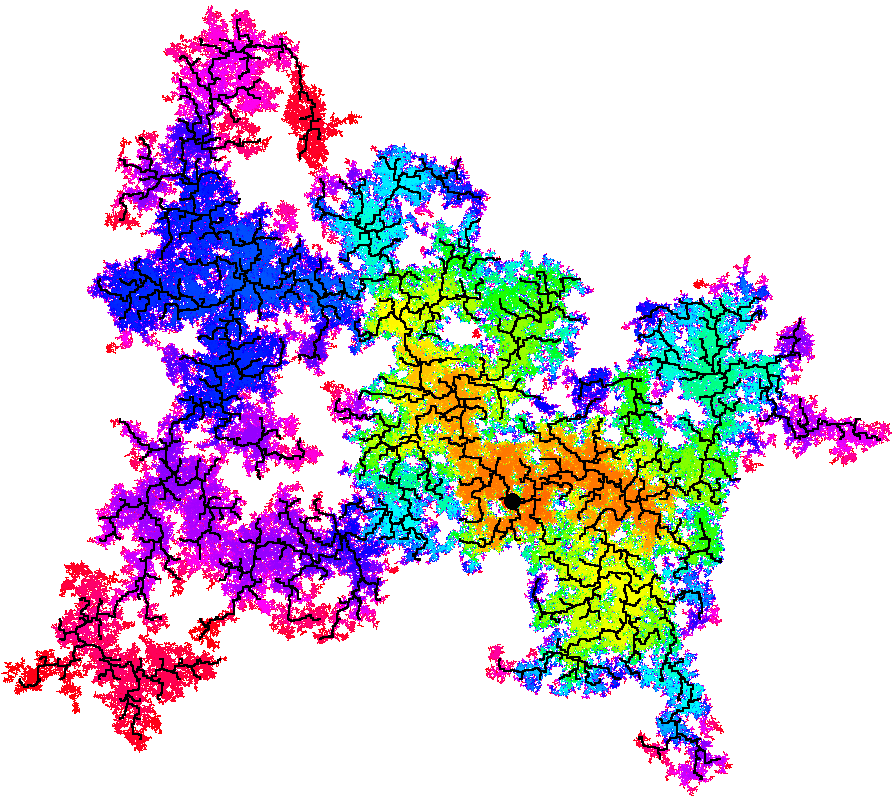}
\vspace{-0.01\textheight}
\caption{Simulation of an LFPP metric ball for $\xi = 1.6 > \xi_{\op{crit}}$. The colors indicate distance to the center point (marked with a black dot) and the black curves are geodesics from the center point to other points in the ball. 
These geodesics have a tree-like structure, which is consistent with our confluence of geodesics results. 
We also note that there are many ``holes" corresponding to complementary connected components of the ball. The boundary of each of these holes is of the form $\bdy\mcl B_s^{y,\bullet}$ for some $y\in\BB C$. 
The simulation was produced using LFPP w.r.t.\ a discrete GFF on a $1024 \times 1024$ subset of $\BB Z^2$.  It is believed that this variant of LFPP falls into the same universality class as the variant in~\eqref{eqn-gff-convolve}. The geodesics go from the center of the metric ball to points in the intersection of the metric ball with the grid $20\BB Z^2$. The code for the simulation was provided by J.\ Miller.
}\label{fig-ball-sim}
\end{center}
\vspace{-1em}
\end{figure}

\subsection{Ordinary and filled LQG metric balls}
\label{sec-results}

Throughout the paper, we let $h$ be a whole-plane GFF, we fix $\xi > 0$, and we let $D_h$ be a weak LQG metric associated with $h$ with parameter $\xi$. For now, the reader can think of $D_h$ as a subsequential limit of the re-scaled LFPP metrics $\frk a_\ep^{-1} D_h^\ep$, but we emphasize that all of our results also hold for any metric satisfying the axioms stated in Definition~\ref{def-metric} below. Also, most of our results are stated for $\xi > 0$ (not just $\xi \geq \xi_{\op{crit}}$), but many of the statements are either obvious or already proven elsewhere when $\xi \in (0,\xi_{\op{crit}})$. For $\xi > \xi_{\op{crit}}$, the metric $D_h$ does not induce the Euclidean topology. We therefore make the following notational convention.

\begin{notation} \label{notation-bdy}
Throughout this paper, topological concepts such as ``open", ``closed", ``boundary", etc., are always defined w.r.t.\ the Euclidean topology unless otherwise stated. Similarly, for a set $A\subset \BB C$, $\bdy A$ denotes its boundary w.r.t.\ the Euclidean topology and $\ol A$ denotes its closure w.r.t.\ the Euclidean topology. Moreover, $z_n \to z$ always refers to convergence with respect to the Euclidean topology, unless otherwise stated.
\end{notation}

For any set $A$, the boundary of $A$ with respect to $D_h$ is contained in the Euclidean boundary $\bdy A$: this is because the Euclidean metric is continuous with respect to $D_h$. 
The reverse inclusion does not necessarily hold. For example, a $D_h$-metric ball is Euclidean-closed (Lemma~\ref{lem-ball-closed}) and has empty Euclidean interior (since the set of singular points is dense), so the Euclidean boundary of such a ball is equal to the whole ball. On the other hand, the $D_h$-distance from each point in the $D_h$-boundary to the center point of the ball is equal to the radius of the ball. Hence, any $D_h$-metric ball with the same center point and a strictly smaller radius is disjoint from the $D_h$-boundary of the ball, so in particular the $D_h$-boundary of the ball is not equal to the whole ball.

We also briefly recall the definition of Hausdorff dimension. For $\Delta >0$, the $\Delta$-Hausdorff content of a metric space $(X,d)$ is
\eqbn
\inf\left\{\sum_{j=1}^\infty r_j^\Delta :\text{there is a covering of $X$ by $d$-metric balls with radii $\{r_j\}_{j\in\BB N}$}\right\}
\eqen
and the \emph{Hausdorff dimension} of $(X,d)$ is the infimum of the values of $\Delta$ for which the $\Delta$-Hausdorff content is zero. 

For $x\in\BB C$ and $s >0$, we write 
\eqb \label{eqn-ball-def}
\mcl B_s(x) := \left\{z\in\BB C : D_h(x,z) \leq s\right\} \quad \text{and} \quad \mcl B_s := \mcl B_s(0)
\eqe
for the closed $D_h$-metric ball of radius $s$. Recall that a \emph{singular point} for $D_h$ is a point which lies at infinite distance from every other point. A \emph{non-singular point} is a point which is not a singular point (i.e., a point which lies at finite distance from some other point). Using the fact that the singular points for $D_h$ are Euclidean-dense, Pfeffer~\cite[Proposition 1.14]{pfeffer-supercritical-lqg} established the following. 

\begin{thm}[\!\!\cite{pfeffer-supercritical-lqg}] \label{thm-non-compact} 
Assume that $\xi > \xi_{\op{crit}}$. Almost surely, for each non-singular point $x\in\BB C$ and each $s > t > 0$, the $D_h$-boundary (hence also the Euclidean boundary) of the $D_h$-metric ball $\mcl B_s(x)$ cannot be covered by finitely many $D_h$-metric balls of radius $t$. Furthermore, $\bdy \mcl B_s(x) = \mcl B_s(x)$ is not $D_h$-compact and has infinite Hausdorff dimension w.r.t.\ $D_h$.
\end{thm}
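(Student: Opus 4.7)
The plan splits the statement into an easy part and a substantive part. The easy part is the identity $\bdy \mcl B_s(x) = \mcl B_s(x)$: by Lemma~\ref{lem-ball-closed}, $\mcl B_s(x)$ is Euclidean-closed, so it suffices to show it has empty Euclidean interior. Since $x$ is non-singular and $s < \infty$, no singular point of $D_h$ can belong to $\mcl B_s(x)$, because $D_h(x,z^*) = \infty$ for every singular $z^*$. In the supercritical regime the set of singular points is Euclidean-dense in $\BB C$, so every non-empty Euclidean-open set meets $\BB C \setminus \mcl B_s(x)$; consequently $\mcl B_s(x)$ has empty Euclidean interior and coincides with its Euclidean boundary.

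The substantive part is to show that $\mcl B_s(x)$ has infinite $D_h$-Hausdorff dimension, since this subsumes both the non-coverability and the non-$D_h$-compactness assertions. My strategy is to construct, for every $t > 0$, arbitrarily large $D_h$-packings inside $\mcl B_s(x)$ at scale $t$: finite collections $z_1, \ldots, z_N \in \mcl B_s(x)$ with pairwise $D_h$-distances exceeding $2t$. Any such packing of size $N$ rules out a cover of $\mcl B_s(x)$ by $N$ $D_h$-balls of radius $t$, so unbounded $N$ would force non-coverability and non-compactness. To upgrade to infinite Hausdorff dimension I would refine the construction so that $N(t) \gtrsim t^{-\Delta}$ for every fixed $\Delta > 0$, which would keep the $\Delta$-Hausdorff content of $\mcl B_s(x)$ bounded away from zero.

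The packings would be extracted from the fjord structure of $\mcl B_s(x)$ near sufficiently thick singular points. In a small Euclidean neighborhood of $x$ I would select well-separated $\alpha$-thick points $z^*_1, \ldots, z^*_N$ of $h$ with $\alpha$ strictly larger than $Q$, each of which is singular and therefore sits in a Euclidean-open component $V_n$ of $\BB C \setminus \mcl B_s(x)$; I would then pick $z_n \in \mcl B_s(x)$ on the relative Euclidean boundary of $V_n$. The core estimate would be that any continuous path joining small Euclidean neighborhoods of distinct $z^*_n$'s has large $D_h$-length: heuristically, by Weyl scaling (Definition~\ref{def-metric}) together with the thick-point estimate $h_\ep^*(z^*_n) \approx \alpha \log \ep^{-1}$, the $D_h$-diameter of a Euclidean annulus of scale $\ep$ around an $\alpha$-thick point behaves like $\ep^{\xi(Q - \alpha) + o(1)}$, which diverges as $\ep \to 0$ precisely when $\alpha > Q$. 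The hard part will be turning this heuristic into a rigorous, uniform lower bound on $D_h$ distances across thick-point neighborhoods along the packing, which would combine the translation-invariance, Weyl-scaling, and locality axioms of Definition~\ref{def-metric} with quantitative GFF thick-point estimates and a multiscale Borel--Cantelli argument producing many thick points of controlled depth inside any small Euclidean disk near $x$. A final passage to a countable dense family of parameters would upgrade the a.s.\ statement to hold simultaneously for all admissible $x$, $s$, $t$.
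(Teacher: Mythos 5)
Note first that the paper does not prove Theorem~\ref{thm-non-compact}; it is cited from \cite{pfeffer-supercritical-lqg}, so there is no in-paper proof to compare against and I am assessing your proposal on its own terms. Your argument for $\bdy\mcl B_s(x)=\mcl B_s(x)$ is correct and complete. However, there are two genuine gaps in the rest.

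First, the logical reduction is wrong: infinite Hausdorff dimension does not subsume the non-compactness and non-coverability assertions. A compact metric space can have infinite Hausdorff dimension (the Hilbert cube), and a non-totally-bounded space can have Hausdorff dimension zero ($\BB N$ with the discrete metric). What is true is that non-coverability at every scale $t<s$ gives non-total-boundedness, hence non-compactness; infinite Hausdorff dimension is a separate claim that packings alone cannot deliver, since a Hausdorff cover is allowed to mix scales. Packing numbers lower-bound Minkowski dimension, not Hausdorff dimension, and your closing remark that $N(t)\gtrsim t^{-\Delta}$ ``keeps the $\Delta$-Hausdorff content bounded away from zero'' is not a valid inference.

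Second, the ``core estimate'' you propose does not control the quantity you actually need. Your points $z_n$ lie on $\bdy V_n$, which need not be Euclidean-close to the thick point $z^*_n$; a $D_h$-geodesic from $z_n$ to $z_m$ will simply avoid all thick points, and in any case $D_h(z_n,z_m)\leq D_h(z_n,x)+D_h(x,z_m)=2s<\infty$, so no divergence is available. The estimate $\ep^{\xi(Q-\alpha)}\to\infty$ governs paths that approach $z^*_n$ within Euclidean scale $\ep\to 0$, which a geodesic between $z_n$ and $z_m$ does not do. You also assume without justification that well-separated thick points land in \emph{distinct} bounded complementary components, which is not automatic. The pairwise distance bound that actually works is a soft triangle-inequality/connectedness argument requiring no thick-point input: fix $t<s$; if $V\neq V'$ are distinct complementary connected components of $\mcl B_{s-t}(x)$ and $w\in V$, $w'\in V'$ satisfy $D_h(x,w)=D_h(x,w')=s$, then any $D_h$-geodesic $\gamma$ from $w$ to $w'$ of length $<2t$ (which exists by Lemma~\ref{lem-geodesic-cont}) satisfies $D_h(x,u)>s-t$ for every $u\in\gamma$ by the triangle inequality, so $\gamma\subset\BB C\setminus\mcl B_{s-t}(x)$, contradicting that $V,V'$ are distinct components of this set; hence $D_h(w,w')\geq 2t$. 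The thick-point machinery is then needed only to produce infinitely many bounded complementary components of $\mcl B_{s-t}(x)$, not to lower-bound geodesic lengths between the packing points.
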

 
The reason why $\bdy\mcl B_s(x) = \mcl B_s(x)$ is that, as noted above, the fact that the set of singular points for $D_h$ is Euclidean dense implies that $\mcl B_s(x)$ has empty Euclidean interior.
Theorem~\ref{thm-non-compact} tells us that the boundaries of $D_h$-metric balls are in some sense highly irregular. One of the main contributions of this paper is to show that, in contrast, the boundaries of \emph{filled} $D_h$-metric balls are well-behaved.

\begin{defn}  \label{def-filled}
Let $x \in \BB{C}$ and $y \in \mathbb{C} \cup \{\infty\}$.  For $s \geq 0$, we define the \emph{filled $D_h$-metric ball centered at $x$ and targeted at $y$} with radius $s>0$ by
\[
\mcl B^{y,\bullet}_s(x) :=
\begin{cases} 
\text{the union of the closed metric ball $\mcl B_s(x)$} \\
\text{and the set of points which this}  &\qquad\mbox{for } s < D_h(x,y) \\
\text{metric ball disconnects from $y$} \\\\
\BB{C} &\qquad\mbox{for } s  \geq D_h(x,y)\\\\
\end{cases}
\]
We will most often work with filled metric balls centered at zero and filled metric balls targeted at infinity, so to lighten notation, we abbreviate
\eqb
\mcl B^\bullet_s(x) := \mcl B^{\infty,\bullet}_s(x), \qquad
\mcl B_s^{y,\bullet} := \mcl B_s^{y,\bullet}(0 ) \quad \text{and} \quad \mcl B_s^\bullet := \mcl B_s^\bullet(0).
\eqe
\end{defn}

We note that filled $D_h$-metric balls differ from ordinary $D_h$-metric balls since the complement of an ordinary $D_h$-metric ball is typically not connected (see Figure~\ref{fig-ball-sim}). In fact, a.s.\ each such complement has infinitely many connected components, see~\cite[Proposition 1.14]{pfeffer-supercritical-lqg}.
The following theorem summarizes our main results concerning the boundaries of filled $D_h$-metric balls. 

\begin{thm} \label{thm-outer-bdy}
Almost surely, for each non-singular point $x\in\BB C$, each $y\in\BB C \cup \{\infty\}$, and each $s \in (0,D_h(x,y))$, the filled metric ball boundary $\bdy\mcl B_s^{y,\bullet}(x)$ is a Jordan curve. Moreover, this boundary is $D_h$-compact and its Hausdorff dimension is bounded above by a finite constant which depends only on the law of $D_h$. 
\end{thm}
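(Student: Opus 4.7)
The plan is to establish the three claims---Jordan curve, $D_h$-compactness, and finite $D_h$-Hausdorff dimension---in sequence, after some preliminary observations. View $\mcl B_s^{y,\bullet}(x)$ as a subset of the Riemann sphere $\wh{\BB C}:=\BB C\cup\{\infty\}$. By construction, its complement $U$ is the connected component of $\wh{\BB C}\setminus \mcl B_s(x)$ containing $y$, which is open and connected; hence $\mcl B_s^{y,\bullet}(x)$ is simply connected in $\wh{\BB C}$. Since $\mcl B_s(x)$ is Euclidean-closed (Lemma~\ref{lem-ball-closed}) and $U$ is open, $\bdy \mcl B_s^{y,\bullet}(x) = \bdy U \subset \mcl B_s(x)$. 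In particular every boundary point $z$ satisfies $D_h(x,z) \leq s$ and is therefore non-singular.

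For the Jordan curve property I plan to apply Carath\'eodory's prime-ends theorem to a conformal isomorphism $\phi:\BB D\to U$: $\bdy U$ will be a Jordan curve provided it is locally connected and no boundary point of $U$ is accessible from $U$ through more than one prime end. Local connectedness of $\bdy U$ will be established by showing that, for each $z\in\bdy U$ and $\eps > 0$, one can choose a small Euclidean circle $C$ centered at $z$ whose $D_h$-length is small and which crosses $\bdy U$ only finitely often; the arc of $\bdy U\cap C^\circ$ containing $z$ then provides a small connected neighborhood of $z$ in $\bdy U$. The key quantitative input is that for non-singular points the $D_h$-diameter of small Euclidean balls shrinks to $0$, a consequence of the axioms from Definition~\ref{def-metric} together with the estimates of~\cite{pfeffer-supercritical-lqg}. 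Absence of pinches will follow from the minimality built into the definition of the filled ball: if two distinct prime ends of $U$ corresponded to a single point $z$, a short Jordan arc inside $\overline U$ close to $z$ together with a small Euclidean arc through $z$ would form a Jordan curve $J$ separating $\BB C$ into two open regions, and the one not containing $y$ should by definition have been included in $\mcl B_s^{y,\bullet}(x)$---contradicting that it lies in the complement $U$.

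For the remaining two claims, I plan to use the Jordan parametrization $\eta: \BB R/\BB Z \to \bdy \mcl B_s^{y,\bullet}(x)$ together with quantitative bounds on $D_h$ near non-singular points. For $D_h$-compactness, I upgrade the Euclidean-continuous $\eta$ to be $D_h$-continuous: for $t$ close to $t_0$, a short $D_h$-path from $\eta(t)$ to $\eta(t_0)$ can be built by concatenating short $D_h$-paths from these two non-singular boundary points a little way into the open set $U$ with a short $D_h$-path inside $U$ connecting the two excursion endpoints; compactness of $\BB R/\BB Z$ then promotes this to uniform $D_h$-continuity, so the image $\bdy \mcl B_s^{y,\bullet}(x)$ is $D_h$-compact. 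For the Hausdorff dimension bound, I refine the above continuity argument to a H\"older estimate of the form
\[
D_h(z,z') \leq C|z - z'|^\alpha, \qquad \forall z, z'\in\bdy \mcl B_s^{y,\bullet}(x) \text{ with } |z - z'|\leq r_0,
\]
with $\alpha, C, r_0 > 0$ depending only on the law of $D_h$; this uses the power-law estimates for $D_h$-diameters of Euclidean balls around non-singular points established in~\cite{pfeffer-supercritical-lqg}, combined with a compactness argument along the curve. The standard inequality then gives $\dim_H(\bdy \mcl B_s^{y,\bullet}(x), D_h) \leq \alpha^{-1}\dim_H(\bdy \mcl B_s^{y,\bullet}(x), |\cdot|) \leq 2\alpha^{-1}$, a finite bound depending only on the law of $D_h$.

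The main obstacle, I expect, is the local connectedness step of the Jordan-curve argument. In the supercritical regime singular points of $D_h$ are Euclidean-dense in $\BB C$, so arbitrarily small Euclidean neighborhoods of a non-singular boundary point may contain pockets where $D_h$ behaves wildly and where $\bdy U$ could \emph{a priori} accumulate with infinitely many small fluctuations. Establishing the quantitative $D_h$-diameter estimate for small Euclidean balls around non-singular points of $\mcl B_s(x)$---uniformly enough to rule out such pathology on the compact set $\bdy \mcl B_s^{y,\bullet}(x)$---is where the technical weight lies, and this is precisely the regime in which the regularity results of~\cite{pfeffer-supercritical-lqg} will be indispensable.
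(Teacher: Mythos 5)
Your overall plan has a fatal flaw at its technical core, and you have actually identified it yourself but did not resolve it. You repeatedly invoke ``the $D_h$-diameter of small Euclidean balls around non-singular points shrinks to $0$'' (and later ``power-law estimates for $D_h$-diameters of Euclidean balls'') as the key quantitative input for local connectedness and for the H\"older estimate. In the supercritical regime this input is simply false: singular points are Euclidean-dense, so \emph{every} Euclidean ball $B_r(z)$ has infinite $D_h$-diameter (it contains points at infinite $D_h$-distance from everything else). There is no version of this estimate, uniform or otherwise, to extract from~\cite{pfeffer-supercritical-lqg}. Similarly, a Euclidean circle around a non-singular boundary point will not typically have small (or even finite) $D_h$-length. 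The proposed H\"older bound $D_h(z,z')\leq C|z-z'|^\alpha$ on the boundary is the \emph{opposite} direction from what is known: Proposition~\ref{prop-holder} gives $|z-w|\leq D_h(z,w)^\chi$, and the paper explicitly notes that H\"older continuity only goes that one way for $\xi\geq\xi_{\op{crit}}$. So both your local-connectedness step and your dimension step, as proposed, fail.

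The paper circumvents this in a genuinely different way. Its central technical tool (Lemma~\ref{lem-hit-ball} / Corollary~\ref{cor-hit-ball'}) is not a bound on $D_h$-diameters of Euclidean balls, but the much weaker and more targeted statement that any point \emph{on the boundary of a filled metric ball} can be surrounded, inside a small Euclidean annulus, by a loop whose $D_h$-length is a small multiple of the $D_h$-distance across that annulus. This is proved by iterating across $\log\ep^{-1}$ concentric annuli, using a percolation-style independence lemma (Lemma~\ref{lem-annulus-iterate}) and a deterministic pigeonhole argument (Lemma~\ref{lem-seq-sum-count}); the crucial point is that it exploits the geodesic structure from the center $x$ to the boundary point, and it has no analogue for arbitrary non-singular points. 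From this the paper derives the separation lower bound (Lemma~\ref{lem-bdy-dist}), and the dimension bound is then obtained by a direct $D_h$-ball covering argument (Theorem~\ref{thm-net-dim}) rather than via any H\"older map from the Euclidean metric. The Jordan-curve part is also handled differently: instead of local connectedness, the paper proves a disconnection criterion (Proposition~\ref{prop-locally-connected}) adapted to sets $Y$ that touch $\bdy U$, verifies it via the annulus-surrounding estimate plus the one H\"older direction that \emph{is} available (Lemma~\ref{lem-metric-disc}), and rules out double points by a topological homotopy/geodesic argument (Lemma~\ref{lem-metric-multiple}) rather than the ``minimality'' heuristic you sketch, which as written does not give a valid contradiction (the region $V$ bounded by your Jordan curve $J$ need not be contained in $\mcl B_s^{y,\bullet}(x)$ since part of $J$ lies outside $\mcl B_s(x)$). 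To salvage your approach you would, in effect, have to prove the surround-by-short-loops estimate for boundary points, which is the heart of the paper's Section 3.
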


We emphasize that the statement of Theorem~\ref{thm-outer-bdy} holds a.s.\ for all choices of $x,y,s$ simultaneously. We show in Lemma~\ref{lem-outer-bdy-dist} below that the boundaries of $\mcl B_s^{y,\bullet}(x)$ with respect to the Euclidean metric and $D_h$ coincide, so Theorem~\ref{thm-outer-bdy} also applies to the $D_h$-boundary of $\mcl B_s^{y,\bullet}(x)$.

In the subcritical case $\xi < \xi_{\op{crit}}$, Theorem~\ref{thm-outer-bdy} follows from the fact that $D_h$ induces the Euclidean topology and the Hausdorff dimension of $(\BB C , D_h)$ is finite. See~\cite[Proposition 2.1]{tbm-characterization} for a proof that the boundary of a filled metric ball is a Jordan curve for any geodesic metric on $\BB C$ which induces the Euclidean topology. For $\xi \geq \xi_{\op{crit}}$, however, the proof of Theorem~\ref{thm-outer-bdy} requires non-trivial ideas. In particular, we first establish a general criterion for the boundary of an open domain to be a (not necessarily simple) curve (Proposition~\ref{prop-locally-connected}), which is a variant of the well-known fact that if the boundary of a simply connected domain in $\BB C$ is locally connected, then it is a curve (see, e.g.,~\cite[Section 2.2]{pom-book}). We then use some geometric estimates for supercritical LQG to check this criterion for the boundary of a filled supercritical LQG metric ball (see Section~\ref{sec-jordan-proof}), which shows that the filled metric ball boundary is a curve. Finally, we use some fairly straightforward topological considerations to show that the boundary of a filled metric ball does not have cut points, so is in fact a \emph{simple} curve (see Lemma~\ref{lem-metric-multiple}). The basic idea of our proof is similar to the proof of~\cite[Proposition 2.1]{tbm-characterization}, which proceeds by checking that the boundary of a filled metric ball for a geodesic metric which induces the Euclidean topology is locally connected and has no cut points. However, our proof is much more involved since our metric does not induce the Euclidean topology.

Theorem~\ref{thm-outer-bdy} implies that for $x\in\BB C$ and $s > 0$, the boundaries of the connected components of $\BB C\setminus \mcl B_s(x)$ have finite $D_h$-Hausdorff dimension. Since $\bdy\mcl B_s(x)$ itself has infinite $D_h$-Hausdorff dimension (Theorem~\ref{thm-non-compact}), we get that ``most" points of $\bdy\mcl B_s(x)$ do not lie on the boundary of any connected component of $\BB C\setminus \mcl B_s(x)$. Points of this type can arise as accumulation points of arbitrarily small connected components of $\BB C\setminus \mcl B_s(x)$. See~\cite[Theorem 1.14]{lqg-zero-one} for an analogous result in the subcritical case.
 
In fact, we will prove a slightly stronger Hausdorff dimension statement than the one in Theorem~\ref{thm-outer-bdy}. 
For $x \in \BB C$ and $y\in \BB C\cup \{\infty\}$, we define the \emph{metric net} 
\eqb
\mcl N_s^y(x) := \bigcup_{t \in [0,s]} \bdy\mcl B_t^{y,\bullet}(x) .
\eqe 

\begin{thm} \label{thm-net-dim}
There is a deterministic constant $\Delta \in (0,\infty)$ (depending on the law of $D_h$) such that a.s.\ for each non-singular point $x \in \BB C$, each $y\in \BB C \cup \{\infty\}$, and each $s > 0$ the Hausdorff dimension of $\mcl N_s^y(x)$ w.r.t.\ $D_h$ is at most $\Delta$.
\end{thm}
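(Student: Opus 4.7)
My plan is to deduce the theorem from Theorem~\ref{thm-outer-bdy} by taking $\Delta = \Delta_0 + 1$, where $\Delta_0$ is the Hausdorff dimension bound provided by Theorem~\ref{thm-outer-bdy}, via a slicing-by-radius argument. The key observation is that the restriction of $\tau(z) := D_h(x,z)$ to $\mcl N_s^y(x)$ is a $1$-Lipschitz map (w.r.t.\ $D_h$) into $[0,s]$: indeed, $\bdy \mcl B_t^{y,\bullet}(x) \subset \bdy \mcl B_t(x) = \{w : D_h(x,w) = t\}$, so every $z \in \mcl N_s^y(x)$ satisfies $\tau(z) \leq s$. Its fibers $\tau^{-1}(t)$ are contained in $\bdy \mcl B_t^{y,\bullet}(x)$, hence have $D_h$-Hausdorff dimension at most $\Delta_0$ by Theorem~\ref{thm-outer-bdy}.

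The heart of the argument is the following geometric continuity lemma: for $0 \leq t_0 \leq t \leq t_0 + \delta$ and $z \in \bdy \mcl B_t^{y,\bullet}(x)$, the $D_h$-distance from $z$ to $\bdy \mcl B_{t_0}^{y,\bullet}(x)$ is at most $t - t_0$. The proof uses that the complementary components of $\mcl B_t(x)$ are nested decreasing in $t$: they can shrink or disappear but can never merge, because $t \mapsto \mcl B_t(x)$ is monotonically increasing. Hence the $y$-component $U_y^{(t)}$ of $\BB C \setminus \mcl B_t(x)$ satisfies $U_y^{(t)} \subset U_y^{(t_0)}$, so $z \in \ol{U_y^{(t_0)}}$. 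A $D_h$-geodesic from $z$ to $x$ of length $t$ must stay in $\ol{U_y^{(t_0)}}$ until the first time the $D_h$-distance to $x$ drops to $t_0$; since this distance decreases at unit rate along the geodesic, this occurs at arc length $t-t_0$, at which point the geodesic crosses $\bdy U_y^{(t_0)} = \bdy \mcl B_{t_0}^{y,\bullet}(x)$.

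Using this lemma, I would partition $[0,s]$ into $\lceil s/\delta\rceil$ intervals of length $\delta$, cover each $\bdy \mcl B_{k\delta}^{y,\bullet}(x)$ by $D_h$-balls of radius $\delta$, and then enlarge each ball by $\delta$ to obtain a cover of $\mcl N_s^y(x)$. This yields a cover by $\lesssim \delta^{-\Delta_0 - \eta - 1}$ balls of radius $2\delta$, which gives $\dim_H^{D_h}(\mcl N_s^y(x)) \leq \Delta_0 + 1$. The main obstacle is this single-scale counting step: Theorem~\ref{thm-outer-bdy} provides only a Hausdorff-dimension bound, whereas the argument needs an upper Minkowski-dimension (or scale-$\delta$ covering-number) estimate that is uniform in $t$ and in $k$. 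I would expect to extract such an estimate from the proof of Theorem~\ref{thm-outer-bdy} itself (which naturally produces scale-invariant covering bounds), or else replace the single-scale argument with a multi-scale decomposition based on the $(\Delta_0+\eta)$-Hausdorff content. Extending the bound from a single $(x,y,s)$ to all such choices simultaneously is then handled by a union bound over a countable dense collection, together with the monotonicity $\mcl N_s^y(x) \subset \mcl N_{s'}^y(x)$ for $s\leq s'$ to pass from rational to general $s$, and suitable continuity in $(x,y)$.
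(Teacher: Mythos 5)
Your proposal has a fatal circularity: in this paper, the Hausdorff-dimension bound in Theorem~\ref{thm-outer-bdy} is \emph{deduced from} Theorem~\ref{thm-net-dim}, not proved independently (see the sentence ``In fact, we will prove a slightly stronger Hausdorff dimension statement than the one in Theorem~\ref{thm-outer-bdy}'' in Section~\ref{sec-results}, and the proof of Theorem~\ref{thm-outer-bdy} at the end of Section~\ref{sec-curve}, which explicitly invokes Theorem~\ref{thm-net-dim}). Since $\bdy\mcl B_t^{y,\bullet}(x) \subset \mcl N_s^y(x)$ for $t\leq s$, the fiber bound you want is a \emph{special case} of what you are trying to prove, so Theorem~\ref{thm-outer-bdy} cannot be used as an input here. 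Your fallback idea of ``extracting'' the needed scale-$\delta$ covering bound from the proof of Theorem~\ref{thm-outer-bdy} fails for the same reason: that proof consists of citing Theorem~\ref{thm-net-dim}, so there is nothing to extract.

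Even setting the circularity aside, you have correctly flagged the other obstruction: a Hausdorff-dimension bound on each fiber $\bdy\mcl B_t^{y,\bullet}(x)$ does not give a covering by $\lesssim \delta^{-\Delta_0-\eta}$ balls at a \emph{single scale} $\delta$ that is \emph{uniform in $t$}; the scales at which a given fiber admits an efficient cover need not be compatible across fibers. Your slicing-by-radius argument would require an upper-Minkowski (box-counting) bound for the fibers, uniform over $t\in[0,s]$, which is genuinely stronger and must be established from scratch. Patching this via a ``multi-scale decomposition based on the $(\Delta_0+\eta)$-Hausdorff content'' does not obviously work either, because the Lipschitz map $\tau$ only controls how much a single slice can be displaced in the $D_h$ metric, not how to synchronize the Hausdorff-content covers of uncountably many slices.

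The paper avoids both issues by constructing a covering of the whole net directly at scale $\ep$, without appealing to a dimension bound for any individual slice. The two key inputs are: (1) Lemma~\ref{lem-bdy-dist}, which gives a \emph{Euclidean} separation $\op{dist}\bigl(\bdy\mcl B_s^{y,\bullet}(x),\bdy\mcl B_{s-\ep}^{y,\bullet}(x)\bigr) \gtrsim \ep^{1/\beta}$; and (2) Proposition~\ref{prop-two-set-dist} plus a union bound, which lets one surround every $\ep^{1/\beta}\times\ep^{1/\beta}$ square $S$ by a loop $\pi_S$ whose $D_h$-length is at most a negative power of $\ep$, and hence cover $\pi_S$ by polynomially many $D_h$-balls of radius $\ep$. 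Given $z\in\bdy\mcl B_s^{y,\bullet}(x)$, a geodesic $P$ from $x$ to $z$ satisfies $P(s-\ep)\in\bdy\mcl B_{s-\ep}^{y,\bullet}(x)$ --- this is your ``geometric continuity'' observation, which is indeed used --- but instead of covering that boundary by $D_h$-balls, the paper uses the Euclidean separation to force $P$ to cross some loop $\pi_S$ during $[s-\ep,s]$, placing $z$ within $D_h$-distance $\ep$ of $\pi_S$. This yields a single-scale covering count directly, and simultaneously for all $x,y,s$ in a fixed window, without any continuity-in-$(x,y)$ argument. So while your observation that $\tau$ is $1$-Lipschitz and that the geodesic at time $s-\ep$ lies on the $(s-\ep)$-boundary is correct and shared with the paper, the way you propose to convert it into a covering bound does not close.
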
 
  
The Hausdorff dimension of the metric net w.r.t.\ $D_h$ or w.r.t.\ the Euclidean metric is not known, even heuristically, for any $\xi > 0$, with one exception: when $\ccM = 0$ ($\xi = 1/\sqrt 6$), we expect that the Hausdorff dimension w.r.t.\ $D_h$ is 3 (this is consistent with scaling relations for quantum Loewner evolution in~\cite{lqg-tbm1,lqg-tbm2,lqg-tbm3}). 
It was shown in~\cite[Theorem 1.11]{gm-confluence} that in the subcritical case, the dimensions of the metric net w.r.t.\ the Euclidean and LQG metrics are each a.s.\ equal to deterministic constants. We expect that the same is true in the supercritical case. 


\subsection{Confluence of geodesics}
\label{sec-confluence-results}

Theorem~\ref{thm-outer-bdy} (and the estimates which go into its proof) can be used to extend the confluence of geodesic results from~\cite{gm-confluence} to the critical and supercritical cases. In particular, we obtain the following theorem for all $\xi > 0$. 

\begin{thm}[Confluence of geodesics at a point] \label{thm-clsce}
 Almost surely, for each radius $s > 0$ there exists a radius $t \in (0,s)$ such that any two $D_h$-geodesics from 0 to points outside of the filled $D_h$-metric ball $\mcl B_s^\bullet =\mcl B_s^\bullet(0)$ coincide on the time interval $[0,t]$.
\end{thm}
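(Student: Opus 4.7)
The plan is to extend the Gwynne-Miller confluence argument from \cite{gm-confluence} to the supercritical regime, with Theorem~\ref{thm-outer-bdy} supplying the key new input. I would proceed in two main steps, followed by a concluding local argument.

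For the first step, I would establish confluence through the boundary of a filled metric ball. Fix $s > 0$. For each $r \in (0, s)$, the filled metric ball $\mcl B_r^\bullet = \mcl B_r^{\infty,\bullet}(0)$ is simply connected and contains $0$, and by Theorem~\ref{thm-outer-bdy} its boundary $\bdy \mcl B_r^\bullet$ is a $D_h$-compact Jordan curve. Any $D_h$-geodesic $\eta$ from $0$ to a point $y$ with $y \notin \mcl B_s^\bullet$ crosses $\bdy \mcl B_r^\bullet$ exactly once, at the point $\eta(r)$; for targets $y \in \mcl B_s^\bullet \setminus \mcl B_s$ (disconnected from infinity) the same analysis applies with filled balls targeted at $y$. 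The goal is to show that for every $r$, the set of exit points $\{\eta(r)\}$, taken over all geodesics $\eta$ from $0$ to points outside $\mcl B_s$, is finite. The argument is topological: two geodesics exiting at distinct points $x_1, x_2$ of the Jordan curve, together with the arc of $\bdy\mcl B_r^\bullet$ between $x_1$ and $x_2$, bound a region of $\mcl B_r^\bullet$, and any third geodesic exiting on this arc is trapped inside. Combined with the $D_h$-compactness of $\bdy\mcl B_r^\bullet$ and a subsequential-limit argument in the lower semicontinuous topology of \cite{pfeffer-supercritical-lqg}, this precludes infinitely many exit points --- otherwise accumulating exit points would yield infinitely many distinct geodesics of the same $D_h$-length emanating from $0$, contradicting a local non-degeneracy estimate for the metric near $0$.

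For the second step, I would pass from finiteness to a single exit point. The function $r \mapsto N(r) := |\{\eta|_{[0,r]} : \eta \text{ is a geodesic from } 0 \text{ to some } y \notin \mcl B_s\}|$ is non-decreasing in $r$ (the restriction map on prefixes is well-defined), and the first step shows $N(r) < \infty$ for every $r \in (0, s)$. If $\lim_{r \to 0^+} N(r) \geq 2$, then via a compactness argument one would extract two distinct geodesic rays from $0$ that differ on every initial interval $[0, r]$, i.e., a genuine branching at the point $0$ itself. Local regularity of the weak LQG metric at a typical point of the GFF, transferred to the fixed base point $0$ by absolute continuity arguments of the type used in \cite{pfeffer-supercritical-lqg}, rules this out. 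Hence $N(r) = 1$ for some $r > 0$, and taking $t$ to be this $r$ gives the desired conclusion; almost sure uniqueness of the $D_h$-geodesic from $0$ to the common exit point (established along the way) ensures that ``same endpoint at time $t$'' upgrades to ``same path on $[0,t]$''.

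The main obstacle is the planarity / topological step in the first stage. In the subcritical case, $D_h$ induces the Euclidean topology and the classical planarity arguments for geodesics go through directly. In the supercritical case, the full Euclidean boundary $\bdy \mcl B_r$ is $D_h$-irregular (Theorem~\ref{thm-non-compact}) and only the much smaller filled boundary $\bdy \mcl B_r^\bullet$ is tame. The confluence argument must therefore be run entirely on $\bdy\mcl B_r^\bullet$, while the ``thin'' part $\bdy \mcl B_r \setminus \bdy \mcl B_r^\bullet$ is absorbed harmlessly into the interiors of the complementary connected components of $\mcl B_r$. Carefully maintaining this distinction between the two boundaries, while still running a planarity argument and a subsequential-limit extraction in the lower semicontinuous topology rather than under uniform convergence, is the most delicate piece of the proof.
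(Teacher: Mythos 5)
Your proposal has a genuine gap in both main steps, and in fact circumvents the entire probabilistic core of the confluence argument.

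In your first step, you claim that a topological planarity argument (geodesics from $0$ do not cross) combined with $D_h$-compactness of $\bdy\mcl B_r^\bullet$ and a subsequential-limit extraction ``precludes infinitely many exit points --- otherwise accumulating exit points would yield infinitely many distinct geodesics of the same $D_h$-length emanating from $0$, contradicting a local non-degeneracy estimate for the metric near $0$.'' But there is no such contradiction and no such ``local non-degeneracy estimate.'' For \emph{any} geodesic metric and any point $z$ on the boundary of a filled metric ball of radius $r$ centered at $0$, there is a geodesic from $0$ to $z$ (Lemmas~\ref{lem-geodesic-cont} and~\ref{lem-outer-bdy-dist}), so there are \emph{uncountably many} geodesics of $D_h$-length exactly $r$ emanating from $0$, one for each point of the Jordan curve $\bdy\mcl B_r^\bullet$. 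The planarity argument only gives a cyclic ordering of geodesics (Lemma~\ref{lem-geo-arc}); it does not bound their number. Finiteness of the set of exit points which are hit by geodesics to points \emph{beyond} $\mcl B_s^\bullet$ is a nontrivial probabilistic statement. In the paper it is Theorem~\ref{thm-finite-geo0}, and its proof (Sections~\ref{sec-good-annuli}--\ref{sec-finite-geo-proof}) rests on the ``shield'' construction: one defines high-probability annulus events $E_r(z)$ using the near-independence of the GFF across scales (Lemma~\ref{lem-annulus-iterate}) and the conditional positive-probability event $H_r^U(z)$ (Lemma~\ref{lem-cond-diam-small}), then uses Lemma~\ref{lem-geo-kill} to build, with high conditional probability given $(\mcl B_\tau^\bullet, h|_{\mcl B_\tau^\bullet})$, a path of small $D_h$-length just outside $\mcl B_\tau^\bullet$ that no $D_h$-geodesic to $0$ can cross. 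One then iterates this across many annuli (Theorem~\ref{thm-finite-geo-quant}) to kill off all but a bounded number of boundary arcs. No amount of topology or compactness substitutes for this; the construction genuinely requires the Markov property and FKG inequality for the GFF (Proposition~\ref{prop-fkg-metric}).

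Your second step has the same problem at a different point. You assert that ``local regularity of the weak LQG metric at a typical point of the GFF, transferred to the fixed base point $0$ by absolute continuity,'' rules out branching at $0$. But local regularity of a metric does not preclude distinct geodesics emanating from a point: for a smooth Riemannian metric, \emph{every} direction gives a distinct geodesic, and the exponential map is a diffeomorphism near $0$, so regularity in fact \emph{guarantees} branching in the sense you describe. Confluence is a reflection of the extreme roughness of the LQG metric, not of its regularity. What the paper actually does (Section~\ref{sec-one-geo}) is prove Lemma~\ref{lem-pos-kill}: for a suitably chosen arc $I\subset\bdy\mcl B_{\tau_{\BB r}}^\bullet$, there is a uniformly positive conditional probability, given $(\mcl B_{\tau_{\BB r}}^\bullet, h|_{\mcl B_{\tau_{\BB r}}^\bullet})$, that \emph{every} $D_h$-geodesic from $0$ to a far-away point passes through $I$. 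This again uses a shield event $E_{\BB r}^U \cap H_{\BB r}^U$ and the FKG inequality. One then iterates across scales to reduce the finite set from Theorem~\ref{thm-finite-geo0} to a single point, following~\cite[Section 4.2]{gm-confluence}. In summary, your proposal correctly identifies Theorem~\ref{thm-outer-bdy} as a prerequisite and correctly observes that the argument must be run on $\bdy\mcl B_r^\bullet$ rather than $\bdy\mcl B_r$, but the two purported mechanisms for finiteness and for reduction to a single geodesic are both incorrect, and the real mechanism --- conditional probability estimates built from GFF independence across scales --- is entirely absent from your outline.
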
 
 
Another form of confluence concerns geodesics across an annulus between two filled $D_h$-metric balls (Definition~\ref{def-filled}).  
Let us first note that every $D_h$-geodesic from $0$ to a point $z\in \bdy\mcl B_s^{ \bullet} $ stays in $\mcl B_s^\bullet$. For some points $z$ there might be many such $D_h$-geodesics, but there is always a distinguished $D_h$-geodesic from $0$ to $z$, called the \emph{leftmost geodesic}, which lies (weakly) to the left of every other $D_h$-geodesic from 0 to $z$ if we stand at $z$ and look outward from $ \mcl B_s^\bullet  $ (see Lemma~\ref{lem-leftmost-geodesic}). 

\begin{thm}[Confluence of geodesics across a metric annulus] \label{thm-finite-geo0}
Almost surely, for each $0 < t < s < \infty$ there is a finite set of $D_h$-geodesics from 0 to $\bdy\mcl B_t^\bullet $ such that every leftmost $D_h$-geodesic from 0 to a point of $\bdy\mcl B_s^\bullet $ coincides with one of these $D_h$-geodesics on the time interval $[0,t]$. In particular, there are a.s.\ only finitely many points of $\bdy\mcl B_t^\bullet $ which are hit by leftmost $D_h$-geodesics from 0 to points of $\bdy\mcl B_s^\bullet $. 
\end{thm}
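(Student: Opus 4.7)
The plan is to adapt the confluence-of-geodesics argument of~\cite{gm-confluence} to the supercritical setting, replacing Euclidean-topology arguments with $D_h$-topology arguments based on the Jordan-curve and $D_h$-compactness properties of filled metric ball boundaries established in Theorem~\ref{thm-outer-bdy}, together with confluence at a point (Theorem~\ref{thm-clsce}).

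First I would carefully define the leftmost $D_h$-geodesic $\eta_z : [0, D_h(0,z)] \to \mcl B_s^\bullet$ from $0$ to each $z \in \bdy \mcl B_s^\bullet$ and collect its basic properties (existence, uniqueness, non-crossing). Existence uses the cyclic order on the Jordan curve $\bdy \mcl B_s^\bullet$: obtain $\eta_z$ as an Arzel\`a--Ascoli limit of $D_h$-geodesics from $0$ to points of $\bdy \mcl B_s^\bullet$ approaching $z$ from the left side. Compactness is available because the geodesics are $1$-Lipschitz for $D_h$ and, at each intermediate time $u \in (0,s]$, take values in the $D_h$-compact Jordan curve $\bdy \mcl B_u^\bullet$. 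The non-crossing property of leftmost geodesics is then forced by the cyclic orders on the $\bdy \mcl B_u^\bullet$ for $u \le s$.

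Next I would define the ``hit map'' $\Phi : \bdy \mcl B_s^\bullet \to \bdy \mcl B_t^\bullet$ by $\Phi(z) := \eta_z(t)$. Non-crossing implies that $\Phi$ respects cyclic order, each level set $\Phi^{-1}(w)$ is a closed arc, and whenever $\Phi(z_1) = \Phi(z_2)$ one has $\eta_{z_1}|_{[0,t]} = \eta_{z_2}|_{[0,t]}$ (any disagreement would produce two non-crossing leftmost geodesics meeting again at $w$, impossible by leftmostness). Hence it suffices to show the image of $\Phi$ is finite. Argue by contradiction: if there were a sequence of distinct $w_n = \Phi(z_n)$, then by $D_h$-compactness of $\bdy \mcl B_t^\bullet$ and $\bdy \mcl B_s^\bullet$ (Theorem~\ref{thm-outer-bdy}) one extracts subsequential $D_h$-limits $w_n \to w_\infty$ and $z_n \to z_\infty$. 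Arzel\`a--Ascoli, applied to the $1$-Lipschitz geodesics $\eta_{z_n}$, produces a further subsequence converging uniformly to a geodesic $\eta_\infty$ from $0$ to $z_\infty$ with $\eta_\infty(t) = w_\infty$. Theorem~\ref{thm-clsce} applied at scale $s$ then gives a $\tau \in (0, t)$ on which every $\eta_{z_n}$ coincides with $\eta_\infty|_{[0, \tau]}$. Iterating a localized version of Theorem~\ref{thm-clsce} starting from $\eta_\infty(\tau)$ yields, after finitely many steps, a common initial segment on all of $[0, t]$, forcing $\eta_{z_n}|_{[0,t]} = \eta_\infty|_{[0,t]}$ for all large $n$ and contradicting the distinctness of the $w_n$.

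The hard part will be making the iteration step rigorous. Because $D_h$ does not induce the Euclidean topology when $\xi > \xi_{\op{crit}}$, the classical continuity arguments used in~\cite{gm-confluence} are unavailable and one must work purely with the $D_h$-topology, relying on Theorem~\ref{thm-outer-bdy} for regularity. One subtlety is that the interior of $\mcl B_s^\bullet$ contains a Euclidean-dense set of $D_h$-singular points, so geodesics only traverse the $D_h$-non-singular part of the filled ball and one must check that limits of geodesics remain there. A second subtlety is that Theorem~\ref{thm-clsce} is formulated at the origin, so the iteration either requires re-centering the argument at $\eta_\infty(\tau)$ using the Markov property of the GFF (producing a locally absolutely continuous law whose metric also satisfies the axioms of~\cite{pfeffer-supercritical-lqg}) or directly extracting from the proof of Theorem~\ref{thm-clsce} a version that is uniform along geodesics.
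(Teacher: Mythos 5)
Your proposed route has a fundamental circularity that makes it untenable as an argument for the paper's Theorem~\ref{thm-finite-geo0}. In the paper, Theorem~\ref{thm-clsce} (confluence at a point) is \emph{derived from} Theorem~\ref{thm-finite-geo0}, not the other way around: the outline at the start of Section~\ref{sec-confluence} makes clear that the iterative ``shield'' argument (steps 2 and 3) first yields Theorem~\ref{thm-finite-geo0}, and only then is Lemma~\ref{lem-pos-kill} used in step 4 to whittle finitely many geodesics down to one and obtain Theorem~\ref{thm-clsce}. Your proof invokes Theorem~\ref{thm-clsce} as a black box to prove Theorem~\ref{thm-finite-geo0}, which presupposes the conclusion you are trying to reach. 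There is no known independent proof of Theorem~\ref{thm-clsce} in the LQG framework that bypasses the finite-geodesics statement (the Brownian map arguments use the Brownian snake encoding, which is unavailable here).

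Even setting aside the circularity, the iteration step you flag as ``the hard part'' is in fact where the substance of the paper's argument lives, and you have not supplied it. Theorem~\ref{thm-clsce}, once applied, gives a \emph{single random} radius $\tau$ of coalescence; to continue past $\tau$ you propose to re-center at $\eta_\infty(\tau)$. But $\eta_\infty(\tau)$ is a random point on $\bdy\mcl B_\tau^\bullet$ (depending on the full configuration up to time $\tau$), so translation invariance does not directly apply, and there is no reason the successive coalescence radii obtained by re-centering sum to at least $t$; without a quantitative lower bound on how far each step advances, the iteration can stall strictly below $t$. The paper resolves exactly this difficulty by working at a sequence of geometric scales and by controlling, via Lemma~\ref{lem-geo-kill} and Lemma~\ref{lem-bdy-dist0}, the probability that a boundary arc of a filled metric ball is ``shielded'' at each scale, which yields the quantitative bound in Theorem~\ref{thm-finite-geo-quant} and hence Theorem~\ref{thm-finite-geo0}. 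In short, the compactness-and-limit framing at the start of your proposal is fine (and essentially matches Lemma~\ref{lem-aa} and Lemma~\ref{lem-leftmost-geodesic}), but the core of the argument --- the event $E_r^U(z)$ construction, Lemma~\ref{lem-cond-diam-small}, and the iterative killing of boundary arcs --- is missing and cannot be replaced by invoking Theorem~\ref{thm-clsce}.
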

  
Theorems~\ref{thm-clsce} and~\ref{thm-finite-geo0} are identical to~\cite[Theorems 1.3 and 1.4]{gm-confluence}, except that they apply for all $\xi > 0$ rather than just $\xi < \xi_{\op{crit}}$. The proofs of Theorems~\ref{thm-clsce} and~\ref{thm-finite-geo0} are given in Section~\ref{sec-confluence}. Many of the proofs in~\cite{gm-confluence} carry over verbatim to the critical and supercritical cases, but other parts require non-trivial adaptations. To avoid unnecessary repetition, we will only explain the parts of the proofs which are different in the critical and supercritical cases. 
 

\subsection{Outline}
\label{sec-outline}

The rest of this paper is structured as follows. In Section~\ref{sec-prelim} we review the axioms for a weak LQG metric from~\cite{pfeffer-supercritical-lqg}, then re-state some results from the existing literature (mostly from~\cite{pfeffer-supercritical-lqg}) which we will need for our proofs.
In Section~\ref{sec-bdy-estimates}, we prove a number of regularity estimates for the boundaries of filled $D_h$-metric balls, which enable us to prove Theorem~\ref{thm-net-dim} as well as all of Theorem~\ref{thm-outer-bdy} except for the statement that $\bdy\mcl B_s^{y,\bullet}$ is a Jordan curve. 
In Section~\ref{sec-curve}, we prove that $\bdy\mcl B_s^{y,\bullet}$ is a Jordan curve, which completes the proof of Theorem~\ref{thm-outer-bdy}. To do this, we first prove a general criterion for the boundary of a simply connected domain to be a curve, then check this criterion for $\bdy\mcl B_s^{y,\bullet}$ using the estimates from Section~\ref{sec-bdy-estimates}. 
In Section~\ref{sec-confluence} we explain how to prove our confluence of geodesic results, Theorems~\ref{thm-clsce} and~\ref{thm-finite-geo0}, by adapting the arguments of~\cite{gm-confluence} and applying the estimates of Section~\ref{sec-bdy-estimates}.

\section{Preliminaries}
\label{sec-prelim}

\subsection{Notational conventions}
\label{sec-notation}

\noindent
We write $\BB N = \{1,2,3,\dots\}$ and $\BB N_0 = \BB N \cup \{0\}$. 
\medskip

\noindent
For $a < b$, we define the discrete interval $[a,b]_{\BB Z}:= [a,b]\cap\BB Z$. 
\medskip

\noindent
If $f  :(0,\infty) \rta \BB R$ and $g : (0,\infty) \rta (0,\infty)$, we say that $f(\ep) = O_\ep(g(\ep))$ (resp.\ $f(\ep) = o_\ep(g(\ep))$) as $\ep\rta 0$ if $f(\ep)/g(\ep)$ remains bounded (resp.\ tends to zero) as $\ep\rta 0$. We similarly define $O(\cdot)$ and $o(\cdot)$ errors as a parameter goes to infinity. 
\medskip

\noindent
Let $\{E^\ep\}_{\ep>0}$ be a one-parameter family of events. We say that $E^\ep$ occurs with
\begin{itemize}
\item \emph{polynomially high probability} as $\ep\rta 0$ if there is a $p > 0$ (independent from $\ep$ and possibly from other parameters of interest) such that  $\BB P[E^\ep] \geq 1 - O_\ep(\ep^p)$. 
\item \emph{superpolynomially high probability} as $\ep\rta 0$ if $\BB P[E^\ep] \geq 1 - O_\ep(\ep^p)$ for every $p>0$.  
\end{itemize}
We similarly define events which occur with polynomially or superpolynomially high probability as a parameter tends to $\infty$. 
\medskip

\noindent
For $z\in\BB C$ and $r>0$, we write $B_r(z)$ for the open Euclidean ball of radius $r$ centered at $z$. More generally, for $X\subset \BB C$ we write $B_r(X) = \bigcup_{z\in X} B_r(z)$. We also define the open annulus
\eqb \label{eqn-annulus-def}
\BB A_{r_1,r_2}(z) := B_{r_2}(z) \setminus \ol{B_{r_1}(z)} ,\quad\forall 0 < r_r < r_2 < \infty .
\eqe 
\medskip
 
\noindent
For a region $A\subset\BB C$ with the topology of a Euclidean annulus, we write $D_h(\text{across $A$})$ for the $D_h$-distances between the inner and outer boundaries of $A$ and $D_h(\text{around $A$})$ for the infimum of the $D_h$-lengths of paths in $A$ which disconnect the inner and outer boundaries of $A$.  

\subsection{Weak LQG metrics}
\label{sec-weak-lqg-metric}

In this subsection, we will state the axiomatic definition of a weak LQG metric from~\cite{pfeffer-supercritical-lqg}. We first define the topology on the space of metrics that we will work with.

\begin{defn} \label{def-lsc} 
Let $X\subset \BB C$. 
A function $f : X \times X \rta \BB R \cup\{-\infty,+\infty\}$ is \emph{lower semicontinuous} if whenever $(z_n,w_n) \in X\times X$ with $(z_n,w_n) \rta (z,w)$, we have $f(z,w) \leq \liminf_{n\rta\infty} f(z_n,w_n)$. 
The \emph{topology on lower semicontinuous functions} is the topology whereby a sequence of such functions $\{f_n\}_{n\in\BB N}$ converges to another such function $f$ if and only if
\begin{enumerate}[(i)]
\item Whenever $(z_n,w_n) \in X\times X$ with $(z_n,w_n) \rta (z,w)$, we have $f(z,w) \leq \liminf_{n\rta\infty} f_n(z_n,w_n)$.
\item For each $(z,w)\in X\times X$, there exists a sequence $(z_n,w_n) \rta (z,w)$ such that $f_n(z_n,w_n) \rta f(z,w)$. 
\end{enumerate}
\end{defn}

It follows from~\cite[Lemma 1.5]{beer-usc} that the topology of Definition~\ref{def-lsc} is metrizable (see~\cite[Section 1.2]{dg-supercritical-lfpp}). 
Furthermore,~\cite[Theorem 1(a)]{beer-usc} shows that this metric can be taken to be separable. 

\begin{defn} \label{def-metric-properties}
Let $(X,d)$ be a metric space, with $d$ allowed to take on infinite values. 
\begin{itemize}
\item
For a curve $P : [a,b] \rta X$, the \emph{$d$-length} of $P$ is defined by 
\eqbn
\op{len}(P;d) :=  \sup_{T} \sum_{i=1}^{\# T} d(P(t_i) , P(t_{i-1})) 
\eqen
where the supremum is over all partitions $T : a= t_0 < \dots < t_{\# T} = b$ of $[a,b]$. Note that the $d$-length of a curve may be infinite.
\item
We say that $(X,d)$ is a \emph{length space} if for each $x,y\in X$ and each $\ep > 0$, there exists a curve of $d$-length at most $d(x,y) + \ep$ from $x$ to $y$. 
A curve from $x$ to $y$ of $d$-length \emph{exactly} $d(x,y)$ is called a \emph{geodesic}. 
\item
For $Y\subset X$, the \emph{internal metric of $d$ on $Y$} is defined by
\eqb \label{eqn-internal-def}
d(x,y ; Y)  := \inf_{P \subset Y} \op{len}\left(P ; d \right) ,\quad \forall x,y\in Y 
\eqe 
where the infimum is over all paths $P$ in $Y$ from $x$ to $y$. 
Note that $d(\cdot,\cdot ; Y)$ is a metric on $Y$, except that it is allowed to take infinite values.  
\item
If $X \subset \BB C$, we say that $d$ is a \emph{lower semicontinuous metric} if the function $(x,y) \rta d(x,y)$ is lower semicontinuous w.r.t.\ the Euclidean topology.  
We equip the set of lower semicontinuous metrics on $X$ with the topology on lower semicontinuous functions on $X \times X$, as in Definition~\ref{def-lsc}, and the associated Borel $\sigma$-algebra.
\end{itemize}
\end{defn}

The following is a re-statement of~\cite[Definition 1.6]{pfeffer-supercritical-lqg}.

\begin{defn}[Weak LQG metric]
\label{def-metric}
Let $\mcl D'$ be the space of distributions (generalized functions) on $\BB C$, equipped with the usual weak topology.   
For $\xi > 0$, a \emph{weak LQG metric with parameter $\xi$} is a measurable functions $h\mapsto D_h$ from $\mcl D'$ to the space of lower semicontinuous metrics on $\BB C$ with the following properties. Let $h$ be a \emph{GFF plus a continuous function} on $\BB C$: i.e., $h$ is a random distribution on $\BB C$ which can be coupled with a random continuous function $f$ in such a way that $h-f$ has the law of the whole-plane GFF. Then the associated metric $D_h$ satisfies the following axioms. 
\begin{enumerate}[I.]
\item \textbf{Length space.} Almost surely, $(\BB C,D_h)$ is a length space. \label{item-metric-length} 
\item \textbf{Locality.} Let $U\subset\BB C$ be a deterministic open set. 
The $D_h$-internal metric $D_h(\cdot,\cdot ; U)$ is a.s.\ given by a measurable function of $h|_U$.  \label{item-metric-local}
\item \textbf{Weyl scaling.} For a continuous function $f : \BB C \rta \BB R$, define
\eqb \label{eqn-metric-f}
(e^{\xi f} \cdot D_h) (z,w) := \inf_{P : z\rta w} \int_0^{\op{len}(P ; D_h)} e^{\xi f(P(t))} \,dt , \quad \forall z,w\in \BB C ,
\eqe 
where the infimum is over all $D_h$-continuous paths from $z$ to $w$ in $\BB C$ parametrized by $D_h$-length.
Then a.s.\ $ e^{\xi f} \cdot D_h = D_{h+f}$ for every continuous function $f: \BB C \rta \BB R$. \label{item-metric-f}
\item \textbf{Translation invariance.} For each deterministic point $z \in \BB C$, a.s.\ $D_{h(\cdot + z)} = D_h(\cdot+ z , \cdot+z)$.  \label{item-metric-translate}
\item \textbf{Tightness across scales.} Suppose that $h$ is a whole-plane GFF and let $\{h_r(z)\}_{r > 0, z\in\BB C}$ be its circle average process. 
There are constants $\{\frk c_r\}_{r>0}$ such that the following is true.
Let $A\subset \BB C$ be a deterministic Euclidean annulus.
In the notation defined at the end of Section~\ref{sec-notation}, the random variables
\eqbn
\frk c_r^{-1} e^{-\xi h_r(0)} D_h\left( \text{across $r A$} \right) \quad \text{and} \quad
\frk c_r^{-1} e^{-\xi h_r(0)} D_h\left( \text{around $r A$} \right)
\eqen
and the reciprocals of these random variables for $r>0$ are tight. Finally, there exists  
$\Lambda > 1$ such that for each $\delta \in (0,1)$,  \label{item-metric-coord} 
\eqb \label{eqn-scaling-constant}
\Lambda^{-1} \delta^\Lambda \leq \frac{\frk c_{\delta r}}{\frk c_r} \leq \Lambda \delta^{-\Lambda} ,\quad\forall r  > 0.
\eqe
\end{enumerate}
\end{defn}

The axioms of Definition~\ref{def-metric} are the same as the axioms which define a weak LQG metric in~\cite[Section 1.2]{lqg-metric-estimates}, with two exceptions: one works with lower semicontinuous metrics instead of continuous metrics, and the tightness across scales axiom (Axiom~\ref{item-metric-coord}) is formulated differently: we require tightness for re-scaled distances around and across Euclidean annuli, rather than requiring tightness of the re-scaled metrics themselves.

It was shown in~\cite{pfeffer-supercritical-lqg} that if $h$ is a GFF plus a continuous function and $D$ is a weak LQG metric, then a.s.\ the Euclidean metric is $D_h$-continuous (see Proposition~\ref{prop-holder} below for a quantitative version of this). In particular, a.s.\ every $D_h$-continuous path (e.g., a $D_h$-geodesic) is also Euclidean continuous. 

Axiom~\ref{item-metric-coord} allows us to get bounds for $D_h$-distances which are uniform across different Euclidean scales.
This axiom serves as a substitute for exact scale invariance (i.e., the LQG coordinate change formula), which is difficult to prove for subsequential limits of LFPP before we know that the subsequential limit is unique. See~\cite{lqg-metric-estimates,gm-uniqueness,pfeffer-supercritical-lqg} for further discussion of this point. 

The following theorem is proven as~\cite[Theorem 1.7]{pfeffer-supercritical-lqg}, building on the tightness result from~\cite{dg-supercritical-lfpp}.

\begin{thm}[\!\!\cite{pfeffer-supercritical-lqg}] \label{thm-lfpp-axioms}
Let $\xi > 0$. For every sequence of $\ep$'s tending to zero, there is a weak LQG metric $D$ with parameter $\xi$ and a subsequence $\{\ep_n\}_{n\in\BB N}$ for which the following is true. Let $h$ be a whole-plane GFF, or more generally a whole-plane GFF plus a bounded continuous function. Then the re-scaled LFPP metrics $\frk a_{\ep_n}^{-1} D_h^{\ep_n}$, as defined in~\eqref{eqn-gff-lfpp} and~\eqref{eqn-gff-constant}, converge in probability to $D_h$ w.r.t.\ the metric on lower semicontinuous functions on $\BB C\times \BB C$. 
\end{thm}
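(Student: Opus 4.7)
The plan is to combine two ingredients. The first is the tightness statement from~\cite{dg-supercritical-lfpp}, which asserts that the laws of $\frk a_\ep^{-1} D_h^\ep$, viewed as random elements of the space of lower semicontinuous functions on $\BB C\times\BB C$ under the topology of Definition~\ref{def-lsc}, form a tight family. Since this topology is Polish~\cite{beer-usc}, Prokhorov's theorem produces, for any given sequence $\ep\rta 0$, a subsequence $\ep_n\rta 0$ along which $\frk a_{\ep_n}^{-1} D_h^{\ep_n}$ converges in law to some random lower semicontinuous metric. To promote this to the assertion that the limit is measurable in $h$ and convergence holds in probability, I would invoke a Skorohod coupling together with a standard argument using the fact that $D_h^\ep$ is itself a measurable function of $h$, so that the subsequential limit may be taken to be a measurable functional $h\mapsto D_h$.

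The second ingredient is to verify that this limit satisfies each of Axioms~\ref{item-metric-length}--\ref{item-metric-coord}. Axioms~\ref{item-metric-f} (Weyl scaling) and~\ref{item-metric-translate} (translation invariance) are easy: at the LFPP level, adding a continuous $f$ to $h$ changes $D_h^\ep$ by a factor of $e^{\xi f_\ep^*}$ along paths, and since $f_\ep^*\rta f$ locally uniformly, both sides of Axiom~\ref{item-metric-f} pass to the LSC limit; translation invariance is immediate. For Axiom~\ref{item-metric-local} (locality), the key point is that $h_\ep^*(z)$ depends only on $h|_{B_{C\ep}(z)}$ for some $C>0$, so $D_h^\ep(\cdot,\cdot;U)$ is measurable with respect to $h|_{B_{C\ep}(U)}$. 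Passing to the limit and intersecting $\sigma$-algebras over $\ep$ then gives measurability with respect to $h|_U$; some care is needed because one must check that the internal metric is continuous under LSC convergence on fixed open sets, which can be done by bounding distances on $U$ using competing concatenations of short sub-paths.

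The remaining axioms are where the real work lies. For Axiom~\ref{item-metric-coord} (tightness across scales), I would define $\frk c_r$ via the LFPP scaling relation: writing $h(r\cdot) \eqD h + h_r(0) - h_r(0)$ and using that $(h(r\cdot) - h_r(0))$ is a whole-plane GFF normalized at scale $1$, one finds that $D_h^\ep(\text{across $rA$})$ has the same law as $r \cdot e^{\xi h_r(0)} D_{\wt h}^{\ep/r}(\text{across $A$})$ for a whole-plane GFF $\wt h$. Combined with~\eqref{eqn-Q-def} this identifies $\frk c_r$ (up to constants) with $r \frk a_{\ep/r}/\frk a_\ep$, and the around/across tightness in Axiom~\ref{item-metric-coord} is precisely the Russo-Seymour-Welsh type input of~\cite{dg-supercritical-lfpp}. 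The ratio bound~\eqref{eqn-scaling-constant} follows from~\eqref{eqn-Q-def} together with the monotonicity of $\frk a_\ep$.

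The main obstacle is Axiom~\ref{item-metric-length}, the length space property. Lower semicontinuous limits of length metrics need not themselves be length metrics, so I cannot simply argue by continuity. Instead, I would construct near-geodesics directly: for each pair $z,w$ with $D_h(z,w)<\infty$, pick $z_n,w_n\rta z,w$ with $\frk a_{\ep_n}^{-1} D_h^{\ep_n}(z_n,w_n) \rta D_h(z,w)$ (possible by Definition~\ref{def-lsc}(ii)), and take near-minimizing paths $P_n$ for the LFPP problem. Using the Hölder continuity estimates comparing $D_h^\ep$ to Euclidean distance (see Proposition~\ref{prop-holder}), the $P_n$ are equicontinuous as Euclidean curves on any compact set, so Arzel\`a--Ascoli extracts a Euclidean-continuous subsequential limit $P$. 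Lower semicontinuity of $D_h$-length under LSC convergence of metrics (a general fact, by taking partitions) then yields $\op{len}(P;D_h) \leq D_h(z,w) + o(1)$, which suffices for a length space. This length-space argument is the most delicate part and would need the uniform Euclidean regularity of LFPP geodesics established in~\cite{dg-supercritical-lfpp} as a black box.
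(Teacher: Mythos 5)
The paper does not prove this statement: Theorem~\ref{thm-lfpp-axioms} is cited directly from~\cite[Theorem~1.7]{pfeffer-supercritical-lqg}, with the accompanying remark that it builds on the tightness established in~\cite{dg-supercritical-lfpp}. Your proposal is therefore a from-scratch reconstruction rather than something I can line up against the text, but it is worth flagging one genuine gap in it.

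The step you pass over most quickly is in fact the crux. Prokhorov's theorem only gives subsequential convergence \emph{in law} of $\frk a_{\ep_n}^{-1} D_h^{\ep_n}$, whereas the theorem asserts convergence \emph{in probability} to a limit $D_h$ that is a measurable function of $h$. Saying that this follows from ``a Skorohod coupling together with a standard argument using the fact that $D_h^\ep$ is itself a measurable function of $h$'' does not close this: Skorohod representation only produces an a.s.\ convergent coupling of objects with the right \emph{laws} on a fresh probability space, and it gives no mechanism for upgrading a jointly subsequential limit $(h,D)$ to one in which $D$ is $\sigma(h)$-measurable. That the prelimiting objects are measurable functions of $h$ does not pass to the limit in any automatic way. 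The delicate argument needed here (carried out in~\cite{dddf-lfpp} in the subcritical case and adapted in~\cite{pfeffer-supercritical-lqg}/\cite{dg-supercritical-lfpp}) shows that the conditional law of the subsequential limit given $h$ is a.s.\ a point mass, using the locality of LFPP together with an independence-across-scales argument and some version of the GFF Markov property; only then does convergence in law upgrade to convergence in probability. This is one of the main contributions of the cited works and not a routine step.

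Two smaller points: your proposed definition of $\frk c_r$ via the ratio $r\frk a_{\ep/r}/\frk a_\ep$ is problematic, since~\eqref{eqn-Q-def} only gives $\frk a_\ep = \ep^{1-\xi Q+o_\ep(1)}$ and so this ratio need not converge as $\ep\to 0$ along the chosen subsequence; one instead fixes $\frk c_r$ directly in terms of the subsequential limit (for example as a median of $e^{-\xi h_r(0)}D_h(\text{across }rA)$), and then proves tightness by transferring the around/across bounds of~\cite{dg-supercritical-lfpp}. Your locality argument also implicitly uses that the internal metric $D_h(\cdot,\cdot;U)$ is the limit of the LFPP internal metrics on $U$, which is not a formal consequence of LSC convergence of the full metric and requires a separate argument. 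The length-space sketch via Arzel\`a--Ascoli and a partition/lower-semicontinuity bound is essentially sound, provided one uses the pre-limiting H\"older estimates for LFPP from~\cite{dg-supercritical-lfpp} rather than the post-limiting Proposition~\ref{prop-holder}, which you do acknowledge.
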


Theorem~\ref{thm-lfpp-axioms} implies in particular that for each $\xi > 0$, there exists a weak LQG metric with parameter $\xi$.

\begin{remark}
It was shown in~\cite{dg-uniqueness}, subsequently to this paper, that the axioms in Definition~\ref{def-metric} uniquely characterize $D_h$, up to multiplication by a deterministic positive constant. This implies that one has actual convergence (not just subsequential convergence) in Theorem~\ref{thm-lfpp-axioms} and that Axiom~\ref{item-metric-coord} can be improved to the LQG coordinate change formula for spatial scaling. Some of the results of this paper (in particular, those in Section~\ref{sec-hit-ball}) are used in~\cite{dg-uniqueness}. 
\end{remark}

\subsection{Results from prior work}
\label{sec-prior}

Throughout the rest of the paper, we fix $\xi >0$ and a weak LQG metric $D : h\mapsto D_h$ with parameter $\xi$. We will not make the dependence on the parameter $\xi$ or the particular choice of metric $D$ explicit in our estimates. 
We also let $h$ be a whole-plane GFF and we let $\{h_r(z) : r > 0, z\in\BB C\}$ be its circle average process (as in Axiom~\ref{item-metric-coord}). 
 
Many of the quantitative estimates in this paper involve a parameter $\BB r > 0$, which represents the ``Euclidean scale". 
The estimates are required to be uniform in the choice of $\BB r$. 
The reason for including $\BB r$ is the same as in other papers concerning weak LQG metrics, such as~\cite{lqg-metric-estimates,gm-confluence,gm-uniqueness,pfeffer-supercritical-lqg}: we only have tightness across scales (Axiom~\ref{item-metric-coord}), rather than exact scale invariance, so it is not possible to directly transfer estimates from one Euclidean scale to another. 

In this subsection, we state some previously known results for the GFF and the LQG metric (mostly from~\cite{pfeffer-supercritical-lqg}) which we will cite regularly. 
We start with the fact that $D_h$-geodesics exist~\cite[Proposition 1.12]{pfeffer-supercritical-lqg}, which is not immediate from the axioms since Axiom~\ref{item-metric-length} only shows that $D_h(z,w)$ is the infimum of the $D_h$-lengths of paths joining $z$ and $w$, not that a length-minimizing path exists.
 
\begin{lem}[\!\!\cite{pfeffer-supercritical-lqg}] \label{lem-geodesic-cont}
Almost surely, for any two non-singular points $z,w\in\BB C$, there exists an LQG geodesic $P$ joining $z$ and $w$.  
\end{lem}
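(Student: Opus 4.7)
The plan is a standard compactness argument: build a geodesic as a Euclidean-uniform limit of a sequence of near-geodesics, and verify that the limit attains the optimal $D_h$-length via lower semicontinuity of $D_h$. Work on the full-probability event that the axioms of Definition~\ref{def-metric} and the regularity estimates to be invoked all hold. Fix non-singular points $z, w \in \BB C$ with $D_h(z,w) < \infty$; the degenerate case $D_h(z,w) = \infty$ produces no finite-length path and is handled separately.

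By Axiom~\ref{item-metric-length}, choose paths $P_n$ from $z$ to $w$ with $L_n := \op{len}(P_n; D_h) \searrow D_h(z,w)$, each parametrized by $D_h$-arc length on $[0, L_n]$ and extended by $P_n(t) := w$ for $t > L_n$. Each $P_n$ is then a $1$-Lipschitz map from $I := [0, D_h(z,w)+1]$ into $(\BB C, D_h)$. To apply Arzelà--Ascoli in the Euclidean topology I need two ingredients: (i) a single Euclidean-bounded set containing $\bigcup_n P_n(I)$, and (ii) Euclidean-equicontinuity of the family $\{P_n\}$ on $I$. For (i), note $P_n(I) \subset \mcl B_{|I|}(z)$, and closed $D_h$-balls around non-singular points are Euclidean-bounded: this reduces via the standard observation $D_h(z, \bdy B_R(z)) \to \infty$ as $R \to \infty$ to a tightness-across-scales statement delivered by Axiom~\ref{item-metric-coord}, given that $z$ is non-singular.

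For (ii), since each $P_n$ is $D_h$-$1$-Lipschitz, it suffices to establish a modulus $\omega(\delta) \to 0$ such that $D_h(x,y) \leq \delta$ with $x, y$ in a fixed Euclidean-bounded set forces $|x-y| \leq \omega(\delta)$. This is the crux of the argument and the main obstacle in the supercritical case: unlike subcritically, $D_h$ and the Euclidean metric are not topologically equivalent, so this modulus does not come for free from any general topological principle. I would establish it by combining the H\"older-type moment estimates of~\cite{pfeffer-supercritical-lqg} (in particular those that show the Euclidean metric is $D_h$-continuous on the non-singular set) with the observation that near-geodesics joining non-singular points remain bounded away from singular points, hence lie in regions where $D_h$ is controlled by $|\cdot|$ up to a H\"older modulus.

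Granting (i) and (ii), Arzel\`a--Ascoli produces a Euclidean-uniform subsequential limit $P : I \to \BB C$ with $P(0) = z$; since $L_n \searrow D_h(z,w)$, the equicontinuous family also gives $P(D_h(z,w)) = w$. The lower semicontinuity of $D_h$ then upgrades to a length bound: for any partition $0 = t_0 < \dots < t_m$ of $[0, D_h(z,w)]$,
\[
\sum_{i=1}^m D_h(P(t_{i-1}), P(t_i)) \leq \liminf_n \sum_{i=1}^m D_h(P_n(t_{i-1}), P_n(t_i)) \leq \liminf_n L_n = D_h(z,w),
\]
and taking the supremum over partitions yields $\op{len}(P; D_h) \leq D_h(z,w)$, so $P$ is a geodesic. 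Since the estimates underlying (i) and (ii) are uniform on Euclidean-compact sets and do not depend on the pair $(z,w)$, a single full-probability event supports the argument for every pair of non-singular points simultaneously.
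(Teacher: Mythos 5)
The paper does not prove this lemma; it simply cites~\cite[Proposition 1.12]{pfeffer-supercritical-lqg}. Your high-level plan (take near-geodesics parametrized by $D_h$-arc length, extract a Euclidean-uniform subsequential limit via Arzel\`a--Ascoli, then use lower semicontinuity of $D_h$ to show the limit attains the infimal length) is the standard argument and, so far as the structure goes, is the right one.

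The substantive issue is in your step (ii). The modulus you correctly identify as needed --- $D_h(x,y) \le \delta$ on a fixed Euclidean-compact set forces $|x-y| \le \omega(\delta)$ --- is precisely Proposition~\ref{prop-holder} of the present paper (Pfeffer's Proposition 3.8): the identity $(\BB C, D_h) \to (\BB C, |\cdot|)$ is $\chi$-H\"older on Euclidean-compact sets, with no restriction to the non-singular set. You frame this as the crux and propose to obtain it by combining a H\"older estimate ``on the non-singular set'' with the claim that near-geodesics avoid singular points so that ``$D_h$ is controlled by $|\cdot|$ up to a H\"older modulus.'' That is the reverse direction and is not what you need. The direction that fails supercritically is ``small Euclidean distance $\Rightarrow$ small $D_h$-distance'' (singular points are Euclidean-dense, and the paper explicitly notes below Proposition~\ref{prop-holder} that the H\"older continuity goes in one direction only); the direction you need, ``small $D_h$-distance $\Rightarrow$ small Euclidean distance,'' is the one that survives and holds unconditionally on compacts. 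The detour through singular-point avoidance is unnecessary --- cite Proposition~\ref{prop-holder} directly and the equicontinuity step closes. As written, the paragraph reads as a direction confusion rather than a missing estimate, but if taken literally it would send you looking for an estimate that is not available.

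A smaller point: you defer the case $D_h(z,w) = \infty$ as ``handled separately'' without saying what happens there. For the lemma to assert existence of a geodesic, one needs $D_h(z,w) < \infty$ for all pairs of non-singular points simultaneously, and this is a fact that requires an argument (e.g., surround each of $z$ and $w$ by a loop of small $D_h$-length via Lemma~\ref{lem-separating-annuli} and join the loops across an annulus using Lemma~\ref{lem-set-tightness}). You should either supply that argument or cite it, rather than leave the degenerate case dangling.
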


We will frequently use without comment the following fact, which implies in particular that every $D_h$-bounded set is Euclidean bounded. See~\cite[Lemma 3.12]{pfeffer-supercritical-lqg} for a proof. 

\begin{lem}[\!\!\cite{pfeffer-supercritical-lqg}] \label{lem-bounded}
Almost surely, for every Euclidean-compact set $K\subset \BB C$, 
\eqbn
\lim_{R\rta\infty} D_h(K,\bdy B_R(0)) = \infty . 
\eqen
\end{lem}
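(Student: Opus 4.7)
Singular points are at infinite $D_h$-distance from all other points, so the conclusion is immediate when $K$ contains such a point; by covering $K$ with finitely many small Euclidean balls and using the triangle inequality, it therefore suffices to prove $D_h(z,\bdy B_R(0)) \to \infty$ for a single non-singular point $z \in \BB C$. The key geometric input is that any $D_h$-continuous path from $z$ to $\bdy B_R(0)$ is Euclidean-continuous (because the Euclidean metric is $D_h$-continuous; see the discussion after Definition~\ref{def-metric}) and so must cross each dyadic annulus $A_k := \BB A_{2^k,2^{k+1}}(0)$ for every integer $k$ with $2^k \geq |z|+1$ and $2^{k+1} \leq R$. Consequently,
\[
D_h\bigl(z,\bdy B_R(0)\bigr) \;\geq\; \sum_{k=k_0}^{\lfloor \log_2 R\rfloor - 1} D_h(\mathrm{across}\ A_k),
\]
for some $k_0 = k_0(|z|)$, and it suffices to prove $\sum_{k} D_h(\mathrm{across}\ A_k) = \infty$ almost surely.

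The divergence of this series is a Borel--Cantelli statement. Axiom~\ref{item-metric-coord} applied to the fixed annulus $\BB A_{1,2}(0)$ asserts that the family $\{\frk c_{2^k}^{-1} e^{-\xi h_{2^k}(0)} D_h(\mathrm{across}\ A_k)\}_k$ has tight reciprocals, yielding a deterministic $c > 0$ with
\[
\BB P\bigl(D_h(\mathrm{across}\ A_k) \geq c\,\frk c_{2^k} e^{\xi h_{2^k}(0)}\bigr) \;\geq\; 1/2, \quad \forall k \in \BB N.
\]
Since $h_{2^k}(0)$ is a Gaussian random walk with $N(0,\log 2)$ increments (by the standard circle-average property of the whole-plane GFF), the factor $e^{\xi h_{2^k}(0)}$ is large sufficiently often. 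Combined with the polynomial bounds on $\frk c_r$ from~\eqref{eqn-scaling-constant} and the Markov property of the GFF---which, by decomposing $h|_{B_{2^{k+2}}(0)}$ into a zero-boundary GFF plus its harmonic extension and then invoking Axioms~\ref{item-metric-local} and~\ref{item-metric-f}, shows that $D_h(\mathrm{across}\ A_k)$ is determined by data local to $A_k$---I would extract a rapidly growing subsequence of scales $\{k_j\}$ along which the events $\{D_h(\mathrm{across}\ A_{k_j}) \geq \delta\}$ for some deterministic $\delta > 0$ are asymptotically independent and occur with uniformly positive probability. The second Borel--Cantelli lemma (applied in a form that handles the mild dependence through the harmonic extensions) then gives that these events occur for infinitely many $j$ almost surely, and the desired divergence follows.

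\textbf{Main obstacle.} The principal difficulty is obtaining a uniform-in-$k$ positive lower bound on $\BB P\bigl(D_h(\mathrm{across}\ A_k) \geq \delta\bigr)$. Although $D_h(\mathrm{across}\ A_k) \gtrsim \frk c_{2^k} e^{\xi h_{2^k}(0)}$ with probability $\geq 1/2$, the log-normal factor can a priori decay: \eqref{eqn-scaling-constant} only controls $\log \frk c_{2^k}$ up to a term of size $O(k)$, while the typical magnitude of $\xi h_{2^k}(0)$ is only $O(\sqrt{k})$ by Gaussian scaling. I would overcome this by using Weyl scaling (Axiom~\ref{item-metric-f}) to absorb the deterministic decay of $\frk c_r$ into the addition of an explicit continuous function to $h$, and then conditioning on the Gaussian large-deviation event $\{h_{2^k}(0) \geq T_k\}$ for an appropriately chosen sequence $T_k$; on this event the scaling factor is bounded below uniformly, and the conditional law of $h$ on any fixed bounded region is mutually absolutely continuous with its unconditional law, so the tightness lower bound from Axiom~\ref{item-metric-coord} still applies.
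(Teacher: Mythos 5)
Your route --- bound $D_h(K,\bdy B_R(0))$ from below by a sum of $D_h$-distances across dyadic annuli, and show the sum diverges using tightness across scales plus near-independence of the GFF on disjoint concentric annuli --- is the right skeleton, and the near-independence machinery you allude to is exactly Lemma~\ref{lem-annulus-iterate} (or its increasing-radii variant, cited at the start of Lemma~\ref{lem-clsce-all} as \cite[Lemma~2.12]{gm-confluence}). However, the ``main obstacle'' you flag is not actually there, and the conditioning/Weyl-scaling remedy you propose to overcome it is both unnecessary and considerably more delicate than you suggest. You only invoke~\eqref{eqn-scaling-constant}, which indeed leaves $\log\frk c_{2^k}$ undetermined up to $O(k)$ of either sign; but the paper supplies the much sharper Proposition~\ref{prop-scaling-constants}, which gives $\frk c_r = r^{\xi Q + o_r(1)}$ as $r\to\infty$. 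Since $Q(\xi) > 0$ for every $\xi>0$, this means $\log\frk c_{2^k} = \xi Q k\log 2\,(1 + o(1)) \to +\infty$, an exponential growth rate that swamps the $O(\sqrt{k\log\log k})$ fluctuations of $\xi h_{2^k}(0)$. Hence $\frk c_{2^k} e^{\xi h_{2^k}(0)} \to \infty$ almost surely with no conditioning, and the increasing-radii variant of Lemma~\ref{lem-annulus-iterate} (with the uniform lower bound $\BB P[D_h(\text{across } A_k) \geq c\,\frk c_{2^k}e^{\xi h_{2^k}(0)}] \geq 1/2$ from Axiom~\ref{item-metric-coord}) immediately yields a.s.\ infinitely many $k$ where the crossing distance exceeds a quantity tending to infinity, so the sum diverges.

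Two smaller points. First, the remark that the conclusion is ``immediate'' when $K$ contains a singular point is misleading: $D_h(K,\bdy B_R(0)) = \inf_{z\in K,\,w\in\bdy B_R(0)} D_h(z,w)$, so a singular point $z_0\in K$ (for which $D_h(z_0,\cdot)\equiv\infty$) simply does not contribute to the infimum --- it neither helps nor hurts. Second, the triangle-inequality reduction to a single point does not quite go through as stated, since $D_h(z,z_i)$ has no a priori upper bound for $z$ near $z_i$ (the Euclidean metric is $D_h$-continuous, not vice versa). Fortunately you do not need that reduction at all: the annulus-crossing lower bound you write down already holds uniformly over starting points in $K$, since any $D_h$-rectifiable path from \emph{any} point of $K$ to $\bdy B_R(0)$ must cross every $A_k$ with $2^k > \sup_{z\in K}|z|$ and $2^{k+1}\leq R$. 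So the compactness of $K$ is used only to fix $k_0$, and the rest of the proof applies verbatim to the infimum over $K$.
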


It was shown in~\cite[Lemma 3.1]{pfeffer-supercritical-lqg} that one has the following stronger version of Axiom~\ref{item-metric-coord}. 

\begin{lem}[\!\!\cite{pfeffer-supercritical-lqg}] \label{lem-set-tightness}
Let $U\subset \BB C$ be open and let $K_1,K_2\subset U$ be two disjoint, deterministic compact sets (allowed to be singletons). The re-scaled internal distances $\frk c_r^{-1} e^{-\xi h_r(0)} D_h(r K_1,r K_2; r U)$ and their reciporicals are tight. 
\end{lem}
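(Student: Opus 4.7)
The plan is to derive both tightness and tightness of the reciprocal directly from Axiom~\ref{item-metric-coord}, which already applies to any deterministic Euclidean annulus, not just annuli centered at the origin.

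For the lower bound, I cover the compact set $K_1$ by finitely many deterministic balls $B_\epsilon(y_k)$, $k = 1, \ldots, M$, with $B_{2\epsilon}(y_k) \subset U$ and $\ol{B_{2\epsilon}(y_k)} \cap K_2 = \emptyset$ (possible since $K_1$ is compact and $K_1, K_2$ are disjoint compact subsets of $U$). Any path in $rU$ from $rK_1$ to $rK_2$ starts inside some $r B_\epsilon(y_k)$ and must exit $r B_{2\epsilon}(y_k)$ before reaching $rK_2$, so it crosses the annulus $\BB A_{r\epsilon, 2r\epsilon}(ry_k)$. Hence
\[
D_h(rK_1, rK_2; rU) \geq \min_{1 \leq k \leq M} D_h\bigl(\text{across } \BB A_{r\epsilon, 2r\epsilon}(ry_k)\bigr),
\]
and Axiom~\ref{item-metric-coord}, applied separately to each of the finitely many annuli $\BB A_{\epsilon, 2\epsilon}(y_k)$, yields a tight lower bound on the right-hand side in units of $\frk c_r e^{\xi h_r(0)}$ (a finite minimum of tight positive random variables with tight reciprocals is itself bounded below by a tight positive random variable).

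For the upper bound I may assume $K_1$ and $K_2$ lie in the same connected component of $U$, since otherwise $D_h(rK_1, rK_2; rU) = \infty$ and its reciprocal vanishes. I fix a piecewise smooth path $\eta$ in $U$ from some $x_1 \in K_1$ to some $x_2 \in K_2$, choose $\delta > 0$ so that the closed $2\delta$-neighborhood of $\eta$ is contained in $U$, and place points $x_1 = z_0, z_1, \ldots, z_N = x_2$ along $\eta$ with consecutive spacing less than $\delta/4$. For each $j$, Axiom~\ref{item-metric-coord} applied to the annulus $\BB A_{\delta/2, \delta}(z_j)$ supplies, up to a tight multiplicative factor of $\frk c_r e^{\xi h_r(0)}$, a loop $\ell_j \subset \BB A_{r\delta/2, r\delta}(rz_j)$ of tight $D_h$-length encircling $rz_j$. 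Concatenating the $\ell_j$'s along their intersection points with consecutive loops, together with short across-arcs in smaller annuli around $rx_1$ and $rx_2$ to link the first and last loops to $rK_1$ and $rK_2$, produces a path in $rU$ from $rK_1$ to $rK_2$ of $D_h$-length bounded above by a sum of $O(N)$ tight contributions, hence by a tight multiple of $\frk c_r e^{\xi h_r(0)}$.

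The main obstacle is the topological concatenation step, namely verifying that consecutive loops $\ell_j$ and $\ell_{j+1}$ must actually intersect. Since $|rz_j - rz_{j+1}| < r\delta/4$, the point $rz_{j+1}$ lies inside $B_{r\delta/2}(rz_j)$ and is therefore encircled by $\ell_j$, and symmetrically $rz_j$ is encircled by $\ell_{j+1}$; the delicate point is that an around-loop need not reach the outer boundary of its annulus, so nesting (rather than crossing) is a priori possible. The fix is to replace each $\ell_j$ by the union of the around-loop with a radial across-arc of $\BB A_{r\delta/2, r\delta}(rz_j)$ — both of tight $D_h$-length by Axiom~\ref{item-metric-coord} — so that the resulting spider-shaped curve has points both in the bounded and unbounded components of the complement of the neighbor's curve; the Jordan curve theorem then forces the required crossing, and routine bookkeeping shows that the sum of $O(N)$ tight terms is tight.
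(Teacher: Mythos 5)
The paper does not prove this lemma itself; it is cited directly from \cite{pfeffer-supercritical-lqg}, so your proposal is an independent reconstruction. Your lower-bound argument (cover $K_1$ by finitely many deterministic balls so that every path in $rU$ from $rK_1$ to $rK_2$ must cross one of finitely many annuli $\BB A_{r\epsilon,2r\epsilon}(ry_k)$, then invoke Axiom~\ref{item-metric-coord}) is correct.

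The upper bound is the right kind of chaining argument, but the concatenation step has a genuine gap. You correctly identify that bare around-loops can nest rather than cross, but the proposed fix --- appending an across-arc of the \emph{same} annulus $\BB A_{r\delta/2,\,r\delta}(rz_j)$ --- does not resolve it. With spacing $|z_j-z_{j+1}|<\delta/4$, the outer endpoint of such an arc lies on $\bdy B_{r\delta}(rz_j)$ and can be as close as $3r\delta/4$ to $rz_{j+1}$, which is still inside $B_{r\delta}(rz_{j+1})$, the ball containing the bounded complementary component $V_{j+1}$ of $\ell_{j+1}$. Concretely, take $z_j=0$, $z_{j+1}=\delta/4$, $\ell_{j+1}$ the circle of radius $0.99\delta$ about $z_{j+1}$, $\ell_j$ the circle of radius $0.51\delta$ about $z_j$, and the across-arc a segment from $\ell_j$ pointed toward $z_{j+1}$: the whole spider lies in $B_{0.76\delta}(z_{j+1})\subset V_{j+1}$, so no crossing is forced. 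The fix is to use an across-arc for a strictly larger annulus such as $\BB A_{r\delta/2,\,2r\delta}(rz_j)$: its outer endpoint then lies at distance at least $2r\delta-r\delta/4>r\delta$ from $rz_{j+1}$, hence outside $V_{j+1}$, while the arc still crosses $\ell_j$ (it joins $\bdy B_{r\delta/2}(rz_j)$ to $\bdy B_{2r\delta}(rz_j)$ and so passes through the annulus that $\ell_j$ separates), making the spider connected and forcing the intersection. Two further imprecisions: you say ``radial across-arc,'' but Axiom~\ref{item-metric-coord} controls only the $D_h$-distance between the two boundary circles, not the $D_h$-length of a chosen radial segment, so one must use a near-minimizing across-path whose direction cannot be controlled --- exactly why the larger annulus is needed; and the link of the first and last loops to $rK_1$ and $rK_2$ when these are singletons is not a single ``short across-arc'' but an infinite telescoping chain of shrinking annuli around the fixed point, whose summed contribution must be shown tight using $\frk c_\rho=\rho^{\xi Q+o_\rho(1)}$ together with Gaussian bounds on the circle averages at the deterministic point; this is glossed over.
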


The following proposition, which is~\cite[Proposition 1.8]{pfeffer-supercritical-lqg}, is a more quantitative version of Lemma~\ref{lem-set-tightness} in the case when $K_1,K_2$ are connected and are not singletons. It will be our most important estimate for $D_h$-distances. 

\begin{prop}[\!\!\cite{pfeffer-supercritical-lqg}] \label{prop-two-set-dist}
Let $U \subset \BB C$ be an open set (possibly all of $\BB C$) and let $K_1,K_2\subset U$ be connected, disjoint compact sets which are not singletons. 
Also let $\{\frk c_r\}_{r >0}$ be the scaling constants from Axiom~\ref{item-metric-coord}. 
For each $\BB r  >0$, it holds with superpolynomially high probability as $A\rta \infty$, at a rate which is uniform in the choice of $\BB r$, that 
\eqb \label{eqn-two-set-dist}
 A^{-1}\frk c_{\BB r} e^{\xi h_{\BB r}(0)} \leq D_h(\BB r K_1,\BB r K_2 ; \BB r U) \leq A \frk c_{\BB r} e^{\xi h_{\BB r}(0)} .  
\eqe
\end{prop}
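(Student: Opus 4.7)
The plan is to upgrade the tightness provided by Axiom~\ref{item-metric-coord} (and Lemma~\ref{lem-set-tightness}) to a superpolynomial probability bound by combining a chaining-of-annuli geometric argument with the approximate independence across scales of the whole-plane GFF. The first step is a purely topological reduction. Since $K_1$ and $K_2$ are connected compacta which are not singletons, each contains a Euclidean continuum of positive diameter, and openness of $U$ together with compactness provides a finite collection of Euclidean annuli of bounded modulus, entirely contained in $U$, whose union threads from $K_1$ to $K_2$. For the upper bound, concatenating geodesics around each annulus in this chain (scaled by $\BB r$) yields an explicit path from $\BB r K_1$ to $\BB r K_2$ in $\BB r U$ whose $D_h$-length is the sum of finitely many around-distances, each tight after normalization by $\frk c_{\BB r}\, e^{\xi h_{\BB r}(0)}$ by Axiom~\ref{item-metric-coord}. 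For the lower bound, any path between $\BB r K_1$ and $\BB r K_2$ in $\BB r U$ must cross each member of a family of pairwise disjoint concentric annuli separating them, and the $D_h$-length of the path is bounded below by the sum of the corresponding across-distances.

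The second, and most delicate, step is to upgrade from tightness to superpolynomial probability. I would place $n \asymp \log A$ pairwise disjoint annular shells between $K_1$ and $K_2$ inside $U$, at geometrically spaced Euclidean scales. By the domain Markov property of the whole-plane GFF, the restrictions of $h$ to these shells are independent up to a harmonic correction which can be controlled via Gaussian estimates. Applying Axiom~\ref{item-metric-coord} to each shell, the across-distance normalized by $\frk c_r e^{\xi h_r(\cdot)}$ at the appropriate scale $r$ is tight and, after conditioning, approximately independent across shells; the product of $n$ per-shell failure probabilities then decays faster than any polynomial in $A$. Gaussian concentration of the circle averages $h_r(z)$, whose variance grows like $\log(1/r)$, converts both the per-scale tightness defects and the large deviations of the harmonic correction into superpolynomial tails. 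Uniformity in $\BB r$ is ensured by the fact that $h(\BB r \cdot) - h_{\BB r}(0) \eqD h$, combined with the control $\Lambda^{-1}\delta^{\Lambda} \leq \frk c_{\delta r}/\frk c_r \leq \Lambda \delta^{-\Lambda}$ from Axiom~\ref{item-metric-coord} and Weyl scaling (Axiom~\ref{item-metric-f}), so that the normalized distances have essentially the same law at every scale.

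The main obstacle I expect is the quantitative management of the cross-scale Markov decomposition: one must show that the harmonic extension coming from conditioning on a boundary shell contributes only sub-leading fluctuations compared with the independent part of the field inside the shell, and that enough disjoint scales can be fitted geometrically between $K_1$ and $K_2$ inside $U$ so that the logarithmic-in-$A$ number of independent shells is realized. Once these ingredients are assembled, iterating the tightness of Axiom~\ref{item-metric-coord} across many weakly independent shells and combining with the stretched-exponential tails of the circle averages is exactly what compounds per-scale polynomial decay into superpolynomial decay, uniformly in $\BB r$.
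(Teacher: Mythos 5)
Proposition~\ref{prop-two-set-dist} is quoted from~\cite[Proposition 1.8]{pfeffer-supercritical-lqg}; the present paper does not re-prove it, so there is no in-paper argument to compare against. Evaluating your sketch on its own terms: you correctly identify the high-level ingredients --- annulus chaining, near-independence of the GFF on disjoint concentric annuli, Gaussian concentration of circle averages, and the iteration mechanism of Lemma~\ref{lem-annulus-iterate} --- but the assembly has two genuine gaps. The geometric one: you want $n \asymp \log A$ pairwise disjoint annular shells \emph{all separating $K_1$ from $K_2$} at geometrically spaced scales, but $K_1$, $K_2$, $U$ are fixed, so the radial gap available for nested separating shells is fixed; as $n$ grows, the shells are forced to have aspect ratio tending to $1$, and for such thin annuli the normalized across-distance degenerates, destroying the per-shell lower bounds exactly where you need them. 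The actual route is to first establish the superpolynomial tail for the across- and around-distance of a \emph{single} annulus of fixed aspect ratio (there one iterates over $\log A$ scales of the field within that annulus), and then pass to general $(K_1,K_2,U)$ by a finite, $A$-independent chain of such annuli and a union bound.

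The probabilistic gap: the claim that "the product of $n$ per-shell failure probabilities then decays faster than any polynomial in $A$" is false as stated. With a fixed per-shell failure probability $q$ and $n \asymp \log A$ approximately independent shells, the product is $q^{c\log A} = A^{c\log q}$ --- one particular polynomial rate, not a superpolynomial one. Superpolynomial decay is achieved by an extra quantifier: for each target exponent $p$ you must choose the per-scale "good" event so that its probability exceeds the threshold $p(a,b,\mu_1,\mu_2)$ of Lemma~\ref{lem-annulus-iterate} with $a > p$, and the fact that this threshold can be met for arbitrarily large $a$ relies on Axiom~\ref{item-metric-coord} (tightness across scales) letting the good-event probability be pushed arbitrarily close to $1$. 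The Gaussian tails of $h_{\ep r}(z) - h_r(0)$ and the estimate $\frk c_{\ep r}/\frk c_r = \ep^{\xi Q + o_\ep(1)}$ of Proposition~\ref{prop-scaling-constants} are indeed what reconcile the differing normalizations $\frk c_{r_i}e^{\xi h_{r_i}(z_i)}$ and $\frk c_{\BB r}e^{\xi h_{\BB r}(0)}$ across scales, but that is a separate bookkeeping step and not, by itself, the mechanism that upgrades polynomial to superpolynomial.
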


Recall that notation for $D_h$-distance across and around Euclidean annuli from Section~\ref{sec-notation}. 
We will most frequently use Proposition~\ref{prop-two-set-dist} to lower-bound $D_h(\text{across $\BB A_{a\BB r , b\BB r}(z)$})$ and upper-bound $D_h(\text{around $\BB A_{a\BB r , b\BB r}(z)$})$ where $b >a > 0$ are fixed. To do this, we first note that due to Axiom~\ref{item-metric-translate} we can assume without loss of generality that $z =0$. To lower-bound $D_h(\text{across $\BB A_{a\BB r , b\BB r}(z)$})$ we apply Proposition~\ref{prop-two-set-dist} with $K_1 = \bdy B_a(0)$, $K_2 = \bdy B_b(0)$, and $U=\BB C$. 
To upper-bound $D_h(\text{around $\BB A_{a\BB r , b\BB r}(z)$})$, we apply Proposition~\ref{prop-two-set-dist} twice, with the sets $K_1,K_2,U$ and $K_1',K_2',U'$ chosen so that the union of any path from $K_1$ to $K_2$ in $U$ and any path from $K_1'$ to $K_2'$ in $U'$ is contained in $\BB A_{a,b}(0)$ and disconnects the inner and outer boundaries of $\BB A_{a,b}(0)$. 

Axiom~\ref{item-metric-coord} only gives polynomial upper and lower bounds for the ratios of the scaling constants $\frk c_r$. The following proposition, which is~\cite[Proposition 1.9]{pfeffer-supercritical-lqg}, gives much more precise bounds for these scaling constants and relates them to LFPP. 

\begin{prop}[\!\!\cite{pfeffer-supercritical-lqg}] \label{prop-scaling-constants}
With $Q$ as in~\eqref{eqn-Q-def}, the scaling constants from Axiom~\ref{item-metric-coord} satisfy $\frk c_r = r^{\xi Q + o_r(1)}$ as $r\rta 0$ or $r\rta\infty$.
\end{prop}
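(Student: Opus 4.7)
The plan is to anchor the scaling constants $\frk c_r$ from Axiom~\ref{item-metric-coord} to the LFPP normalization $\frk a_\ep = \ep^{1 - \xi Q + o_\ep(1)}$ recorded in~\eqref{eqn-Q-def}, by combining the subsequential convergence in Theorem~\ref{thm-lfpp-axioms} with an exact scaling relation for LFPP before the limit.

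First I would record the LFPP scaling identity. For $r > 0$, define $\tilde h_r(z) := h(rz) - h_r(0)$; since the circle average of $h$ on $\bdy B_r(0)$ is $h_r(0)$ and $h(r\cdot)$ has the law of a whole-plane GFF modulo additive constant, the field $\tilde h_r$ is a whole-plane GFF in the standard normalization. A direct change of variables in the definition~\eqref{eqn-gff-lfpp} of LFPP yields
\[
D_h^\ep(rz, rw) \;=\; r \, e^{\xi h_r(0)} \, D_{\tilde h_r}^{\ep/r}(z, w), \qquad \forall z, w \in \BB C,\ \ep > 0.
\]
Applied to the distance across a fixed Euclidean annulus $A$ (between its inner and outer boundaries), this becomes
\[
\frk a_\ep^{-1} D_h^\ep(\text{across } rA) \;=\; r \, e^{\xi h_r(0)} \cdot \frac{\frk a_{\ep/r}}{\frk a_\ep} \cdot \frk a_{\ep/r}^{-1} D_{\tilde h_r}^{\ep/r}(\text{across } A).
\]

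Next, passing to the limit along a subsequence $\ep_n \to 0$ supplied by Theorem~\ref{thm-lfpp-axioms}, the left-hand side converges in probability to $D_h(\text{across } rA)$, while the last factor on the right converges in law to $D_{\tilde h_r}(\text{across } A)$, which is equal in law to $D_h(\text{across } A)$ because $\tilde h_r \eqD h$. By~\eqref{eqn-Q-def}, the deterministic ratio $\frk a_{\ep_n/r}/\frk a_{\ep_n}$ is $r^{\xi Q - 1 + o_r(1)}$ on the logarithmic scale, so we obtain that $r^{-\xi Q + o_r(1)} e^{-\xi h_r(0)} D_h(\text{across } rA)$ is, up to $r^{o_r(1)}$ multiplicative factors, equal in law to $D_h(\text{across } A)$, which is tight with a tight reciprocal by Proposition~\ref{prop-two-set-dist}. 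On the other hand, Axiom~\ref{item-metric-coord} asserts that $\frk c_r^{-1} e^{-\xi h_r(0)} D_h(\text{across } rA)$ and its reciprocal are tight uniformly in $r$. Matching these two tightness statements forces $\frk c_r \cdot r^{-\xi Q} = r^{o_r(1)}$ as $r \to 0$ and as $r \to \infty$, which is exactly the claim.

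The main obstacle is controlling the $o_\ep(1)$ error inside $\frk a_\ep = \ep^{1 - \xi Q + o_\ep(1)}$ when one forms the ratio $\frk a_{\ep_n/r}/\frk a_{\ep_n}$: this ratio need not converge to a nice value, and is only well-behaved on a logarithmic scale. To make the double limit in $\ep_n$ and $r$ rigorous, one must separate the roles of the two parameters---for example, by first using Axiom~\ref{item-metric-coord} together with Proposition~\ref{prop-two-set-dist} to argue that $\log \frk c_r / \log r$ stays in a bounded range, and then using the LFPP-anchored identity above to identify every subsequential limit of $\log \frk c_r / \log r$ (as $r \to 0$ or $r \to \infty$) with $\xi Q$.
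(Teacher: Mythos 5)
Your LFPP scaling identity is correct (it follows from the heat-kernel scaling $p_{\ep^2/2}(r\,\cdot) = r^{-2}p_{(\ep/r)^2/2}(\cdot)$ together with the recentering $\tilde h_r = h(r\cdot) - h_r(0)$), and anchoring $\frk c_r$ to the LFPP normalization $\frk a_\ep$ is the right strategy, since $\xi Q$ is only defined through $\frk a_\ep$ and so LFPP must enter somewhere. Note, however, that the paper simply cites~\cite{pfeffer-supercritical-lqg} here and gives no proof of its own, so there is no in-paper argument to compare against; I can only assess the proposal on its own terms.

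That said, there are genuine gaps. First, the claim that $\frk a_{\ep_n/r}^{-1} D_{\tilde h_r}^{\ep_n/r}(\text{across }A)$ converges in law to $D_{\tilde h_r}(\text{across }A)$ is not justified: $\{\ep_n/r\}$ need not lie along the subsequence for which LFPP converges, so only tightness of this family together with its reciprocal (from~\cite{dg-supercritical-lfpp}) is available; that is enough for a matching argument, but the convergence statement is too strong. Second, and more seriously, $\frk a_\ep = \ep^{1-\xi Q+o_\ep(1)}$ only controls $\log\frk a_{\ep_n/r}-\log\frk a_{\ep_n}$ up to an additive error of order $o(|\log\ep_n|)$, which for fixed $r$ need not remain bounded as $n\to\infty$, much less be $o(|\log r|)$. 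You flag this, but the concluding sketch (``identify every subsequential limit of $\log\frk c_r/\log r$ with $\xi Q$'') does not supply a mechanism forcing the error down to $o(|\log r|)$. One workable route is to evaluate at $r=\ep_n/\ep_m$ with $m$ fixed and $n\to\infty$, so the $\frk a$-ratio telescopes and the error collapses to $\psi(\ep_m)-\psi(\ep_n)$ with $\psi(\ep):=\log\frk a_\ep-(1-\xi Q)\log\ep$, and then interpolate to general $r$ via the polynomial control~\eqref{eqn-scaling-constant}; but the interpolation requires the subsequence $\{\ep_n\}$ to be logarithmically dense, which does not follow from Theorem~\ref{thm-lfpp-axioms} as stated. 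Finally, Theorem~\ref{thm-lfpp-axioms} only produces \emph{some} weak LQG metric as a subsequential limit; before the uniqueness result of~\cite{dg-uniqueness}, the argument pins down $\xi Q$ for the scaling constants of \emph{that} metric, not for the arbitrary weak LQG metric $D_h$ fixed in Section~\ref{sec-prior} for which the proposition is asserted, so an additional comparison (or a reformulation of the argument so it needs only LFPP tightness rather than subsequential convergence to $D_h$) is required.
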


We also have a H\"older continuity condition for the Euclidean metric w.r.t.\ $D_h$. See~\cite[Proposition 3.8]{pfeffer-supercritical-lqg}. 

\begin{prop}[\!\!\cite{pfeffer-supercritical-lqg}]  \label{prop-holder}
Let $\chi  \in (0, ( \xi(Q+2))^{-1}) $ and let $U\subset\BB C$ be a Euclidean-bounded open set. For each $\BB r  >0$, it holds with polynomially high probability as $\ep\rta 0$, at a rate which is uniform in $\BB r$, that
\eqb
|z-w| \leq D_h(z,w)^\chi,\quad  \forall z,w\in \BB r U   \quad \text{with} \quad |z-w| \leq \ep \BB r. 
\eqe 
In particular, the identity mapping from $(\BB C , D_h)$ to $\BB C$, equipped with the Euclidean metric, is $\chi$-H\"older continuous when restricted to any Euclidean-compact set. 
\end{prop}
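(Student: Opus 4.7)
The plan is to establish a quantitative lower bound of the form $D_h(z,w) \ge c_{\BB r,h}\, |z-w|^{1/\chi'}$, uniformly over close pairs $z,w \in \BB r U$, for some $\chi'$ with $\chi < \chi' < (\xi(Q+2))^{-1}$, with polynomially high probability in $\ep$. Given such a bound, since $|z-w| \le \ep \BB r$ and $1/\chi > 1/\chi'$, we can write $D_h(z,w)^\chi \ge c_{\BB r,h}^\chi |z-w|^{\chi/\chi'} = c_{\BB r,h}^\chi |z-w| \cdot |z-w|^{\chi/\chi'-1}$; the last factor blows up as $|z-w| \to 0$ and hence dominates $c_{\BB r,h}^{-\chi}$ once $\ep$ is small enough, yielding $|z-w| \le D_h(z,w)^\chi$.

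To produce the quantitative lower bound, fix $\chi < \chi' < (\xi(Q+2))^{-1}$ and choose $\alpha > 2$ with $\xi(Q+\alpha) < 1/\chi'$. Work at dyadic scales $r_k = 2^{-k}\BB r$ for $k \ge k_0 \approx \log_2 \ep^{-1}$, and at each scale cover $\BB r U$ by a grid $\mcl G_k$ of $O((\BB r/r_k)^2)$ points of mutual spacing at most $r_k/100$. For each $z_j \in \mcl G_k$, Proposition~\ref{prop-two-set-dist} (translated by $z_j$ via Axiom~\ref{item-metric-translate}) applied with $\BB r = r_k$, $K_1 = \bdy B_{1/10}(0)$, $K_2 = \bdy B_{1/5}(0)$, $U = \BB C$ gives, with superpolynomially high probability as $A \to \infty$,
\eqbn
D_h(\text{across } \BB A_{r_k/10, r_k/5}(z_j)) \ge A^{-1} \frk c_{r_k} e^{\xi h_{r_k}(z_j)} .
\eqen
Choosing $A = A_k = (\BB r/r_k)^{\delta}$ for arbitrarily small $\delta > 0$ makes the per-point failure probability decay faster than any polynomial in $\BB r/r_k$, so a union bound over $\mcl G_k$ and then over $k \ge k_0$ incurs only polynomial loss in $\ep$.

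Next, a Gaussian maximal inequality for the circle-average process, using that $h_{r_k}(z_j) - h_{\BB r}(0)$ is Gaussian with variance $\log(\BB r/r_k) + O(1)$ uniformly in $\BB r$ and $|\mcl G_k| = O((\BB r/r_k)^2)$, yields, with polynomial probability in $\ep$, the simultaneous bound
\eqbn
h_{r_k}(z_j) \ge h_{\BB r}(0) - \alpha \log(\BB r/r_k) \quad \forall k \ge k_0,\ z_j \in \mcl G_k,
\eqen
precisely because $\alpha^2/2 > 2$. Combined with $\frk c_{r_k} = r_k^{\xi Q + o_r(1)}$ from Proposition~\ref{prop-scaling-constants}, this yields $D_h(\text{across } \BB A_{r_k/10, r_k/5}(z_j)) \ge c_{\BB r, h}\, r_k^{\xi(Q+\alpha)+o(1)} \ge c_{\BB r,h}\, r_k^{1/\chi'}$, uniformly in $k,z_j$. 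For any $z,w \in \BB r U$ with $|z-w| \in [r_{k+1}, r_k]$, choosing $z_j \in \mcl G_k$ within $r_k/100$ of $z$ forces $z \in B_{r_k/10}(z_j)$ and $w \notin B_{r_k/5}(z_j)$, so every path from $z$ to $w$ crosses $\BB A_{r_k/10, r_k/5}(z_j)$, whence $D_h(z,w) \ge c_{\BB r,h} |z-w|^{1/\chi'}$ as desired.

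The main obstacle is the careful accounting of the two independent sources of union-bound loss: the $(\BB r/r_k)^2$ geometric loss from the grid and the Gaussian maximum penalty requiring $\alpha > 2$. These combine to yield the exponent $\xi(Q+2)$ rather than merely $\xi Q$, and it is essential that the loss from Proposition~\ref{prop-two-set-dist} itself contributes only a negligible sub-polynomial factor—otherwise the Hölder exponent would be strictly worse. A secondary technical point is verifying uniformity in $\BB r$ throughout the Gaussian tail estimates, which relies on the fact that the covariance structure of the whole-plane GFF circle averages depends only on ratios of scales.
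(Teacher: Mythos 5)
This statement is not proved in the paper; it is quoted from Pfeffer's work (Proposition 3.8 of [pfeffer-supercritical-lqg]), so there is no proof here to compare against. Your multi-scale scheme — dyadic grid, across-annulus lower bounds from Proposition~\ref{prop-two-set-dist}, a Gaussian maximum bound on circle averages with $\alpha>2$ to beat the union bound over $O((\BB r/r_k)^2)$ grid points, hence the exponent $\xi(Q+2)$ — is indeed the expected route, and you correctly trace the constraint $\chi<(\xi(Q+2))^{-1}$ to the requirement $2<\alpha<(1/\chi'-\xi Q)/\xi$.

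The gap is in the final reduction. Your quantitative bound is $D_h(z,w)\ge c_{\BB r,h}|z-w|^{1/\chi'}$ with $c_{\BB r,h}=\frk c_{\BB r}e^{\xi h_{\BB r}(0)}\BB r^{-1/\chi'}$ (up to a power of $\ep$ lost in $A_k$), and you dismiss the prefactor by saying $|z-w|^{\chi/\chi'-1}$ ``dominates $c_{\BB r,h}^{-\chi}$ once $\ep$ is small enough.'' But $c_{\BB r,h}$ contains the random factor $e^{\xi h_{\BB r}(0)}$, so ``$\ep$ small enough'' would have to depend on the realization of $h$ — which is incompatible with the claim that a deterministic $\ep$-threshold yields polynomially high probability. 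What is actually needed is that the event $\{c_{\BB r,h}\ge(\ep\BB r)^{1/\chi-1/\chi'}\}$ itself holds with polynomially high probability as $\ep\to 0$, uniformly in $\BB r$; unwinding this asks for $e^{\xi h_{\BB r}(0)}\gtrsim\ep^{1/\chi-1/\chi'}\BB r^{1/\chi-\xi Q+o(1)}$. You supply no estimate for this, and your justification of uniformity in $\BB r$ — ``the covariance structure of the whole-plane GFF circle averages depends only on ratios of scales'' — is precisely where this is hidden: only the \emph{increments} $h_{r_k}(z_j)-h_{\BB r}(0)$ have ratio-dependent variance; $\mathrm{Var}(h_{\BB r}(0))=|\log\BB r|$ itself is not scale-invariant because the whole-plane GFF is normalized at the absolute scale $1$. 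The cleaner fix is to avoid ever introducing $h_{\BB r}(0)$: choose the Gaussian threshold directly as $h_{r_k}(z_j)\ge -\beta\log(1/r_k)+O(1)$ with $\beta=(1/\chi'-\xi Q)/\xi>2$, so that the resulting lower bound $D_h(z,w)\ge r_k^{1/\chi'+o(1)}\ge |z-w|^{1/\chi}$ has a deterministic constant, and then verify that the per-point tail probability still sums against the $(\BB r/r_k)^2$-sized grid (this requires tracking the $\mathrm{Var}(h_{r_k}(z_j))\approx 2\log^+(|z_j|)+\log(1/r_k)$ dependence on the absolute position $z_j$, which is another place where the ``ratios of scales'' heuristic does not apply verbatim). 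As written, your argument does not close.
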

  
We note that in contrast to the subcritical case (see~\cite[Theorem 1.7]{gm-confluence}), the H\"older continuity in Proposition~\ref{prop-holder} only goes in one direction. 

Finally, we state an estimate which is a consequence of the fact that the restrictions of the GFF $h$ to disjoint concentric annuli are nearly independent. See~\cite[Lemma 3.1]{local-metrics} for a proof of a slightly more general result.

\begin{lem}[\!\!\cite{local-metrics}] \label{lem-annulus-iterate}
Fix $0 < \mu_1<\mu_2 < 1$. Let $\{r_k\}_{k\in\BB N}$ be a decreasing sequence of positive real numbers such that $r_{k+1} / r_k \leq \mu_1$ for each $k\in\BB N$ and let $\{E_{r_k} \}_{k\in\BB N}$ be events such that $E_{r_k} \in \sigma\left( (h-h_{r_k}(0)) |_{\BB A_{\mu_1 r_k , \mu_2 r_k}(0)  } \right)$ for each $k\in\BB N$ (here we use the notation for Euclidean annuli from Section~\ref{sec-notation}). 
For $K\in\BB N$, let $N(K)$ be the number of $k\in [1,K]_{\BB Z}$ for which $E_{r_k}$ occurs.  
For each $a > 0$ and each $b\in (0,1)$, there exists $p = p(a,b,\mu_1,\mu_2) \in (0,1)$ and $c = c(a,b,\mu_1,\mu_2) > 0$ such that if  
\eqb \label{eqn-annulus-iterate-prob}
\BB P\left[ E_{r_k}  \right] \geq p , \quad \forall k\in\BB N  ,
\eqe 
then 
\eqb \label{eqn-annulus-iterate}
\BB P\left[ N(K)  < b K\right] \leq c e^{-a K} ,\quad\forall K \in \BB N. 
\eqe  
\end{lem}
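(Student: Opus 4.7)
The plan is to reduce the estimate to a Chernoff bound for sums of independent Bernoulli random variables, using the near-independence of the whole-plane GFF restricted to widely separated concentric annuli. First I would thin the sequence: fix a large integer $M = M(\mu_1,\mu_2,a,b)$ to be chosen later, and partition the indices modulo $M$, writing $N(K) = \sum_{j=0}^{M-1} N_j(K)$ where $N_j(K)$ counts the occurrences of $E_{r_k}$ for $k \in [1,K]_{\BB Z}$ with $k \equiv j \pmod{M}$. Since $N(K) < bK$ forces $N_j(K) < b K_j$ for some $j$ (with $K_j \approx K/M$ the size of sub-sequence $j$), a union bound reduces the problem to proving the analogous estimate along each sub-sequence separately. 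Along any such sub-sequence, consecutive radii differ by a factor of at most $\mu_1^M$, so by choosing $M$ sufficiently large we can ensure that the enlarged annuli $A_k := \BB A_{r_k/2,\, 2 r_k}(0)$ are pairwise disjoint.

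Next I would apply the Markov decomposition of the whole-plane GFF: for each $k$ along the thinned sequence, write $h = \frk h_k + \mathring h_k$ on $A_k$, where $\mathring h_k$ is a zero-boundary GFF on $A_k$ independent of $h|_{\BB C \setminus A_k}$, and $\frk h_k$ is the harmonic extension of $h|_{\bdy A_k}$ into $A_k$. Since the $A_k$'s are disjoint, the fields $\{\mathring h_k\}$ are mutually independent and are jointly independent of $\{\frk h_k\}$. A standard interior harmonic-function estimate together with Gaussian tail bounds for the circle averages shows that, on an event $G_k$ of probability arbitrarily close to $1$, the centered harmonic function $\frk h_k - h_{r_k}(0)$ is bounded in sup-norm on the inner annulus $\BB A_{\mu_1 r_k, \mu_2 r_k}(0)$ by a deterministic constant $C_1 = C_1(\mu_1,\mu_2)$. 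Since $\bdy B_{r_k}(0) \subset A_k$, the event $E_{r_k}$ is measurable with respect to $h|_{A_k}$, hence with respect to the pair $(\frk h_k, \mathring h_k|_{A_k})$.

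The heart of the argument is a Cameron--Martin / Radon--Nikodym comparison. On $G_k$, I would extend $\frk h_k - h_{r_k}(0)$ (a bounded smooth function on $\BB A_{\mu_1 r_k, \mu_2 r_k}$) to a smooth function on $A_k$ with compact support and Dirichlet energy uniformly bounded by $C_0 = C_0(\mu_1,\mu_2)$. The Cameron--Martin theorem applied to $\mathring h_k$ then gives that, on $G_k$, the conditional law of $(h - h_{r_k}(0))|_{\BB A_{\mu_1 r_k, \mu_2 r_k}}$ given $\frk h_k$ is mutually absolutely continuous with the (scale-invariant) law of $\mathring h_k|_{\BB A_{\mu_1 r_k, \mu_2 r_k}}$, with Radon--Nikodym derivative bounded above and below by constants depending only on $\mu_1, \mu_2$. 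Consequently, on the good event $G := \bigcap_k G_k$, the events $\{E_{r_k}\}_k$ have conditional probabilities (given $\sigma(\frk h_j : j \in \BB N)$) at least $q$ for some $q = q(p,\mu_1,\mu_2)$ with $q \to 1$ as $p \to 1$, and they are conditionally independent (since the $\mathring h_k$'s remain mutually independent after conditioning on the $\frk h_j$'s). The Chernoff bound for independent Bernoullis of parameter $q$ then gives $\BB P[\{N_j(K) < b K_j\} \cap G] \leq e^{-I(q,b) K_j}$ with $I(q,b) \to \infty$ as $q \to 1$, and choosing $p$ large enough that $I(q,b) \geq aM$, then union-bounding over $j \in \{0,\ldots,M-1\}$ and over $G^c$, yields~\eqref{eqn-annulus-iterate}.

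The main obstacle is the quantitative Radon--Nikodym comparison: one must carefully track how the derivative depends on the harmonic data $\frk h_k$ and verify that the bound is uniform on a high-probability event chosen independently of $K$. A direct way to carry this out is to follow the argument given for the analogous statement in~\cite[Lemma 3.1]{local-metrics}, which establishes essentially this result in slightly greater generality.
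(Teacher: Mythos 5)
Your proposal correctly identifies the architecture used for this type of lemma — thin the sequence modulo $M$ to separate the annuli, decompose via the GFF Markov property, compare conditional laws by absolute continuity, and finish with a Chernoff bound — and this matches the approach of the reference~\cite{local-metrics} that the paper cites in place of giving its own proof. Two steps, however, are not correct as written.

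The more serious issue is the final ``union-bound over $G^c$.'' Your good events $G_k$ each have probability at least $1-\delta$ with $\delta$ small, but $\BB P\bigl[\bigcup_{k\leq K} G_k^c\bigr]$ is of order $K\delta$ rather than exponentially small in $K$, so discarding $G^c$ at the end destroys the target bound $c e^{-aK}$ for large $K$. One must instead keep the truncation inside the conditional argument: the $G_k$'s are $\sigma(\frk h_j : j)$-measurable, so conditionally on that $\sigma$-algebra the count $N_j(K)$ dominates a binomial on $|\{k \leq K_j : G_k\}|$ trials with success probability $q$, and then one needs a \emph{separate} exponential concentration estimate for $|\{k \leq K_j : G_k\}|$ (which holds because the harmonic parts $\frk h_k - h_{r_k}(0)$, restricted to the inner annuli, are themselves determined by well-separated pieces of the field, so the $G_k$'s inherit the same near-independence). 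Alternatively one can avoid good-event truncation altogether by iteratively bounding the Laplace transform $\BB E[e^{-\lambda N(K)}]$. A secondary inaccuracy is the claim that the Cameron--Martin Radon--Nikodym derivative is ``bounded above and below by constants'': for $\mathring h_k$ a zero-boundary GFF and $f$ of Dirichlet norm $\leq C_0$, the derivative $\exp\bigl((f,\mathring h_k)_\nabla - \tfrac12\|f\|_\nabla^2\bigr)$ is a.s.\ unbounded; what holds is that its positive and negative moments are bounded by constants depending on $C_0$, and the probability comparison proceeds via Cauchy--Schwarz ($\BB P_{\mu_1}[E^c] \leq e^{C_0^2/2}\sqrt{\BB P_{\mu_0}[E^c]}$), which still gives $q \to 1$ as $p \to 1$ but by a weaker mechanism. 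A symmetric moment bound on the reciprocal derivative is also needed to relate the hypothesis $\BB P[E_{r_k}] \geq p$ to the zero-boundary reference law.
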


\section{Estimates for the outer boundary of an LQG metric ball}
\label{sec-bdy-estimates}

We continue to assume that $\xi > 0$, $h$ is a whole-plane GFF, and $D_h$ is a weak LQG metric with parameter $\xi$.
In this section, we will prove a variety of estimates for $D_h$-distance which will eventually lead to proofs of Theorem~\ref{thm-net-dim} and the compactness and finite-dimensionality parts of Theorem~\ref{thm-outer-bdy}. We start out in Section~\ref{sec-bdy-prelim} by proving some basic facts about $D_h$ which are relatively straightforward consequences of existing results, e.g., the fact that $D_h$-metric balls are Euclidean closed and every filled $D_h$-metric ball contains a Euclidean ball with the same center point. 
In Section~\ref{sec-hit-ball}, we will prove a technical lemma which will be a key tool in our proofs: basically, it says that points on the boundary of a filled $D_h$-metric ball can be surrounded by paths with small $D_h$-lengths (Lemma~\ref{lem-hit-ball}). Using this lemma, in Section~\ref{sec-bdy-dist} we will prove a lower bound for the Euclidean distance between the boundaries of two filled metric balls with the same center point. Finally, in Section~\ref{sec-net-dim} we will prove Theorem~\ref{thm-net-dim} and part of Theorem~\ref{thm-outer-bdy}.

\subsection{Basic facts about the LQG metric}
\label{sec-bdy-prelim}

Before proving our main results for LQG metric ball boundaries, we will record some facts about $D_h$ which are easy consequences of the axioms from Definition~\ref{def-metric} and the estimates from Section~\ref{sec-prior}. For our first statement, we recall that $\bdy$ always denotes the boundary w.r.t.\ the \emph{Euclidean} topology.

\begin{lem} \label{lem-ball-closed}
Almost surely, for each $x \in \BB C$, each $y\in \BB C\cup \{\infty\}$, and each $s \in (0,D_h(x,y))$, the ordinary metric ball $\mcl B_s(x)$ and the filled metric ball $\mcl B_s^{y,\bullet}(x)$ are both Euclidean-closed and $\bdy\mcl B_s^{y,\bullet}(x) \subset  \mcl B_s(x)$.  
\end{lem}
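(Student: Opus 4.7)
The plan is to reduce all three claims to two ingredients: the lower semicontinuity of $D_h$ (which is part of Definition~\ref{def-metric}), and the fact that connected components of Euclidean-open subsets of $\BB C$ are themselves open (local connectedness of $\BB C$). No estimates from Section~\ref{sec-prior} should be needed; the statement is essentially topological, with the ``a.s.'' coming only from the fact that $D_h$ is a.s.\ a lower semicontinuous metric on $\BB C$.

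First I would handle the ordinary ball. By Definition~\ref{def-metric-properties}, $D_h$ is lower semicontinuous, so in particular the function $z \mapsto D_h(x,z)$ is lower semicontinuous for every fixed $x$. Hence its sublevel set $\mcl B_s(x) = \{z : D_h(x,z) \leq s\}$ is Euclidean-closed. This gives the first half of the lemma for all $x$ and $s$ simultaneously.

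Next I would turn to the filled ball. Assume $s < D_h(x,y)$. Since $\mcl B_s(x)$ is Euclidean-closed, $\BB C \setminus \mcl B_s(x)$ is Euclidean-open, so each of its connected components is open. The point $y$ (or the ``point at infinity'' in the case $y = \infty$, which we treat via the unbounded component) lies in exactly one such component $U_y$. By Definition~\ref{def-filled}, the complement $\BB C \setminus \mcl B_s^{y,\bullet}(x)$ is precisely $U_y$: the points the ball disconnects from $y$ are by definition those in the components other than $U_y$. Since $U_y$ is open, $\mcl B_s^{y,\bullet}(x)$ is Euclidean-closed.

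Finally, for the inclusion $\bdy \mcl B_s^{y,\bullet}(x) \subset \mcl B_s(x)$, suppose $z \in \mcl B_s^{y,\bullet}(x) \setminus \mcl B_s(x)$. Then $z$ lies in some connected component $V$ of $\BB C \setminus \mcl B_s(x)$ with $V \neq U_y$. The component $V$ is open and entirely contained in $\mcl B_s^{y,\bullet}(x)$ (again by the definition of the filled ball), so $z$ is an interior point of $\mcl B_s^{y,\bullet}(x)$ and therefore not in $\bdy \mcl B_s^{y,\bullet}(x)$. Contrapositively, every boundary point of $\mcl B_s^{y,\bullet}(x)$ must lie in $\mcl B_s(x)$. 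I do not expect any real obstacle here; the only minor point requiring care is the $y = \infty$ case, which is handled uniformly by working on $\BB C \cup \{\infty\} \cong S^2$ and letting $U_y$ be the unbounded component of $\BB C \setminus \mcl B_s(x)$.
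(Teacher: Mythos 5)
Your proof is correct and takes essentially the same approach as the paper: lower semicontinuity of $z\mapsto D_h(x,z)$ gives closedness of $\mcl B_s(x)$, openness of connected components of the open complement gives closedness of $\mcl B_s^{y,\bullet}(x)$, and the boundary inclusion follows by observing that the boundary of a complementary component of a closed set lies in that closed set. Your phrasing of the last step (showing points of $\mcl B_s^{y,\bullet}(x)\setminus\mcl B_s(x)$ are interior) is a minor variant of the paper's more direct statement, but the content is identical.
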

\begin{proof}
The function $z\mapsto D_h(x,z)$ is lower semicontinuous, so if $z_n$ is a sequence of points in $\mcl B_s(x)$ with $|z_n-z| \rta 0$, then $D_h(x,z) \leq \liminf_{n\rta\infty} D_h(x,z_n) \leq s $, so $z\in \mcl B_s(x)$. 
Hence $\mcl B_s(x)$ is Euclidean-closed. 
Consequently, each connected component of $\BB C\setminus \mcl B_s (x)$ is Euclidean-open.  In particular, the connected component of $\BB C\setminus \mcl B_s(x)$ containing $y$, namely $\BB C\setminus  \mcl B_s^{y,\bullet}(x)$, is Euclidean-open, so $\mcl B_s^{y,\bullet}(x)$ is Euclidean-closed. 
Since $\mcl B_s(x)$ is Euclidean-closed, it contains the boundary of each of its complementary connected components. In particular,  $\bdy\mcl B_s^{y,\bullet}(x) \subset   \mcl B_s(x)$.  
\end{proof}

Our next several lemmas are based on the following straightforward consequence of Lemma~\ref{lem-annulus-iterate}, see~\cite[Proposition 1.13]{pfeffer-supercritical-lqg} for a proof.

\begin{lem}[\!\!\cite{pfeffer-supercritical-lqg}] \label{lem-separating-annuli}
Almost surely, for each non-singular point $z\in\BB C$ there is a sequence of disjoint $D_h$-continuous loops $\{\pi_n\}_{n\in\BB N}$, each of which separates a neighborhood of $z$ from $\infty$, 
such that the Euclidean radius of $\pi_n$, the $D_h$-length of $\pi_n$, and the $D_h$-distance from $z$ to $\pi_n$ each tend to zero as $n\rta\infty$.
\end{lem}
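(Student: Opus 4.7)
My plan is to construct the loops $\pi_n$ at a geometric sequence of Euclidean scales shrinking to zero via a multi-scale Borel--Cantelli argument, and then verify the three size conditions using the non-singularity of $z$. First, for each $r>0$, Proposition~\ref{prop-two-set-dist} applied to the annulus $\BB A_{r/2, 3r/4}(0)$ (in the manner indicated just after its statement) shows that $\frk c_r^{-1}e^{-\xi h_r(0)} D_h(\text{around } \BB A_{r/2, 3r/4}(0))$ is tight; hence for any $A>0$ the event $G_r^A$ that some loop around $0$ in $\BB A_{r/2, 3r/4}(0)$ of $D_h$-length at most $A\frk c_r e^{\xi h_r(0)}$ exists has probability tending to $1$ as $A\to\infty$, uniformly in $r$. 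By Axiom~\ref{item-metric-local}, after subtracting the circle average, $G_r^A$ is essentially measurable with respect to $(h-h_r(0))|_{\BB A_{r/4, r}(0)}$, so applying Lemma~\ref{lem-annulus-iterate} to the dyadic scales $r_k = 2^{-k}$ shows that almost surely $G_{r_k}^A$ holds for a positive density of indices $k$. I would then thin to a subsequence on which the enclosing annuli are pairwise disjoint.

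To upgrade to all non-singular $z \in \BB C$ simultaneously, translation invariance (Axiom~\ref{item-metric-translate}) propagates the construction above to every $z_0$ in a countable dense set such as $\BB Q^2$, and for an arbitrary non-singular $z$ I would approximate $z$ by $z_0 \in \BB Q^2$ with $|z - z_0| \ll r_k$ so that a loop around $z_0$ in $\BB A_{r_k/2, 3r_k/4}(z_0)$ is automatically a loop around $z$ at a comparable scale. For the $D_h$-distance bound, Lemma~\ref{lem-geodesic-cont} gives a $D_h$-geodesic $P$ from $z$ to some $w$ with $D_h(z,w)<\infty$, and Proposition~\ref{prop-holder} makes $P$ Euclidean-continuous with $P(0)=z$; the first hitting time $t_r$ of $\bdy B_{r/2}(z)$ by $P$ therefore tends to $0$ as $r\to 0$ and satisfies $D_h(z, P(t_r))\le t_r$. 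Since any loop around $z$ contained in $\BB A_{r/2, 3r/4}(z)$ separates $z$ from $P(t_r)$, the Euclidean-continuous curve $P|_{[0, t_r]}$ must cross such a loop, yielding $D_h(z, \pi_n) \to 0$.

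The main obstacle is controlling the $D_h$-length. The bound $A\frk c_{r_k} e^{\xi h_{r_k}(z)}$ only tends to zero when the local thickness $\limsup_{r\to 0} h_r(z)/\log(1/r)$ at $z$ is strictly smaller than $Q$, since by Proposition~\ref{prop-scaling-constants} one has $\frk c_r = r^{\xi Q + o(1)}$. For generic non-singular $z$ this is fine, in particular for every deterministic $z$ by Gaussian tail bounds on the circle-average process, but the statement is required uniformly over every non-singular point $z\in\BB C$. To handle this, I would invoke the characterization of singular points as $\alpha$-thick points for $\alpha > Q$ alluded to in Section~\ref{sec-overview}: for $z$ non-singular one should be able to extract a subsequence $r_{k_n} \to 0$ along which $h_{r_{k_n}}(z)$ stays a definite amount below $Q\log(1/r_{k_n})$, and combining this with the positive-density statement from Lemma~\ref{lem-annulus-iterate} gives a common subsequence on which all three required conditions hold. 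Making this last step precise is the subtlest ingredient of the proof, and would likely require a separate deterministic argument relating non-singularity to the decay of $\frk c_r e^{\xi h_r(z)}$ along subsequences.
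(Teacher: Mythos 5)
The scaffolding you propose---multi-scale application of Lemma~\ref{lem-annulus-iterate}, a countable dense set of centers, and the Euclidean-continuous geodesic from $z$ to a fixed $w$ to control $D_h(z,\pi_n)$ and the Euclidean radius---is the right shape of argument. You also correctly identify the crux of the difficulty: the length bound you obtain is $A\,\frk c_{r_k} e^{\xi h_{r_k}(z)}$, and there is no guarantee this tends to zero for every non-singular $z$ simultaneously, since non-singularity gives only very weak control on $\limsup_{r\to 0} h_r(z)/\log r^{-1}$.

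Where the proposal goes wrong is in the attempted fix. Trying to extract a subsequence along which $h_{r}(z)$ sits ``a definite amount below $Q\log(1/r)$'' is both unnecessary and genuinely hard to make rigorous uniformly over all non-singular $z$ (the borderline case $\alpha=Q$ is delicate). The correct device is to make the event \emph{scale-relative} rather than \emph{absolute}: replace your event $G_r^A$ by the comparison event
\[
E_c(\BB A) = \left\{ D_h\!\left(\text{around } \BB A\right) \le (1/c)\, D_h\!\left(\text{across } \BB A\right) \right\},
\]
exactly as in~\eqref{eqn-annulus-event} and Lemma~\ref{lem-annulus-cover} of this paper. By Lemma~\ref{lem-set-tightness}/Proposition~\ref{prop-two-set-dist} this event has uniformly high probability, it is measurable w.r.t.\ $(h - h_r(z_0))$ restricted to the annulus, and Lemma~\ref{lem-annulus-iterate} plus a union bound over a mesh gives a positive fraction of ``good'' dyadic scales around each mesh point. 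The point is that the quantity $\frk c_r e^{\xi h_r(z)}$ never appears: one instead observes that the across-distances of the disjoint concentric good annuli around a nearby $z_0$ must sum to at most $D_h(z,w)<\infty$ (any path from $z$ to $w$ crosses all of them), so among $\gtrsim \log\ep^{-1}$ good annuli at scales between $\ep$ and $\ep^{1/2}$ at least one has across-distance $O(1/\log\ep^{-1})$, hence around-distance $O(1/\log\ep^{-1})\to 0$. This is precisely how non-singularity enters---through the finiteness of $D_h(z,w)$, not through any pointwise thickness estimate---and it is the same mechanism deployed in Lemma~\ref{lem-hit-ball} via the deterministic Lemma~\ref{lem-seq-sum-count}. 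With this modification the remaining steps you sketch (choosing $z_0$ at scale $\approx r_k$, using the geodesic for the $D_h(z,\pi_n)$ bound, thinning to disjoint loops) go through. Note also that the paper does not prove this lemma itself; it is cited directly from Pfeffer's Proposition~1.13, so there is no in-paper argument to compare against, but the ``around vs.\ across'' event is exactly the tool the present paper reuses for the closely related Lemma~\ref{lem-hit-ball}.
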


Since the set of singular points is a.s.\ Euclidean-dense, a.s.\ every $D_h$-metric ball has empty Euclidean interior. In contrast, the following lemma tells us that a filled $D_h$-metric ball a.s.\ contains a Euclidean ball with the same center. 

\begin{lem} \label{lem-ball-contain-all}
Almost surely, for each non-singular point $x\in\BB C$, each $y\in \BB C\cup\{\infty\}$, and each $s\in (0,D_h(x,y))$, the filled $D_h$-metric ball $\mcl B_s^{y,\bullet}(x)$ contains a Euclidean ball centered at $x$ with positive radius. 
\end{lem}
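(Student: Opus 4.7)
The plan is to use Lemma~\ref{lem-separating-annuli} to produce a small loop around $x$ that lies inside $\mcl B_s(x)$, and then to argue that the bounded Euclidean complementary component of this loop sits inside $\mcl B_s^{y,\bullet}(x)$ and contains a Euclidean ball around $x$.

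First I would condition on the a.s.\ event provided by Lemma~\ref{lem-separating-annuli}, so that for every non-singular $x\in\BB C$ we have a sequence of disjoint $D_h$-continuous loops $\{\pi_n\}_{n\in\BB N}$, each separating a neighborhood of $x$ from $\infty$, whose Euclidean radii, $D_h$-lengths, and $D_h$-distances from $x$ all tend to zero. Because this sequence exists simultaneously for all non-singular $x$ on a single event of full measure, the rest of the argument can be carried out pointwise in the triple $(x,y,s)$.

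Next, given $x$, $y\in\BB C\cup\{\infty\}$, and $s\in(0,D_h(x,y))$, I would choose $n$ large enough that two conditions hold at once: (i) $D_h(x,\pi_n)+\op{len}(\pi_n;D_h)<s$, and (ii) $\pi_n$, together with the bounded component $V_n$ of $\BB C\setminus\pi_n$, lies in a small Euclidean disk centered at $x$ that excludes $y$. Condition (ii) is automatic when $y=\infty$ since $V_n$ is bounded; when $y\in\BB C$ it is ensured by taking the Euclidean radius of $\pi_n$ much smaller than $|x-y|$. Condition (i) together with the triangle inequality along $\pi_n$ gives $D_h(x,z)\le D_h(x,\pi_n)+\op{len}(\pi_n;D_h)<s$ for every $z\in\pi_n$, so $\pi_n\subset\mcl B_s(x)$.

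To conclude, I would observe that $V_n$ is Euclidean-open (as a connected component of the open set $\BB C\setminus\pi_n$) and contains a Euclidean neighborhood of $x$, since $\pi_n$ separates a neighborhood of $x$ from $\infty$ and $V_n$ is the bounded complementary component. By (ii), $y$ lies in the unbounded complementary component of $\BB C\setminus\pi_n$, so any continuous path from a point of $V_n$ to $y$ must cross $\pi_n\subset\mcl B_s(x)$; this says that $\mcl B_s(x)$ disconnects every point of $V_n$ from $y$, so $V_n\subset\mcl B_s^{y,\bullet}(x)$ by Definition~\ref{def-filled}. Since $V_n$ contains a Euclidean ball around $x$, so does $\mcl B_s^{y,\bullet}(x)$. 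I do not expect a real obstacle here: the only probabilistic input is already packaged in Lemma~\ref{lem-separating-annuli}, and what remains is purely topological bookkeeping around the loop $\pi_n$.
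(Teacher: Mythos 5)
Your proof is correct and follows essentially the same strategy as the paper: both use Lemma~\ref{lem-separating-annuli} to produce a small loop around $x$ lying inside $\mcl B_s(x)$ and disconnecting a Euclidean neighborhood of $x$ from $y$. The only difference is a minor bookkeeping one: you bound $D_h(x,z)$ for $z\in\pi_n$ directly via $D_h(x,\pi_n)+\op{len}(\pi_n;D_h)$, whereas the paper instead tracks when a $D_h$-geodesic from $x$ to $y$ first hits $\pi_n$.
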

\begin{proof}
Let $\{\pi_n\}_{n\in\BB N}$ be a sequence of loops surrounding $x$ as in Lemma~\ref{lem-separating-annuli}.  
Let $P$ be a $D_h$-geodesic from $x$ to $y$. 
The Euclidean radii and the $D_h$-lengths of the $\pi_n$'s shrink to zero as $n\rta\infty$ and $P$ is Euclidean continuous.
Hence a.s.\ for each sufficiently large $n\in\BB N$, the loop $\pi_n$ disconnects $x$ from $y$, the $D_h$-length of $\pi_n$ is less than $s / 2$, and $P$ hits $\pi_n$ before time $s/2$. This shows that $\pi_n$ is contained in $\mcl B_s(x)$, so $\pi_n \subset \mcl B_s^{y,\bullet}(x)$. Since $\pi_n$ disconnects a Euclidean ball of positive radius centered at $x$ from $y$, this gives the lemma statement. 
\end{proof} 

For our next lemma, we recall that $\bdy$ always denotes the boundary w.r.t.\ the \emph{Euclidean} topology. 

\begin{lem} \label{lem-outer-bdy-dist}
Almost surely, for each non-singular point $x \in \BB C$, each $y\in \BB C\cup \{\infty\}$, and each $s \in (0,D_h(x,y))$,  
\eqb \label{eqn-outer-bdy-dist}
D_h(x,z) = s ,\quad \forall z \in \bdy\mcl B_s^{y,\bullet}(x) .
\eqe 
Furthermore, the Euclidean boundary $\bdy\mcl B_s^{y,\bullet}(x)$ is equal to the $D_h$-boundary of $\mcl B_s^{y,\bullet}(x)$.
\end{lem}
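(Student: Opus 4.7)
The plan is to use the small-loops lemma (Lemma~\ref{lem-separating-annuli}) in two similar contradiction arguments, one for each assertion. Throughout I restrict to the almost sure event on which Lemma~\ref{lem-separating-annuli} produces separating loops at every non-singular point simultaneously, and I fix $x,y,s$ as in the statement. If $z \in \bdy \mcl B_s^{y,\bullet}(x)$, then Lemma~\ref{lem-ball-closed} gives $z \in \mcl B_s(x)$, so $D_h(x,z) \leq s < \infty$; this makes $z$ non-singular (hence eligible for Lemma~\ref{lem-separating-annuli}) and also forces $z \neq y$ since $s < D_h(x,y)$.

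For the identity $D_h(x,z) = s$, the inequality $D_h(x,z) \leq s$ is immediate from $z \in \mcl B_s(x)$, and I would prove the reverse by contradiction. Assume $D_h(x,z) < s$ and set $\eta := (s - D_h(x,z))/4$. By Lemma~\ref{lem-separating-annuli}, choose a $D_h$-continuous loop $\pi$ around $z$ with $\op{len}(\pi;D_h) < \eta$, $D_h(z,\pi) < \eta$, and Euclidean radius small enough that $\pi$ separates $z$ from $y$ in $\BB C$ (possible because $\pi$ separates $z$ from $\infty$ and $y \neq z$). For each $w \in \pi$, approaching $\pi$ from $z$ and then traveling at most halfway around $\pi$ gives $D_h(z,w) \leq D_h(z,\pi) + \op{len}(\pi;D_h)/2 < 2\eta$, so $D_h(x,w) < D_h(x,z) + 2\eta < s$ and hence $\pi \subset \mcl B_s(x)$. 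On the other hand, $z \in \bdy\mcl B_s^{y,\bullet}(x)$ produces a Euclidean sequence $z_n \to z$ with each $z_n$ in the $y$-component of $\BB C \setminus \mcl B_s(x)$, joined to $y$ by a continuous path $\gamma_n \subset \BB C \setminus \mcl B_s(x)$. For large $n$ the loop $\pi$ Euclideanly separates $z_n$ from $y$, so $\gamma_n$ must meet $\pi$; but any such intersection lies in $\mcl B_s(x) \cap (\BB C \setminus \mcl B_s(x)) = \emptyset$, the desired contradiction.

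For the claim that the Euclidean and $D_h$-boundaries of $\mcl B_s^{y,\bullet}(x)$ coincide, the inclusion of the $D_h$-boundary in the Euclidean boundary was already noted in Section~\ref{sec-results}. For the reverse, fix $z \in \bdy\mcl B_s^{y,\bullet}(x)$ and $\delta > 0$; I would produce $w \in \BB C \setminus \mcl B_s^{y,\bullet}(x)$ with $D_h(z,w) < \delta$. Choose $\pi$ around $z$ as before but with $D_h(z,\pi) + \op{len}(\pi;D_h)/2 < \delta$ and still small Euclidean radius. If every point of $\pi$ were in $\mcl B_s^{y,\bullet}(x)$, then exactly the same argument --- a Euclidean sequence $z_n \to z$ from outside $\mcl B_s^{y,\bullet}(x)$, connecting paths $\gamma_n \subset \BB C \setminus \mcl B_s(x)$ to $y$, and a forced crossing with $\pi$ --- would yield a crossing point that lies simultaneously in $\pi \subset \mcl B_s^{y,\bullet}(x)$ and in the $y$-component of $\BB C \setminus \mcl B_s(x)$, contradicting the definition of $\mcl B_s^{y,\bullet}(x)$. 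Hence some $w \in \pi$ lies outside $\mcl B_s^{y,\bullet}(x)$, and $D_h(z,w) < \delta$ by the choice of $\pi$.

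The only delicate step is the planar separation, since the loops furnished by Lemma~\ref{lem-separating-annuli} are merely $D_h$-continuous and need not be Jordan curves. No Jordan property is needed, however: the argument only relies on the compact image of $\pi$ having a bounded Euclidean component containing $z$ and an unbounded one containing $y$, and this follows from the ``separates a neighborhood of $z$ from $\infty$'' property combined with choosing the Euclidean radius of $\pi$ smaller than $|z-y|$ (any radius suffices if $y = \infty$).
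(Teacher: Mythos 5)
Your proof is correct and follows essentially the same strategy as the paper: use Lemma~\ref{lem-separating-annuli} to produce small separating loops around $z$, and then derive a contradiction by comparing the loop's small $D_h$-size against the separating structure of $\BB C\setminus\mcl B_s(x)$. Where the paper phrases the second part as ``$\pi_n$ must meet $\BB C\setminus\mcl B_s^{y,\bullet}(x)$ because that set is connected and is separated by $\pi_n$,'' you unpack it via an explicit sequence $z_n\to z$ from the complement and paths $\gamma_n$ to $y$ forced to cross $\pi$; these are the same argument at different levels of detail, and both are valid.
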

\begin{proof}
By Lemma~\ref{lem-ball-closed}, a.s.\ for each $x,y,s$ as in the lemma statement we have $\bdy\mcl B_s^{y,\bullet}(x) \subset \mcl B_s(x)$, so $D_h(x,z) \leq s$ for each $z\in  \bdy\mcl B_s^{y,\bullet}(x)$. We need to prove the reverse inequality. 
To this end, we fix $x,y,s$ as in the lemma statement. All statements are required to hold for all choices of $x,y,s$ simultaneously.

Let $z\in\bdy\mcl B_s(x)$. Then $D_h(x,z) \leq s < \infty$ so $z$ is not a singular point. Let $\{\pi_n\}_{n\in\BB N}$ be a sequence of disjoint $D_h$-continuous loops surrounding $z$ as in Lemma~\ref{lem-separating-annuli}. 
Since $D_h(x,w) \leq s < D_h(x,y)$ for each $w\in\bdy\mcl B_s^{y,\bullet}(x)$ and $\bdy\mcl B_s^{y,\bullet}(x)$ is Euclidean-closed, $\bdy\mcl B_s^{y,\bullet}(x)$ lies at positive Euclidean distance from $y$. 
The Euclidean radius of $\pi_n$ tends to zero as $n\rta\infty$ and each $\pi_n$ disconnects a neighborhood of $z$ from $\infty$. Hence for each large enough $n \in \BB N$, $y$ lies in the unbounded complementary connected component of $\pi_n$, and hence $\pi_n$ disconnects a neighborhood of $z$ from $y$.

If $D_h(x,z) < s$, then since the $D_h$-length of $\pi_n$ and the $D_h$-distance from $z$ to $\pi_n$ both tend to zero as $n\rta\infty$, the triangle inequality shows that $\pi_n\subset\mcl B_s(x)$ for each large enough $n\in\BB N$. But, $\pi_n$ disconnects a neighborhood of $z$ from $y$ for each large enough $n$, so if $D_h(x,z) < s$ then $z$ must be in the interior of $\mcl B_s^{y,\bullet}(x)$, not in $\bdy\mcl B_s^{y,\bullet}(x)$. 
We thus obtain~\eqref{eqn-outer-bdy-dist}. 

Since $\bdy\mcl B_s^{y,\bullet}(x) \subset \mcl B_s(x) \subset \mcl B_s^{y,\bullet}(x)$ and the $D_h$-boundary of any set is contained in its Euclidean boundary, to get the last statement of the lemma, we need to show that each point $z\in \bdy \mcl B_s^{y,\bullet}(x)$ is a $D_h$-accumulation point of $\BB C\setminus \mcl B_s^{y,\bullet}(x)$. 
Since the loop $\pi_n$ disconnects $z$ from $y$ for each large enough $n$, it follows that $\pi_n$ disconnects $\BB C\setminus \mcl B_s^{y,\bullet}(x)$ into at least two connected components for each large enough $n$. Since $\BB C\setminus \mcl B_s^{y,\bullet}(x)$ is connected, it follows that $\pi_n$ contains a point $z_n \in \BB C\setminus \mcl B_s^{y,\bullet}(x)$ for each large enough $n$. Since the $D_h$-distance from $z$ to $\pi_n$ and the $D_h$-length of $\pi_n$ each tend to zero as $n\rta\infty$, we infer that $z$ is a $D_h$-accumulation point of $\BB C\setminus \mcl B_s^{y,\bullet}(x)$, as required.
\end{proof}

Finally, we record a more quantitative version of Lemma~\ref{lem-ball-contain-all} which applies when the center point of the filled metric ball is fixed. 
In the lemma statement and in several places later in the paper, we will use the notation
\eqb \label{eqn-tau_r-def}
\tau_r = D_h(0,\bdy B_r(0)) = \inf\{t > 0 : \mcl B_t \not\subset B_r(0)\} , \quad\forall r > 0 .
\eqe

\begin{lem} \label{lem-ball-contain}
Let $\BB r > 0$ and let $\tau_{\BB r} $ be as in~\eqref{eqn-tau_r-def}.
It holds with polynomially high probability as $\ep\rta 0$, uniformly over the choice of $\BB r$, that $B_{\ep \BB r}(0) \subset \mcl B_{\tau_{\BB r}}^\bullet$.
\end{lem}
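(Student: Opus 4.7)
The strategy is to find, with polynomially high probability in $\ep$, a $D_h$-continuous loop $\pi$ around $0$ lying in the Euclidean annulus $\BB A_{\ep\BB r, \BB r/2}(0)$ and contained in $\mcl B_{\tau_{\BB r}}$; any such loop separates $B_{\ep\BB r}(0)$ from $\infty$ and hence forces $B_{\ep\BB r}(0) \subset \mcl B_{\tau_{\BB r}}^\bullet$. If $\pi$ is a loop around $0$ lying in $\ol{B_b(0)}$ for some $b < \BB r$, then every $z \in \pi$ satisfies $D_h(0,z) \leq D_h(0,\pi) + \tfrac12 \op{len}(\pi; D_h)$, while $D_h(0,\pi) \leq \tau_b$ since any $D_h$-geodesic from $0$ to $\bdy B_b(0)$ must cross $\pi$. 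Splitting a $D_h$-geodesic from $0$ to $\bdy B_{\BB r}(0)$ at its first crossing of $\bdy B_b(0)$ gives $\tau_{\BB r} - \tau_b \geq D_h(\bdy B_b(0), \bdy B_{\BB r}(0))$, so a sufficient condition for $\pi \subset \mcl B_{\tau_{\BB r}}$ is
\[ \op{len}(\pi;D_h) \;\leq\; 2\,D_h(\bdy B_b(0),\bdy B_{\BB r}(0)). \]

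A single-scale attempt with $b = O(\ep\BB r)$ fails: the relevant upper and lower bounds from Proposition~\ref{prop-two-set-dist} come with scale factors $\frk c_{\ep\BB r} e^{\xi h_{\ep\BB r}(0)}$ and $\frk c_{\BB r} e^{\xi h_{\BB r}(0)}$ respectively, and their ratio cannot be made polynomially small in $\ep$ uniformly in $\BB r$: Axiom~\ref{item-metric-coord} only controls $\frk c_{\ep\BB r}/\frk c_{\BB r}$ up to $\ep^{\pm\Lambda}$, and $h_{\ep\BB r}(0) - h_{\BB r}(0)\sim N(0,\log(1/\ep))$ has Gaussian tails too weak to compensate. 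I therefore use a multi-scale iteration. Set $\rho_k := 2^{-k}\BB r$ and $K := \lfloor \log_2(1/\ep) \rfloor - O(1)$. For each $k$, define a local event $E_k$, measurable with respect to $h|_{\BB A_{\rho_{k+1},\rho_{k-1}}(0)}$ modulo the circle average $h_{\rho_k}(0)$, stating that there is a loop $\pi_k$ around $0$ in $\BB A_{\rho_{k+1},\rho_k}(0)$ with $\op{len}(\pi_k;D_h) \leq 2\,D_h(\bdy B_{\rho_k}(0),\bdy B_{\rho_{k-1}}(0))$. By Proposition~\ref{prop-two-set-dist}, both quantities are (with constant probability on each scale) comparable to $\frk c_{\rho_k}e^{\xi h_{\rho_k}(0)}$ and $\frk c_{\rho_{k-1}}e^{\xi h_{\rho_{k-1}}(0)}$ respectively; the one-step comparison involves only the bounded ratio $\frk c_{\rho_k}/\frk c_{\rho_{k-1}}$ and the fixed-variance Gaussian $h_{\rho_k}(0) - h_{\rho_{k-1}}(0) \sim N(0,\log 2)$. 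Hence $\BB P[E_k] \geq p$ for some $p > 0$ uniform in $k,\BB r$.

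Passing to every third index so that the defining annuli are pairwise disjoint on a log-scale, Lemma~\ref{lem-annulus-iterate} yields that at least one $E_k$ holds with probability at least $1 - e^{-cK} = 1 - O(\ep^{c'})$ for some $c' > 0$. On this event, taking $k^*$ to be the smallest such index, the sufficient condition of the first paragraph (applied with $b = \rho_{k^*} < \BB r/2$ and $\pi = \pi_{k^*}$) combined with the chained inequality $D_h(\bdy B_{\rho_{k^*}}(0),\bdy B_{\BB r}(0)) \geq D_h(\bdy B_{\rho_{k^*}}(0),\bdy B_{\rho_{k^*-1}}(0))$ shows $\pi_{k^*} \subset \mcl B_{\tau_{\BB r}}$, and since $\pi_{k^*}$ surrounds $B_{\ep\BB r}(0)$ the lemma follows. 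The main obstacle is rigorously setting up the local events $E_k$: they must be (effectively) measurable with respect to the field in the prescribed annulus up to the circle-average correction, admit a uniform-in-$(k,\BB r)$ lower bound on probability through a scale-normalized form of Proposition~\ref{prop-two-set-dist}, and imply the sufficient length bound; secondarily one must carefully track the constant-probability corrections from the various applications of Proposition~\ref{prop-two-set-dist} so that they do not overwhelm the exponential gain from Lemma~\ref{lem-annulus-iterate}.
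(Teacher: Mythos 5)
Your first paragraph identifies the right strategy, and it is essentially what the paper does: find a loop around $0$ in a small annulus whose $D_h$-length is strictly smaller than the $D_h$-distance across a macroscopic annulus $\BB A_{\BB r/2,\BB r}(0)$, so that a geodesic from $0$ to $\bdy B_{\BB r}(0)$ traps the loop inside $\mcl B_{\tau_{\BB r}}$.

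However, your second paragraph contains a genuine misconception. You claim the single-scale comparison at $b = O(\ep\BB r)$ fails because ``Axiom~\ref{item-metric-coord} only controls $\frk c_{\ep\BB r}/\frk c_{\BB r}$ up to $\ep^{\pm\Lambda}$'' and the Gaussian fluctuation of $h_{\ep\BB r}(0) - h_{\BB r}(0)$ cannot compensate. That reasoning overlooks Proposition~\ref{prop-scaling-constants}, which upgrades the crude polynomial bound of Axiom~\ref{item-metric-coord} to the sharp estimate $\frk c_{\ep\BB r}/\frk c_{\BB r} = \ep^{\xi Q + o_\ep(1)}$, uniformly in $\BB r$. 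With this in hand, the single-scale argument does work, and it is exactly the paper's proof: the ratio of $D_h(\text{around } \BB A_{\ep\BB r,2\ep\BB r}(0))$ to $D_h(\text{across } \BB A_{\BB r/2,\BB r}(0))$ is, with superpolynomially high probability, at most $\ep^{\xi Q - 2\zeta + o_\ep(1)} e^{\xi(h_{\ep\BB r}(0)-h_{\BB r}(0))}$, and since $\xi Q - 2\zeta > 0$ while $h_{\ep\BB r}(0)-h_{\BB r}(0) \sim N(0,\log\ep^{-1})$, the Gaussian tail bound gives that this ratio is less than $1$ with polynomially high probability. Your multi-scale scheme based on Lemma~\ref{lem-annulus-iterate} is plausible and could likely be made rigorous along the lines you sketch (it is in the spirit of what the paper does in Lemma~\ref{lem-annulus-cover} for a harder estimate), but it is substantially more work than is needed here, and the motivation you give for abandoning the single-scale route is incorrect.
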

\begin{proof}
Let $\zeta  \in (0,\xi Q / 100)$ be a small exponent. By Proposition~\ref{prop-two-set-dist}, it holds with superpolynomially high probability as $\ep\rta 0$, uniformly in $\BB r$, that 
\eqb
D_h\left( \text{across $\BB A_{\BB r/2,\BB r}(0)$} \right) \geq \ep^\zeta \frk c_{\BB r} e^{\xi h_{\BB r}(0)} \quad \text{and} \quad
D_h\left( \text{around $\BB A_{\ep \BB r, 2\ep \BB r}(0)$} \right) \leq \ep^{-\zeta} \frk c_{\ep \BB r} e^{\xi h_{\ep \BB r}(0)} .
\eqe
Here we use the notation for $D_h$-distances across and around Euclidean annuli as explained in Section~\ref{sec-notation}. 

By Proposition~\ref{prop-scaling-constants}, we have $\frk c_{\ep \BB r} / \frk c_{\BB r} = \ep^{\xi Q + o_\ep(1)}$, with the rate of convergence of the $o_\ep(1)$ uniform in $\BB r$, so with superpolynomially high probability as $\ep \rta 0$, 
\eqb \label{eqn-ball-contain-ratio}
\frac{ D_h\left( \text{around $\BB A_{\ep \BB r, 2\ep \BB r}(0)$} \right) }{D_h\left( \text{across $\BB A_{\BB r/2,\BB r}(0)$} \right)}
\leq \ep^{\xi Q - 2\zeta  +  o_\ep(1)} e^{\xi (h_{\ep \BB r}(0) - h_{\BB r}(0)) } .
\eqe
The random variable $h_{\ep \BB r}(0) - h_{\BB r}(0)$ is centered Gaussian with variance $\log \ep^{-1}$, so by the Gaussian tail bound it holds with polynomially high probability as $\ep\rta 0$ that $e^{\xi (h_{\ep \BB r}(0) - h_{\BB r}(0)) } \leq \ep^{-(\xi Q - 3\zeta)}$. By~\eqref{eqn-ball-contain-ratio}, it therefore holds with polynomially high probability as $\ep\rta 0$ that 
\eqb  \label{eqn-ball-contain-smaller}
D_h\left( \text{around $\BB A_{\ep \BB r, 2\ep \BB r}(0)$} \right)  <  D_h\left( \text{across $\BB A_{\BB r/2,\BB r}(0)$} \right) .
\eqe

Suppose that~\eqref{eqn-ball-contain-smaller} holds. We claim that $B_{\ep \BB r}(0) \subset \mcl B_{\tau_\BB r}^\bullet$. Let $\pi$ be a path in $\BB A_{\ep \BB r, 2\ep \BB r}(0)$ which disconnects the inner and outer boundaries of $\BB A_{\ep \BB r, 2\ep \BB r}(0)$ and has $D_h$-length less than $D_h\left( \text{across $\BB A_{\BB r/2,\BB r}(0)$} \right)$. Also let $P$ be a $D_h$-geodesic from 0 to a point of $\bdy\mcl B_{\tau_{\BB r}}^\bullet \cap \bdy B_{\BB r}(0)$. Then $P$ hits $\pi$ before leaving $B_{\BB r/2}(0)$ and the segment of $P$ after it leaves $B_{\BB r/2}(0)$ has $D_h$-length at least $ D_h\left( \text{across $\BB A_{\BB r/2,\BB r}(0)$} \right)$. Since the $D_h$-length of $\pi$ is smaller than $ D_h\left( \text{across $\BB A_{\BB r/2,\BB r}(0)$} \right)$, we get that $\pi \subset \mcl B_{\tau_{\BB r}}$. Since $\pi$ disconnects $B_{\ep \BB r}(0)$ from $\infty$, it follows that $B_{\ep \BB r}(0) \subset \mcl B_{\tau_{\BB r}}^\bullet$.
\end{proof}

\subsection{Regularity of distances on outer boundaries of metric balls}
\label{sec-hit-ball}

A key ingredient for many of the proofs in this paper is the following lemma, which implies every point on the boundary of a filled $D_h$-metric ball can be surrounded by a path of small $D_h$-length, in a sense which is uniform over all points in any Euclidean-bounded open set (this is in contrast to Lemma~\ref{lem-separating-annuli}, which does not give any uniform control on the rate of convergence). A closely related lemma for LQG geodesics is proven in~\cite[Section 2.4]{pfeffer-supercritical-lqg}. Note that we include a Euclidean scale parameter $\BB r$ in the estimates of this subsection since we will need them to be uniform across Euclidean scales.

\begin{figure}[ht!]
\begin{center}
\includegraphics[scale=1]{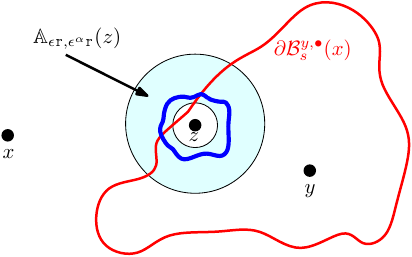} 
\caption{\label{fig-hit-ball} 
Illustration of the statement of Lemma~\ref{lem-hit-ball}. 
The lemma asserts that if $z\in \BB r U$, $x,y\in\BB C \setminus B_{ \ep^\alpha \BB r}(z)$, and $s > 0$ such that the filled metric ball boundary $\bdy\mcl B_s^{y,\bullet}(x)$ (red) intersects $B_{\ep \BB r}(z)$, then we can find a path (blue) which disconnects the inner and outer boundaries of the annulus $\BB A_{\ep \BB r, \ep^\alpha \BB r}(z)$ which has short $D_h$-length, in the sense of~\eqref{eqn-hit-ball}.  
}
\end{center}
\end{figure}

\begin{lem} \label{lem-hit-ball}
For each $\alpha \in (0,1)$, there exists $\beta = \beta(\alpha  ) > 0$ such that for each Euclidean-bounded open set $U\subset \BB C$ and each $\BB r > 0$,  
it holds with polynomially high probability as $\ep \rta 0$, uniformly over the choice of $\BB r$, that following is true. 
Suppose $z\in \BB r U$, $x,y\in\BB C \setminus B_{ \ep^\alpha \BB r}(z)$, and $s > 0$ such that the boundary of the filled metric ball $\bdy\mcl B_s^{y,\bullet}(x)$ intersects $B_{\ep \BB r}(z)$. 
Then 
\eqb \label{eqn-hit-ball}
D_h\left( \text{around $\BB A_{ \ep \BB r , \ep^\alpha \BB r}(z)$} \right) 
\leq \ep^\beta D_h\left( \text{across $\BB A_{   \ep \BB r , \ep^\alpha \BB r}(z)$} \right) .
\eqe
\end{lem}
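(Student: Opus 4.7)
The approach is to adapt the detour-based argument used in \cite[Section 2.4]{pfeffer-supercritical-lqg} to prove the analogous statement for $D_h$-geodesics. That argument shows that if a $D_h$-geodesic between two points outside $B_{\ep^\alpha \BB r}(z)$ enters $B_{\ep\BB r}(z)$, then \eqref{eqn-hit-ball} must hold: otherwise one could replace the portion of the geodesic inside $B_{\ep^\alpha \BB r}(z)$ with an around-detour of $D_h$-length comparable to $D_h(\text{around}\ \BB A_{\ep\BB r,\ep^\alpha\BB r}(z))$, producing a strictly shorter path and contradicting geodesicity. In our setting the hypothesis is that a filled-ball boundary hits $B_{\ep\BB r}(z)$ rather than a geodesic, and the first task is to reduce this to a geodesic-style detour.

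Given $w\in\bdy\mcl B_s^{y,\bullet}(x)\cap B_{\ep\BB r}(z)$, Lemma~\ref{lem-outer-bdy-dist} gives $D_h(x,w)=s$ and shows that $w$ is a $D_h$-accumulation point of the complement $\BB C\setminus\mcl B_s^{y,\bullet}(x)$; applying Lemma~\ref{lem-separating-annuli} near $w$ produces short $D_h$-loops about $w$ that must meet the complement, yielding a sequence of complement points $w'_n\to w$ in the $D_h$-metric. By Proposition~\ref{prop-holder} these $w'_n$ also converge to $w$ in the Euclidean metric, so $w'_n\in B_{\ep\BB r}(z)$ for large $n$. Since $w'_n\notin\mcl B_s(x)$ we have the strict inequality $D_h(x,w'_n)>s$, while the triangle inequality gives $D_h(x,w'_n)\leq s+D_h(w,w'_n)$, with $D_h(w,w'_n)\to 0$. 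This tight pair of bounds will play the role that geodesicity plays in Pfeffer's argument.

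The core detour step runs as follows. Let $P$ be a $D_h$-geodesic from $x$ to $w$; since $x\notin B_{\ep^\alpha\BB r}(z)$ and $w\in B_{\ep\BB r}(z)$, the portion of $P$ inside $B_{\ep^\alpha\BB r}(z)$ has $D_h$-length at least $D_h(\text{across}\ \BB A_{\ep\BB r,\ep^\alpha\BB r}(z))$. We build a competitor path from $x$ to $w'_n$ by following $P$ to its first entry of $\bdy B_{\ep^\alpha\BB r}(z)$, then traversing a minimal-length around-loop in $\BB A_{\ep\BB r,\ep^\alpha\BB r}(z)$, then crossing the annulus back toward $B_{\ep\BB r}(z)$ at a point in the complement of $\mcl B_s^{y,\bullet}(x)$, and finally using one of the short loops from Lemma~\ref{lem-separating-annuli} to reach $w'_n$. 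Bookkeeping the total length and comparing against the constraint $D_h(x,w'_n)>s$ forces $D_h(\text{around})\leq \ep^\beta D_h(\text{across})$ for a suitable $\beta=\beta(\alpha)>0$, provided that the various connectors contribute only lower-order length. The pointwise-in-$z$ statement is then upgraded to the uniform-in-$z\in \BB r U$ statement by a union bound over an $\ep\BB r$-fine grid, using Propositions~\ref{prop-two-set-dist} and~\ref{prop-scaling-constants} together with Gaussian tail bounds on the circle averages $h_{\BB r}(0)$ to obtain polynomial-in-$\ep$ probability bounds for the relevant $D_h$-distances at each grid point.

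The main obstacle will be implementing the detour rigorously, in particular quantifying the connector that links the around-loop in the annulus to a complement point near $w'_n$. This requires using the connectedness of $\BB C\setminus\mcl B_s^{y,\bullet}(x)$ (which holds by definition, since this set is a connected component of $\BB C\setminus\mcl B_s(x)$ and contains $y\notin B_{\ep^\alpha\BB r}(z)$) together with the accumulation of complement points on $\bdy\mcl B_s^{y,\bullet}(x)$ near $w$ provided by Lemma~\ref{lem-separating-annuli}, so that the competitor can indeed terminate at $w'_n$ at the claimed length cost.
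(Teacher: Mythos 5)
The proposal correctly identifies the analogy with the geodesic version and the need for iterated annuli plus Gaussian/union-bound machinery to get the uniform-in-$z$ statement. However, the central reduction step---replacing geodesicity by the pair of bounds $s<D_h(x,w'_n)\le s+D_h(w,w'_n)$ and invoking a competitor path to $w'_n$---has a genuine logical gap. The constraint $D_h(x,w'_n)>s$ is a \emph{lower} bound on the length of any path from $x$ to $w'_n$, and the competitor you construct is such a path; so all you learn is that the competitor has length $>s$, which is automatically true and imposes no constraint on $D_h(\text{around})$. For the geodesic detour to produce a contradiction, the geodesic must enter $B_{\ep\BB r}(z)$ and exit again, so that bypassing the trip via an around-loop saves \emph{two} annulus crossings. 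Here $w'_n$ is Euclidean-close to $w$ and hence lies in (or arbitrarily near) $B_{\ep\BB r}(z)$, so your competitor still must cross the annulus once to reach its target; there is no savings from which to derive a contradiction, and no inequality in the useful direction emerges from the bookkeeping.

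The paper circumvents this by not using a competitor path at all. After producing, with high probability, $N\sim\log\ep^{-1}$ nested annuli $A_1,\dots,A_N$ on which $D_h(\text{around }A_n)\le (1/c)D_h(\text{across }A_n)$ (Lemma~\ref{lem-annulus-cover}), the key step is a \emph{disconnection} argument: if $D_h(\text{around }A_n)$ were smaller than $D_h(A_n,B_{\ep\BB r}(z))$ for some $n$, then one can realize a loop $\pi$ around $A_n$ of small $D_h$-length, and the bound $\tau+\operatorname{len}(\pi)<\tau+D_h(A_n,B_{\ep\BB r}(z))\le s$ along a geodesic $P$ from $x$ to $w$ shows $\pi\subset\mcl B_s(x)$. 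Since $\pi$ separates $B_{\ep\BB r}(z)$ from $x$ and $y$, this forces $B_{\ep\BB r}(z)$ to lie in the interior of $\mcl B_s^{y,\bullet}(x)$, contradicting the hypothesis that $\bdy\mcl B_s^{y,\bullet}(x)$ meets $B_{\ep\BB r}(z)$. This yields $X_n\ge c\sum_{j>n}X_j$ for every $n$, and the combinatorial Lemma~\ref{lem-seq-sum-count} converts that chain of inequalities (valid for all $N\sim\log\ep^{-1}$ indices) into the polynomial gain $\ep^\beta$. Your proposal contains neither this disconnection step nor an explicit appeal to the combinatorial lemma, and without them the argument cannot produce~\eqref{eqn-hit-ball}. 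The auxiliary complement points $w'_n$ are not needed at all; $D_h(x,w)=s$ together with the topology of $\pi$ inside $\mcl B_s(x)$ already gives the contradiction.
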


See Figure~\ref{fig-hit-ball} for an illustration of the statement of Lemma~\ref{lem-hit-ball}. 
We will most often use the following slightly weaker estimate, which is an immediate consequence of Lemma~\ref{lem-hit-ball}. 

\begin{cor} \label{cor-hit-ball'}
Suppose we are in the setting of Lemma~\ref{lem-hit-ball}. On the polynomially high probability event of that lemma, the following is true. Suppose $z\in \BB r U$, $x,y\in\BB C \setminus B_{\ep^\alpha \BB r}(z)$, and $s>0$ such that either $\bdy\mcl B_s^{y,\bullet}(x) \cap B_{\ep \BB r}(z) \not=\emptyset$ or there is a $D_h$-geodesic $P$ from $x$ to $y$ with $P(s) \in B_{\ep \BB r}(z)$. Then  
\eqb \label{eqn-hit-ball'}
D_h\left( \text{around $\BB A_{ \ep \BB r , \ep^\alpha \BB r}(z)$} \right) 
\leq \ep^\beta s.
\eqe
\end{cor}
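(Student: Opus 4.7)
The plan is to handle the two alternatives in the hypothesis separately, reducing the geodesic case to the filled-ball-boundary case, and then applying Lemma~\ref{lem-hit-ball} together with Lemma~\ref{lem-outer-bdy-dist}. All statements below are made on the polynomially high probability event of Lemma~\ref{lem-hit-ball}.

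First, suppose $\bdy\mcl B_s^{y,\bullet}(x) \cap B_{\ep\BB r}(z) \neq \emptyset$. Lemma~\ref{lem-hit-ball} immediately gives
\begin{equation*}
D_h\bigl(\text{around $\BB A_{\ep \BB r , \ep^\alpha \BB r}(z)$}\bigr) \leq \ep^\beta D_h\bigl(\text{across $\BB A_{\ep \BB r , \ep^\alpha \BB r}(z)$}\bigr),
\end{equation*}
so it suffices to show $D_h\bigl(\text{across $\BB A_{\ep \BB r , \ep^\alpha \BB r}(z)$}\bigr) \leq s$. Pick $w \in \bdy\mcl B_s^{y,\bullet}(x) \cap B_{\ep \BB r}(z)$; by Lemma~\ref{lem-outer-bdy-dist}, $D_h(x,w)=s$. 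Since $x \notin B_{\ep^\alpha \BB r}(z)$ and $w \in B_{\ep \BB r}(z)$, any path from $x$ to $w$ must contain a sub-path that crosses the annulus $\BB A_{\ep \BB r,\ep^\alpha \BB r}(z)$. Taking a $D_h$-geodesic from $x$ to $w$ (which exists by Lemma~\ref{lem-geodesic-cont}) yields $D_h\bigl(\text{across $\BB A_{\ep \BB r , \ep^\alpha \BB r}(z)$}\bigr) \leq s$, completing this case.

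Next, suppose there is a $D_h$-geodesic $P$ from $x$ to $y$, parametrized by $D_h$-length, with $P(s) \in B_{\ep \BB r}(z)$. The strategy is to show that $P(s) \in \bdy\mcl B_s^{y,\bullet}(x)$, so that the previous case applies. Since $P$ is a geodesic, $D_h(x,P(s))=s$, whence $P(s) \in \mcl B_s(x) \subset \mcl B_s^{y,\bullet}(x)$; and for every $t \in (s,D_h(x,y)]$ we have $D_h(x,P(t)) = t > s$, so $P(t) \notin \mcl B_s(x)$. I claim also that $P(t) \notin \mcl B_s^{y,\bullet}(x)$ for such $t$: otherwise $P(t)$ would lie in $\mcl B_s^{y,\bullet}(x) \setminus \mcl B_s(x)$ and therefore be disconnected from $y$ by $\mcl B_s(x)$, yet the remaining segment $P|_{[t,D_h(x,y)]}$ joins $P(t)$ to $y$ through points at $D_h$-distance strictly greater than $s$ from $x$, giving a path that avoids $\mcl B_s(x)$ — a contradiction. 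Since $D_h$-continuous paths are Euclidean continuous, $P(t) \to P(s)$ in the Euclidean sense as $t \searrow s$, so $P(s)$ is a Euclidean accumulation point of $\BB C\setminus \mcl B_s^{y,\bullet}(x)$. Combined with $P(s) \in \mcl B_s^{y,\bullet}(x)$ (which is Euclidean-closed by Lemma~\ref{lem-ball-closed}), this forces $P(s) \in \bdy\mcl B_s^{y,\bullet}(x) \cap B_{\ep \BB r}(z)$, and the first case applies.

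The main obstacle is the reduction step in the geodesic case: one must rule out the possibility that $P$, after leaving $\mcl B_s(x)$ at time $s$, detours through one of the ``pockets'' $\mcl B_s^{y,\bullet}(x)\setminus \mcl B_s(x)$ on its way to $y$. This is handled by combining the geodesic property (distance to $x$ is strictly increasing in time) with the definition of the filled metric ball (any pocket is separated from $y$ by $\mcl B_s(x)$). Once this topological fact is in hand, both cases reduce directly to Lemma~\ref{lem-hit-ball} and Lemma~\ref{lem-outer-bdy-dist}.
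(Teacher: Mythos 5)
Your proof is correct and follows essentially the same route as the paper: first use Lemma~\ref{lem-hit-ball} together with Lemma~\ref{lem-outer-bdy-dist} (and the geodesic-crossing observation) to bound $D_h(\text{across }\BB A_{\ep\BB r,\ep^\alpha\BB r}(z))\le s$, then reduce the geodesic case to the boundary-intersection case via $P(s)\in\bdy\mcl B_s^{y,\bullet}(x)$. The only difference is that the paper treats the containment $P(s)\in\bdy\mcl B_s^{y,\bullet}(x)$ as immediate, whereas you spell out the (correct) topological argument ruling out a detour into $\mcl B_s^{y,\bullet}(x)\setminus\mcl B_s(x)$.
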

\begin{proof}
If $P$ is a $D_h$-geodesic from $x$ to $y$, then necessarily $P(s) \in \bdy\mcl B_s^{y,\bullet}(x)$, so if $P(s) \in B_{\ep \BB r}(z)$ then $\bdy\mcl B_s^{y,\bullet}(x) \cap B_{\ep \BB r}(z) \not=\emptyset$. Hence, Lemma~\ref{lem-hit-ball} implies that for $z,x,y,s$ as in the lemma statement the bound~\eqref{eqn-hit-ball} is satisfied. Since any $D_h$-geodesic from $x$ to a point of $\bdy\mcl B_s^{\bullet,y} \cap B_{\ep \BB r}(z)$ has length $s$ and must cross between the inner and outer boundaries of $B_{\ep^\alpha \BB r}(z) \setminus B_{\ep \BB r}(z)$, we see that~\eqref{eqn-hit-ball} implies~\eqref{eqn-hit-ball'}. 
\end{proof}

Intuitively, the reason why Lemma~\ref{lem-hit-ball} and Corollary~\ref{cor-hit-ball'} are true is that points on the boundary of a filled $D_h$-metric ball or on a $D_h$-geodesic should be in some sense far from being singular points (since they are at finite distance from at least one point). Hence it should be possible to find short paths which disconnect small Euclidean neighborhoods of such points from $\infty$ (roughly speaking, this is a quantitative version of Lemma~\ref{lem-separating-annuli}).

Corollary~\ref{cor-hit-ball'} can be thought of as a substitute for the fact that for supercritical LQG (unlike in the subcritical case) we do not know that the identity mapping $(\BB C , |\cdot|) \rta (\BB C , D_h)$ is H\"older continuous. To be more precise, the corollary tells us that points on the outer boundary of a filled $D_h$-metric ball or on a $D_h$-geodesic can be surrounded by paths of small Euclidean size whose $D_h$-length is small. By forcing these paths to cross other paths, we will be able to establish upper bounds for the $D_h$-distance between points near filled metric ball boundaries or geodesics in terms of their Euclidean distance. Since many estimates for the LQG metric only require us to work with points near filled metric ball boundaries or geodesics, this will be a suitable substitute for H\"older continuity. 

The idea of the proof of Lemma~\ref{lem-hit-ball} is to surround the Euclidean ball $B_{\ep \BB r}(z)$ by logarithmically many disjoint concentric Euclidean annuli contained in $\BB A_{\ep \BB r , \ep^\alpha \BB r}(z)$ with the property that the $D_h$-distances around and across each of the annuli are comparable. This will be done using Lemma~\ref{lem-annulus-iterate}. We will order these annuli from outside to inside. Using a deterministic lemma (see Lemma~\ref{lem-seq-sum-count}), we will argue that in order for a filled metric ball boundary to intersect $B_\ep(z)$, there must be at least one annulus such that the $D_h$-distance around this annulus is smaller than a positive power of $\ep$ times the sum of the $D_h$-distances across the subsequent annuli. This latter sum provides a lower bound for $ D_h\left( \text{across $\BB A_{   \ep \BB r , \ep^\alpha \BB r}(z)$} \right)$. 

Let us now construct the concentric annuli that we will work with. 
An \emph{annulus with aspect ratio 2} is an open annulus of the form $A = \BB A_{r,2r}(z)$ for some $z\in\BB C$ and $r>0$. For an annulus $A$ with aspect ratio 2 and a number $c >0$, we define
\eqb \label{eqn-annulus-event}
E_c(A) := \left\{ D_h\left(\text{around $A $} \right) \leq  (1/c) D_h\left(\text{across $A$} \right) \right\} .
\eqe 

\begin{lem} \label{lem-annulus-cover}
For each $\alpha \in (0,1)$, there exists $\eta  \in (0,1-\alpha)$ and $c \in (0,1)$ such that for each Euclidean-bounded open set $U\subset\BB C$ and each $\BB r > 0$, it holds with polynomially high probability as $\ep\rta 0$, uniformly over the choice of $\BB r$, that the following is true. 
For each $z\in \BB r U$, there exist $N := \lceil \eta \log\ep^{-1} \rceil$ disjoint concentric annuli $A_1,\dots,A_N  \subset \BB A_{\ep\BB r , \ep^\alpha\BB r}(z)$ which each disconnects $\bdy B_{\ep\BB r}(z)$ from $\bdy B_{\ep^\alpha \BB r}(z)$ such that $E_c(A_n)$ occurs for each $n=1,\dots,N$. 
\end{lem}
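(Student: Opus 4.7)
The plan is to produce, around a single fixed center $z_0$, a logarithmically long nested sequence of concentric aspect-ratio-$2$ annuli; use near-independence of the restrictions of $h$ to disjoint annuli to guarantee that a positive fraction of them lie in $E_c(\cdot)$ with very high probability; and then upgrade to uniformity over $z \in \BB r U$ by a grid union bound. For the single-annulus input, Proposition~\ref{prop-two-set-dist} (combined with the ``apply twice'' trick described after it for distances around annuli) shows that both $D_h(\text{across}\,\BB A_{r,2r}(z_0))$ and $D_h(\text{around}\,\BB A_{r,2r}(z_0))$ lie in $[A^{-1}\frk c_r e^{\xi h_r(z_0)},\, A\, \frk c_r e^{\xi h_r(z_0)}]$ except on an event whose probability tends to $0$ superpolynomially as $A\to\infty$, so their ratio is at most $A^2$ with probability approaching $1$. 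Combined with translation invariance (Axiom~\ref{item-metric-translate}), this gives: for every $p<1$ there exists $c\in(0,1)$ such that $\BB P[E_c(\BB A_{r,2r}(z_0))]\geq p$, uniformly in $r>0$ and $z_0\in\BB C$.

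Next, fix $z_0$ and set $r_k := 4^{-k}\ep^\alpha \BB r$ and $A_k := \BB A_{r_k,2r_k}(z_0)$ for $k=1,\dots,K$, with $K$ the largest index for which $r_K \geq 4\ep\BB r$, so that $K$ is of order $\log\ep^{-1}$. These annuli are concentric and pairwise disjoint, since $2r_{k+1} = r_k/2 < r_k$. By locality (Axiom~\ref{item-metric-local}) the ``across'' and ``around'' distances of $A_k$ are measurable with respect to $h|_{A_k}$, and by Weyl scaling (Axiom~\ref{item-metric-f}) a constant shift of $h$ cancels in the ratio defining $E_c$. Hence $E_c(A_k)$ is measurable with respect to $(h-h_{\rho_k}(z_0))|_{\BB A_{\mu_1\rho_k,\mu_2\rho_k}(z_0)}$ with $\rho_k := 4r_k$, $\mu_1 := 1/4$, $\mu_2 := 3/4$; and $\rho_{k+1}/\rho_k = 1/4 = \mu_1$. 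Lemma~\ref{lem-annulus-iterate} then yields, for any $a>0$ and $b\in(0,1)$ that we fix in advance (after which $c$ is chosen via the previous step so that the single-annulus probability is at least the required $p=p(a,b,\mu_1,\mu_2)$), a bound $\BB P[N(K)<bK]\leq c_0 e^{-aK}$. Since $K$ is proportional to $\log\ep^{-1}$, choosing $a$ sufficiently large makes this polynomially small in $\ep$.

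To make this uniform in $z\in \BB r U$, I would cover $\BB r U$ by a grid $\mcl G$ of spacing $\ep\BB r$ ($O(\ep^{-2})$ points) and take a union bound, absorbing the loss into the choice of $a$. For any $z\in \BB r U$, pick $z_0\in\mcl G$ with $|z-z_0|\leq\ep\BB r$ and restrict attention to those $k$ with $r_k\in[2\ep\BB r,\ep^\alpha\BB r/4]$: for these $k$ the annulus $A_k = \BB A_{r_k,2r_k}(z_0)$ is contained in $\BB A_{\ep\BB r,\ep^\alpha\BB r}(z)$ and still separates $\bdy B_{\ep\BB r}(z)$ from $\bdy B_{\ep^\alpha\BB r}(z)$, since the perturbation $|z-z_0|$ is dominated by both radii. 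The admissible $k$'s form a definite fraction of $[1,K]_{\BB Z}$, so by taking $b$ close enough to $1$ a positive fraction of them are good, yielding at least $\lceil \eta\log\ep^{-1}\rceil$ annuli for some $\eta\in(0,1-\alpha)$.

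The main obstacle is juggling the competing scale constraints: $E_c$ is defined only for aspect-ratio-$2$ annuli; Lemma~\ref{lem-annulus-iterate} requires strictly nested localization annuli with $\mu_2<1$, which forces $A_k$ to be a strict subannulus of $\BB A_{\mu_1\rho_k,\mu_2\rho_k}(z_0)$ (this is what motivates the factor $4$ in $\rho_k/r_k$ and the choice of geometric ratio $1/4$ for the $r_k$); and the grid argument consumes a polynomial-in-$\ep$ union bound that must be offset by further increasing $a$, hence further shrinking $c$. Tracking these constants end-to-end is what pins down the admissible range of $\eta$ and $c$ in the statement.
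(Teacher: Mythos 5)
Your proposal is correct and follows essentially the same strategy as the paper's own proof: build a logarithmic chain of disjoint concentric aspect-ratio-$2$ annuli with geometric gaps, apply Lemma~\ref{lem-annulus-iterate} (after checking the localization measurability and lower-bounding the single-annulus probability $\BB P[E_c(\cdot)]$ via tightness across scales), and finish with a grid union bound plus a small perturbation of the center to pass from grid points to arbitrary $z \in \BB r U$. The only differences are cosmetic: you index the annuli from the outside in whereas the paper indexes from the inside out, you invoke Proposition~\ref{prop-two-set-dist} where the paper appeals directly to Axioms~\ref{item-metric-translate} and~\ref{item-metric-coord}, and the explicit radii/grid-spacing choices differ by harmless constant factors.
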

\begin{proof}
This is a straightforward consequence of the near-independence of the restriction of the GFF to disjoint concentric annuli (Lemma~\ref{lem-annulus-iterate}) together with a union bound over points in an fine mesh of $\BB r U$. Let us now give the details. 

For $z \in \BB C$ and $k \in \BB N$, let $A_{k,\ep}(z) := \BB A_{ 2^{2k} \ep \BB r  ,  2^{2k+1}\ep \BB r}(z) $. Note that the annuli $A_{k,\ep}(z)$ for different values of $k$ are disjoint and for each $k$, the region between the annuli $  A_{k,\ep}(z) $ and $ A_{k+1,\ep}(z) $ is the annulus $\BB A_{2^{2k+1}\ep \BB r , 2^{2 k+ 2}\ep \BB r }(z)$. 
Furthermore, if we set $K_\ep := \lfloor \frac13 \log_2 \ep^{-(1-\alpha)} \rfloor -1$, then 
\eqb \label{eqn-annulus-nest}
  A_{k,\ep}(z)  \subset \BB A_{2\ep \BB r , \ep^\alpha \BB r /2}(z)  ,\quad \forall k\in [1,K_\ep]_{\BB Z} .
\eqe 
The reason why we want $2\ep\BB r$ and $\ep^\alpha\BB r/2$ instead of just $\ep\BB r$ and $\ep^\alpha\BB r$ in~\eqref{eqn-annulus-nest} is that we will need to slightly adjust the radii of our annuli when we pass from a statement for points in a fine mesh to a statement for all points simultaneously. 

By the definition~\eqref{eqn-annulus-event} of $E_c(A_{k,\ep}(z) )$, this event is a.s.\ determined by the internal metric of $D_h$ on $A_{k,\ep}(z) $.
By the locality and Weyl scaling properties of $D_h$ (Axioms~\ref{item-metric-local} and~\ref{item-metric-f}), each of the events $E_c(A_{k,\ep}(z))$ is a.s.\ determined by the restriction of $h$ to $A_{k,\ep}(z) $, viewed modulo additive constant. 
By the translation invariance and tightness across scales properties of $D_h$ (Axioms~\ref{item-metric-translate} and~\ref{item-metric-coord}), for any $p\in (0,1)$ we can find $c  = c(p) \in (0,1)$ such that $\BB P[E_c(A_{k,\ep}(z))] \geq p$ for each $z\in\BB C$, $k\in\BB N$, and $\ep > 0$. 

We may therefore apply Lemma~\ref{lem-annulus-iterate} to find $c \in (0,1)$ and $\eta \in (0,1-\alpha)$ such that for each $z\in \BB C$, it holds with probability at least $1-O_\ep(\ep^3)$ that there are at least $\eta \log \ep^{-1}$ values of $k \in [1,K_\ep]_{\BB Z}$ for which $E_c(A_{\ep,k}(z))$ occurs. By a union bound, it holds with polynomially high probability as $\ep\rta 0$ that for each $z \in \left(\frac{\ep \BB r}{4} \BB Z^2 \right) \cap B_{\ep \BB r}(\BB r U)$, there are at least $\eta \log \ep^{-1}$ values of $k \in [1,K_\ep]_{\BB Z}$ for which $E_c(A_{\ep,k}(z))$ occurs. Henceforth assume that this is the case. 

Let $z\in U$. We can find $z'\in \left(\frac{\ep \BB r}{4} \BB Z^2 \right) \cap B_{\ep\BB r}(U)$ such that $z\in B_{\ep \BB r/2}(z')$. Then $B_{\ep\BB r}(z) \subset B_{2\ep \BB r}(z')$ and $B_{\ep^\alpha \BB r/2}(z') \subset B_{\ep^\alpha \BB r}(z )$. By~\eqref{eqn-annulus-nest}, the conditions in the lemma statement hold with $A_1,\dots,A_N$ chosen to be $N = \lceil \eta \log\ep^{-1} \rceil$ of the annuli $A_{k,\ep}(z')$ for $k\in [1,K_\ep]_{\BB Z}$ for which $E_c(A_{k,\ep}(z'))$ occurs. 
\end{proof}

The following deterministic lemma will allow us to choose one of the annuli $A_n$ from Lemma~\ref{lem-annulus-cover} in such a way that $D_h(\text{around $A_n$})$ is much smaller than $\sum_{j=n+1}^N D_h(\text{across $A_j$})$. See~\cite[Lemma 2.20]{pfeffer-supercritical-lqg} for a proof. 

\begin{lem} \label{lem-seq-sum-count}
Let $X_1 , ..., X_N$ be non-negative real numbers. For each $c > 0$, 
\eqb \label{eqn-seq-sum-count}
\#\left\{ n  \in [1,N]_{\BB Z} : X_n \geq c \sum_{j=n+1}^N X_j \right\} \leq  \max\left\{1 ,  \frac{ \log\left( \frac{1}{X_N} \max_{n \in [1,N]_{\BB Z}} X_n \right) }{\log(c+1)} - \frac{\log c}{\log(c+1)}  + 2 \right\} . 
\eqe
\end{lem}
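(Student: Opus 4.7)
The plan is to reduce the statement to a clean geometric-sum estimate. Write $S_n := \sum_{j=n+1}^{N} X_j$ and $T_n := \sum_{j=n}^N X_j = X_n + S_n$. Let $G := \{ n \in [1,N]_{\BB Z} : X_n \geq c S_n \}$, and let $k := |G|$; we may assume $k \geq 2$, since otherwise the ``$\max\{1,\cdot\}$'' on the right-hand side makes the bound trivial. Enumerate $G$ in increasing order as $n_1 < n_2 < \cdots < n_k$.

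The key step is to show the inductive ratio bound $T_{n_i} \geq (c+1)\,T_{n_{i+1}}$ for each $i \in [1,k-1]_{\BB Z}$. This follows from two observations. First, since $n_{i+1} > n_i$, the suffix sum $S_{n_i}$ contains every $X_j$ with $j \geq n_{i+1}$, so $S_{n_i} \geq T_{n_{i+1}}$. Second, because $n_i \in G$, we have $X_{n_i} \geq c\,S_{n_i}$. Combining gives
\[
T_{n_i} \;=\; X_{n_i} + S_{n_i} \;\geq\; (c+1) S_{n_i} \;\geq\; (c+1) T_{n_{i+1}} .
\]
Iterating yields $T_{n_1} \geq (c+1)^{k-1}\,T_{n_k}$.

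Finally, I would sandwich $T_{n_1}$ and $T_{n_k}$. For the upper bound, since $n_1 \in G$ gives $S_{n_1} \leq X_{n_1}/c$, we have $T_{n_1} \leq \tfrac{c+1}{c} X_{n_1} \leq \tfrac{c+1}{c} \max_{n \in [1,N]_{\BB Z}} X_n$. For the lower bound, simply observe $T_{n_k} \geq X_N$ since $n_k \leq N$. Combining with the iterated ratio bound gives $\tfrac{c+1}{c} \max_n X_n \geq (c+1)^{k-1} X_N$, i.e.\ $\max_n X_n / X_N \geq c (c+1)^{k-2}$. Taking logs and solving for $k$ produces exactly
\[
k \;\leq\; \frac{\log\!\left(\tfrac{1}{X_N}\max_n X_n\right)}{\log(c+1)} \;-\; \frac{\log c}{\log(c+1)} \;+\; 2 ,
\]
which is the desired estimate. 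There is no serious obstacle here: the argument is purely arithmetic, and the only subtlety is handling the small cases $k\leq 1$ (absorbed by the ``$\max\{1,\cdot\}$'') and the degenerate case $X_N=0$ (where the right-hand side is $+\infty$ and the inequality is vacuous).
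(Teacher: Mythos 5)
Your proof is correct and takes essentially the same approach as the paper's intended argument (the paper defers to~\cite[Lemma 2.20]{pfeffer-supercritical-lqg}, whose proof reverses the indices and iterates the same ratio bound on prefix partial sums of the reversed sequence, which is exactly your inequality $T_{n_i}\geq (c+1)T_{n_{i+1}}$ in disguise). Your sandwiching of $T_{n_1}\leq \frac{c+1}{c}\max_n X_n$ and $T_{n_k}\geq X_N$ and the log-solve are the same computation.
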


\begin{proof}[Proof of Lemma~\ref{lem-hit-ball}]
Let $\alpha > 0$  and let $\eta $ and $c$ be chosen as in Lemma~\ref{lem-annulus-cover}. 
Also fix a Euclidean-bounded open set $U\subset\BB C$ and a number $\BB r > 0$. 
Throughout the proof, we work on the polynomially high probability event of Lemma~\ref{lem-annulus-cover}. 

Let $z\in \BB r U$ and let $A_1,\dots,A_N \subset \BB A_{\ep \BB r , \ep^\alpha \BB r}(z)$ be the disjoint concentric annuli from Lemma~\ref{lem-annulus-cover}, numbered from outside in. 
For $n\in [1,N]_{\BB Z}$, define
\eqb \label{eqn-X_n-def} 
X_n := D_h\left(\text{around $A_n $} \right) ,
\eqe
so that by the definition of $E_c(A_n)$,  
\eqb \label{eqn-X_n-relation}
X_n \leq (1/c) D_h\left(\text{across $A_n$}\right) .
\eqe

Suppose that there exists $x,y\in\BB C \setminus B_{\ep^\alpha \BB r}(z)$ and $s > 0$ such that $\bdy\mcl B_s^{y,\bullet}(x) \cap B_{\ep\BB r}(z) \not=\emptyset$. We need to show that~\eqref{eqn-hit-ball} holds for an appropriate choice of $\beta$. With a view toward applying Lemma~\ref{lem-seq-sum-count}, we claim that
\eqb \label{eqn-X_n-sum}
X_n \geq   c \sum_{j=n+1}^N X_j , \quad\forall n \in [1,N-1]_{\BB Z} .
\eqe

Indeed, suppose by way of contradiction that~\eqref{eqn-X_n-sum} does not hold for some $n \in [1,N-1]_{\BB Z}$, i.e., $X_n < c\sum_{j=n+1}^N X_j$. By~\eqref{eqn-X_n-relation}, for this choice of $n$,
\eqb \label{eqn-dist-sum}
D_h\left(\text{around $A_n $} \right) <  \sum_{j=n+1}^N D_h\left(\text{across $A_j$}\right) \leq D_h\left(A_n  , B_{\ep\BB r}(z) \right) ,
\eqe
where the last inequality follows since the $A_j$'s are disjoint, numbered from outside in, and surround $B_{\ep\BB r}(z)$. 
Let $\pi$ be a path in $A_n $ which disconnects the inner and outer boundaries of $A_n $ and has $D_h$-length strictly less than $D_h\left(A_n  , B_{\ep\BB r}(z) \right)$. 

Let $w\in \bdy\mcl B_s^{y,\bullet}(x) \cap B_{\ep\BB r}(z)$. By Lemma~\ref{lem-outer-bdy-dist}, $D_h(x,w) = s$. 
There is a $D_h$-geodesic $P  : [0,s] \rta \BB C$ from $x$ to $w$. 
Let $\tau$ be the first time that $P$ hits $\pi$. Since $P$ is a geodesic, the $D_h$-distance from $x$ to each point of $\pi$ is at most $\tau + (\text{$D_h$-length of $\pi$})$, which by the preceding paragraph is less than $\tau + D_h(A_n , B_{\ep\BB r}(z))$. 

On the other hand, $P$ must travel from $A_n $ to $B_{\ep\BB r}(z)$ after time $\tau$, so $s  \geq \tau +  D_h(A_n , B_{\ep\BB r}(z))$. Therefore, each point of $\pi$ lies at $D_h$-distance less than $  s$ from $x$, so $\pi \subset \mcl B_s(x)$. Since $x, y\notin B_{\ep^\alpha \BB r}(z)$ and $\pi$ is contained in $B_{\ep^\alpha \BB r}(z)$ and disconnects $B_{\ep\BB r}(z)$ from $\bdy B_{\ep^\alpha \BB r}(z)$, we get that $\mcl B_s(x)$ disconnects $B_{\ep \BB r}(z)$ from $x$ and $y$. Therefore, $B_{\ep \BB r}(z) \cap \mcl B_s^{y,\bullet}(x) =\emptyset$, which is our desired contradiction.
Hence~\eqref{eqn-X_n-sum} holds.

By~\eqref{eqn-X_n-sum}, there are $N$ values of $n\in[1,N]_{\BB Z}$ for which $X_n \geq   c \sum_{j=n+1}^N X_j$. 
Therefore, Lemma~\ref{lem-seq-sum-count} gives
\eqb \label{eqn-use-seq-sum-count} 
\frac{\log\left(\frac{1}{X_N} \max_{n\in [1,N]_{\BB Z}} X_n \right) }{\log(c+1)} \geq N - O(1) ,
\eqe
where the $O(1)$ denotes a constant depending only on $c$ (not on $\ep$). 
Therefore, 
\allb
D_h\left( \text{around $B_{\ep^\alpha \BB r}(z) \setminus B_{\ep \BB r}(z)$} \right)
&\leq X_N \quad \text{(by the definition~\eqref{eqn-X_n-def} of $X_n$)} \notag\\
&\leq (c+1)^{-N + O(1)} \max_{n\in [1,N]_{\BB Z}} X_n \quad \text{(by re-arranging~\eqref{eqn-use-seq-sum-count})} \notag\\ 
&\leq O(1) (c+1)^{-N}   D_h\left(\text{across $\BB A_{\ep\BB r , \ep^\alpha \BB r}(z)$} \right)\quad \text{(by~\eqref{eqn-X_n-relation})} . 
\alle
Since $N = \lceil \eta\log\ep^{-1} \rceil$, for small enough $\ep$ the quantity $O(1) (c+1)^{-N} $ is bounded above by $\ep^\beta$ for an appropriate choice of $\beta > 0$.
This gives~\eqref{eqn-hit-ball}. 
\end{proof}

\subsection{Lower bound for $D_h$-distances across LQG annuli}
\label{sec-bdy-dist}

An easy consequence of Lemma~\ref{lem-hit-ball} is the following lemma, which gives a polynomial lower bound for the Euclidean distance between the outer boundaries of concentric filled $D_h$-metric balls. This lemma will play an important role in the proof of Theorem~\ref{thm-net-dim} and in the proof of confluence of geodesics.

\begin{lem} \label{lem-bdy-dist0}
There exists $\beta  > 0$ such that the following is true. 
Fix $b >1$ and for $\BB r >0$ let $\tau_{\BB r} = D_h(0,\bdy B_{\BB r}(0))$ be as in~\eqref{eqn-tau_r-def}. It holds with probability tending to 1 as $\delta \rta 0$, uniformly in the choice of $\BB r$, that for each $ s, t \in [\tau_{\BB r} , \tau_{b \BB r}]$ with $|s-t| \leq \delta   \frk c_{\BB r} e^{\xi h_{\BB r}(0)} $, 
\eqb \label{eqn-bdy-dist0}
\op{dist}\left( \bdy \mcl B_s^{\bullet} , \bdy \mcl B_t^\bullet \right) \geq \left(\frac{|s-t|}{\frk c_{\BB r} e^{\xi h_{\BB r}(0)}} \right)^{1/\beta} \BB r ,
\eqe 
where $\op{dist}$ denotes Euclidean distance. 
\end{lem}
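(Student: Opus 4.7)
The plan is to leverage Lemma~\ref{lem-hit-ball} to produce, for each $z \in \bdy\mcl B_s^\bullet$, a short loop $\pi$ of small $D_h$-length enclosing a small Euclidean neighborhood of $z$, and to argue that this loop lies inside $\mcl B_{s+L}$, where $L$ denotes the $D_h$-length of $\pi$. If a second boundary point $w \in \bdy\mcl B_t^\bullet$ is Euclidean-close to $z$, a topological argument will force $w \in \mcl B_{s+L}$ as well, yielding $t \leq s+L$ and hence the desired inequality after bounding $L$ by a small power of $\epsilon$ times a scale factor.

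Fix $\alpha \in (0,1)$ and let $\beta = \beta(\alpha) > 0$ be the exponent from Lemma~\ref{lem-hit-ball}. A union bound over a dyadic family of scales $\epsilon = 2^{-k}$ below a small threshold, combined with Lemma~\ref{lem-hit-ball} applied with $U = B_{2b}(0)$, Lemma~\ref{lem-ball-contain} applied at scale $\epsilon^\alpha$, Corollary~\ref{cor-hit-ball'}, and the tail bound $\tau_{b\BB r} \leq A_0\, \frk c_{\BB r} e^{\xi h_{\BB r}(0)}$ coming from Lemma~\ref{lem-set-tightness}/Proposition~\ref{prop-two-set-dist}, will yield a single good event of probability $\rta 1$ as $\delta \rta 0$ on which the following hold uniformly over all dyadic $\epsilon$ and all $z \in \bdy\mcl B_s^\bullet$ with $s \in [\tau_{\BB r}, \tau_{b\BB r}]$: $z \in B_{b\BB r}(0) \setminus B_{\epsilon^\alpha \BB r}(0)$, so that $0$ lies outside $B_{\epsilon^\alpha \BB r}(z)$; and there is a simple loop $\pi \subset \BB A_{\epsilon \BB r, \epsilon^\alpha \BB r}(z)$ enclosing $\bar B_{\epsilon \BB r}(z)$ with $D_h$-length $L \leq \epsilon^\beta A_0\, \frk c_{\BB r} e^{\xi h_{\BB r}(0)}$.

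Now suppose $w \in \bdy\mcl B_t^\bullet$ with $t > s$ and $|z - w| \leq \epsilon \BB r$. First, $\pi \subset \mcl B_{s+L}$: any $D_h$-geodesic $P_z$ from $0$ to $z$ must cross $\pi$ at some time $\rho \leq s$ (since $0$ and $z$ lie on opposite sides of $\pi$), and concatenating $P_z|_{[0,\rho]}$ with an arc of $\pi$ from $P_z(\rho)$ to any $v \in \pi$ yields $D_h(0, v) \leq \rho + L \leq s + L$. Now assume for contradiction that $s + L < t$. The inequality $|z - w| \leq \epsilon \BB r$ places $w$ in the Euclidean-bounded Jordan region enclosed by $\pi$. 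If $w \in \mcl B_{s+L}$, then by Lemma~\ref{lem-outer-bdy-dist} we have $t = D_h(0, w) \leq s + L$, a contradiction. Otherwise, the Euclidean-connected component $U$ of $\BB C \setminus \mcl B_{s+L}$ containing $w$ is disconnected from $\infty$ by $\pi \subset \mcl B_{s+L}$, so $U \subseteq \mcl B_{s+L}^\bullet \subseteq \mcl B_t^\bullet$ by monotonicity of filled metric balls in the radius; but $U$ is Euclidean-open, so $w$ lies in the Euclidean interior of $\mcl B_t^\bullet$, contradicting $w \in \bdy\mcl B_t^\bullet$. Hence $t - s \leq L \leq \epsilon^\beta A_0\, \frk c_{\BB r} e^{\xi h_{\BB r}(0)}$. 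Rearranging and absorbing $A_0$ into any slightly smaller exponent $\beta' < \beta$ (valid once $\delta$ is small enough that $|t-s|/\frk c_{\BB r} e^{\xi h_{\BB r}(0)} \leq \delta$ is small) gives the claimed Euclidean distance lower bound with $\beta'$ in place of $\beta$.

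The main obstacle is the topological step that rules out $w$ sitting in a strict Euclidean pocket of $\BB C \setminus \mcl B_{s+L}$ while simultaneously belonging to $\bdy\mcl B_t^\bullet$; it relies crucially on the Euclidean openness of the pocket, the monotonicity $\mcl B_{s+L}^\bullet \subseteq \mcl B_t^\bullet$, and the fact (Lemma~\ref{lem-outer-bdy-dist}) that every point of $\bdy\mcl B_t^\bullet$ has $D_h$-distance exactly $t$ from $0$. The remaining work consists of the union bound over dyadic scales and routine scaling bookkeeping.
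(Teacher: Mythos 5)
Your argument is correct and rests on the same pillars as the paper's: Corollary~\ref{cor-hit-ball'} to produce a short $D_h$-loop $\pi$ around each boundary point, Lemma~\ref{lem-outer-bdy-dist} to relate radii to $D_h$-distance from $0$, Lemma~\ref{lem-ball-contain} and tightness to control the boundary's location, and a union bound over dyadic scales. The deduction at the final step is implemented a bit differently from the paper's. In Lemma~\ref{lem-bdy-dist}, the paper notes that if $\bdy\mcl B_s^{y,\bullet}$ and $\bdy\mcl B_{(1-\ep)s}^{y,\bullet}$ both enter $B_{\ep^{1/\beta}\BB r}(z)$, then, since $x$ and $y$ lie outside the large ball and both $\mcl B_\cdot^{y,\bullet}$ and its complement are connected, both boundaries must intersect $\pi$; this makes their $D_h$-distance at most the $D_h$-length of $\pi$, contradicting the exact value $\ep s$ furnished by Lemma~\ref{lem-outer-bdy-dist}. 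You instead first prove $\pi\subset \mcl B_{s+L}$ by running a geodesic from $0$ to $z$ through $\pi$, and then use a topological argument about the Jordan region enclosed by $\pi$ to show that $w$ would have to lie in the Euclidean interior of $\mcl B_t^\bullet$, contradicting $w\in\bdy\mcl B_t^\bullet$. Both are sound; the paper's version is marginally shorter because it avoids establishing $\pi\subset\mcl B_{s+L}$, while yours is a touch more robust in that it never needs the connectedness of the filled ball or its complement. Your bookkeeping for absorbing the constant $A_0$ into a smaller exponent $\beta'<\beta$ is fine, since the threshold on $\delta$ is allowed to depend on $A_0$ (equivalently, on the desired probability), exactly as in the paper's treatment of the constant $C$ in~\eqref{eqn-bdy-dist0-reg}.
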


Note that in the subcritical case,~\eqref{eqn-bdy-dist0} is immediate from the local H\"older continuity of $D_h$ w.r.t.\ the Euclidean metric~\cite[Theorem 1.7]{lqg-metric-estimates}, so the lemma has non-trivial content only in the case when $\xi \geq \xi_{\op{crit}}$. 
We will deduce Lemma~\ref{lem-bdy-dist0} from the following more quantitative statement, which allows for a general choice of starting points and target points for the filled metric balls. For the statement, we recall the notation $B_r(X) = \bigcup_{z\in X} B_r(z)$ for the Euclidean $r$-neighborhood of a set $X \subset\BB C$. 

\begin{lem} \label{lem-bdy-dist}
For each $\alpha  \in (0,1)$, there exists $\beta  = \beta(\alpha) > 0  $ such that the following is true. 
Let $U\subset \BB C$ be a Euclidean-bounded open set and let $\BB r > 0$. 
With polynomially high probability as $\ep\rta 0$, uniformly over the choice of $\BB r$, it holds for each non-singular point $x \in \BB C$, each $y\in\BB C\cup \{\infty\}$, and each $s \in [\ep \frk c_{\BB r} e^{\xi h_{\BB r}(0)} ,D_h(x,y)  ]$ that
\eqb \label{eqn-bdy-dist} 
\op{dist}\left( \bdy \mcl B_s^{y,\bullet}(x) \cap (\BB r U\setminus B_{\ep^{\alpha  /\beta} \BB r}(\{x,y\}) ) , \bdy \mcl B_{(1-\ep) s}^{y,\bullet}(x)  \right) \geq \ep^{1/\beta} \BB r ,
\eqe
where $\op{dist}$ denotes Euclidean distance, we define $B_{\ep^{\alpha/\beta}}(\{x, \infty\} ) = B_{\ep^{\alpha/\beta}}(x)$, and we make the convention that the distance from any set to the empty set is $\infty$ (which is consistent with the convention that the infimum of the empty set is $\infty$).
\end{lem}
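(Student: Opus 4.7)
The plan is to prove the lemma by contradiction: suppose there exist $z_1 \in \bdy\mcl B_s^{y,\bullet}(x) \cap (\BB r U \setminus B_{\ep^{\alpha/\beta}\BB r}(\{x,y\}))$ and $z_2 \in \bdy\mcl B_{(1-\ep)s}^{y,\bullet}(x)$ with $|z_1 - z_2| < \ep^{1/\beta}\BB r$, and derive a contradiction by producing a short $D_h$-loop around $z_2$ that forces $z_1$ into the Euclidean interior of a filled metric ball strictly smaller than $\mcl B_s^{y,\bullet}(x)$.

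To set up the parameters, I would fix an auxiliary exponent $\alpha' \in (\alpha,1)$, e.g.\ $\alpha' = (1+\alpha)/2$, let $\beta_0 = \beta_0(\alpha')$ be the exponent from Corollary~\ref{cor-hit-ball'} for parameter $\alpha'$, and then set $\beta := \beta_0/2$. I would work on the polynomially high probability event obtained by applying Corollary~\ref{cor-hit-ball'} with $\ep^{1/\beta}$ in place of $\ep$, $\alpha'$ in place of $\alpha$, and a slight Euclidean enlargement $U'$ of $U$ in place of $U$. On this event, I can apply the corollary at the point $z_2$ to the filled ball $\mcl B_{(1-\ep)s}^{y,\bullet}(x)$: the required hypothesis $x,y \notin B_{\ep^{\alpha'/\beta}\BB r}(z_2)$ follows for small $\ep$ from $z_1 \notin B_{\ep^{\alpha/\beta}\BB r}(\{x,y\})$, $|z_1-z_2| < \ep^{1/\beta}\BB r$, and $\alpha' > \alpha$, while $\bdy\mcl B_{(1-\ep)s}^{y,\bullet}(x) \cap B_{\ep^{1/\beta}\BB r}(z_2) \ni z_2$ is immediate. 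This produces a loop $\pi_2 \subset \BB A_{\ep^{1/\beta}\BB r,\, \ep^{\alpha'/\beta}\BB r}(z_2)$ separating the inner and outer boundaries of the annulus, with $D_h$-length at most $L := \ep^{\beta_0/\beta}(1-\ep)s$.

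Next I would show $\pi_2 \subset \mcl B_{(1-\ep)s+L}(x)$ and that $\pi_2$ separates $z_1$ from $y$. A $D_h$-geodesic from $x$ to $z_2$ (which exists by Lemma~\ref{lem-geodesic-cont}) has length $(1-\ep)s$ and starts outside the annulus while ending inside the inner hole, so it must hit $\pi_2$ at some point $w_0$ with $D_h(x,w_0) \leq (1-\ep)s$. Since any other $w \in \pi_2$ satisfies $D_h(w_0,w) \leq L$ by going around the loop, $D_h(x,w) \leq (1-\ep)s + L$, giving the containment. Because $z_1 \in B_{\ep^{1/\beta}\BB r}(z_2)$ while $x, y$ lie outside $B_{\ep^{\alpha'/\beta}\BB r}(z_2)$, the loop $\pi_2$ (and hence $\mcl B_{(1-\ep)s+L}(x)$) separates $z_1$ from $y$, so $z_1 \in \mcl B_{(1-\ep)s+L}^{y,\bullet}(x)$.

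Finally, the choice $\beta = \beta_0/2$ gives $L \leq \ep^{2}(1-\ep)s < \ep s$ for small $\ep$, so $(1-\ep)s + L < s$; by monotonicity of filled metric balls in the radius, $\mcl B_{(1-\ep)s+L}^{y,\bullet}(x) \subset \mcl B_s^{y,\bullet}(x)$. By Lemma~\ref{lem-outer-bdy-dist}, $D_h(x,z_1) = s > (1-\ep)s + L$, so $z_1 \notin \mcl B_{(1-\ep)s+L}(x)$; hence $z_1$ lies in a Euclidean-open complementary component of the metric ball $\mcl B_{(1-\ep)s+L}(x)$, and this component is contained in $\mcl B_{(1-\ep)s+L}^{y,\bullet}(x)$. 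Therefore $z_1 \in \op{int}\mcl B_s^{y,\bullet}(x)$, contradicting $z_1 \in \bdy\mcl B_s^{y,\bullet}(x)$. The main obstacle I expect is managing the interplay of exponents: one must simultaneously arrange that the hypotheses of Corollary~\ref{cor-hit-ball'} for $z_2$ are inherited from those for $z_1$ (forcing $\alpha' > \alpha$) and that the resulting loop length is strictly smaller than $\ep s$ (forcing $\beta < \beta_0(\alpha')$), and the condition $s \geq \ep \frk c_{\BB r} e^{\xi h_{\BB r}(0)}$ should enter via the uniformity of Corollary~\ref{cor-hit-ball'} across scales.
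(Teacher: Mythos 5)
Your proof is correct, but it takes a slightly different route than the paper while relying on the same key input. You center the annulus from Corollary~\ref{cor-hit-ball'} at $z_2 \in \bdy\mcl B_{(1-\ep)s}^{y,\bullet}(x)$, the \emph{inner} ball boundary. The paper instead centers at $z \in \bdy\mcl B_s^{y,\bullet}(x) \cap (\BB r U\setminus B_{\ep^{\alpha/\beta}\BB r}(\{x,y\}))$, the point of the \emph{outer} boundary that the lemma statement already singles out. The paper's choice is more economical on two counts. First, the hypotheses of Corollary~\ref{cor-hit-ball'} are inherited verbatim from the lemma statement at $z$, whereas your $z_2$ lies only near $z_1$ and you must introduce an auxiliary exponent $\alpha' > \alpha$ (and a slightly enlarged $U'$) to transfer the conditions $x,y \notin B_{\ep^{\alpha'/\beta}\BB r}(z_2)$ and $z_2 \in \BB r U'$. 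Second, once the short loop $\pi$ around $z$ is produced, the paper's contradiction is immediate: since $\pi$ disconnects $z$ from $\{x,y\}$ and both $\mcl B_s^{y,\bullet}(x)$ and $\mcl B_{(1-\ep)s}^{y,\bullet}(x)$ are connected and contain $x$, while $z$ and (by hypothesis) a point of $\bdy\mcl B_{(1-\ep)s}^{y,\bullet}(x)$ lie inside $\pi$, the connected set $\pi$ must hit both boundaries; hence $D_h\bigl(\bdy\mcl B_{(1-\ep)s}^{y,\bullet}(x),\bdy\mcl B_s^{y,\bullet}(x)\bigr) \le (\ep/2)s$, contradicting the value $\ep s$ forced by Lemma~\ref{lem-outer-bdy-dist}. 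Your alternative --- showing $\pi_2 \subset \mcl B_{(1-\ep)s+L}(x)$, that $\pi_2$ separates $z_1$ from $y$, and that $z_1$ therefore lies in a Euclidean-open complementary component of the metric ball inside $\mcl B_s^{y,\bullet}(x)$, contradicting $z_1 \in \bdy\mcl B_s^{y,\bullet}(x)$ --- is valid but takes a longer topological detour to reach the same contradiction. Both arguments hinge on the pairing of Corollary~\ref{cor-hit-ball'} with the exact radius-to-distance relation of Lemma~\ref{lem-outer-bdy-dist}.
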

\begin{proof}
Let $\wt\beta = \wt\beta(\alpha)  > 0$ be the parameter $\beta$ from Corollary~\ref{cor-hit-ball'} and let $\beta = \wt\beta/2$. By Corollary~\ref{cor-hit-ball'} (applied with $\ep^{1/\beta}$ instead of $\ep$), it holds with polynomially high probability as $\ep\rta 0$ that for each $x \in \BB C$, each $y\in \BB C \cup \{\infty\}$, each $s\in [0,D_h(x,y)]$, and each $z\in  \bdy \mcl B_s^{y,\bullet}(x)  \cap (\BB r U\setminus B_{\ep^{\alpha/\beta} \BB r}(\{x,y\}) )$,  
\eqb
D_h\left( \text{around $\BB A_{\ep^{1/\beta} \BB r  , \ep^{\alpha/\beta} \BB r }(z) $} \right) 
\leq   \ep^2 s .
\eqe
We henceforth work on the polynomially high probability event that this is the case. 

Let $x,y,s,z$ be as above with $x$ non-singular and let $\pi$ be a path in $\BB A_{\ep^{1/\beta} \BB r  , \ep^{\alpha/\beta} \BB r }(z) $ which disconnects the inner and outer boundaries of this annulus and has $D_h$-length at most $(\ep/2) s$. 
If $\bdy \mcl B_{(1-\ep)s}^{y,\bullet}(x) \cap B_{\ep^{1/\beta} \BB r}(z) \not=\emptyset$, then since $x,y\notin B_{\ep^{\alpha/\beta} \BB r}(z)$ it must be the case that each of $\bdy\mcl B_{(1-\ep)s}^{y,\bullet}(x)$ and $\bdy\mcl B_s^{y,\bullet}(x)$ intersects $\pi$. This implies that the $D_h$-distance between $\bdy\mcl B_{(1-\ep)s}^{y,\bullet}(x)$ and $\bdy\mcl B_s^{y,\bullet}(x)$ is at most $(\ep/2) s$. This cannot be the case since Lemma~\ref{lem-outer-bdy-dist} implies the $D_h$-distance between  $\bdy\mcl B_{(1-\ep)s}^{y,\bullet}(x)$ and $\bdy\mcl B_s^{y,\bullet}(x)$ is $\ep s$. Therefore, $\bdy \mcl B_{(1-\ep)s}^{y,\bullet}(x) \cap B_{\ep^{1/\beta} \BB r}(z)  =\emptyset$, so~\eqref{eqn-bdy-dist} holds. 
\end{proof}

\begin{proof}[Proof of Lemma~\ref{lem-bdy-dist0}]
Let $\beta$ be the parameter from Lemma~\ref{lem-bdy-dist} with $\alpha = 1/2$. By Lemma~\ref{lem-ball-contain}, it holds with probability tending to 1 as $\delta \rta 0$ that $B_{\delta^{1/(2\beta) }\BB r}(0) \subset \mcl B_{\tau_{\BB r}}^\bullet$, which means that also $B_{\delta^{1/(2\beta)} \BB r}(0) \subset \mcl B_s^\bullet$ for each $s\geq \tau_{\BB r}$.  
Furthermore, by tightness across scales (Axiom~\ref{item-metric-coord}) it holds with probability tending to 1 as $\delta \rta 0$ that $\tau_{\BB r} \geq \delta \frk c_{\BB r} e^{\xi h_{\BB r}(0)}$. 
Hence with probability tending to 1 as $\delta \rta 0$, we have $\tau_{\BB r} \geq \delta \frk c_{\BB r} e^{\xi h_{\BB r}(0)}$ and $\bdy\mcl B_s^\bullet \cap (B_{b\BB r}(0) \setminus B_{\delta^{1/(2\beta)}\BB r}(0))= \bdy\mcl B_s^\bullet $ for each $s\in [\tau_{\BB r} , \tau_{b\BB r}]$. 

We now apply Lemma~\ref{lem-bdy-dist} (with $U = B_b(0)$) and a union bound over dyadic values of $\ep$, followed by the estimate of the preceding paragraph, to get that with probability tending to 1 as $\delta \rta 0$, the following is true. For $\ep \in (0,\delta) \cap \{2^{-k}\}_{k\in\BB N}$ and each $s\in [\tau_{\BB r} , \tau_{b\BB r}]$,
\eqb \label{eqn-bdy-dist0-eps}
\op{dist}\left( \bdy \mcl B_s^\bullet, \bdy\mcl B_{(1-\ep) s}^\bullet \right) \geq \ep^{1/ \beta} \BB r . 
\eqe
By Lemma~\ref{lem-set-tightness}, for any $p\in (0,1)$ we can find $C = C(p , b) > 1$ such that for each $\BB r > 0$,  
\eqb \label{eqn-bdy-dist0-reg}
\BB P\left[ C^{-1}  \frk c_{\BB r}  e^{ \xi h_{\BB r}(0)} \leq \tau_{\BB r} \leq \tau_{b\BB r} \leq C  \frk c_{\BB r}  e^{ \xi h_{\BB r}(0)}  \right] \geq p    .
\eqe 

Now suppose that the event in~\eqref{eqn-bdy-dist0-reg} holds and the event in~\eqref{eqn-bdy-dist0-eps} holds with $C\delta$ in place of $\delta$, which happens with probability at least $p - o_\delta(1)$. By~\eqref{eqn-bdy-dist0-reg}, for any $s,t\in [\tau_{\BB r} , \tau_{b\BB r}]$ with $s- \delta  \frk c_{\BB r} e^{\xi h_{\BB r}(0)}   \leq t \leq s$, we have $t \leq (1-\ep) s$ for some dyadic $\ep \in (0, C\delta)$ which satisfies
\eqb \label{eqn-bdy-dist0-choice}
\ep \geq \frac{s-t}{2s} \geq \frac{1}{2C} \frac{s-t}{\frk c_{\BB r} e^{\xi h_{\BB r}(0)}  } .
\eqe
We conclude by combining~\eqref{eqn-bdy-dist0-choice} with~\eqref{eqn-bdy-dist0-eps}, replacing $\beta$ by a slightly smaller number to absorb the factor of $1/(2C)$ into a small power of $\ep$, and noting that the parameter $p$ from~\eqref{eqn-bdy-dist0-reg} can be made arbitrarily close to 1. 
\end{proof}

\subsection{The metric net is finite-dimensional}
\label{sec-net-dim}

We will now use Lemma~\ref{lem-bdy-dist} to prove Theorem~\ref{thm-net-dim}. 
Since we are proving an a.s.\ statement, we no longer need to include the Euclidean scale parameter $\BB r$.

\begin{proof}[Proof of Theorem~\ref{thm-net-dim}] 
We write $\dim_h$ for Hausdorff dimension w.r.t.\ $D_h$. 
Fix a Euclidean-bounded open set $U\subset \BB C$, a number $r > 0$, and numbers $s_2 > s_1 > 0$. By the countable stability of Hausdorff dimension, it suffices to show that there exists $\Delta \in (0,\infty)$ (not depending on $U , r , s_1,s_2$) such that a.s.\ for each non-singular point $x \in U$ and each $y\in\BB C \cup \{\infty\}$, 
\eqb \label{eqn-net-dim-show}
\dim_h\left( \left( \mcl N_{s_2}^y(x) \setminus \mcl N_{s_1}^y(x) \right) \cap \left( U \setminus B_r(\{x,y\}) \right) \right) \leq \Delta .
\eqe

See Figure~\ref{fig-net-dim} for an illustration of the proof. The idea is as follows. We consider the set of $\ep^{1/\beta} \times \ep^{1/\beta}$ squares with corners in $\ep^{1/\beta}\BB Z^2$ which intersect a neighborhood of $U$. By Proposition~\ref{prop-two-set-dist} and an estimate for the maximum of the circle average process $h_{\ep^{1/\beta}}$, each of these squares can be surrounded by a path $\pi_S$ of Euclidean diameter comparable to $\ep^{1/\beta}$ whose $D_h$-length is at most a negative power of $\ep$. The number of $D_h$-balls of radius $\ep$ needed to cover each of these paths is at most a negative power of $\ep$. Using Lemma~\ref{lem-bdy-dist}, we show that for each $s\in [s_1,s_2]$, each $D_h$-geodesic from a point of $\bdy\mcl B_s^{y,\bullet}(x)$ to $x$ must hit $\pi_S$ for one of the $\ep^{1/\beta} \times \ep^{1/\beta}$ squares $S$ which intersects $\bdy\mcl B_{s-\ep}^{y,\bullet}(x)$, and it must do so before time $\ep$. This shows that the set in~\eqref{eqn-net-dim-show} is contained in the union of a polynomial (in $\ep$) number of $D_h$-balls of radius $2\ep$. 
\medskip

\begin{figure}[ht!]
\begin{center}
\includegraphics[scale=1]{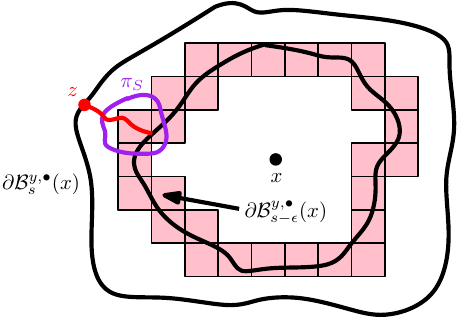} 
\caption{\label{fig-net-dim} Illustration of the proof of Theorem~\ref{thm-net-dim}. The figure shows a point $z\in \bdy\mcl B_s^{y,\bullet}(x)$ for some $s\in [s_1,s_2]$ (red) and the subset of $\mcl S_\ep$ consisting of squares $S$ which intersect $\bdy\mcl B_{s-\ep}^{y,\bullet}(x)$ (pink). Thanks to Lemma~\ref{lem-bdy-dist}, we can arrange that each of these squares lies at Euclidean distance at least $\ep^{1/\beta}$ from $\bdy\mcl B_s^{y,\bullet}$. Moreover, using basic estimates for $D_h$ we can arrange that each square $S$ is surrounded by a path $\pi_S \subset B_{\ep^{1/\beta}}(S) \setminus S$ whose $D_h$-length is bounded above by a negative power of $\ep$ (one such path is shown in purple). Hence the number of $D_h$-balls needed to cover $\pi_S$ is at most a negative power of $\ep$. If $P$ is a $D_h$-geodesic from 0 to $z$, then $P([s-\ep,s])$ (red) must intersect $\pi_S$ for some $S\in\mcl S_\ep$, so $z$ is contained in the $\ep$-neighborhood of one of the $D_h$-metric balls in our covering of $\pi_S$.  This leads to an upper bound for the number of $D_h$-balls of radius $2\ep$ needed to cover the metric net. 
}
\end{center}
\end{figure}

\noindent\textit{Step 1: regularity events.}
Let $\wt\beta > 0$ be the parameter $\beta$ from Lemma~\ref{lem-bdy-dist} with $\alpha =1/2$, say, and let $\beta = \wt\beta/2$. 
By Lemma~\ref{lem-bdy-dist}, it holds with probability tending to 1 as $\ep\rta 0$ that for each non-singular point $x\in \BB C$, each $y\in\BB C\cup\{\infty\}$, and each $s \in [s_1 , s_2 \wedge D_h(x,y) ]$, 
\eqb \label{eqn-use-bdy-dist} 
\op{dist}\left( \bdy \mcl B_s^{y,\bullet}(x) \cap (U\setminus B_r(\{x,y\}) ) , \bdy \mcl B_{s - \ep}^{y,\bullet}(x)  \right) \geq 4 \ep^{1/\beta} . 
\eqe
Note that here we have used that $s \in [s_1,s_2]$ to absorb an $s$-dependent constant factor into a power of $\ep$. 

Let $\mcl S_\ep$ be the set of $\ep^{1/\beta} \times \ep^{1/\beta}$ squares with corners in $\ep^{1/\beta} \BB Z^2$ which intersect the LQG $s_2$-neighborhood $\mcl B_{s_2}(U)$.
For each $S\in\mcl S_\ep$, we define the annular region
\eqb
A_S := B_{\ep^{1/\beta}}(S) \setminus S .
\eqe 
Since $\mcl B_{s_2}(U)$ is a.s.\ contained in some Euclidean-bounded open set, we can apply Proposition~\ref{prop-two-set-dist} and a union bound over $S\in\mcl S_\ep$ to get that with probability tending to 1 as $\ep\rta 0$, 
\eqb \label{eqn-net-dim-around0}
D_h\left(\text{around $A_S$}\right) \leq \ep^{o_\ep(1)} \frk c_{\ep^{1/\beta}} e^{\xi h_{\ep^{1/\beta}}(v_S)} , \quad\forall S \in \mcl S_\ep , 
\eqe
where $v_S$ is the center of $S$ and the rate of convergence of the $o_\ep(1)$ is deterministic and uniform over all $S\in\mcl S_\ep$. 
 
The random variables $e^{\xi h_{\ep^{1/\beta}}(v_S)}$ for $S\in\mcl S_\ep$ are centered Gaussian with variances $\log\ep^{-1/\beta}  + O_\ep(1)$. By the Gaussian tail bound and a union bound over $O_\ep(\ep^{-2/\beta})$ squares, we get that with probability tending to 1 as $\ep\rta 0$, we have $h_{\ep^{1/\beta}}(v_S) \leq (2/\beta + o_\ep(1) ) \log\ep^{-1}$ for each $S\in\mcl S_\ep$. By Proposition~\ref{prop-scaling-constants}, we also have $\frk c_{\ep^{1/\beta}} = \ep^{(1/\beta) \xi Q + o_\ep(1)}$.  
By plugging these estimates into~\eqref{eqn-net-dim-around0}, we get that with probability tending to 1 as $\ep\rta 0$, 
\eqb \label{eqn-net-dim-around}
D_h\left(\text{around $A_S$}\right) \leq   \ep^{-(1/\beta) (2-Q) + o_\ep(1)}       , \quad\forall S \in \mcl S_\ep , 
\eqe
where the rate of convergence of the $o_\ep(1)$ is deterministic and uniform over all $S\in\mcl S_\ep$. 

Henceforth assume that~\eqref{eqn-use-bdy-dist} and~\eqref{eqn-net-dim-around} both hold, which happens with probability tending to 1 as $\ep\rta 0$. We will prove an upper bound for the number of $D_h$-balls of radius $\ep$ needed to cover the set on the left side of~\eqref{eqn-net-dim-show}.
\medskip

\noindent\textit{Step 2: defining a collection of $D_h$-metric balls.}
For $S\in\mcl S_\ep$, let $\pi_S$ be a path in $A_S$ which separates the inner and outer boundaries of $A_S$ and which has $D_h$-length at most $\ep^{-(1/\beta) (2-Q) + o_\ep(1)} $ (such a path exists by~\eqref{eqn-net-dim-around}). There is a set $\mcl M_S$ of $\#\mcl M_S \leq \ep^{-(1/\beta)(2-Q) - 1 + o_\ep(1)}$ $D_h$-metric balls of radius $\ep$ whose union contains $\pi_S$. Since $\#\mcl S_\ep = O_\ep(\ep^{-2/\beta})$, we have 
\eqb \label{eqn-net-ball-count} 
\#\left( \bigcup_{S\in\mcl S_\ep} \mcl M_S \right) \leq \ep^{-\Delta + o_\ep(1) } \quad \text{for $\Delta = \frac{1}{\beta}(4-Q) + 1$} .
\eqe

By the definition of Hausdorff dimension, to prove~\eqref{eqn-net-dim-show} with $\Delta$ as in~\eqref{eqn-net-ball-count}, it suffices to show (continuing to assume~\eqref{eqn-use-bdy-dist} and~\eqref{eqn-net-dim-around}), that for each non-singular point $x\in\BB C$ and $y\in\BB C\cup \{\infty\}$,
\eqb \label{eqn-net-dim-show'} 
\left( \mcl N_{s_2}^y(x) \setminus \mcl N_{s_1}^y(x) \right) \cap \left( U \setminus B_r(\{x,y\}) \right) \subset \bigcup_{S\in\mcl S_\ep} \bigcup_{\mcl B \in \mcl M_S}   \mcl B' ,
\eqe
where $  \mcl B'$ denotes the $D_h$-ball with the same center as $\mcl B$ and twice the radius.
\medskip

\noindent\textit{Step 3: covering the metric net.}
To prove~\eqref{eqn-net-dim-show'}, let $x\in\BB C, y\in \BB C \cup \{\infty\}  , s \in [s_1, D_h(x,y) \wedge s_2]$, and $z\in \bdy\mcl B_s^{y,\bullet}(x) \cap (U\setminus B_r(\{x,y\}) )$. We need to show that $z\in \mcl B'$ for some $\mcl B \in \bigcup_{S\in\mcl S_\ep} \mcl M_S$. Let $P$ be a $D_h$-geodesic from $x$ to $z$. By Lemma~\ref{lem-outer-bdy-dist}, $D_h(x,z) = s$ so $P : [0,s] \rta \BB C$. We have $P(s-\ep) \in \bdy\mcl B_{s-\ep}^{y,\bullet}(x)$. Furthermore, since $x \in U$ and $s\leq s_2$, we have $P(s-\ep) \in \mcl B_{s_2}(U)$, so there exists $S\in \mcl S_\ep$ such that $P(s-\ep) \in S$. 
By~\eqref{eqn-use-bdy-dist}, $P(s-\ep)$ lies at Euclidean distance at least $4\ep^{1/\beta}$ from $z = P(s)$, so $z \notin B_{\ep^{1/\beta}}(S)$. Therefore, the path $\pi_S$ disconnects $P(s-\ep)$ from $z$, so $P$ must cross $\pi_S$ between time $s-\ep$ and time $s$. This implies that there is a point of $\pi_S$ which lies at $D_h$-distance at most $\ep$ from $z$. This point is contained in $\mcl B$ for one of the $\ep$-balls $\mcl B \in \mcl M_S$. Therefore, $z\in\mcl B'$, as required.
\end{proof}

Our proof of Theorem~\ref{thm-net-dim} also yields the following proposition, which is a slightly stronger version of the compactness statement from Theorem~\ref{thm-outer-bdy}. 
 
\begin{prop} \label{prop-ball-bdy-compact}
Almost surely, for each $x\in\BB C$, each $y\in\BB C \cup \{\infty\}$, and each $0 < s_1 < s_2 < D_h(x,y)$, the set $\mcl N_{s_2}^y(x) \setminus \mcl N_{s_1}^y(x)$ is precompact w.r.t.\ $D_h$ (i.e., its $D_h$-closure is compact). 
\end{prop}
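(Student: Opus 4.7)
The plan is to observe that the $D_h$-ball covering constructed inside the proof of Theorem~\ref{thm-net-dim} in fact shows that $\mcl N_{s_2}^y(x) \setminus \mcl N_{s_1}^y(x)$ is totally bounded with respect to $D_h$, and then to upgrade total boundedness to precompactness using the Euclidean compactness of the ambient set together with the lower semicontinuity of $D_h$. If $x$ is a singular point, then $\mcl B_s(x) = \{x\}$ for every finite $s$, so the set in question is trivial and the statement is immediate; I henceforth assume $x$ is non-singular.

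First I would check that $\mcl N_{s_2}^y(x)\setminus\mcl N_{s_1}^y(x)$ lies inside a Euclidean-compact region bounded away from $\{x,y\}$. By Lemma~\ref{lem-ball-closed} and Lemma~\ref{lem-bounded}, the filled ball $\mcl B_{s_2}^{y,\bullet}(x)$ is Euclidean-closed and Euclidean-bounded, so it is contained in some $B_n(0)$. Since $s_2 < D_h(x,y)$, the point $y$ (if $y \in \BB C$) lies outside the Euclidean-closed set $\mcl B_{s_2}^{y,\bullet}(x)$ and is thus at positive Euclidean distance from it. Lemma~\ref{lem-ball-contain-all} produces a Euclidean ball $B_{r_0}(x)$ in the interior of $\mcl B_{s_1}^{y,\bullet}(x)$, and every point of $\bdy\mcl B_t^{y,\bullet}(x)$ with $t > s_1$ lies outside this interior, so the set is also at positive Euclidean distance from $x$. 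Hence there exist $n,m \in \BB N$ (depending on $x,y,s_1,s_2$) with $\mcl N_{s_2}^y(x)\setminus \mcl N_{s_1}^y(x) \subset B_n(0)\setminus B_{1/m}(\{x,y\})$.

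Next I would invoke the covering estimate from the proof of Theorem~\ref{thm-net-dim}: for each fixed rational $0 < s_1' < s_2'$ and each $n,m \in \BB N$, that proof shows with probability tending to $1$ as $\ep \rta 0$ that $(\mcl N_{s_2'}^y(x')\setminus \mcl N_{s_1'}^y(x'))\cap (B_n(0)\setminus B_{1/m}(\{x',y\}))$ is covered by at most $\ep^{-\Delta + o_\ep(1)}$ $D_h$-balls of radius $2\ep$, simultaneously over all non-singular $x' \in \BB C$ and all $y \in \BB C \cup \{\infty\}$. A Borel--Cantelli extraction along a sequence $\ep_k \rta 0$, combined with a countable intersection over such quadruples $(s_1',s_2',n,m)$, produces a single almost-sure event on which the following holds: given $x,y,s_1,s_2$ as in the proposition, pick rationals $s_1' \leq s_1$ and $s_2' \in (s_2,D_h(x,y))$ and integers $n,m$ as above; the inclusion $\mcl N_{s_2}^y(x)\setminus\mcl N_{s_1}^y(x) \subset \mcl N_{s_2'}^y(x)\setminus \mcl N_{s_1'}^y(x)$ then delivers total boundedness with respect to $D_h$.

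Finally, to promote total boundedness to precompactness, I would take any sequence $\{z_n\}$ in $\mcl N_{s_2}^y(x) \setminus \mcl N_{s_1}^y(x)$. A standard diagonal extraction using the $2\ep_k$-ball coverings produces a $D_h$-Cauchy subsequence $\{z_{n_k}\}$, and Euclidean compactness of the ambient region produces a further subsequence converging Euclidean to some $z^*$. Lower semicontinuity of $D_h$ then yields, for any $\eta > 0$ and any $k$ large enough that $D_h(z_{n_k}, z_{n_{k'}}) < \eta$ for all $k' \geq k$,
\eqbn
D_h(z_{n_k}, z^*) \leq \liminf_{k' \rta \infty} D_h(z_{n_k}, z_{n_{k'}}) \leq \eta ,
\eqen
so $z_{n_k} \rta z^*$ with respect to $D_h$ and the $D_h$-closure of the set is sequentially compact, hence compact. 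The principal technical obstacle is simply arranging a single almost-sure event that handles every $x,y,s_1,s_2$ at once; the key simplification is that I only need to over-enlarge $s_2$, $n$, and $B_{1/m}(\{x,y\})$ and undershoot $s_1$, so countable rational/integer parameters suffice.
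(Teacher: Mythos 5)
Your argument is correct and follows the same route as the paper's own proof: quote the $D_h$-ball covering from the proof of Theorem~\ref{thm-net-dim}, extract an a.s.\ subsequence $\ep_k\rta 0$, take a countable intersection over rational enlargements of the radii and over the parameters that index the Euclidean exhaustion, and then observe that the monotonicity $\mcl N_{s_2}^y(x)\setminus\mcl N_{s_1}^y(x)\subset\mcl N_{s_2'}^y(x)\setminus\mcl N_{s_1'}^y(x)$ lets you pass to all real $s_1 < s_2$ at once. The one place you go further than the paper is the last step: the paper writes ``totally bounded w.r.t.\ $D_h$, hence precompact w.r.t.\ $D_h$'' and leaves it at that, whereas you justify the implication by extracting a $D_h$-Cauchy subsequence, using Euclidean compactness of the ambient region to find a Euclidean limit $z^*$, and then using lower semicontinuity of $D_h$ to upgrade the Euclidean limit to a $D_h$-limit. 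Since $(\BB C, D_h)$ is not obviously complete (indeed $D_h$ takes infinite values and has a dense set of singular points), spelling this out is a genuine improvement in clarity, and the chain $\liminf$ inequality you invoke is exactly what is needed. No gaps.
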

\begin{proof}
Let $s_2 > s_1 > 0$, let $U\subset\BB C$ be a Euclidean-bounded open set, and let $r > 0$. 
The proof of Theorem~\ref{thm-net-dim} shows there exists $\Delta > 0$ such that with probability tending to 1 as $\ep\rta 0$, it holds for each $x\in U$ and each $y\in \BB C\cup \{\infty\}$ that the set $\left( \mcl N_{s_2}^y(x) \setminus \mcl N_{s_1}^y(x) \right) \cap \left( U \setminus B_r(\{x,y\}) \right)$ can be covered by $\ep^{-\Delta + o_\ep(1)}$ $D_h$-balls of radius $\ep$.  
Hence, a.s.\ there is a sequence $\ep_k \rta 0$ (depending on $U,r$) such that for each $x,y$ as above and each $k\in\BB N$, this set can be covered by $\ep_k^{-2\Delta  }$ $D_h$-balls of radius $\ep_k$. 

Let $\{U_n\}_{n\in\BB N}$ be an increasing sequence of Euclidean-bounded open sets whose union is all of $\BB C$ and let $\{r_n\}_{n\in\BB N}$ be a sequence of positive radii tending to zero. By the conclusion of the preceding paragraph, a.s.\ for each $n\in\BB N$ there exists a sequence $\ep_{n,k}\rta 0$ such that for each $k\in\BB N$, each $x\in U_n$, and each $y\in \BB C\cup \{\infty\}$, the set $\left( \mcl N_{s_2}^y(x) \setminus \mcl N_{s_1}^y(x) \right) \cap \left( U_n \setminus B_{r_n}(\{x,y\}) \right)$ can be covered by $\ep_{n,k}^{-2\Delta  }$ $D_h$-balls of radius $\ep_{n,k}$.  

If $x$ is a singular point or $y = x$, then $\mcl N_{s_2}^y(x) \setminus \mcl N_{s_1}^y(x) = \emptyset$, so we can assume without loss of generality that $x$ is non-singular and $y\not=x$. 
For each non-singular $x\in\BB C$ and each $y\in \BB C\cup \{\infty\}$ with $y\not=x$, the set $\mcl N_{s_2}^y(x) \setminus \mcl N_{s_1}^y(x)$ is Euclidean-bounded, so both $x$ and this set are contained in $U_n$ for each large enough $n\in\BB N$. By Lemma~\ref{lem-ball-contain-all}, $\mcl B_{s_1}^{y,\bullet}(x)$ contains $B_{r_n}(x)$ for each large enough $n\in\BB N$, which implies that $\mcl N_{s_2}^y(x) \setminus \mcl N_{s_1}^y(x)$ is disjoint from $B_{r_n}(x)$ for each large enough $n\in\BB N$. Furthermore, if $s_2 < D_h(x,y)$ then since $\mcl B_{s_2}^{y,\bullet}(x) \supset \mcl N_{s_2}^y(x) \setminus \mcl N_{s_1}^y(x)$ is Euclidean-closed (Lemma~\ref{lem-ball-closed}) and does not contain $y$, this set lies at positive Euclidean distance from $y$. 

Hence, a.s.\ for each non-singular $x\in\BB C$ and each $y\in \BB C\cup \{\infty\}$ with $0 < s_2 < D_h(x,y)$, the set $\mcl N_{s_2}^y(x) \setminus \mcl N_{s_1}^y(x)$ is contained in $U_n \setminus B_{r_n}(\{x,y\})$ for each large enough $n\in\BB N$. Therefore, as shown above, $\mcl N_{s_2}^y(x) \setminus \mcl N_{s_1}^y(x)$ can be covered by finitely many $D_h$-balls of radius $\ep_{n,k}$ for each large enough $n \in\BB N$ and each large enough $k\in\BB N$. Hence $\mcl N_{s_2}^y(x) \setminus \mcl N_{s_1}^y(x)$ is totally bounded w.r.t.\ $D_h$, hence precompact w.r.t.\ $D_h$. 

This proves the proposition for a deterministic choice of $s_1$ and $s_2$. Every interval $[s_1,s_2] \subset (0,\infty)$ is contained in $[s_1', s_2']$ for some $s_1',s_2' \in \BB Q \cap (0,\infty)$ with $s_1',s_2'$ arbitrarily close to $s_1,s_2$. This gives the proposition statement in general. 
\end{proof}

\section{Outer boundaries of LQG metric balls are Jordan curves}
\label{sec-curve}

The goal of this section is to prove the following proposition, which is the missing ingredient needed to prove Theorem~\ref{thm-outer-bdy}. 

\begin{prop} \label{prop-jordan}
Almost surely, for each non-singular point $x\in\BB C$, each $y\in \BB C\cup \{\infty\}$, and each $s\in (0,D_h(x,y))$, the set $\bdy\mcl B_s^{y,\bullet}(x)$ is a Jordan curve.
\end{prop}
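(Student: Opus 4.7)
The plan is to deduce Proposition~\ref{prop-jordan} from a general topological criterion for when the boundary of a simply connected planar domain is a Jordan curve, and then to verify this criterion using the $D_h$-distance estimates from Section~\ref{sec-bdy-estimates}. First I would set up the topology. Let $\Omega$ be the connected component of $S^2\setminus\mcl B_s(x)$ containing $y$; then $\Omega=S^2\setminus\mcl B_s^{y,\bullet}(x)$ is a simply connected open subset of $S^2$ with $\bdy\Omega=\bdy\mcl B_s^{y,\bullet}(x)$. Using the existence of $D_h$-geodesics (Lemma~\ref{lem-geodesic-cont}) together with the H\"older continuity in Proposition~\ref{prop-holder}, the set $\mcl B_s(x)$ is Euclidean path-connected, and adjoining its complementary components yields that $\mcl B_s^{y,\bullet}(x)$ is connected as well. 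Combined with Lemma~\ref{lem-ball-contain-all}, $\Omega$ is a simply connected subdomain of $S^2$ whose complement is connected and contains more than one point; in particular $\bdy\Omega$ is a nondegenerate connected compact set.

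\textbf{Topological criterion.} I would first prove the following: for $\Omega\subset S^2$ simply connected with connected complement of more than one point, $\bdy\Omega$ is a Jordan curve if and only if (i) $\bdy\Omega$ is locally connected in the Euclidean topology, and (ii) for each $p\in\bdy\Omega$ and each sufficiently small $\delta>0$, there is exactly one connected component of $\Omega\cap B_\delta(p)$ whose Euclidean closure contains $p$. By Carath\'eodory's theorem, (i) is equivalent to continuous extension of a Riemann map from the unit disk to $\Omega$ up to the closed disk; under (i), condition (ii) is equivalent to injectivity of the boundary extension, and hence to the extension being a homeomorphism of the unit circle onto $\bdy\Omega$.

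\textbf{Verification via Lemma~\ref{lem-hit-ball}.} The main probabilistic input is Lemma~\ref{lem-hit-ball}, which for each $p\in\bdy\mcl B_s^{y,\bullet}(x)$ and each $\alpha\in(0,1)$ provides, at all sufficiently small scales $\rho$, a loop $\pi$ surrounding $p$ in $\BB A_{\rho,\rho^\alpha}(p)$ with $D_h$-length bounded by $\rho^\beta$ times the $D_h$-distance across the annulus (in particular, much smaller than any fixed positive number). For (i), the short loop $\pi$ acts as a small-diameter separating barrier, confining the connected component of $\bdy\mcl B_s^{y,\bullet}(x)\cap B_\epsilon(p)$ containing $p$ to within the $\rho^\alpha$-disk around $p$; combined with Lemma~\ref{lem-outer-bdy-dist} (which forces $D_h(x,\cdot)=s$ on all of $\bdy\mcl B_s^{y,\bullet}(x)$) and a planar topology argument, this yields arbitrarily small connected neighborhoods of $p$ in $\bdy\mcl B_s^{y,\bullet}(x)$. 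For (ii), a putative pinch point at $p$ would mean two distinct components of $\Omega\cap B_\delta(p)$ accumulate at $p$; the single loop $\pi$ around $p$ would then be forced to have arcs inside both components, and tracing $D_h$-geodesics from the crossings of $\pi$ with $\bdy\mcl B_s^{y,\bullet}(x)$ back to $x$, together with the Euclidean lower bound of Lemma~\ref{lem-bdy-dist} between the boundaries $\bdy\mcl B_t^{y,\bullet}(x)$ for nearby $t$, would contradict the small $D_h$-length of $\pi$.

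\textbf{Main obstacle.} The most delicate step will be verifying (ii), the no-pinch-point condition. Because $D_h$ does not induce the Euclidean topology in the supercritical case, Euclidean-close points can be $D_h$-far apart, so the contradiction with a pinch point must carefully combine the planar topology of $\pi$ with the quantitative $D_h$-estimates. Condition (i), local connectedness, should be more direct once the short-loop estimate is in hand. Uniformity over all $(x,y,s)$ as stated in Proposition~\ref{prop-jordan} will then follow from the polynomial rates in Lemma~\ref{lem-hit-ball} by a countable-dense argument analogous to the one used in the proof of Theorem~\ref{thm-net-dim}.
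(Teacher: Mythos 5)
Your overall framework --- represent the boundary as the boundary of a simply connected subdomain, extend a Riemann map continuously via Carath\'eodory, then upgrade to a homeomorphism --- is indeed the shape of the paper's argument. But there are two places where the proposal diverges from the paper in a way that matters, and the second one is a genuine gap.

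First, you work with the exterior domain $\Omega = S^2\setminus\mcl B_s^{y,\bullet}(x)$, while the paper works with the interior component $U_s^y(x)$ of $\operatorname{int}\mcl B_s^{y,\bullet}(x)$ containing $x$ (see Lemma~\ref{lem-metric-bdy}). This is not cosmetic: the paper's disconnection criterion (Proposition~\ref{prop-locally-connected}) is formulated relative to the center point $x$, and its verification (Lemma~\ref{lem-metric-disc}) leans on Lemma~\ref{lem-approx-path}, which produces near-geodesics from $x$ to points of $U_s^y(x)$ that remain inside $U_s^y(x)$. Geodesics from $x$ have no reason to stay in your $\Omega$, so the analogous control on Euclidean diameters of disconnected sets is much harder to get on the exterior side. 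Your sketch for condition (i) --- ``the short loop $\pi$ acts as a small-diameter separating barrier, confining the connected component of $\bdy\mcl B_s^{y,\bullet}(x)\cap B_\ep(p)$ containing $p$'' --- does not obviously work: the loop $\pi$ separates the plane but has no reason to be disjoint from $\bdy\mcl B_s^{y,\bullet}(x)$, so it does not confine that connected component. The paper instead avoids proving local connectedness directly, proving the weaker disconnection criterion of Proposition~\ref{prop-locally-connected} by a careful two-case argument (Lemma~\ref{lem-metric-disc}, Figures~\ref{fig-metric-disc} and~\ref{fig-Vinfty}) that crucially uses the interior parametrization.

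Second, and more seriously, the no-pinch-point step (your condition (ii)) is where the proposal actually fails. Your plan is to combine the small $D_h$-length of the loop $\pi$ with Lemma~\ref{lem-bdy-dist} to contradict a pinch point, but the exponents do not line up: Lemma~\ref{lem-hit-ball} gives a loop $\pi$ of Euclidean diameter $\asymp\rho^\alpha$ with $D_h$-length $\lesssim\rho^\beta s$, so the points of $\pi$ lie in $\mcl B_{(1+C\rho^\beta)s}(x)$ for some constant $C$; Lemma~\ref{lem-bdy-dist} then puts $\bdy\mcl B_{(1+C\rho^\beta)s}^{y,\bullet}(x)$ at Euclidean distance $\gtrsim\rho$ from $\bdy\mcl B_s^{y,\bullet}(x)$, which is \emph{smaller} than $\rho^\alpha$ since $\alpha<1$, and hence the loop may well straddle the outer boundary. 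No contradiction results from this accounting. What the paper uses instead is a purely topological observation (Lemma~\ref{lem-metric-multiple}): once $\bdy\mcl B_s^{y,\bullet}(x)$ is known to be a curve $\psi$, it has nontrivial winding number around $\{x,y\}$; if $\psi(u)=\psi(v)$ for $u\ne v$, then the restriction of $\psi$ to at least one of the two arcs $I,J$ of $\bdy\BB D$ bounded by $u,v$ is still nontrivial, i.e.\ $\psi(I)$ (say) disconnects $x$ from $y$. Then, since every point of $\bdy\mcl B_s^{y,\bullet}(x)$ is at $D_h$-distance exactly $s$ from $x$ (Lemma~\ref{lem-outer-bdy-dist}), a $D_h$-geodesic from $x$ to a point of $\psi(J)\setminus\psi(I)$ would have to cross $\psi(I)$ strictly before time $s$, which is impossible. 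This forces $\psi(J)\subset\psi(I)$, and ruling out cut points follows. This argument needs \emph{no} further probabilistic estimate; the estimates are spent on showing the boundary is a curve at all, not on showing it is simple. That homotopy/nesting insight is the ingredient missing from your proposal, and without it I do not see how your sketch for condition (ii) closes.
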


\subsection{A criterion for a domain boundary to be a curve}
\label{sec-jordan-criterion}

In this subsection we will prove a general criterion for the boundary of a planar domain to be a curve. Our criterion will be stated in terms of disconnecting sets, defined as follows. 

\begin{defn} \label{def-disconnect}
Let $X , Y\subset \BB C$ and $A_1,A_2\subset X$. We say that $Y$ \emph{disconnects} $A_2$ from $A_1$ in $X$ if the following is true: $A_1 $ is disjoint from $Y$; and any two points $x \in A_1$ and $y\in A_2 \setminus Y$ lie in different connected components of $X\setminus Y$. 
\end{defn}

We note that by definition $Y$ disconnects any subset of $Y\cap X$ from any subset of $X\setminus Y$.

\begin{prop} \label{prop-locally-connected}
Let $U \subset \BB C$ be a domain containing 0, not all of $\BB C$, such that $\bdy U$ is compact. 
We assume that either $U$ is bounded and simply connected; or  $U$ is unbounded and $U\cup \{\infty\}$ is a simply connected subset of the Riemann sphere.  
Suppose that for each $\ep > 0$, there exists $\delta > 0$ such that each subset of $ U$ which can be disconnected from 0 in $  U$ by a set $Y$ of Euclidean diameter at most $\delta$ with $Y\cap \bdy U\not=\emptyset$ has Euclidean diameter at most $\ep$. 
Then $\bdy U$ is the image of a (not necessarily simple) curve.
\end{prop}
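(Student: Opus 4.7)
The plan is to reduce the claim to Carath\'eodory's classical theorem on continuous boundary extensions of Riemann maps of simply connected planar domains.

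First I would reduce to the case where $U$ is bounded. When $U$ is unbounded, the set $U \cup \{\infty\}$ is an open simply connected subset of the Riemann sphere (open because $\bdy U$ is compact, so $\{z : |z|>R\} \cup \{\infty\} \subset U \cup \{\infty\}$ for large enough $R$). Picking a base point $p \in U$ and applying a M\"obius transformation $T$ of the sphere sending $p$ to $\infty$, the image $T(U \cup \{\infty\})$ is an open, bounded, simply connected subset of $\BB C$ with compact boundary, and $T$ is bilipschitz on any Euclidean neighborhood of $\bdy U$ that avoids $p$. Hence the disconnecting-set hypothesis transfers to $T(U)$ with $T(0)$ in place of $0$, and it suffices to show $\bdy T(U)$ is a curve; applying $T^{-1}$ then recovers $\bdy U$.

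So assume $U$ is bounded and simply connected. By the Riemann mapping theorem, let $\phi : \BB D \to U$ be a conformal isomorphism with $\phi(0) = 0$. The strategy is to verify the standard crosscut form of Carath\'eodory's extension theorem: $\phi$ extends continuously to $\ol{\BB D}$ if and only if for every $\ep > 0$ there exists $\delta > 0$ such that any crosscut $\gamma$ of $U$ (a Jordan arc in $\ol U$ whose interior lies in $U$ and whose two endpoints lie on $\bdy U$) with $\op{diam}(\gamma) \leq \delta$ divides $U \setminus \gamma$ into two components, one of which has Euclidean diameter at most $\ep$. Once this extension exists, the restriction $\ol\phi|_{\bdy \BB D} : \bdy \BB D \to \bdy U$ is a continuous surjection, so $\bdy U$ is the image of a closed curve.

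To verify the crosscut condition from the hypothesis, fix $\ep > 0$ and choose $\delta > 0$ from the hypothesis, small enough that also $\delta < \op{dist}(0, \bdy U)$. Given any crosscut $\gamma$ of $U$ with $\op{diam}(\gamma) \leq \delta$, the set $Y := \gamma$ has diameter at most $\delta$ and meets $\bdy U$ at its endpoints. Let $V$ be the connected component of $U \setminus \gamma$ not containing $0$ (if no such $V$ exists, there is nothing to show). Then $Y$ disconnects $V$ from $\{0\}$ in $U$ in the sense of Definition~\ref{def-disconnect}: $\{0\}$ is disjoint from $Y$, and every $y \in V \setminus Y = V$ lies in a different component of $U \setminus Y$ than $0$. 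The hypothesis then gives $\op{diam}(V) \leq \ep$, which is exactly what is needed.

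The main obstacle I anticipate is citing the right form of Carath\'eodory's theorem, since the usual textbook statement asserts equivalence of the continuous extension with local connectedness of $\bdy U$; one then invokes the standard equivalence between local connectedness of $\bdy U$ and the crosscut diameter criterion above (see, e.g., Pommerenke, \emph{Boundary Behaviour of Conformal Maps}, Chapter~2). A minor secondary technicality is the unbounded-case reduction, but this is routine given the compactness of $\bdy U$.
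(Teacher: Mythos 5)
Your overall strategy---verifying a crosscut form of Carath\'eodory's boundary-extension theorem from the disconnection hypothesis---is the same as the paper's, which does the crosscut/prime-end argument from scratch in Lemmas~\ref{lem-metric-prime-end} and~\ref{lem-metric-cont} rather than citing a packaged criterion. Your verification that the hypothesis implies the crosscut criterion (take $Y=\gamma$ for a crosscut $\gamma$ of diameter at most $\delta$, which meets $\bdy U$ at its endpoints, and note that the complementary component of $U\setminus\gamma$ not containing $0$ is then $\ep$-small) is correct and is exactly the content of the paper's Lemma~\ref{lem-metric-prime-end}.

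There is, however, a genuine gap in the reduction to the bounded case. A M\"obius map $T$ sending $p\in U$ to $\infty$ makes $T(U\cup\{\infty\})$ \emph{contain} $\infty$, hence it is unbounded, the opposite of what you claim; to get a bounded image one must send to $\infty$ a point $p$ lying in the interior of $\BB C\setminus U$. But that interior can be empty under the hypotheses of the proposition: the domain $U=\BB C\setminus[1,2]$ (with $0\in U$) has $\bdy U=[1,2]$ compact, $U\cup\{\infty\}$ simply connected in $\hat{\BB C}$, and it satisfies the disconnection hypothesis, yet $\BB C\setminus U=[1,2]$ has empty interior. Sending any $p\in[1,2]$ to $\infty$ puts $\infty$ on the boundary of the image, so the transformed domain is again unbounded with non-compact boundary, and the reduction fails. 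The remedy, which the paper adopts, is to skip the reduction entirely: when $U$ is unbounded, regard $\infty$ as an interior point of $U$ and apply the Riemann mapping theorem and the crosscut/prime-end theory directly to the spherical domain $U\cup\{\infty\}\subset\hat{\BB C}$. Since crosscuts of $U\cup\{\infty\}$ have their endpoints on the compact set $\bdy U\subset\BB C$, all the diameter bookkeeping stays Euclidean and your verification of the crosscut criterion goes through verbatim---no change of coordinates is needed.
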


The criterion of Proposition~\ref{prop-locally-connected} is similar in spirit to the concept of $\bdy U$ being locally connected (see, e.g.,~\cite[Section 2.2]{pom-book}), which is a different condition that implies that $\bdy U$ is a curve. The reason why we require that $Y\cap \bdy U \not=\emptyset$ is to rule out the possibility that $Y$ is a small loop surrounding 0, in which case $Y$ would disconnect most of $U$ from 0. 
 
For the proof of Proposition~\ref{prop-locally-connected}, we first need to recall some standard definitions from complex analysis. See, e.g.,~\cite{pom-book} for more detail on these concepts. 
A \emph{crosscut} of a domain $U\subset\BB C$ is a simple curve $C : [0,1] \rta \ol U$ such that $C(0) , C(1) \in \bdy U$ and $C((0,1)) \subset U$. 
If $\bdy U$ is bounded, we define a \emph{null chain} in $U$ to be a sequence of crosscuts $\{C_n\}_{n\in\BB N}$ with the following properties.
\begin{enumerate}[(i)]
\item $ C_n \cap   C_{n+1} =\emptyset$ for each $n\in\BB N$. 
\item $C_n $ disconnects $C_{n+1}$ from $C_1$ in $U$ for each $n\in\BB N$. 
\item As $n\rta\infty$, the Euclidean diameter of $C_n$ converges to zero. 
\end{enumerate}

If $\{C_n\}$ and $\{C_n'\}$ are two null chains, we say that $\{C_n\}$ and $\{C_n'\}$ are \emph{equivalent} if for each large enough $m\in\BB N$, there exists $n\in\BB N$ for which $C_m'$ disconnects $C_n$ from $C_1$ in $U$ and $C_m$ disconnects $C_n'$ from $C_1'$ in $U$. 
A \emph{prime end} of $\bdy U$ is an equivalence class of null chains. 

For a prime end $p$ represented by a null chain $\{C_n\}$, we define $A_p$ to be the intersection over all $n\in\BB N$ of the closure of the set of points in $  U$ which are disconnected from $C_1$ by $C_n$ in $ U$. Then $A_p \subset \bdy U$. We call $A_p$ the \emph{set of points corresponding to $p$}. 

In the next two lemmas, we assume that $U$ is a domain satisfying the hypotheses of Proposition~\ref{prop-locally-connected}. 

\begin{lem} \label{lem-metric-prime-end}
Each prime end of $U$ corresponds to a single point of $\bdy U$.
\end{lem}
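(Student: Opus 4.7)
The plan is to use the disconnection hypothesis of Proposition~\ref{prop-locally-connected} to force the Euclidean diameters of the regions cut off by successive crosscuts in a null chain representing $p$ to shrink to zero.

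Let $p$ be a prime end represented by a null chain $\{C_n\}_{n\in\BB N}$, and for each $n$ write $V_n$ for the set of points of $U$ disconnected from $C_1$ by $C_n$ in $U$, so that $A_p = \bigcap_n \ol{V_n}$. I first want to record some structural properties: because $U$ is simply connected (respectively, $U\cup\{\infty\}$ is simply connected in the unbounded case), a crosscut splits $U$ into two connected components, so the complementary region $W_n := U\setminus (V_n\cup C_n)$ containing $C_1$ is connected and $V_n$ itself is a connected component of $U\setminus C_n$. Moreover the $V_n$ are nested decreasing, the $W_n$ are nested increasing, and $\op{diam}(C_n)\rta 0$ by definition of a null chain.

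Next I would locate the origin relative to the null chain. Since $A_p\subset \bdy U$ and $0\in U$, there is some $N\in\BB N$ with $0\notin \ol{V_N}$; by monotonicity, $0\in W_n$ for every $n\geq N$. The reason this matters is that for each such $n$ the crosscut $C_n$ now disconnects $V_n$ from $0$ in $U$ in the sense of Definition~\ref{def-disconnect}, because $V_n$ and $\{0\}$ lie in distinct connected components of $U\setminus C_n$ and $0\notin C_n$. Also $C_n\cap \bdy U$ contains the two endpoints of the crosscut, so in particular is nonempty.

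With these properties in hand, I can invoke the hypothesis directly. Given any $\ep > 0$, take the corresponding $\delta > 0$ from the proposition statement and choose $n\geq N$ large enough that $\op{diam}(C_n)\leq \delta$; applying the hypothesis with $Y = C_n$ yields $\op{diam}(V_n)\leq \ep$, and hence $\op{diam}(\ol{V_n})\leq \ep$ as well. Sending $\ep\rta 0$, the nested closed sets $\ol{V_n}$ have diameters shrinking to zero, so $A_p = \bigcap_n \ol{V_n}$ is a single point.

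The only potentially delicate point I anticipate is the mismatch between the hypothesis (which controls subsets disconnected from $0$) and the definition of $A_p$ (which is phrased in terms of disconnection from $C_1$); this is handled by observing that $0$ must eventually lie on the $C_1$-side of the null chain, an observation that only uses the fact $A_p\subset \bdy U$ already recorded in the excerpt.
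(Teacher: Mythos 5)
Your proof is correct and follows essentially the same route as the paper's: locate a tail of the null chain so that $0$ lies on the $C_1$-side (the paper does this by discarding finitely many crosscuts ``without loss of generality,'' while you justify it explicitly via $A_p\subset\bdy U$), then apply the disconnection hypothesis of Proposition~\ref{prop-locally-connected} with $Y=C_n$ to force $\op{diam}(\ol V_n)\rta 0$. The extra structural observations you record (that crosscuts split $U$ into exactly two components, the nesting of $V_n$ and $W_n$, and that the endpoints of $C_n$ supply $Y\cap\bdy U\neq\emptyset$) are all implicitly used in the paper's terser version.
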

\begin{proof}
Let $p $ be a prime end for $U$ and let $\{C_n\}_{n\in\BB N}$ be a null chain corresponding to $p$. By possibly removing finitely many of the $C_n$'s, we can assume without loss of generality that $C_1$ disconnects 0 from $C_n$ for each $n \geq 2$. 
Since the diameter of $C_n$ tends to zero as $n\rta\infty$, our assumption on $U$ implies that the diameter of the set of points in $U$ which are disconnected from 0 in $U$ by $C_n$, hence also its closure, tends to zero as $n\rta\infty$. Hence the (decreasing) intersection of the closures of these sets has diameter zero, so is a single point. 
\end{proof}

In what follows, if $U$ is unbounded we view $\infty$ as a point of $U$, so that by the Riemann mapping theorem there exists a conformal map from the open unit disk $\BB D$ to $U$. 

\begin{lem} \label{lem-metric-cont}
Every conformal map $\phi : \BB D \rta U$ extends to a continuous map $\ol{\BB D} \rta \ol U$. 
\end{lem}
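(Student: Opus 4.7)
The plan is to deduce the lemma from Carath\'eodory's prime end theorem, using Lemma~\ref{lem-metric-prime-end} as the key input. Recall that the classical theorem (see, e.g., \cite[Theorem~2.15]{pom-book}) states exactly the following: a conformal map from $\BB D$ onto a simply connected proper subdomain of $\hat{\BB C}$ extends continuously to $\ol{\BB D}$ with values in the closure of the domain if and only if every prime end of the domain has a singleton impression.

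The first step is to treat the bounded case. For the conformal map $\phi : \BB D \to U$ with $U$ bounded and simply connected, I would invoke Carath\'eodory's prime end theorem, which provides a homeomorphism $\Phi : \ol{\BB D} \to U \cup \op{Ends}(U)$ extending $\phi$, where $\op{Ends}(U)$ carries the prime end topology. The desired extension $\wt\phi : \ol{\BB D} \to \ol U$ is then obtained by composing $\Phi$ with the natural map $U \cup \op{Ends}(U) \to \ol U$ sending each interior point to itself and each prime end $p$ to the unique point in its impression $A_p$. By Lemma~\ref{lem-metric-prime-end} each $A_p$ is a singleton, so this composition is well-defined, and its continuity is precisely the content of Carath\'eodory's theorem under the singleton impression condition.

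The second step is to handle the unbounded case on the Riemann sphere. Per the convention stated just before the lemma, $\infty$ is treated as a point of $U$, so $U$ is a simply connected open subset of $\hat{\BB C}$ containing $\infty$ as an interior point (the latter follows from the compactness of $\bdy U$: for $R$ large enough, $\{|z|>R\} \subset U$). The spherical version of Carath\'eodory's theorem applies directly, and in combination with Lemma~\ref{lem-metric-prime-end} produces a continuous extension $\wt\phi : \ol{\BB D} \to \ol U$, where $\ol U$ denotes the closure in $\hat{\BB C}$. Since $\infty$ is an interior point of $U$, the boundary values of $\wt\phi$ all lie in $\bdy U \subset \BB C$, as required.

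The main obstacle here is not in the proof of this lemma itself, which is essentially a verification once Lemma~\ref{lem-metric-prime-end} is in hand, but rather in the downstream task of verifying the disconnecting set hypothesis of Proposition~\ref{prop-locally-connected} for $U$ equal to the interior of a filled metric ball --- this is where the refined estimates of Section~\ref{sec-bdy-estimates}, in particular Lemma~\ref{lem-hit-ball} and Lemma~\ref{lem-bdy-dist0}, will play the essential role.
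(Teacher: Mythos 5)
Your approach is correct and, in spirit, the same as the paper's: establish that every prime end of $U$ has a singleton impression (Lemma~\ref{lem-metric-prime-end}), then conclude continuous extension of the conformal map. The difference is in how the implication ``all singleton impressions $\Rightarrow$ continuous extension'' is handled. You delegate it entirely to a cited form of Carath\'eodory's theorem, while the paper proves that implication from scratch within the proof of Lemma~\ref{lem-metric-cont}. The paper's argument has to handle the subtle point that the sequences $z_k \to u$ may lie on $\bdy\BB D$, not just in $\BB D$; this is exactly what the inclusion $\phi\bigl(\ol{\phi^{-1}(G_n)}\bigr) \subset \ol{G_n}$ is for. In your formulation this work is absorbed into the assertion that the map from the prime-end compactification to $\ol U$ is continuous under the singleton-impression hypothesis, which is indeed the content of the theorem you invoke, so nothing is missing in substance.

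Two small remarks. First, the reference is misattributed: \cite[Theorem~2.15]{pom-book} is the prime-end correspondence (the bijection $\wh\phi:\bdy\BB D\to\mathrm{prime\ ends}$ together with the fact that $\phi^{-1}$ of a null chain is a null chain); it does not by itself give continuous extension. The statement you want is the characterization ``continuous extension $\Leftrightarrow$ every impression is a singleton $\Leftrightarrow$ $\bdy U$ locally connected,'' which appears a bit later (Corollary~2.17 and Theorem~2.1 in Pommerenke). Second, you note that singleton impressions make the extension well-defined, but the paper actually has a slightly stronger fact available from the hypothesis of Proposition~\ref{prop-locally-connected}: the hypothesis gives a \emph{uniform} modulus for the diameters of the regions $G_n$ cut off by small disconnecting sets, so $\op{diam}\ol{G_n}\to 0$ directly, without needing to first argue singleton impressions and then deduce shrinking diameters by a compactness argument. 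This is why the paper can cite Lemma~\ref{lem-metric-prime-end} only to well-define $\phi(u)$ and then reuses the quantitative hypothesis for the continuity estimate. Either route is fine; yours is shorter at the cost of leaning on a slightly less commonly quoted form of Carath\'eodory.
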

\begin{proof}
By~\cite[Theorem 2.15]{pom-book} there is a bijection $\wh \phi$ from $\bdy \BB D$ to prime ends of $U$ such that for each $u\in\bdy\BB D$ and each null chain $\{C_n\}_{n\in\BB N}$ for the prime end $\wh \phi(u)$, $\{f^{-1}(C_n)\}_{n\in\BB N}$ is a null chain for $u$.
By Lemma~\ref{lem-metric-prime-end}, for each $u\in \bdy\BB D$ the prime end $\wh \phi(u)$ corresponds to a single point of $\bdy U$.
Let $\phi(u)$ be this point. We need to show that $\phi$, thus extended, is continuous.

Obviously, $\phi$ is continuous at each point of $\BB D$, so consider a point $u\in \bdy\BB D$ and a sequence $\{z_k\}_{k\in\BB N}$ in $\ol{\BB D}$ which converges to $u$. 
We will show that $\phi(z_k) \rta \phi(u)$. 

For this purpose let $\ep > 0$ and let $\{C_n\}_{n\in\BB N}$ be a null chain for the prime end $\wh\phi(u)$, as above. 
By possibly removing one of the $C_n$'s, we can assume without loss of generality that $0\notin C_n$ for each $n$. 
By~\cite[Proposition 2.12]{pom-book}, each of the cross cuts $C_n$ separates $U$ into exactly two connected components. 
Let $G_n$ be the one of these connected components which does not contain 0. 
Then $\phi(u) \in \bdy G_n$. 
Since the Euclidean diameter of $C_n$ tends to 0 as $n\rta\infty$, our hypothesis on $U$ implies that the Euclidean diameter of $G_n$, and hence also the Euclidean diameter of $\ol G_n$, tends to 0 as $n\rta\infty$. 
Hence there is some $n_* \in \BB N$ such that for $n\geq n_*$, each point of $\ol G_n$ lies at Euclidean distance at most $\ep$ from $\phi(u)$. 

By the defining property of $\wh\phi$, the sets $\phi^{-1}(C_n)$ are a null chain for the prime end $u\in\bdy\BB D$. 
In particular, each $\phi^{-1}(C_n)$ separates $ \BB D$ into two connected components, namely $\phi^{-1}(G_n)$ and $\phi^{-1}(U \setminus \ol G_n)$. 
Each prime end of $U$ which does not correspond to a point of $\bdy G_n$ corresponds to a point which lies at positive distance from $G_n$, so can be represented by a null chain whose cross cuts (except for their endpoints) are contained in $U \setminus \ol G_n$. 

We claim that $\phi(\ol{\phi^{-1}(G_n)}) \subset \ol G_n$. 
Since $\phi|_{\BB D}$ is a homeomorphism from $\BB D$ to $U$, we have $\phi(\ol{\phi^{-1}(G_n)} \cap \BB D) \subset \ol G_n$.
Now let $w \in \ol{\phi^{-1}(G_n)} \cap \bdy\BB D$ and suppose by way of contradiction that $\phi(w) \notin \ol G_n$. 
By the preceding paragraph there is a null chain $\{\wt C_n\}_{n\in\BB N}$ for $\phi(w)$ whose cross cuts (except for their endpoints) are contained in $U\setminus \ol G_n$.
But, then $\{\phi^{-1}(\wt C_n)\}_{n\in\BB N}$ is a null chain for $w$ whose cross cuts (except for their endpoints) are contained in $\phi^{-1}(U\setminus \ol G_n)$, hence lie at positive distance from $\phi^{-1}(G_n)$. This contradicts the fact that $w\in \ol{\phi^{-1}(G_n)}$, as desired. 
Therefore, $\phi(\ol{\phi^{-1}(G_n)}) \subset \ol G_n$. 

Recall the sequence $z_k\rta z$ from above. For each large enough $k$, $z_k$ is disconnected from $\phi^{-1}(0)$ in $\BB D$ by $\phi^{-1}(C_n)$, so $z_k \in \ol{\phi^{-1}(G_n)}$. 
It therefore follows from the conclusion of the preceding paragraph that for each such $k$, we have $\phi(z_k) \in \ol G_n$ and hence $|\phi(z_k) - \phi(u)| < \ep$. 
Since $\ep$ is arbitrary, this gives the continuity of $\phi$. 
\end{proof}

\begin{proof}[Proof of Proposition~\ref{prop-locally-connected}] 
Lemma~\ref{lem-metric-cont} implies that $\bdy U$ is a curve, since it is the continuous image of $\bdy\BB D$ under $\phi$ (in fact, the statement of Lemma~\ref{lem-metric-cont} is equivalent to the statement that $\bdy U$ is a curve, see~\cite[Theorem 2.1]{pom-book}). 
\end{proof}

\subsection{Proof of Proposition~\ref{prop-jordan}}
\label{sec-jordan-proof}

In this subsection we will use Proposition~\ref{prop-locally-connected} to prove Proposition~\ref{prop-jordan}. Let us first introduce the domain $U$ that we will work with. 
For each non-singular point $x\in\BB C$, each $y\in\BB C \cup \{\infty\}$, and each $s \in (0,D_h(x,y))$, we let 
\eqb \label{eqn-origin-component-def}
U_s^y(x) := \left(\text{connected component of the interior of $\mcl B_s^{y,\bullet}(x)$ which contains $x$} \right) .
\eqe 
By Lemma~\ref{lem-ball-contain-all}, a.s.\ $x$ lies in the interior of $\mcl B_s^{y,\bullet}(x)$ for every $x,y,s$ as above. Hence, almost surely $U_s^y(x)$ is well-defined for every such $x,y,s$. 

Once we show that $\bdy\mcl B_s^{y,\bullet}(x)$ is a Jordan curve, we will get that $U_s^y(x)$ is in fact the only connected component of the interior of $\mcl B_s^{y,\bullet}(x)$. 
However, we do not rule out the possibility that the interior of $\mcl B_s^{y,\bullet}(x)$ is not connected \emph{a priori}. 
The following lemma will allow us to work with $U_s^y(x)$ instead of $\mcl B_s^{y,\bullet}(x)$ throughout the proof of Proposition~\ref{prop-jordan}.

\begin{lem} \label{lem-metric-bdy}
Almost surely, for each non-singular point $x\in\BB C$, each $y\in \BB C\cup \{\infty\}$, and each $s\in (0,D_h(x,y))$, the following is true, with $U_s^y(x)$ as in~\eqref{eqn-origin-component-def}.   
We have $\bdy U_s^y(x) = \bdy \ol U_s^y(x) = \bdy\mcl B_s^{y,\bullet}(x)$ and $U_s^y(x)$ is simply connected. Furthermore, each $D_h$-geodesic from $x$ to a point of $\bdy\mcl B_s^{y,\bullet}(x)$ is contained in $U_s^y(x)$ except for its terminal endpoint.
\end{lem}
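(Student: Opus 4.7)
Here is my plan. I will establish the three claims in the order: geodesic containment, then the boundary equalities, then simple connectedness, with each step feeding into the next. All arguments run on the almost-sure event where Lemma~\ref{lem-ball-closed}, Lemma~\ref{lem-outer-bdy-dist}, Lemma~\ref{lem-geodesic-cont}, and Proposition~\ref{prop-holder} hold simultaneously.

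For the geodesic containment, I take a $D_h$-geodesic $P:[0,s]\to\BB C$ from $x$ to $z\in\bdy\mcl B_s^{y,\bullet}(x)$, which exists by Lemma~\ref{lem-geodesic-cont} since $D_h(x,z)=s<\infty$ by Lemma~\ref{lem-outer-bdy-dist}. For each $t<s$, $D_h(x,P(t))=t<s$, so Lemma~\ref{lem-outer-bdy-dist} gives $P(t)\notin\bdy\mcl B_s^{y,\bullet}(x)$; combined with $P(t)\in\mcl B_s(x)\subset\mcl B_s^{y,\bullet}(x)$ and the fact that the filled ball is Euclidean-closed (Lemma~\ref{lem-ball-closed}), this places $P(t)$ in $\op{int}\mcl B_s^{y,\bullet}(x)$. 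By Proposition~\ref{prop-holder} the geodesic $P$ is Euclidean-continuous, so $P([0,s))$ is a connected subset of the interior containing $x$, hence lies inside $U_s^y(x)$.

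For the two boundary identities, the inclusion $\bdy U_s^y(x)\subset\bdy\mcl B_s^{y,\bullet}(x)$ uses that distinct connected components of the open set $\op{int}\mcl B_s^{y,\bullet}(x)$ are themselves open and mutually disjoint, so no point of $\op{int}\mcl B_s^{y,\bullet}(x)\setminus U_s^y(x)$ can be a limit point of $U_s^y(x)$. The reverse inclusion uses the previous step: the geodesic furnishes a Euclidean-continuous path inside $U_s^y(x)$ ending at any prescribed $z\in\bdy\mcl B_s^{y,\bullet}(x)$, showing $z\in\ol{U_s^y(x)}\setminus U_s^y(x)=\bdy U_s^y(x)$. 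Finally, $\bdy\ol{U_s^y(x)}=\bdy U_s^y(x)$ reduces to verifying $\op{int}\ol{U_s^y(x)}=U_s^y(x)$; any point of the former sits in a Euclidean ball $B\subset\ol{U_s^y(x)}\subset\mcl B_s^{y,\bullet}(x)$, hence $B\subset\op{int}\mcl B_s^{y,\bullet}(x)$, and $B$ meets $U_s^y(x)$ by the accumulation property, so by connectedness $B\subset U_s^y(x)$.

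For simple connectedness, I set $C:=\wh{\BB C}\setminus\mcl B_s^{y,\bullet}(x)$, which is connected since it equals the connected component of $\wh{\BB C}\setminus\mcl B_s(x)$ containing $y$ (with $y=\infty$ allowed). By the last assertion of Lemma~\ref{lem-outer-bdy-dist} every point of $\bdy\mcl B_s^{y,\bullet}(x)$ is a $D_h$-accumulation point of $\BB C\setminus\mcl B_s^{y,\bullet}(x)\subset C$; the approaching points are $D_h$-bounded and hence (by Lemma~\ref{lem-bounded}) Euclidean-bounded, so Proposition~\ref{prop-holder} upgrades $D_h$-accumulation to Euclidean accumulation, yielding $\bdy\mcl B_s^{y,\bullet}(x)\subset\ol C$. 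Writing $\wh{\BB C}\setminus U_s^y(x)=\ol C\cup\bigcup_i V_i$ where the $V_i$ are the remaining connected components of $\op{int}\mcl B_s^{y,\bullet}(x)$, the argument from the previous paragraph applied to each $V_i$ yields $\bdy V_i\subset\bdy\mcl B_s^{y,\bullet}(x)\subset\ol C$, so $\ol{V_i}\cap\ol C\neq\emptyset$ and $V_i\cup\ol C$ is connected. Hence $\wh{\BB C}\setminus U_s^y(x)$ is a union of connected sets all containing $\ol C$, so it is connected, and the standard criterion implies $U_s^y(x)$ is simply connected. The main obstacle will be this final step: I cannot rule out at this stage that $\op{int}\mcl B_s^{y,\bullet}(x)$ has components other than $U_s^y(x)$ (this will only follow from Proposition~\ref{prop-jordan} itself), so these potentially spurious components must be absorbed into the connectedness argument via the fact that their Euclidean boundaries are contained in $\ol C$.
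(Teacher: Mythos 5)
Your overall plan matches the paper's proof, and the first two parts (geodesic containment and the boundary identities) are correct and essentially identical to the argument in the paper. The boundary identity $\bdy \ol{U_s^y(x)} = \bdy U_s^y(x)$ is done via verifying $\op{int}\ol{U_s^y(x)} = U_s^y(x)$, which is a bit longer than necessary (one can just note $\bdy\ol U \subset \bdy U$ always, and conversely if $z \in \bdy U = \bdy\mcl B_s^{y,\bullet}(x)$ then $z \in \ol U$ and $z\notin\op{int}\mcl B_s^{y,\bullet}(x) \supset\op{int}\ol U$, so $z\in\bdy\ol U$), but it is correct.

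There is a genuine gap in the simple connectedness step. You set $C := \wh{\BB C}\setminus\mcl B_s^{y,\bullet}(x)$ and assert that $C$ is connected ``since it equals the connected component of $\wh{\BB C}\setminus\mcl B_s(x)$ containing $y$.'' This identification fails when $y \in \BB C$ lies in a \emph{bounded} connected component $W$ of $\BB C\setminus\mcl B_s(x)$. In that case $\mcl B_s^{y,\bullet}(x) = \BB C\setminus W$ is unbounded, so $C = W\cup\{\infty\}$; since $W$ is bounded, $\infty$ is an isolated point of $C$, and $C$ is \emph{disconnected} (and is not the connected component of $\wh{\BB C}\setminus\mcl B_s(x)$ containing $y$, which is just $W$). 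This case is not vacuous: as the paper emphasizes, $\BB C\setminus\mcl B_s(x)$ a.s.\ has infinitely many bounded components, and for a fixed pair $(x,y)$ there can be radii $s<D_h(x,y)$ for which $y$ falls into such a hole. Once $\ol C$ is disconnected, the ``union of connected sets all containing a common connected set $\ol C$'' step no longer yields connectedness of $\wh{\BB C}\setminus U_s^y(x)$. The paper avoids the issue entirely by working in $\BB C$ rather than $\wh{\BB C}$: it writes $\BB C\setminus U_s^y(x)$ as the union of $\ol{\BB C\setminus\mcl B_s^{y,\bullet}(x)}$ (the closure in $\BB C$ of the connected open set $\BB C\setminus\mcl B_s^{y,\bullet}(x)$, hence always connected) and the closures $\ol V$ of the other interior components, each of which meets $\bdy\mcl B_s^{y,\bullet}(x)\subset\ol{\BB C\setminus\mcl B_s^{y,\bullet}(x)}$. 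Showing $\BB C\setminus U_s^y(x)$ connected suffices whether $U_s^y(x)$ is bounded or not: if bounded, $\BB C\setminus U_s^y(x)$ is unbounded and connected, so adjoining $\infty$ keeps it connected; if unbounded, $\BB C\setminus U_s^y(x)$ connected is exactly the criterion for $U_s^y(x)\cup\{\infty\}$ to be simply connected on the sphere. Separately, your appeal to the last assertion of Lemma~\ref{lem-outer-bdy-dist} plus Lemma~\ref{lem-bounded} plus Proposition~\ref{prop-holder} to deduce $\bdy\mcl B_s^{y,\bullet}(x)\subset\ol C$ is a needless detour: $\bdy A \subset \ol{A^c}$ is a point-set-topology identity, with no metric input required.
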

\begin{proof}
All of the statements in the proof are required to hold a.s.\ for each $x,y,s$ as in the lemma statement. 
To prove that $\bdy U_s^y(x) = \bdy\mcl B_s^{y,\bullet}(x)$, we first argue that $\bdy U_s^y(x) \subset \bdy\mcl B_s^{y,\bullet}(x)$. Indeed, each $z\in \bdy U_s^y(x)$ is an accumulation point of $U_s^y(x)  \subset \mcl B_s^{y,\bullet}(x)$, so in particular $\bdy U_s^y(x)  \subset \mcl B_s^{y,\bullet}(x)$. Hence it suffices to show that if $z$ is in the interior of $ \mcl B_s^{y,\bullet}(x)$, then $z\notin \bdy U_s^y(x)$. Indeed, for such a $z$ either $z\in U_s^y(x)$ or $z$ belongs to a connected component of the interior of $\mcl B_s^{y,\bullet}(x)$ other than $U_s^y(x)$. In the former case, $z\notin \bdy U_s^y(x)$ since $U_s^y(x)$ is open. In the latter case, $z\notin U_s^y(x)$ since the other connected components of the interior of $\mcl B_s^{y,\bullet}(x)$ are open sets disjoint from $U_s^y(x)$, so they are also disjoint from $\bdy U_s^y(x)$. 

To prove that $\bdy\mcl B_s^{y,\bullet}(x) \subset \bdy U_s^y(x)$, let $z\in \bdy\mcl B_s^{y,\bullet}(x)$. By Lemma~\ref{lem-outer-bdy-dist}, $D_h(x,z) =s$. Let $P : [0,s] \rta \BB C$ be a $D_h$-geodesic from $x$ to $z$. Then $P\subset \mcl B_s^{y,\bullet}(x)$. Furthermore, for $t <s$ we have $D_h(x,P(t)) = t$, so Lemma~\ref{lem-outer-bdy-dist} implies that $P(t) \notin \bdy\mcl B_s^{y,\bullet}(x)$. Therefore, $P([0,s)) $ is contained in the interior of $\mcl B_s^{y,\bullet}(x)$ and hence $P([0,s))\subset U_s^y(x)$. This shows that $z$ is an accumulation point of $U_s^y(x)$, so $z\in \bdy U_s^y(x)$. 

We have shown that $\bdy\mcl B_s^{y,\bullet}(x) = \bdy U_s^y(x)$. 
Since $U_s^y(x) \subset \ol U_s^y(x) \subset \mcl B_s^{y,\bullet}(x)$, we also have $\bdy\ol U_s^y(x) = \bdy\mcl B_s^{y,\bullet}(x)$. 
 
The argument in the second paragraph of the proof shows that each $D_h$-geodesic from $x$ to a point of $\bdy\mcl B_s^{y,\bullet}(x) = \bdy U_s^y(x)$ is contained in $  U_s^y(x)$ except for its terminal endpoint.  
 
Since $U_s^y(x)$ is connected, to show that $U_s^y(x)$ is simply connected, it suffices to show that $\BB C\setminus U_s^y(x)$ is connected. 
Let $\mcl V$ be the set of connected components of the interior of $\mcl B_s^{y,\bullet}(x)$ other than $U_s^y(x)$ (we will eventually show that $\mcl V =\emptyset$, but we do not know this yet). 
We can write $\BB C\setminus U_s^y(x)$ as the union of $\ol{\BB C \setminus \mcl B_s^{y,\bullet}(x)}$ and the sets $\ol V$ for $V\in\mcl V$. Each of the sets $\ol{\BB C \setminus \mcl B_s^{y,\bullet}(x)}$ and $\ol V$ is the closure of a connected set, so is connected. Furthermore, each $\bdy V$ for $V\subset\mcl V$ is contained in $\bdy\mcl B_s^{y,\bullet}(x)$ (by the same argument that we used for $U_s^y(x)$ above), which in turn is contained in $\ol{\BB C \setminus \mcl B_s^{y,\bullet}(x)}$. Hence $\BB C\setminus U_s^y(x)$ is the union of connected sets which all intersect a common connected set, so is connected.
\end{proof}

The set $U_s^y(x)$ contains points $z$ with $D_h(x,z)  > s$. For such points $z$, it is possible that a $D_h$-geodesic from $x$ to $z$ intersects $\bdy U_s^y(x)$. Since we will be interested in sets which are disconnected from $x$ in $U_s^y(x)$ (c.f.\ Proposition~\ref{prop-locally-connected}), it is important for us to work with paths which are contained in $U_s^y(x)$. The following lemma will allow us to do so.

\begin{figure}[ht!]
\begin{center}
\includegraphics[scale=1]{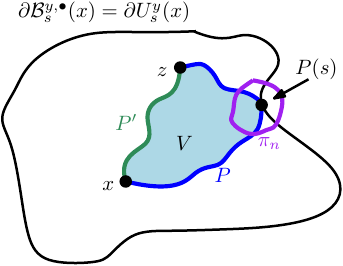} 
\caption{\label{fig-approx-path} Illustration of the proof of Lemma~\ref{lem-approx-path}. The path $P$ is a $D_h$-geodesic from $x$ to $z$. If $P(s) \in \bdy U_s^y(x)$, we replace a segment of $P$ by a segment of the small loop $\pi$ to get a path from $x$ to $z$ which is contained in $U_s^y(x)$ and which is not much longer than $P$. 
}
\end{center}
\end{figure}

\begin{lem} \label{lem-approx-path}
Almost surely, for each non-singular point $x\in\BB C$, each $y\in \BB C\cup \{\infty\}$, and each $s\in (0,D_h(x,y))$, the following is true.
For each $z \in U_s^y(x)$ and each $\delta > 0$, there is a path in $U_s^y(x)$ from $x$ to $z$ with $D_h$-length at most $D_h(x,z) + \delta$.
\end{lem}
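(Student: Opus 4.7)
The plan is to construct the approximating path by modifying a $D_h$-geodesic. If $z$ is $D_h$-singular, then $D_h(x,z) = \infty$ and any path in the connected open set $U_s^y(x)$ from $x$ to $z$ trivially works (such a path exists since $U_s^y(x)$ is open and connected, hence path-connected). Otherwise let $P : [0, L] \rta \BB C$ be a $D_h$-geodesic from $x$ to $z$ (which exists by Lemma~\ref{lem-geodesic-cont}), parameterized by $D_h$-length so that $L = D_h(x,z)$ and $D_h(x, P(t)) = t$ for all $t \in [0, L]$. By Lemma~\ref{lem-outer-bdy-dist}, every $w \in \bdy U_s^y(x) = \bdy\mcl B_s^{y,\bullet}(x)$ satisfies $D_h(x, w) = s$, so $P^{-1}(\bdy U_s^y(x)) \subset \{s\}$. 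Since $P$ is Euclidean-continuous (by Proposition~\ref{prop-holder}) and its endpoints $x, z$ lie in the open set $U_s^y(x)$, any excursion of $P$ into $\BB C \setminus \ol{U_s^y(x)}$ would force at least two crossings of $\bdy U_s^y(x)$, which is ruled out. Hence $P([0, L]) \subset U_s^y(x)$ except possibly at the single instant $t = s$.

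If $L < s$ or $P(s) \notin \bdy U_s^y(x)$, then $P$ itself already lies in $U_s^y(x)$ and we take $P$. Otherwise $L \geq s$ and $P(s) \in \bdy U_s^y(x)$, and we must detour around this single touching point. Apply Lemma~\ref{lem-separating-annuli} to the non-singular point $P(s)$ (non-singular since $D_h(x, P(s)) = s < \infty$) to select a $D_h$-continuous loop $\pi$ encircling $P(s)$ with $D_h$-length less than $\delta/4$, $D_h$-distance from $P(s)$ to $\pi$ less than $\delta/4$, and Euclidean diameter small enough that neither $x$ nor $z$ lies in the bounded component of $\BB C \setminus \pi$. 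By Euclidean continuity of $P$, we can find times $t_1 < s < t_2$ with $P(t_1), P(t_2) \in \pi$ and $P((t_1, t_2))$ contained in this bounded component; in particular $P(t_1), P(t_2) \in \pi \cap U_s^y(x)$, since $t_1 \neq s \neq t_2$.

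We then splice: let $\sigma$ be a suitable arc of $\pi$ from $P(t_1)$ to $P(t_2)$, and define $\wt P := P|_{[0, t_1]} \cdot \sigma \cdot P|_{[t_2, L]}$. By construction $\wt P$ has $D_h$-length at most $L - (t_2 - t_1) + \op{len}(\sigma; D_h) \leq L + \delta/4 < L + \delta$. The main obstacle is to ensure $\sigma \subset U_s^y(x)$: the segments $P|_{[0, t_1]}$ and $P|_{[t_2, L]}$ already lie in $U_s^y(x)$, but an arbitrary arc of $\pi$ may \emph{a priori} cross $\bdy U_s^y(x)$. We expect to overcome this by using the freedom to pass to a sufficiently small loop $\pi_n$ from the nested sequence in Lemma~\ref{lem-separating-annuli} and the observation that $\pi \cap U_s^y(x)$ is relatively open in $\pi$ and contains both $P(t_1)$ and $P(t_2)$; combined with the smallness of both the $D_h$-length of $\pi_n$ and the $D_h$-distance from $P(s)$ to $\pi_n$, together with the openness of $U_s^y(x)$, this should allow us to route a detour staying inside $U_s^y(x)$ whose total $D_h$-length is controlled by $\op{len}(\pi_n; D_h)$. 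This last step, rather than the structural analysis of $P$, is the technical heart of the proof.
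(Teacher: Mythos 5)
Your structural analysis of the geodesic $P$ is sound: the identity $P^{-1}(\bdy U_s^y(x)) \subset \{s\}$ follows correctly from Lemma~\ref{lem-outer-bdy-dist} and Lemma~\ref{lem-metric-bdy}, and the excursion argument (combined with the fact that $\ol{U_s^y(x)}\setminus U_s^y(x) = \bdy U_s^y(x)$) does show $P(t) \in U_s^y(x)$ for $t\neq s$. This is a mildly different route from the paper (which first establishes $P\subset\mcl B_s^{y,\bullet}(x)$ and then localizes inside the interior), but both work. The singular-$z$ case is handled correctly.

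The gap is exactly where you flag it, and it is not merely technical bookkeeping but a missing idea. Your appeal to ``$\pi_n \cap U_s^y(x)$ is relatively open in $\pi_n$ and contains both $P(t_1)$ and $P(t_2)$'' does not give a path between them inside $U_s^y(x)$: a relatively open subset of a circle is a disjoint union of arcs, and $P(t_1)$, $P(t_2)$ could well lie in different components. Nothing in your setup rules out the Jordan curve $\bdy U_s^y(x)$ crossing $\pi_n$ many times near $P(s)$ and separating the two. Shrinking $n$ does not obviously help, and the ``$D_h$-distance from $P(s)$ to $\pi_n$ small'' hypothesis is not the right lever, since the obstruction is topological, not metric. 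The device that closes this is to first choose an \emph{arbitrary} simple path $P'\subset U_s^y(x)$ from $x$ to $z$ disjoint from $P$ except at endpoints (possible since $U_s^y(x)$ is a disk by Lemma~\ref{lem-metric-bdy} and Proposition~\ref{prop-jordan}/Lemma~\ref{lem-metric-cont}), and let $V$ be the Jordan domain bounded by $P\cup P'$. Then $\bdy V \subset \mcl B_s^{y,\bullet}(x)$ forces $\ol V \subset \mcl B_s^{y,\bullet}(x)$, and in fact $\ol V\setminus\{P(s)\}$ lies in the interior of $\mcl B_s^{y,\bullet}(x)$, hence in $U_s^y(x)$. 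Once $n$ is large enough that $\pi_n$ is disjoint from $P'$ and misses $x$, $z$, and $P(s)$, some segment $\eta_n$ of $\pi_n$ is a crosscut of $V$ joining $P(t_1)$ to $P(t_2)$ with $t_1 < s < t_2$, and this crosscut lies in $\ol V\setminus\{P(s)\}\subset U_s^y(x)$ \emph{automatically} --- no connectivity analysis of $\pi_n\cap U_s^y(x)$ is needed. Concatenating $P|_{[0,t_1]}$, $\eta_n$, $P|_{[t_2,L]}$ then finishes the proof. In short: the auxiliary path $P'$ and the Jordan domain $V$ are the key idea you are missing; they convert the hard question ``is the detour inside $U_s^y(x)$?'' into the easy observation that the detour lies inside $\ol V\setminus\{P(s)\}$.
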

\begin{proof}
See Figure~\ref{fig-approx-path} for an illustration of the proof.
The statement is vacuous if $z$ is a singular point (i.e., $D_h(x,z) = \infty$), so assume that $z$ is not a singular point. 

Let $P$ be a $D_h$-geodesic from $x$ to $z$. If $ t:= D_h(x,z)  \leq s$, then $P \subset \mcl B_s(x) \subset \mcl B_s^{y,\bullet}(x)$. Furthermore, since $D_h(x,w) = s \geq t$ for each $w\in \bdy\mcl B_s^{y,\bullet}(x)$ (Lemma~\ref{lem-outer-bdy-dist}), $P([0,t))$ is contained in the interior of $\mcl B_s^{y,\bullet}(x)$. Since $P(t) = z \in U_s^y(x)$, we get that $P$ is contained in the interior of $\mcl B_s^{y,\bullet}(x)$. By the definition~\eqref{eqn-origin-component-def} of $U_s^y(x)$, it follows that $P\subset U_s^y(x)$. 

Hence we only need to treat the case when $D_h(x,z)  > s$. By Lemma~\ref{lem-outer-bdy-dist}, the path $P$ can hit $\bdy\mcl B_s^{y,\bullet}(x)$ at most once, namely at time $s$. Consequently, $P$ cannot exit and subsequently re-enter $\mcl B_s^{y,\bullet}(x)$, so $P\subset \mcl B_s^{y,\bullet}(x)$. Furthermore, $P(t)$ is contained in the interior of $\mcl B_s^{y,\bullet}(x)$ for each $t\not=s$.

If $P(s) \notin \bdy\mcl B_s^{y,\bullet}(x)$, then we are done so we can assume without loss of generality that $P(s) \in \bdy\mcl B_s^{y,\bullet}(x)$. 
Since $U_s^y(x)$ is open and connected, hence path connected, we can find a simple path $P'$ in $U_s^y(x)$ from $x$ to $z$ (we make no assumption on the $D_h$-length of $P'$).
In fact, since $U_s^y(x)$ is homeomorphic to the disk and $P$ is a simple path, we can arrange that $P'$ does not intersect $P$ except at $x$ and $z$. 
Let $V$ be the unique bounded complementary connected component of $\BB C\setminus (P\cup P')$. Then $V$ is a Jordan domain and $P(s) \in \bdy V$. Furthermore, $\bdy V \subset \mcl B_s^{y,\bullet}(x)$, so each point of $V$ is disconnected from $y$ by $\mcl B_s^{y,\bullet}(x)$. Hence $\ol V\subset \mcl B_s^{y,\bullet}(x)$. In fact, $\bdy V\setminus \{P(s)\}$ is contained in the interior of $\mcl B_s^{y,\bullet}(x)$, so it follows that $\ol V\setminus \{P(s)\}$ is contained in the interior of $\mcl B_s^{y,\bullet}(x)$. Since $\ol V \setminus \{P(s)\}$ is connected and contains $x$ it follows that $\ol V \setminus \{P(s)\} \subset U_s^y(x)$. 

By Lemma~\ref{lem-separating-annuli}, there is a sequence of disjoint $D_h$-continuous loops $\{\pi_n\}_{n\in\BB N}$, each of which separates a neighborhood of $P(s)$ from $\infty$, such that the Euclidean radius of $\pi_n$, the $D_h$-length of $\pi_n$, and the $D_h$-distance from $z$ to $\pi_n$ each tend to zero as $n\rta\infty$.  
If $n\in\BB N$ is chosen to be sufficiently large, then $\pi_n$ is disjoint from $P'$, the $D_h$-length of $\pi_n$ is at most $\delta$, and there is a segment $\eta_n$ of $\pi_n$ which is a crosscut of $V$ (i.e., it is contained in $V$ except for its endpoints). The segment $\eta_n$ joins $P(t_1)$ to $P(t_2)$ for some $t_1 < s < t_2$. Let $\wt P$ be the concatenation of $P|_{[0,t_1]}$, $\eta_n$, and $P|_{[t_2,D_h(x,z)]}$. Then $\wt P$ is a path in $\ol V\setminus \{P(s)\}$ from $x$ to $z$ with $D_h$-length at most $D_h(x,z) + \delta$. By the preceding paragraph, $\wt P$ is contained in $U_s^y(x)$.
\end{proof}

We will now check the criterion of Proposition~\ref{prop-locally-connected} for the domain $U_s^y(x)$.

\begin{figure}[ht!]
\begin{center}
\includegraphics[scale=1]{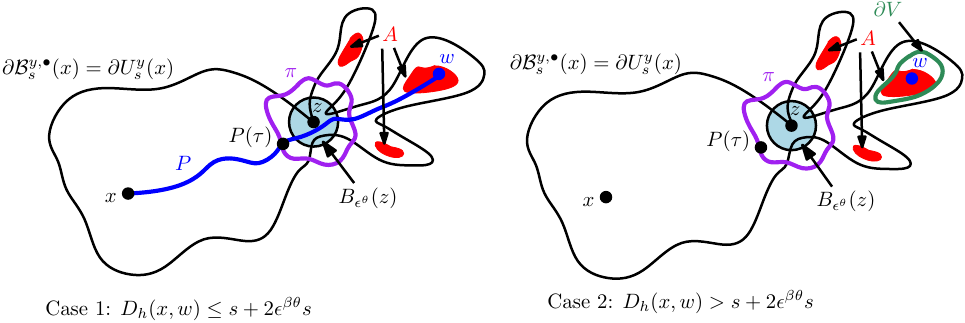} 
\caption{\label{fig-metric-disc} Illustration of the proof of Lemma~\ref{lem-metric-disc}. The red set $A\subset U_s^y(x)$ is disconnected from $x$ in $ U_s^y(x)$ by the Euclidean ball $\ol{B_{\ep^\theta}(z)}$. Using Corollary~\ref{cor-hit-ball'}, we produce a path $\pi$ (purple) disconnecting the inner and outer boundaries of the annulus $\BB A_{\ep^\theta , \ep^{\theta/2}}(z)$ with $D_h$-length at most $\ep^{\beta\theta} s $. We seek to bound the Euclidean distance from a point $w \in A$ to $\ol{B_{\ep^\theta}(z)}$.  The left panel shows the case when $D_h(x,w) \leq s + 2\ep^{\beta\theta}s$. In this case, Lemma~\ref{lem-approx-path} gives a path $P$ from $x$ to $w$ whose $D_h$-length is at most $s+3\ep^{\beta\theta}s$. The path $P$ must hit $\pi$, say at a time $\tau $. Our upper bound for the $D_h$-length of $\pi$ shows that $D_h(x,w) -\tau \leq 4\ep^{\beta\theta} s$. Using a H\"older continuity bound for the Euclidean metric w.r.t.\ $D_h$ (Proposition~\ref{prop-holder}), we then obtain an upper bound for the Euclidean diameter of $P([\tau,D_h(x,w)])$, which then gives an upper bound for the Euclidean distance from $w$ to $B_{\ep^\theta}(z)$.  
The right panel shows the case when $D_h(x,w) > s + 2\ep^{\beta\theta} s$. In this case, we consider the complementary connected component $V$ of the ball $\mcl B_{s+2\ep^{\beta\theta} s}(x)$ with $w\in V$. We bound the $D_h$-distance from $x$ to $\ol{B_{\ep^\theta}(z)}$ in terms of $\sup_{u \in \bdy V} \op{dist}(u , \ol{B_{\ep^\theta}(z)} )$, then bound this last quantity using the previous case. 
}
\end{center}
\end{figure}

\begin{lem} \label{lem-metric-disc}
There exists $\theta > 1$ such that a.s.\ for each non-singular point $x\in \BB C$, each $y\in \BB C\cup \{\infty\}$, and each $s \in (0,D_h(x,y))$, there exists a random $\ol\ep = \ol\ep(x,y,s)>0$ with the following property. For each $\ep \in (0,\ol\ep]$, each set $A\subset  U_s^y(x)$ which can be disconnected from $x$ in $ U_s^y(x)$ by a set $Y$ of Euclidean diameter at most $\ep^\theta$ which intersects $\bdy U_s^y(x)$ has Euclidean diameter at most $\ep$. 
\end{lem}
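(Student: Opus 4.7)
The plan is to use Corollary~\ref{cor-hit-ball'} at Euclidean scale $\ep^\theta$ to produce a short $D_h$-loop $\pi$ surrounding a small Euclidean ball containing $Y$, and then combine this with Proposition~\ref{prop-holder} to turn $D_h$-length bounds into Euclidean diameter bounds. Fix $\alpha\in(0,1)$, let $\beta=\beta(\alpha)>0$ be the exponent from Corollary~\ref{cor-hit-ball'}, let $\chi>0$ be an admissible H\"older exponent in Proposition~\ref{prop-holder}, and take $\theta>\max(1/\alpha,\,1/(\beta\chi))$. By Lemma~\ref{lem-metric-bdy} we have $\bdy U_s^y(x)=\bdy\mcl B_s^{y,\bullet}(x)$, so I may pick $z\in Y\cap\bdy\mcl B_s^{y,\bullet}(x)$, obtaining $Y\subset\ol{B_{\ep^\theta}(z)}$ and $D_h(x,z)=s$ by Lemma~\ref{lem-outer-bdy-dist}. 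Since $\bdy\mcl B_s^{y,\bullet}(x)$ is Euclidean-compact (Lemmas~\ref{lem-ball-closed} and~\ref{lem-bounded}) and avoids $\{x,y\}$, I will choose $\ol\ep=\ol\ep(x,y,s)>0$ small enough that for $\ep\in(0,\ol\ep]$: (i) $\ep^{\theta\alpha}<\tfrac12\op{dist}(\{x,y\},\bdy\mcl B_s^{y,\bullet}(x))$; (ii) the polynomially-high-probability event of Corollary~\ref{cor-hit-ball'} at scale $\ep^\theta$ holds (a.s.\ for all small $\ep$ by Borel--Cantelli over dyadic $\ep$-values, with a slight reduction of $\beta$ to interpolate); and (iii) the H\"older bound of Proposition~\ref{prop-holder} applies at scale $\ep^{\theta\alpha}$. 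Corollary~\ref{cor-hit-ball'} then provides a loop $\pi\subset\BB A_{\ep^\theta,\,\ep^{\theta\alpha}}(z)$ surrounding $\ol{B_{\ep^\theta}(z)}$ with $D_h$-length at most $\ep^{\beta\theta}s$.

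I will prove $|w-z|\le \ep/2$ for every $w\in A$, splitting into the two cases from Figure~\ref{fig-metric-disc}. In Case~1, $D_h(x,w)\le s+2\ep^{\beta\theta}s$: by Lemma~\ref{lem-approx-path} with $\delta=\ep^{\beta\theta}s$, pick a path $P\subset U_s^y(x)$ from $x$ to $w$ of $D_h$-length $L\le s+3\ep^{\beta\theta}s$. Since $A$ is disconnected from $x$ in $U_s^y(x)$ by $Y\subset\ol{B_{\ep^\theta}(z)}$ and $x\notin B_{\ep^{\theta\alpha}}(z)$, $P$ must cross $\pi$; let $\tau$ be the first such crossing time. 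Combining a $D_h$-geodesic $\gamma:x\to z$ (which crosses $\pi$ first at some time $\sigma$ with $D_h(x,\gamma(\sigma))=\sigma$ and $D_h(\gamma(\sigma),z)=s-\sigma$) with a $\pi$-shortcut between $P(\tau)$ and $\gamma(\sigma)$ (cost $\le\ep^{\beta\theta}s$), and using $D_h(x,z)=s$ together with $L\le D_h(x,w)+\ep^{\beta\theta}s$ and the triangle inequality, I expect to derive the key estimate $L-\tau\le C\ep^{\beta\theta}s$ for an absolute constant $C$. Proposition~\ref{prop-holder} then bounds the Euclidean diameter of $P([\tau,L])$ by $(C\ep^{\beta\theta}s)^\chi$; since $P(\tau)\in\pi\subset B_{\ep^{\theta\alpha}}(z)$, this gives $|w-z|\le\ep^{\theta\alpha}+(C\ep^{\beta\theta}s)^\chi\le\ep/2$ by the choice of $\theta$.

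In Case~2, $D_h(x,w)>s+2\ep^{\beta\theta}s$: let $V$ be the connected component of $\BB C\setminus\mcl B_{s+2\ep^{\beta\theta}s}(x)$ containing $w$. Since $w$ is separated from $y$ by $\mcl B_s^{y,\bullet}(x)\subset\mcl B_{s+2\ep^{\beta\theta}s}(x)$, $V$ does not contain $y$, and for $\ep$ small enough that $s+2\ep^{\beta\theta}s<D_h(x,y)$ the set $V$ is Euclidean-bounded with $\bdy V\subset\mcl B_{s+2\ep^{\beta\theta}s}(x)$, giving $D_h(x,u)\le s+2\ep^{\beta\theta}s$ for each $u\in\bdy V$. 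An adaptation of Case~1 (applied at the enlarged radius $s+2\ep^{\beta\theta}s$, still anchored at $z\in\bdy\mcl B_s^{y,\bullet}(x)$, and using Lemma~\ref{lem-bdy-dist0} to keep $z$ at Euclidean distance comparable to $\ep^\theta$ from the enlarged filled ball boundary) yields $|u-z|\le\ep/2$ for every $u\in\bdy V$. Using the elementary identity $\op{diam}(V)=\op{diam}(\bdy V)$ for a bounded connected open planar set (which follows by extending a near-diameter chord of $V$ outward to $\bdy V$), I conclude $V\subset B_\ep(z)$ and thus $|w-z|\le\ep$. Since $Y\subset\ol{B_{\ep^\theta}(z)}\subset B_\ep(z)$, both cases give $A\subset B_\ep(z)$, and absorbing a constant factor into a final shrinking of $\ol\ep$ completes the diameter bound.

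The main obstacle will be establishing $L-\tau\le C\ep^{\beta\theta}s$ in Case~1: the short loop $\pi$ only directly shortcuts between its own points, so the $D_h$-distance from $\pi$ to $z$ is not controlled, and the argument must exploit $D_h(x,z)=s$ exactly (which ultimately rests on $z\in\bdy\mcl B_s^{y,\bullet}(x)$) in order to squeeze $\tau$ and $\sigma$ together through triangle inequalities built from $\pi$-shortcuts. A secondary obstacle is that Case~2 requires transferring the Case~1 conclusion across a slight radial perturbation of the filled ball, for which Lemma~\ref{lem-bdy-dist0} furnishes the Euclidean comparability needed to keep $z$ near the perturbed boundary.
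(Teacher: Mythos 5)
Your plan identifies the right ingredients (Corollary~\ref{cor-hit-ball'} at scale $\ep^\theta$, Lemma~\ref{lem-approx-path}, Proposition~\ref{prop-holder}, and a two-case split on $D_h(x,w)$ matching Figure~\ref{fig-metric-disc}), but Case~1 has a genuine gap, and you in fact flag it yourself without resolving it. You route the time estimate through a $D_h$-geodesic $\gamma$ from $x$ to $z$, its first hitting time $\sigma$ of $\pi$, and $\pi$-shortcuts. Working out the triangle inequalities that you gesture at gives $\tau \geq \sigma - \ep^{\beta\theta}s$, and with $L \leq s + 3\ep^{\beta\theta}s$ one obtains only $L - \tau \leq (s-\sigma) + 4\ep^{\beta\theta}s$. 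The quantity $s - \sigma = D_h(\gamma(\sigma), z)$ is exactly the ``$D_h$-distance from $\pi$ to $z$'' you note is uncontrolled: $\pi$ is Euclidean-close to $z$, but since $D_h$ does not induce the Euclidean topology this gives no upper bound, and no choice of concatenation of $\gamma$, $P$, and a $\pi$-segment turns this around (all such concatenations give the \emph{wrong-direction} inequality $\sigma \leq \tau + \ep^{\beta\theta}s$). The missing idea in the paper's proof is topological, not metric: the loop $\pi$ separates $z$ from $\{x,y\}$, and $\bdy U_s^y(x) = \bdy\mcl B_s^{y,\bullet}(x)$ is a \emph{connected} set (this follows from the simple connectedness of $U_s^y(x)$, Lemma~\ref{lem-metric-bdy}) which contains $z$ and must have a point outside $B_{\ep^{\theta\alpha}}(z)$ because it separates $x$ from $y$; hence $\pi$ intersects $\bdy\mcl B_s^{y,\bullet}(x)$. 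Since $\pi$ has $D_h$-length at most $\ep^{\beta\theta}s$, this yields $D_h\bigl(u, \bdy\mcl B_s^{y,\bullet}(x)\bigr) \leq \ep^{\beta\theta}s$ for every $u\in\pi$, and combined with $D_h(x,\cdot)\equiv s$ on the boundary (Lemma~\ref{lem-outer-bdy-dist}) this directly gives $\tau \geq s - \ep^{\beta\theta}s$, which is what you need. This step cannot be replaced by working with the single geodesic $\gamma$.

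There are two smaller issues in Case~2. First, you invoke Lemma~\ref{lem-bdy-dist0}, which is stated only for filled balls centered at $0$ with target $\infty$ and for radii in the window $[\tau_{\BB r},\tau_{b\BB r}]$; for general $(x,y)$ you would need Lemma~\ref{lem-bdy-dist}. But in fact neither is the right tool here: the paper's Case~2 uses again the key fact that $\pi$ meets $\bdy\mcl B_s^{y,\bullet}(x)$ to show directly that $\ol V$ is disjoint from $\ol{B_{\ep^\theta}(z)}$ (otherwise $\ol V$ would meet $\pi$ and so lie at $D_h$-distance at most $s+\ep^{\beta\theta}s$ from $x$, contradicting Lemma~\ref{lem-outer-bdy-dist} applied at radius $s+2\ep^{\beta\theta}s$), so no boundary-separation estimate is needed. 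Second, when $U_s^y(x)$ is unbounded you must rule out $V = V_\infty$ (otherwise $\ol V$ is not compact and the reduction to $\bdy V$ fails); the paper does this by exhibiting a fixed path $\Pi$ from $x$ to $\BB C\setminus\mcl B_{2s}^\bullet(x)$ at positive Euclidean distance $\ep_0$ from $\bdy U_s^y(x)$ and taking $\ol\ep$ small enough that $\ep^\theta<\ep_0$. Your proposal does not address this, and without it the ``extend a near-diameter chord outward'' step is not available.
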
 
\begin{proof} 
See Figure~\ref{fig-metric-disc} for an illustration of the proof.  

Let $\theta > 1$ to be chosen later. 
We will first state some estimates which hold a.s.\ for each $x,y,s$ as in the lemma statement and each small enough $\ep > 0$ (depending on $x,y,s$). We will then truncate on the event that these estimates are satisfied and show that the conclusion of the lemma statement is satisfied.

Almost surely, for each $s > 0$ and each $x\in\BB C$, the $D_h$-ball $\mcl B_s(x)$ is  Euclidean-bounded, so its Euclidean 1-neighborhood $B_1(\mcl B_s(x))$ is also Euclidean-bounded. 
We note that this latter set contains every point which lies at Euclidean distance less than 1 from $\bdy\mcl B_s^{y,\bullet}(x) = \bdy U_s^y(x)$ (see Lemma~\ref{lem-metric-bdy}) for each $y\in \BB C\cup \{\infty\}$. 

If we let $\{V_n\}_{n\in\BB N}$ be an increasing sequence of Euclidean-bounded open sets whose union is all of $\BB C$, then a.s.\ for each $x\in\BB C$ and each $s > 0$ we have $B_1(\mcl B_s(x)) \subset V_n$ for large enough $n\in\BB N$. 
Furthermore, if $y\in \BB C\cup \{\infty\}$ and $s \in (0,D_h(x,y))$, then $x$ and $y$ each lie at positive Euclidean distance from $\bdy\mcl B_s^{y,\bullet}(x)$ (see Lemma~\ref{lem-ball-contain-all}). 
We may therefore apply Corollary~\ref{cor-hit-ball'} (with $\alpha=1/2$, $\ep^\theta$ instead of $\ep$, and $U=V_n$), then send $n\rta \infty$, to get that there exists $\beta > 0$ such that a.s.\ for each $x,y,s$ as in the lemma statement, it holds for small enough $\ep  > 0$ that 
\eqb \label{eqn-metric-disc-cont}
D_h\left( \text{around $\BB A_{ \ep^\theta , \ep^{\theta/2}}(z) $} \right) 
\leq \ep^{\beta\theta} s ,\quad \forall z \in \bdy\mcl B_s^{y,\bullet}(x).
\eqe 

By Proposition~\ref{prop-holder} (again applied to each of the sets $V_n$ above), if $\chi \in (0, (\xi(Q+2))^{-1}) $, then for each $x \in \BB C$ and each $s > 0$, it is a.s.\ the case that for each small enough $\ep > 0$, 
\eqb \label{eqn-metric-disc-coer}
\text{$\forall z,w\in B_1(\mcl B_s(x))$ with $D_h(z,w) \leq 4\ep^{\beta\theta} s $, we have $|z-w| \leq  \ep^{\chi \beta \theta} s^\chi$}. 
\eqe  

By Lemma~\ref{lem-ball-contain-all}, it is a.s.\ the case that for each $x,y,s$ as in the lemma statement and each small enough $\ep  > 0$,  
\eqb \label{eqn-metric-disc-origin}
B_{4\ep^{\theta/2}}(x) \subset \mcl B_s^{y,\bullet}(x) .
\eqe
By the definition~\eqref{eqn-origin-component-def} of $U_s^y(x)$, we see that~\eqref{eqn-metric-disc-origin} implies that also $B_{4\ep^{\theta/2}}(x) \subset U_s^y(x)$. 
 
We henceforth work on the full-probability event that for each $x,y,s$ as in the lemma statement,~\eqref{eqn-metric-disc-cont}, \eqref{eqn-metric-disc-coer}, and~\eqref{eqn-metric-disc-origin} all hold each small enough $\ep > 0$. We will show that the lemma statement holds provided $\theta > \max\{2 ,  1/(\beta \chi) \}$. 
To see this, let $x,y,s$ be as in the lemma statement, assume that $\ep > 0$ is small enough enough that the above three estimates hold. 
Let $A\subset  U_s^y(x)$ be a set which can be disconnected from $x$ in $  U_s^y(x)$ by a set $Y$ of Euclidean diameter at most $\ep^\theta $ which intersects $\bdy U_s^y(x)$.
We claim that the Euclidean diameter of $A$ is at most $\ep$.  

Choose $z \in Y\cap \bdy U_s^y(x)$. Then $Y\subset \ol{B_{\ep^\theta}(z)}$ so $A$ is disconnected from $x$ in $  U_s^y(x)$ by $\ol{B_{\ep^\theta}(z)}$. 
We can assume without loss of generality that $A \not\subset \ol{B_{\ep^{\theta/2}}(z)}$ (otherwise, the Euclidean diameter of $A$ is at most $\ep^{\theta/2} < \ep$). Furthermore, we have $z\in \bdy U_s^y(x)  \subset B_1(\mcl B_s(x))$ and by~\eqref{eqn-metric-disc-origin}, the Euclidean distance from $x$ to $\bdy U_s^y(x) = \bdy\mcl B_s^{y,\bullet}(x)$ is at least $4\ep^{\theta/2}$, so $x\notin B_{\ep^{\theta/2}}(z)$. 

The estimate~\eqref{eqn-metric-disc-cont} implies that there is a path $\pi$ in $\BB A_{\ep^\theta ,  \ep^{\theta/2}}(z)$ which disconnects the inner and outer boundaries of $\BB A_{\ep^\theta ,  \ep^{\theta/2}}(z)  $ and satisfies
\eqb \label{eqn-metric-disc-path}
\left(\text{$D_h$-length of $\pi$} \right)   \leq \ep^{\beta \theta} s .
\eqe
Since $z\in \bdy\mcl B_s^{y,\bullet}(x)$, the path $\pi$ intersects $\bdy U_s^y(x) = \bdy\mcl B_s^{y,\bullet}(x)$.
So, 
\eqb \label{eqn-metric-disc-path-dist}
D_h(u,\bdy\mcl B_s^{y,\bullet}(x)) \leq \ep^{\beta\theta} s ,\quad\forall u \in \pi .
\eqe
 
We claim that for each $w\in A \setminus \ol{B_{\ep^{\theta/2 }}(z)}$, 
\eqb \label{eqn-metric-disc-claim}
\op{dist}\left( w ,   \ol{B_{\ep^{\theta/2}}(z)} \right) \leq \ep/4 ,
\eqe
where $\op{dist}$ denotes Euclidean distance.
Once~\eqref{eqn-metric-disc-claim} is established, we will obtain that the Euclidean diameter of $A $ is at most $\ep/2 + 2\ep^{\theta/2} \leq \ep $, as desired.
To prove~\eqref{eqn-metric-disc-claim}, we treat two cases depending on the value of $D_h(x,w)$. 
\medskip

\noindent\textit{Case 1: $D_h(x,w) \leq s + 2\ep^{\beta\theta} s$.} 
By Lemma~\ref{lem-approx-path}, there is a path $P  $ from $x$ to $w$ in $U_s^y(x)$ with $D_h$-length $T \leq D_h(x,w) + \ep^{\beta\theta} s$. We take $P$ to be parametrized by its $D_h$-length.
By our choice of $z$, $P$ passes through $\ol{B_{\ep^\theta}(z)}$. 
Since $x,w  \notin B_{\ep^{\theta/2}}(z)$, $P$ must hit the path $\pi$. Let $\tau$ be the first time that $P$ hits $\pi$.  
By~\eqref{eqn-metric-disc-path-dist}, $D_h( P(\tau) , \bdy\mcl B_s^{y,\bullet}(x) ) \leq  \ep^{\beta\theta} s$.

Since each point of $\bdy\mcl B_s^{y,\bullet}(x)$ lies at $D_h$-distance $s$ from $x$ (Lemma~\ref{lem-outer-bdy-dist}), this implies that $\tau \geq s - \ep^{\beta\theta} s$ and hence that 
\eqbn
T - \tau \leq T - s + \ep^{\beta\theta} s \leq D_h(x,w) - s + 2\ep^{\beta\theta} s \leq 4\ep^{\beta\theta} s .
\eqen 
By~\eqref{eqn-metric-disc-coer}, if we let 
\eqbn
\sigma := s \wedge \inf\left\{ t > \tau : P(t) \notin B_1(\bdy\mcl B_s^{y,\bullet}(x))    \right\}
\eqen
then the Euclidean diameter of $P([\tau,\sigma])$ is at most $\ep^{\chi\beta\theta} s^\chi $, which by our choice of $\theta$ is at most $\ep/4$ (provided $\ep$ is small enough). 

Since $P(\tau) \in \pi$ and $\pi \subset B_{\ep^{\theta/2}}(\bdy\mcl B_s^{y,\bullet}(x))$, each point of $P([\tau,\sigma])$ lies at Euclidean distance at most $\ep^{\theta/2} + \ep/4 < 1$ from $\bdy\mcl B_s^{y,\bullet}(x)$. 
Therefore, $\sigma = D_h(x,w)$ and $P(\sigma) = w$. 
Hence $w$ lies at Euclidean distance at most $\ep/4$ from $B_{\ep^{\theta/2}}(z)$, as required.
\medskip

\noindent\textit{Case 2: $D_h(x,w) > s + 2\ep^{\beta\theta} s$.} Let $V$ be the connected component of $\BB C\setminus \mcl B_{s +2 \ep^{\beta\theta} s} (x)$ which contains $w$. 
Then $V$ is contained in $\BB C\setminus \mcl B_s^{w,\bullet}(x)$, which is the connected component of $\BB C \setminus \mcl B_s(x)$ which contains $w$. 
By Lemmas~\ref{lem-ball-closed} and~\ref{lem-outer-bdy-dist}, $\bdy V = \bdy\mcl B_{s + 2\ep^{\beta\theta} s}^{w,\bullet}(x)$ lies at positive Euclidean distance from $\bdy\mcl B_{s  }^{w,\bullet}(x)$ and hence also from $\mcl B_s(x)$. It follows that $V$ lies at positive Euclidean distance from $\bdy \mcl B_s^{y,\bullet}(x)$, so $\ol V $ is contained in the interior of $\mcl B_s^{y,\bullet}(x)$. Since $\ol V$ is connected and $w \in U_s^y(x)$, we have $\ol V\subset U_s^y(x)$. 

We claim that $\ol V$ is disjoint from $\ol{B_{\ep^\theta}(z)}$. Indeed, if $\ol V$ intersects $\ol{B_{\ep^\theta}(z)}$, then since $w\in \ol V \setminus \ol{B_{\ep^{\theta/2}}(z)}$ and $\ol V$ is connected, it must be the case that $\ol V$ intersects the inner and outer boundaries of the annulus $\BB A_{\ep^\theta ,  \ep^{\theta/2}}(z)  $. Hence $\ol V$ intersects $\pi$. By~\eqref{eqn-metric-disc-path-dist}, the $D_h$-distance from $\ol V$ to $\bdy\mcl B_s^{y,\bullet}(x)$ is at most $\ep^{\beta\theta}s$, so the $D_h$-distance from $\ol V$ to $x$ is at most $s + \ep^{\beta\theta} s$. But, by Lemma~\ref{lem-outer-bdy-dist} (applied to $\bdy\mcl B_{s+2\ep^{\beta\theta} s}^{w,\bullet}(x)$), the $D_h$-distance from $\ol V$ to $x$ is  equal to $s+2\ep^{\beta\theta} s$, which is a contradiction. 

Since $\ol V$ is connected, $w\in \ol V$, and $w$ is disconnected from $x$ by $\ol{B_{\ep^\theta}(z)}$ in $U_s^y(x)$, we get that $\ol V$ is disconnected from $x$ by $\ol{B_{\ep^{\theta}}(z)}$ in $U_s^y(x)$. 

Let $V_\infty = \BB C\setminus  \mcl B_{s+2\ep^{\beta\theta}s}^\bullet(x)$ be the unbounded connected component of $\BB C\setminus \mcl B_{s+2\ep^{\beta\theta}s}(x)$.
We will now reduce to the case when $V\not=V_\infty$. See Figure~\ref{fig-Vinfty} for an illustration of this part of the argument. Obviously, if $V_\infty\not\subset U_s^y(x)$, then $V\not= V_\infty$, so we can assume that $V_\infty \subset U_s^y(x)$ (which implies that $U_s^y(x)$ is unbounded). 
Then $\BB C\setminus \mcl B_{2s}^\bullet(x) \subset V_\infty  \subset U_s^y(x)$. We can choose a path $\Pi$ in $U_s^y(x)$ from $x$ to a point of $\BB C\setminus  \mcl B_{2s}^\bullet(x)$ in a manner which depends only on $U_s^y(x)$ and $\BB C\setminus \mcl B_{2s}^\bullet(x)$ (not on $\ep$). Let $\ep_0$ be the Euclidean distance from $\Pi$ to $\bdy U_s^y(x)$. Then $\ep_0$ is a random number depending on $x,y,s$ (not on $\ep$). If $\ep^{\theta } < \ep_0$ then the path $\Pi$ cannot intersect $\ol{B_{\ep^\theta}(z)}$. Hence if $\ep^\theta < \ep_0$, then $\BB C\setminus  \mcl B_{2s}^\bullet(x)$ is not disconnected from $x$ in $U_s^y(x)$ by $\ol{B_{\ep^\theta}(z)}$. Since $\BB C\setminus  \mcl B_{2s}^\bullet(x) \subset V_\infty$ and $V$ is disconnected from $x$ in $U_s^y(x)$ by $\ol{B_{\ep^\theta}(z)}$ (as explained above), this implies that so long as $\ep < \ep_0$, we have $V\not=V_\infty$. We henceforth assume that $\ep < \ep_0$, so that $\ol V$ is compact. 

The Euclidean-furthest point of $\ol V$ from $\ol{B_{\ep^\theta}(z)}$ must lie on $\bdy V$, so since $w \in \ol V$ we have
\eqbn
\op{dist}\left(w , \ol{B_{\ep^\theta}(z)} \right) \leq \sup_{u \in \bdy V} \op{dist}(u , \ol{B_{\ep^\theta}(z)} ) .
\eqen
Each point of $\bdy V$ lies at $D_h$-distance $s + 2\ep^{\beta\theta} s$ from $x$, so we can apply Case 1 with $\bdy V$ in place of $A$ to get that $\sup_{u \in \bdy V} \op{dist}(u , \ol{B_{\ep^\theta}(z)} ) \leq \ep/4$. This yields~\eqref{eqn-metric-disc-claim}. 
\end{proof}

\begin{figure}[ht!]
\begin{center}
\includegraphics[scale=1]{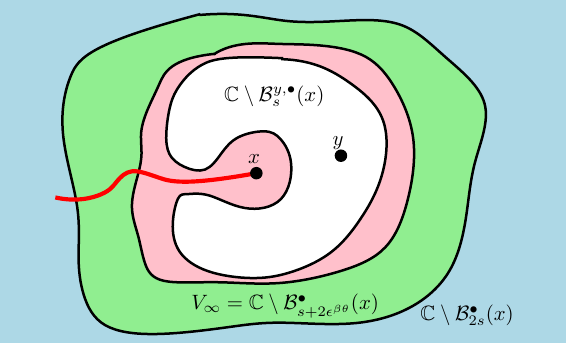} 
\caption{\label{fig-Vinfty} Illustration of how we reduce to the case when $V\not=V_\infty$ in the proof of Lemma~\ref{lem-metric-disc}. Here we have shown the case when $U_s^y(x)$ (the union of the pink, green, and blue regions) is unbounded. The set $V_\infty$ is the union of the green and blue regions. We can choose a path $\Pi$ (red) in $U_s^y(x)$ from $x$ to the blue region $\BB C\setminus \mcl B_{2s}^\bullet(x)$ in a manner which does not depend on $\ep$. The Euclidean distance from $\Pi$ to $\bdy U_s^y(x)$ is a positive constant $\ep_0 > 0$ which does not depend on $\ep$. Hence, if $\ep^\theta < \ep_0$ then $\BB C\setminus \mcl B_{2s}^\bullet(x)$ is not disconnected from $x$ in $U_s^y(x)$ by $\ol{B_{\ep^\theta}(z)}$. Since $\BB C\setminus \mcl B_{2s}^\bullet(x) \subset V_\infty$, the same is true for $V_\infty$. Since $V$ is disconnected from $x$ in $U_s^y(x)$ by $\ol{B_{\ep^\theta}(z)}$, we infer that if $\ep^\theta < \ep_0$, then $V \not= V_\infty$.  
}
\end{center}
\end{figure}

We can now apply Proposition~\ref{prop-locally-connected} to get the following.

\begin{lem} \label{lem-curve}
Almost surely, for each non-singular point $x\in\BB C$, each $y\in\BB C \cup \{\infty\}$, and each $s \in (0,D_h(x,y))$ the set $\bdy\mcl B_s^{y,\bullet}(x)$ is the image of a (not necessarily simple) curve. 
\end{lem}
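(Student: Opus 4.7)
The plan is to apply Proposition~\ref{prop-locally-connected} to the domain $U_s^y(x)$ defined in~\eqref{eqn-origin-component-def}. All the real work has already been done: Lemma~\ref{lem-metric-bdy} gives $\bdy U_s^y(x)=\bdy\mcl B_s^{y,\bullet}(x)$ together with simple connectivity, and Lemma~\ref{lem-metric-disc} is precisely the disconnecting-set hypothesis from Proposition~\ref{prop-locally-connected}.

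First, I would work on the full-probability event on which the conclusions of Lemmas~\ref{lem-ball-closed}, \ref{lem-ball-contain-all}, \ref{lem-metric-bdy}, and \ref{lem-metric-disc} hold simultaneously for every valid triple $(x,y,s)$. Fix such a triple and set $U := U_s^y(x)$. By translating the picture by $-x$ (this is just a shift and does not affect the hypothesis or conclusion), we may reduce to the case $x = 0$ required by Proposition~\ref{prop-locally-connected}.

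Next, I would verify each hypothesis of Proposition~\ref{prop-locally-connected}. The set $U$ is open and connected by construction and contains $x$ by Lemma~\ref{lem-ball-contain-all}. It is not all of $\BB C$ because $y \notin \mcl B_s^{y,\bullet}(x) \supset U$ when $s < D_h(x,y)$. Its boundary $\bdy U = \bdy\mcl B_s^{y,\bullet}(x)$ (Lemma~\ref{lem-metric-bdy}) is contained in $\mcl B_s(x)$, which is Euclidean-closed (Lemma~\ref{lem-ball-closed}) and Euclidean-bounded (Lemma~\ref{lem-bounded}), hence $\bdy U$ is compact. Simple connectivity of $U$ in $\BB C$ is given by Lemma~\ref{lem-metric-bdy}, and when $U$ is unbounded this forces $\BB C\setminus U$ to be connected in $\BB C$, so $U\cup\{\infty\}$ is simply connected on the Riemann sphere. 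Finally, for the disconnecting-set criterion, given $\ep > 0$, let $\ol\ep = \ol\ep(x,y,s)$ be as in Lemma~\ref{lem-metric-disc} and put $\delta := \min(\ep,\ol\ep)^\theta$. If a set $A\subset U$ can be disconnected from $x$ in $U$ by $Y$ of Euclidean diameter at most $\delta$ with $Y\cap \bdy U\ne\emptyset$, then Lemma~\ref{lem-metric-disc} applied with $\min(\ep,\ol\ep)$ in place of $\ep$ yields that $A$ has Euclidean diameter at most $\min(\ep,\ol\ep)\le \ep$.

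With all hypotheses verified, Proposition~\ref{prop-locally-connected} gives that $\bdy U = \bdy\mcl B_s^{y,\bullet}(x)$ is the image of a (not necessarily simple) curve, which is the desired conclusion. There is no serious obstacle here beyond bookkeeping; the mild subtlety is the two-case distinction between bounded and unbounded $U$ in verifying the simple-connectivity hypothesis on the Riemann sphere, which is dispatched by the standard fact that a simply connected domain in $\BB C$ has connected complement in $\BB C$.
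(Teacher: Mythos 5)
Your proposal is correct and takes essentially the same route as the paper: combine Lemma~\ref{lem-metric-bdy} (to reduce to $U_s^y(x)$, identify boundaries, and get the simple-connectivity/connected-complement hypothesis) with Lemma~\ref{lem-metric-disc} (to supply the disconnecting-set criterion, uniformizing the random $\ol\ep$ via $\min(\ep,\ol\ep)^\theta$) and then invoke Proposition~\ref{prop-locally-connected}. The paper's own proof is just a terser version of what you wrote, with the hypothesis verifications left implicit.
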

\begin{proof} 
By Lemma~\ref{lem-metric-bdy}, $\bdy \mcl B_s^{y,\bullet}(x) = \bdy U_s^y(x)$, so it suffices to show that $\bdy U_s^y(x)$ is a curve. 
By Lemma~\ref{lem-metric-bdy}, it is a.s.\ the case that for each $x,y,s$ as in the lemma statement, the set $U_s^y(x)$ is simply connected. Furthermore, $\bdy U_s^y(x) \subset \bdy\mcl B_s(x)$ is Euclidean-compact. 
By Proposition~\ref{prop-locally-connected}, to show that $\bdy U_s^y(x)$ is a curve it therefore suffices to show that for each $\ep > 0$, there exists $\delta > 0$ such that each set $A\subset   U_s^y(x)$ which can be disconnected from $x$ in $  U_s^y(x)$ by a set of Euclidean diameter at most $\delta$ which intersects $\bdy U_s^y(x)$ has Euclidean diameter at most $\ep$. 
This follows from Lemma~\ref{lem-metric-disc}. 
\end{proof}

To prove that $\bdy\mcl B_s^{y,\bullet}(x)$ is a \emph{Jordan} curve, we need to prove that it can be represented by a curve with no double points. 
The following lemma will help us to do that.

\begin{lem} \label{lem-metric-multiple}
Almost surely, for each non-singular point $x\in\BB C$, each $y\in\BB C \cup \{\infty\}$, and each $s \in (0,D_h(x,y))$, the following is true. 
Let $\psi : \bdy\BB D \rta \bdy\mcl B_s^{y,\bullet}(x)$ be a continuous map (which exists by Lemma~\ref{lem-curve}).
Let $u,v \in \bdy\BB D$ be distinct points such that $\psi(u) = \psi(v)$. 
Let $I$ and $J$ be the two closed arcs of $\bdy\BB D$ between $u$ and $v$. 
Then either $\psi(I)\subset \psi(J)$ or $\psi(J)\subset \psi(I)$. 
\end{lem}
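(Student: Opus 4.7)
The approach is to apply planar topology to the conformal uniformization of $U_s^y(x)$, which is simply connected by Lemma~\ref{lem-metric-bdy}. The map $\psi$ guaranteed by Lemma~\ref{lem-curve} is the continuous boundary extension $\psi = \phi|_{\bdy\BB D}$ of a conformal isomorphism $\phi:\BB D \to U_s^y(x)$, supplied by Lemma~\ref{lem-metric-cont}; I will work with this specific $\psi$. In fact I will prove the strictly stronger statement that either $\psi(I)=\{w\}$ or $\psi(J)=\{w\}$, which implies the required containment since $w \in \psi(I)\cap\psi(J)$.

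Given distinct $u,v\in\bdy\BB D$ with $\psi(u)=\psi(v)=w$, choose any simple arc $\alpha\subset\ol{\BB D}$ from $u$ to $v$ with $\alpha\setminus\{u,v\}\subset\BB D$ (e.g.\ the Euclidean chord, perturbed if necessary to avoid $\phi^{-1}(\infty)$ when $U_s^y(x)$ is unbounded). Then $\phi(\alpha\setminus\{u,v\})$ is a simple curve in $U_s^y(x)\subset\BB C$ whose two ends approach $w$, so
\eqbn
\Gamma := \phi(\alpha\setminus\{u,v\})\cup\{w\}
\eqen
is a Jordan curve in the Riemann sphere $\wh{\BB C}$. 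By the Jordan curve theorem, $\wh{\BB C}\setminus\Gamma$ consists of two open topological disks $\Omega_1,\Omega_2$. Let $H_I,H_J$ be the two components of $\BB D\setminus\alpha$, labeled so that $\bdy H_I\cap\bdy\BB D=I$ and $\bdy H_J\cap\bdy\BB D=J$. Their images $\phi(H_I),\phi(H_J)$ are connected and disjoint from $\Gamma$, hence lie in different components of $\wh{\BB C}\setminus\Gamma$; relabel so that $\phi(H_I)\subset\Omega_1$ and $\phi(H_J)\subset\Omega_2$.

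The crucial observation is $\Gamma\subset\mcl B_s^{y,\bullet}(x)$, since $\phi(\alpha\setminus\{u,v\})\subset U_s^y(x)\subset\mcl B_s^{y,\bullet}(x)$ and $w\in\bdy\mcl B_s^{y,\bullet}(x)$. By the very definition of the filled metric ball, the complement of $\mcl B_s^{y,\bullet}(x)$ in $\wh{\BB C}$ — interpreted by adjoining $\infty$ to $\BB C\setminus\mcl B_s^{y,\bullet}(x)$ exactly when $\mcl B_s^{y,\bullet}(x)$ is bounded — is the single connected component of $\wh{\BB C}\setminus\mcl B_s(x)$ containing $y$. Being connected and disjoint from $\Gamma$, this complement lies entirely in whichever $\Omega_i$ contains $y$; WLOG $y\in\Omega_2$. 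Hence $\Omega_1\subset\mcl B_s^{y,\bullet}(x)$. Being open, $\Omega_1$ lies in the interior of $\mcl B_s^{y,\bullet}(x)$; being connected and sharing the nonempty open subset $\phi(H_I)$ with $U_s^y(x)$, it lies in the same component of that interior, so $\Omega_1\subset U_s^y(x)$ by the definition of $U_s^y(x)$.

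To conclude, for any $z\in I\setminus\{u,v\}$ the radial points $rz$ lie in $H_I$ for $r$ close enough to $1$ (since $\alpha$ does not pass through $z$), so $\phi(rz)\in\Omega_1$ and continuity of the extension gives $\psi(z)=\lim_{r\to 1^-}\phi(rz)\in\ol{\Omega_1}$. Combined with $\psi(z)\in\bdy U_s^y(x)$, and the facts that $\Omega_1\subset U_s^y(x)$ is open (hence disjoint from $\bdy U_s^y(x)$) while $\Gamma\setminus\{w\}=\phi(\alpha\setminus\{u,v\})\subset U_s^y(x)$ is also interior, this forces
\eqbn
\psi(z)\in\ol{\Omega_1}\cap\bdy U_s^y(x)\subset\Gamma\cap\bdy U_s^y(x)=\{w\}.
\eqen
Thus $\psi(I)=\{w\}\subset\psi(J)$, while the symmetric case $y\in\Omega_1$ yields $\psi(J)\subset\psi(I)$. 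The main delicate point is handling bounded and unbounded $U_s^y(x)$ uniformly on the Riemann sphere (so that $y=\infty$ is treated correctly when the filled ball is bounded, and conversely $\infty$ is adjoined to the ball when $\mcl B_s^{y,\bullet}(x)$ itself is unbounded); no quantitative LQG estimates from Section~\ref{sec-bdy-estimates} are used, as the argument is purely topological and depends only on Lemmas~\ref{lem-metric-cont} and~\ref{lem-metric-bdy}.
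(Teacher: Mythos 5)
Your proof is correct, and it takes a genuinely different route from the paper. The paper's proof has two steps: a homotopy argument to show that one of $\psi(I),\psi(J)$ separates $x$ from $y$, and then a geodesic argument — if $z\in\psi(J)\setminus\psi(I)$ were in the component of $\BB C\setminus\psi(I)$ containing $y$, a $D_h$-geodesic from $x$ to $z$ would have to cross $\psi(I)\subset\bdy\mcl B_s^{y,\bullet}(x)$ at some time $\tau<s$, contradicting Lemma~\ref{lem-outer-bdy-dist}. Your proof instead runs the Jordan curve theorem on the crosscut image $\Gamma=\phi(\alpha\setminus\{u,v\})\cup\{w\}$ and pins down $\psi(I)$ topologically, using only that $\wh{\BB C}\setminus\mcl B_s^{y,\bullet}(x)$ (suitably interpreted on the sphere) is connected, that $\bdy U_s^y(x)=\bdy\mcl B_s^{y,\bullet}(x)$ (Lemma~\ref{lem-metric-bdy}), and the continuity of $\phi$ on $\ol{\BB D}$ (Lemma~\ref{lem-metric-cont}); the geodesic input is entirely absorbed into the cited lemmas rather than invoked here. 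You also prove the strictly stronger conclusion that one of $\psi(I),\psi(J)$ collapses to the single point $w$, which, as you note, immediately gives the required containment — and would in fact slightly streamline the subsequent proof of Proposition~\ref{prop-jordan}. The one point that deserves a sentence more than you gave it is why $\phi(H_I)$ and $\phi(H_J)$ lie in \emph{different} components of $\wh{\BB C}\setminus\Gamma$: this follows because, for a point $p\in\alpha\cap\BB D$ away from $\phi^{-1}(\infty)$, a small disk $\phi(N)$ around $\phi(p)$ meets $\Gamma$ exactly in $\phi(\alpha\cap N)$, so the two components of $\phi(N)\setminus\Gamma$ (one in each of $\phi(H_I),\phi(H_J)$) must lie in the two different complementary components of $\Gamma$, since every point of a Jordan curve is on the boundary of both. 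With that filled in, the argument is complete and all the casework on boundedness and $\infty$ is handled correctly.
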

\begin{proof} 
Since $\bdy\mcl B_s^{y,\bullet}(x)$ disconnects $x$ from $y$, the homotopy class of the loop $\psi$ in $(\BB C\cup \{\infty\}) \setminus \{x,y\}$ is non-trivial.  
Since $\psi(u) = \psi(v)$, each of $\psi|_I$ and $\psi|_J$ is a loop in $\BB C$, and $\psi|_{\bdy\BB D}$ is the concatenation of these two loops. 
The concatenation of two homotopically trivial loops is also homotopically trivial.
Therefore, one of $\psi|_I$ or $\psi|_J$ is not homotopic to a point in $(\BB C\cup \{\infty\}) \setminus \{x,y\}$. 
This implies that one of $\psi(I)$ or $\psi(J)$ disconnects $x$ from $y$.   
 
Assume without loss of generality that $\psi(I)$ disconnects $x$ from $y$.  
Since $\psi(I) , \psi(J) \subset \bdy \mcl B_s^{y,\bullet}(x)$, no point of $\psi(J) \setminus \psi(I)$ can be disconnected from $y$ by $\psi(I)$. 
Hence, $\psi(J) \subset \ol O$, where $O$ is the connected component of $\BB C\setminus \psi(I)$ which contains $y$. 
By assumption, $x \notin O$. 

If $z \in \psi(J) \setminus \psi(I)$ then $z\in O  \cap \bdy\mcl B_s^{y,\bullet}(x)$. By Lemma~\ref{lem-outer-bdy-dist}, we have $D_h(x,z) = s$. If $P : [0,s] \rta \BB C$ is a $D_h$-geodesic from $x$ to $z$, then since $z\in O$ there is a time $\tau < s$ such that $P(\tau) \in \psi(I)$. But, $\psi(I)\subset\bdy\mcl B_s^{y,\bullet}(x)$ so by Lemma~\ref{lem-outer-bdy-dist}, $D_h(x,P(\tau)) = s$. This contradicts the fact that $P$ is a $D_h$-geodesic. We conclude that $\psi(J) \subset \psi(I)$. 
\end{proof}

\begin{proof}[Proof of Proposition~\ref{prop-jordan}]
Throughout the proof, we fix a non-singular point $x\in\BB C$, a point $y \in \BB C\cup\{\infty\}$, and $s\in (0,D_h(x,y))$. All statements are required to hold a.s.\ for all choices of $x,y,s$ simultaneously.

Let $U_s^y(x)$ be as in~\eqref{eqn-origin-component-def} and let $\phi = \phi_s^y : \BB D\rta U_s^y(x)$ be a conformal map (such a map exists since $U_s^y(x)$ is simply connected, see Lemma~\ref{lem-metric-bdy}). Since $\bdy U_s^y(x) = \bdy\mcl B_s^{y,\bullet}(x)$ (Lemma~\ref{lem-metric-bdy}), it follows from Lemma~\ref{lem-curve} and~\cite[Theorem 2.1]{pom-book} (or just Lemma~\ref{lem-metric-cont}) that the map $\phi$ extends to a continuous map $\ol{\BB D} \rta \ol U_s^y(x)$. We henceforth assume that $\phi$ has been replaced by such a continuous extension. We will show that $\phi$, thus extended, is a homeomorphism.

We say that $z\in\bdy\mcl B_s^{y,\bullet}(x) =\bdy U_s^y(x)$ is a \emph{cut point} if $\bdy U_s^y(x) \setminus \{z\}$ is not connected. By~\cite[Theorem 2.6]{pom-book}, it suffices to show that $\bdy U_s^y(x)$ has no cut points. 
 
Assume by way of contradiction that $z \in \bdy U_s^y(x)$ is a cut point. By~\cite[Proposition 2.5]{pom-book}, $\# \phi^{-1}(z) \geq 2$ (in principle $\#\phi^{-1}(z) $ could be infinite, even uncountable).
Furthermore, if $\mcl I$ is the set of connected components of $\bdy\BB D\setminus \phi^{-1}(z)$, then the set of connected components of $\bdy U_s^y(x) \setminus \{z\}$ is $\{\phi(I) : I\in \mcl I\}$. 
 
Each $I\in\mcl I$ is an open arc of $\bdy\BB D$ whose endpoints are distinct points of $\phi^{-1}(z)$.
Let $J  = J(I) := \bdy\BB D\setminus I$. 
By Lemma~\ref{lem-metric-multiple}, either $\phi(\ol I)\subset \phi(J)$ or $\phi(J)\subset\phi(\ol I)$. 
By the preceding paragraph, $\phi(\ol I)$ is the union of $\{z\}$ and a connected component of $\bdy U_s^y(x) \setminus \{z\}$; and $\phi(J)$ is the union of $\{z\}$ and the other connected components of $\bdy U_s^y(x) \setminus\{z\}$. 
Therefore, $\phi(\ol I)\cap \phi(J) = \{z\}$. 
Hence one of $\phi(\ol I)$ or $\phi(J)$ is equal to $\{z\}$. 
This means that $\bdy U_s^y(x) \setminus \{z\}$ has only one connected component, so $z$ was not a cut point after all.
\end{proof}

\begin{proof}[Proof of Theorem~\ref{thm-outer-bdy}]
 Proposition~\ref{prop-jordan} implies that a.s.\ $\bdy\mcl B_s^{y,\bullet}(x)$ is a Jordan curve for each non-singular $x\in\BB C$, each $y \in \BB C\cup\{\infty\}$, and each $s \in (0,D_h(x,y))$. 
Theorem~\ref{thm-net-dim} implies that a.s.\ each of these filled metric ball boundaries has finite Hausdorff dimension w.r.t.\ $D_h$. 
Proposition~\ref{prop-ball-bdy-compact} implies that a.s.\ each of these filled metric ball boundaries is pre-compact w.r.t.\ $D_h$. Since each such boundary is Euclidean-closed, it is also $D_h$-closed, and hence $D_h$-compact. 
\end{proof}

\section{Confluence of geodesics}
\label{sec-confluence}

In this section we will explain how to adapt the proof of confluence of geodesics for subcritical LQG from~\cite{gm-confluence} to the supercritical case. This will lead to proofs of Theorems~\ref{thm-clsce} and~\ref{thm-finite-geo0}. Many of the arguments of~\cite{gm-confluence} carry over verbatim to the supercritical case, but in some places non-trivial modifications to the arguments, using results from Sections~\ref{sec-bdy-estimates} and~\ref{sec-curve} of the present paper, are needed. As such, we will not repeat the full argument from~\cite{gm-confluence}. Instead, we will only explain the parts of the argument which require modification. We aim to strike a balance between minimizing repetition of arguments from the subcritical case and making the paper readable without the reader having to frequently refer to~\cite{gm-confluence}.   

The proof of confluence of geodesics for subcritical LQG has four steps.
\begin{enumerate}
\item Establish some preliminary facts about geodesics, such as uniqueness of geodesics between typical points and certain monotonicity properties for the cyclic ordering of geodesics from 0 to points of the boundary of the filled metric ball $\mcl B_s^\bullet$~\cite[Section 2.1]{gm-confluence}. \label{item-step-unique}
\item Suppose we condition on $(\mcl B_s^\bullet , h|_{\mcl B_s^\bullet})$ and $I\subset\bdy\mcl B_s^\bullet$ is an arc chosen in a way which depends only on  $(\mcl B_s^\bullet , h|_{\mcl B_s^\bullet})$. Show that if $I$ can be disconnected from $\infty$ in $\BB C\setminus \mcl B_s^\bullet$ by a set of small Euclidean diameter, then it holds with high conditional probability that there is a ``shield" in $\BB C\setminus \mcl B_s^\bullet$ which disconnects $I$ from $\infty$ with the property that no $D_h$-geodesic started from 0 can cross this shield~\cite[Sections 3.2 and 3.3]{gm-confluence}.  \label{item-step-shield}
\item Start with a positive radius $t$ and a collection of boundary arcs $\mcl I_0$ of $\bdy\mcl B_t^\bullet$. Iteratively apply Step~\ref{item-step-shield} for several successive radii $s_k > t$ to iteratively ``kill off" all of the geodesics started from 0 which pass through $I \in \mcl I_0$. Repeat until the number of remaining arcs in $\mcl I_0$ which have not yet been killed off is at most a large deterministic constant (independent of the initial choice of $\mcl I_0$). By sending the size of the arcs in $\mcl I_0$ to zero (and the number of such arcs to $\infty$), conclude that for each fixed $s > t$, there are a.s.\ only finitely many points on $\bdy\mcl B_t^\bullet$ which are hit by $D_h$-geodesics from 0 to $\bdy\mcl B_s^\bullet$~\cite[Section 3.4]{gm-confluence}. This yields Theorem~\ref{thm-finite-geo0}.  \label{item-step-iterate}
\item Reduce from finitely many points on $\bdy\mcl B_t^\bullet$ to a single point by ``killing off" the points one at a time~\cite[Section 4]{gm-confluence}. This yields Theorem~\ref{thm-clsce}. \label{item-step-single}
\end{enumerate}
See~\cite[Section 3.1]{gm-confluence} for a detailed overview of Steps~\ref{item-step-shield} and~\ref{item-step-iterate}. 

Most of the arguments involved in step~\ref{item-step-unique} carry over verbatim to the supercritical case once we know that the boundary of a filled metric ball is a Jordan curve (Proposition~\ref{prop-jordan}). So, we will not repeat many of these arguments here. Rather, we will just state a few of the most important results; see Section~\ref{sec-unique}.

Step~\ref{item-step-shield} requires non-trivial  modifications in the supercritical case. This is because the definition of the event used to build the ``shield" in the subcritical case involves a bound for the LQG diameters of certain small squares, which are infinite in the supercritical case. So, we need to work with somewhat different events in the supercritical case. Because of this, we will give most of the details for Step~\ref{item-step-shield} in this paper. This is done in Sections~\ref{sec-good-annuli} and~\ref{sec-geo-kill}. 

Step~\ref{item-step-iterate} requires only very minor modifications as compared to the subcritical case. In particular, in the subcritical case, the H\"older continuity of the LQG metric with respect to the Euclidean metric is used in one place. In our setting, we can replace this use of H\"older continuity by using Lemma~\ref{lem-bdy-dist0}, and then the argument goes through verbatim. As such, we will not give much detail about this step; see Section~\ref{sec-finite-geo-proof}. 

As in the case of step~\ref{item-step-shield}, step~\ref{item-step-single} requires non-trivial modifications in the supercritical case. Again, this is because the event used to ``kill off" all but one of the geodesics in the subcritical case involves bounds for LQG diameters. We will provide most of the details for the parts of step~\ref{item-step-single} which require modification, see Section~\ref{sec-one-geo}. 
 
\subsection{Preliminary results about LQG metric balls and geodesics}
\label{sec-unique}

We know that $D_h$-geodesics and outer boundaries of filled $D_h$-metric balls are simple, Euclidean-continuous curves (Lemma~\ref{lem-geodesic-cont} and Proposition~\ref{prop-jordan}). Furthermore, we know that $D_h(0,z) = s$ for each $s >0$ and each $z\in \bdy\mcl B_s^\bullet$ (Lemma~\ref{lem-outer-bdy-dist}). With these facts in hand, most of the results in~\cite[Section 2.1]{gm-confluence} and their proofs carry over verbatim to the supercritical case. 

We first state a result to the effect that ordinary and filled LQG metric balls are local sets for $h$ as defined in~\cite[Lemma 3.9]{ss-contour}. 
Let us recall the definition. Suppose $(h,A)$ is a coupling of $h$ with a random set $A$. 
We say that a closed set $A \subset\BB C$ is a \emph{local set} for $h$ if for any open set $U\subset \BB C$, the event $\{A\cap U \not=\emptyset\}$ is conditionally independent from $h|_{\BB C\setminus U}$ given $h|_U$. If $A$ is determined by $h$ (which will be the case for all of the local sets we consider), this is equivalent to the statement that $A$ is determined by $h|_U$ on the event $\{A\subset U\}$. 
For a local set $A$, we can condition on the pair $(A,h|_A)$: this is by definition the same as conditioning on the $\sigma$-algebra $\bigcap_{\ep > 0} \sigma(A , h|_{B_\ep(A)})$. 
The conditional law of $h|_{\BB C\setminus A}$ given $(A,h|_A)$ is that of a zero-boundary GFF on $\BB C\setminus A$ plus a harmonic function on $\BB C\setminus A$ which is determined by $(A,h|_A)$. 

\begin{lem} \label{lem-ball-local}
Let $x\in\BB C$ and $y\in\BB C\cup \{\infty\}$ be deterministic. 
If $\tau$ is a stopping time for the filtration generated by $(\mcl B_s(x) , h|_{\mcl B_s(x)})$, then $\mcl B_\tau(x)$ is a local set for $h$.
The same is true with $\mcl B_s^{y,\bullet}(x)$ in place of $ \mcl B_s(x)$.  
\end{lem}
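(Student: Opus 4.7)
The plan is to establish locality first for $\mcl B_s(x)$ at deterministic radii $s$ using the locality axiom (Axiom~\ref{item-metric-local}), upgrade to stopping times by dyadic approximation, and finally deduce the filled-ball statement as a direct consequence of the ordinary-ball statement.

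For deterministic $s>0$, I would argue $\mcl B_s(x)$ is local as follows. Since $\mcl B_s(x)$ is determined by $h$, by the criterion recalled just before the lemma it suffices to prove that on $E_U:=\{\mcl B_s(x)\subset U\}$, $\mcl B_s(x)$ is determined by $h|_U$. The key observation is that on $E_U$, every $D_h$-geodesic from $x$ to a point $z\in\mcl B_s(x)$ lies in $\mcl B_s(x)\subset U$, because each point $P(t)$ on such a geodesic satisfies $D_h(x,P(t))\leq t\leq s$ (existence of geodesics is Lemma~\ref{lem-geodesic-cont}). Hence for every $z\in U$ with $D_h(x,z)\leq s$, $D_h(x,z)=D_h(x,z;U)$, and so
\[
\mcl B_s(x) \;=\; \{z\in U : D_h(x,z;U)\leq s\} \qquad \text{on } E_U.
\]
By Axiom~\ref{item-metric-local}, the right-hand side is a measurable function of $h|_U$, which gives locality at deterministic radii.

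For a stopping time $\tau$ with respect to $\mcl F_s:=\sigma(\mcl B_{s'}(x),h|_{\mcl B_{s'}(x)}:s'\leq s)$, I would set $\tau_n:=2^{-n}\lceil 2^n\tau\rceil$, which takes values in the countable set $2^{-n}\BB N$. The decomposition
\[
\{\mcl B_{\tau_n}(x)\subset U\} \;=\; \bigsqcup_{t\in 2^{-n}\BB N}\bigl(\{\tau_n=t\}\cap\{\mcl B_t(x)\subset U\}\bigr),
\]
combined with the fact that $\{\tau_n=t\}\in\mcl F_t$ is determined by $h|_U$ on $\{\mcl B_t(x)\subset U\}$ via the first step, shows that $\mcl B_{\tau_n}(x)$ is local. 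Since $D_h$ is lower semicontinuous, $\mcl B_s(x)$ is Euclidean-closed and $\bigcap_{s>\tau}\mcl B_s(x)=\mcl B_\tau(x)$; thus $\mcl B_{\tau_n}(x)\downarrow\mcl B_\tau(x)$. The standard fact that a decreasing intersection of local sets is local (see~\cite{ss-contour}) then gives locality of $\mcl B_\tau(x)$. For the filled ball, I would note that $\mcl B_\tau^{y,\bullet}(x)$ is a deterministic function of $\mcl B_\tau(x)$ and $y$, and that $\{\mcl B_\tau^{y,\bullet}(x)\subset U\}\subset\{\mcl B_\tau(x)\subset U\}$, so $\mcl B_\tau(x)$, and hence $\mcl B_\tau^{y,\bullet}(x)$, is determined by $h|_U$ on the former event.

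The main obstacle is the passage to the limit in the dyadic approximation. In the subcritical setting one could avoid this by using Euclidean-continuity of the metric, but here we must rely on the lower semicontinuity of $D_h$ to identify $\bigcap_{s>\tau}\mcl B_s(x)$ with $\mcl B_\tau(x)$, and then invoke the standard GFF local-set machinery for decreasing limits. Everything else (the deterministic-radius step and the reduction from filled balls to ordinary balls) is a direct application of Axiom~\ref{item-metric-local} together with the geometric observation that geodesics of length $\leq s$ stay inside $\mcl B_s(x)$.
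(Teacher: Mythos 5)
Your approach — locality at deterministic radii via Axiom~\ref{item-metric-local} together with the observation that geodesics to points of $\mcl B_s(x)$ stay in $\mcl B_s(x)$, then dyadic approximation of a general stopping time from above and passage to the decreasing limit — matches the expected strategy; the paper itself simply records that the balls are Euclidean-closed (Lemma~\ref{lem-ball-closed}) and cites~\cite[Lemma 2.1]{gm-confluence} and~\cite[Lemma 2.2]{local-metrics}. The deterministic-radius step and the dyadic decomposition are sound, and the decreasing-limit step works because the balls are compact (closed by Lemma~\ref{lem-ball-closed} and Euclidean-bounded by Lemma~\ref{lem-bounded}), so that $\{\mcl B_\tau(x)\subset U\}=\bigcup_n\{\mcl B_{\tau_n}(x)\subset U\}$.

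There is, however, a gap in the final step. The filled-ball statement asserts that $\mcl B_\tau^{y,\bullet}(x)$ is local when $\tau$ is a stopping time for the filtration generated by $\bigl(\mcl B_s^{y,\bullet}(x),h|_{\mcl B_s^{y,\bullet}(x)}\bigr)_{s\geq 0}$, which is a different (and certainly not smaller) filtration than the one generated by $\bigl(\mcl B_s(x),h|_{\mcl B_s(x)}\bigr)_{s\geq 0}$, since $\mcl B_s^{y,\bullet}(x)$ can strictly contain $\mcl B_s(x)$. Your deduction treats $\mcl B_\tau(x)$ as a local set for this $\tau$, but your first two steps establish that only for stopping times of the ordinary-ball filtration, and a stopping time for the filled-ball filtration need not be one for the ordinary-ball filtration. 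The fix is to repeat the whole argument with $\mcl B_s^{y,\bullet}(x)$ in place of $\mcl B_s(x)$ throughout: the deterministic-radius step goes through because on $\{\mcl B_s^{y,\bullet}(x)\subset U\}$ one also has $\mcl B_s(x)\subset U$, so $\mcl B_s(x)$ is determined by $h|_U$ via your internal-metric argument and $\mcl B_s^{y,\bullet}(x)$ is then the deterministic function of $\mcl B_s(x)$ and $y$ that you describe; the dyadic-approximation step then runs with the filled-ball filtration. Your observation is precisely the right ingredient for the deterministic step, but it cannot substitute for re-running the stopping-time argument.
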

\begin{proof}
Note that $\mcl B_s(x)$ and $\mcl B_s^{y,\bullet}(x)$ are Euclidean-closed (Lemma~\ref{lem-ball-closed}). In light of this, the lemma follows from exactly the same proof as~\cite[Lemma 2.1]{gm-confluence} (see also~\cite[Lemma 2.2]{local-metrics}). 
\end{proof}

Our next result gives the uniqueness of $D_h$-geodesics between typical points. 

\begin{lem} \label{lem-geo-unique}
For each fixed $z,w\in\BB C$, a.s.\ there is a unique $D_h$-geodesic from $z$ to $w$.
\end{lem}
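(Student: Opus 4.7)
The plan is to argue by contradiction via Cameron--Martin absolute continuity of the whole-plane GFF together with the Weyl scaling axiom (Axiom~\ref{item-metric-f}), following the approach used in~\cite{gm-confluence} for subcritical LQG. Since Lemma~\ref{lem-geo-unique} depends only on the axiomatic properties of $D_h$ plus the GFF structure of $h$, I expect the subcritical argument to transfer essentially unchanged to our setting; the supercritical peculiarities (singular points, lower-semicontinuity) do not enter, since both endpoints $z,w$ are deterministic, hence a.s.\ non-singular, and $D_h$-geodesics between non-singular points exist and are Euclidean-continuous by Lemma~\ref{lem-geodesic-cont} and Proposition~\ref{prop-holder}.

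First, I would suppose that with positive probability there exist at least two distinct $D_h$-geodesics from $z$ to $w$. By a measurable selection theorem (e.g.\ Kuratowski--Ryll-Nardzewski), two such geodesics $P_1, P_2$ can be chosen measurably on this event. Enumerating rational Euclidean balls $B \subset \overline B \subset B'$ with $\overline{B'} \cap \{z,w\} = \emptyset$ and rational thresholds $\delta > 0$, one finds with positive probability that $P_1$ spends $D_h$-length at least $\delta$ inside $B$ while $P_2 \cap B' = \emptyset$. The reason one can isolate a positive $D_h$-length (not merely positive Euclidean length) in $B$ is that $D_h$-geodesics are parametrized by $D_h$-length, so any sub-interval of their parameter corresponds to positive $D_h$-length.

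Next, take a smooth bump $\phi \geq 0$ with $\phi \equiv 1$ on $\overline B$ and $\mathrm{supp}(\phi) \subset B'$. By Axiom~\ref{item-metric-f}, $D_{h+\phi} = e^{\xi\phi}\cdot D_h$. On the positive-probability event above, parametrizing by $D_h$-length and using that $P_2$ avoids $\mathrm{supp}(\phi)$,
\[
\mathrm{len}(P_2; D_{h+\phi}) = D_h(z,w), \qquad \mathrm{len}(P_1; D_{h+\phi}) \geq D_h(z,w) + (e^\xi - 1)\delta.
\]
Combined with the pointwise monotonicity $D_{h+\phi} \geq D_h$, this forces $D_{h+\phi}(z,w) = D_h(z,w)$, so $P_2$ remains a $D_{h+\phi}$-geodesic while $P_1$ is strictly too long to be one. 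The Cameron--Martin theorem for the whole-plane GFF gives that the laws of $h$ and $h + \phi$ are mutually absolutely continuous (since $\phi$ is smooth and compactly supported, hence in the Cameron--Martin space), and pulling the preceding dichotomy back through this change of measure should yield the required contradiction.

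The main obstacle will be the final change-of-measure step: since the random set of $D_h$-geodesics from $z$ to $w$ depends on $h$ in a highly nonlinear way, one cannot directly transport the measurable selection of $(P_1, P_2)$ across the Cameron--Martin shift. The standard resolution in~\cite{gm-confluence} is to consider a symmetric pair of perturbations $\pm \phi$ and derive a contradiction from the incompatibility of the two one-sided statements about which of $P_1, P_2$ is ``shorter'' under the shifted field. An alternative route, which may be cleaner, is to first establish uniqueness for Lebesgue-almost-every pair of endpoints via a Fubini argument (using that non-uniqueness for a.e.\ pair has positive Lebesgue measure would force a measurable subset on which a Weyl-scaling perturbation creates an asymmetry detectable on a set of positive measure), and then upgrade to all fixed pairs using the translation invariance in Axiom~\ref{item-metric-translate}.
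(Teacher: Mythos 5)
The paper's entire proof of this lemma is a one-line citation: existence comes from Lemma~\ref{lem-geodesic-cont}, and uniqueness ``follows from exactly the same argument as in the subcritical case, see~\cite[Theorem 1.2]{mq-geodesics}.'' So your task was really to reconstruct (or at least correctly describe) that argument, and your sketch identifies the right family of tools (Weyl scaling plus Cameron--Martin absolute continuity) but, as you yourself flag, does not close the key gap, and neither of your two proposed fixes is actually sufficient as stated.

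The bump-function setup is fine up to the point you flag: if with positive probability there are two distinct geodesics, then (by continuity, simplicity, and a countable union over rational data) there are fixed rational concentric balls $B\subset \overline B\subset B'$ avoiding $\{z,w\}$ and a rational $\delta>0$ such that with positive probability $P_1$ spends $D_h$-length $\geq\delta$ in $B$ while $P_2\cap B'=\emptyset$; and on that event, for $\phi\geq 0$ with $\phi\equiv 1$ on $B$ and $\mathrm{supp}\,\phi\subset B'$, Axiom~\ref{item-metric-f} gives $\mathrm{len}(P_2;D_{h+\phi})=D_h(z,w)$ while $\mathrm{len}(P_1;D_{h+\phi})\geq D_h(z,w)+(e^\xi-1)\delta$, so $P_2$ remains a $D_{h+\phi}$-geodesic and $P_1$ does not. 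But this is a statement about $(h,P_1(h),P_2(h),h+\phi)$, and absolute continuity of the laws of $h$ and $h\pm\phi$ only lets you transfer events expressible as $\{h\in A\}$ for a fixed Borel set $A$; it does not let you ``follow'' the $h$-dependent paths $P_1(h),P_2(h)$ through the shift. Under the shift $h\mapsto h-\phi$, the objects you end up controlling are $P_1(h-\phi),P_2(h-\phi)$, not $P_1(h),P_2(h)$, and the conclusion that one of the former is a $D_h$-geodesic while the other is not is simply not a contradiction with anything. The ``symmetric pair $\pm\phi$'' version runs into exactly the same mismatch: on the shifted event the roles of the two geodesics are played by different paths, so the ``incompatibility of the two one-sided statements'' does not materialize.

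Your alternative Fubini route also fails, for a cleaner reason: a weak LQG metric in the sense of Definition~\ref{def-metric} is only \emph{translation} invariant in law (Axiom~\ref{item-metric-translate}); exact rotation and scaling invariance are not available (only tightness across scales, Axiom~\ref{item-metric-coord}). Translation invariance means that $q(z,w):=\BB P[\text{non-uniqueness from }z\text{ to }w]$ depends only on $w-z$, which is a two-real-parameter family, and knowing $q(0,u)=0$ for Lebesgue-a.e.\ $u$ does not yield $q(0,u)=0$ for every fixed $u$. To close this lemma one really has to reproduce the argument of~\cite[Theorem 1.2]{mq-geodesics} in full, replacing the subcritical inputs by Lemma~\ref{lem-geodesic-cont}, Lemma~\ref{lem-bounded}, Proposition~\ref{prop-holder}, and the lower semicontinuity of $D_h$ where continuity was used before; the mechanism by which the Cameron--Martin shift is made to bite there is more delicate than what is sketched here.
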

\begin{proof}
We know that a.s.\ $D_h(z,w) < \infty$ and there is at least one $D_h$-geodesic from $z$ to $w$ (Lemma~\ref{lem-geodesic-cont}). 
The a.s.\ uniqueness of this geodesic follows from exactly the same argument as in the subcritical case, see~\cite[Theorem 1.2]{mq-geodesics}. 
\end{proof}

We emphasize that Lemma~\ref{lem-geo-unique} only holds a.s.\ for a \emph{fixed} choice of $z$ and $w$. We expect that there are exceptional pairs of points $z,w$ which are joined by multiple distinct $D_h$-geodesics (such points are known to exist in the subcritical case, see~\cite{akm-geodesics,gwynne-geodesic-network,mq-strong-confluence}). 
We also record the following analog of~\cite[Lemma 2.3]{gm-confluence}.

\begin{lem} \label{lem-non-cross}
For $q\in\BB Q^2$, let $P_q$ be the a.s.\ unique $D_h$-geodesic from 0 to $q$. The following holds a.s. If $q\in\BB Q^2$, $P'$ is a $D_h$-geodesic started from 0, and $u \in P_q\cap P'$, then there is a time $s \geq 0$ such that $P_q(s) = P'(s) = u$ and $P_q(t) = P'(t)$ for each $t \in [0,s]$.  
\end{lem}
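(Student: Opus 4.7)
\medskip

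\noindent\textbf{Proof plan for Lemma~\ref{lem-non-cross}.}
The plan is to deduce the statement from the a.s.\ uniqueness of $D_h$-geodesics from $0$ to rational points (Lemma~\ref{lem-geo-unique}), by a concatenation argument. Since $\BB Q^2$ is countable, we may intersect the full-probability events from Lemma~\ref{lem-geo-unique} over all $q\in\BB Q^2$ and work on the almost-sure event $\Omega_0$ on which, for every $q\in\BB Q^2$, $P_q$ is the unique $D_h$-geodesic from $0$ to $q$. All statements below are to be understood on $\Omega_0$.

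Fix $q \in \BB Q^2$ and any $D_h$-geodesic $P'$ started from $0$, both parametrized by $D_h$-length, and suppose $u \in P_q\cap P'$. The first key observation is that any $D_h$-geodesic $P$ from $0$ (parametrized by $D_h$-length) satisfies $D_h(0,P(t))=t$ for every admissible $t$, since $P|_{[0,t]}$ is itself a sub-geodesic realizing distance $t$ and no shorter path can exist without contradicting the geodesic property of the full curve. Applying this to both $P_q$ and $P'$, if $P_q(s)=u$ and $P'(s')=u$, then $s = D_h(0,u) = s'$. Call this common value $s$.

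The second key step is the concatenation. Define a path $\wt P : [0,D_h(0,q)]\rta \BB C$ by setting $\wt P = P'$ on $[0,s]$ and $\wt P = P_q$ on $[s,D_h(0,q)]$; this is continuous since $P'(s)=u = P_q(s)$. Its $D_h$-length is at most $s + (D_h(0,q)-s) = D_h(0,q)$, so $\wt P$ is itself a $D_h$-geodesic from $0$ to $q$. By the uniqueness assertion of $\Omega_0$, we must have $\wt P = P_q$, and in particular $P_q|_{[0,s]} = P'|_{[0,s]}$, which is precisely the conclusion of the lemma.

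The only subtle point, and the single thing worth double-checking, is that $\wt P$ genuinely has $D_h$-length at most $s + (D_h(0,q)-s)$: the $D_h$-length is additive under concatenation of continuous paths that agree at their common endpoint, and the two pieces $P'|_{[0,s]}$ and $P_q|_{[s,D_h(0,q)]}$ are themselves sub-geodesics of lengths exactly $s$ and $D_h(0,q)-s$ respectively (by the first observation applied in $P'$ and to the tail of $P_q$). No further regularity of $D_h$ beyond what is provided by the axioms of Definition~\ref{def-metric} is required; in particular, no facts from Sections~\ref{sec-bdy-estimates}--\ref{sec-curve} are needed here.
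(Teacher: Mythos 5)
Your proof is correct and takes essentially the same approach as the paper's, which simply defers to the uniqueness of geodesics to rational targets (Lemma~\ref{lem-geo-unique}) and then cites the concatenation argument of \cite[Lemma~2.3]{gm-confluence}; your write-up just spells out that concatenation argument explicitly (the key points being that $D_h(0,P(t))=t$ for a geodesic $P$ from $0$ parametrized by $D_h$-length, so the two geodesics must hit $u$ at the same time $s$, and that splicing $P'|_{[0,s]}$ onto $P_q|_{[s,D_h(0,q)]}$ produces another geodesic from $0$ to $q$, forcing equality with $P_q$ by uniqueness).
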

\begin{proof}
Lemma~\ref{lem-geo-unique} implies that a.s.\ the $D_h$-geodesic from 0 to $q$ is unique for each $q\in\BB Q^2$. The lemma now follows from exactly the same argument as in~\cite[Lemma 2.3]{gm-confluence}. 
\end{proof}

The following result, which is the supercritical analog of~\cite[Lemma 2.4]{gm-confluence}, tells us that for $z\in\bdy\mcl B_s^\bullet$, there are two distinguished $D_h$-geodesics from 0 to $z$.

\begin{lem} \label{lem-leftmost-geodesic} 
Almost surely, for each $s>0$ and each $z\in \bdy \mcl B_s^\bullet$, there exists a (necessarily unique) \emph{leftmost (resp.\ rightmost) geodesic} $P_z^-$ (resp.\ $P_z^+$) from 0 to $z$ such that each $D_h$-geodesic from 0 to $z$ lies (weakly) to the right (resp.\ left) of $P_z^-$ (resp.\ $P_z^+$) if we stand at $z$ and look outward from $\mcl B_{s }^\bullet$. 
Moreover, there are sequences of points $q_n^- , q_n^+ \in \BB Q^2 \setminus\mcl B_s^\bullet$ such that the $D_h$-geodesics from 0 to $q_n^\pm$ satisfy $P_{q_n^\pm} \rta P_z^\pm$ uniformly w.r.t.\ the Euclidean topology. 
\end{lem}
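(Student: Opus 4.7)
The plan is to adapt the subcritical argument of~\cite[Lemma 2.4]{gm-confluence}, substituting Proposition~\ref{prop-jordan} (Jordan curve property of $\bdy\mcl B_s^\bullet$) for the simpler Jordan-curve fact used in the subcritical setting. I work on the full-probability event where, simultaneously for all $s > 0$: (a) $\bdy \mcl B_s^\bullet$ is a Jordan curve by Proposition~\ref{prop-jordan}; (b) $D_h(0,y)=s$ for each $y \in \bdy\mcl B_s^\bullet$ by Lemma~\ref{lem-outer-bdy-dist}; (c) every $D_h$-geodesic from $0$ to a point of $\bdy\mcl B_s^\bullet$ lies in $U_s^\bullet(0)$ except for its terminal endpoint, by Lemma~\ref{lem-metric-bdy}; (d) the $D_h$-geodesic $P_q$ from $0$ to each $q \in \BB Q^2$ is unique (Lemma~\ref{lem-geo-unique} and a countable union) and the non-crossing property of Lemma~\ref{lem-non-cross} holds; and (e) the H\"older estimate of Proposition~\ref{prop-holder} holds on every Euclidean-compact set. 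Fix $s > 0$ and $z \in \bdy \mcl B_s^\bullet$. Orient $\bdy\mcl B_s^\bullet$ counterclockwise as viewed from $0$; this gives a notion of ``just to the left'' / ``just to the right'' of $z$.

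To construct $P_z^-$, I first produce a sequence $q_n^- \in \BB Q^2\setminus \mcl B_s^\bullet$ with $q_n^-\to z$ in the Euclidean metric, $D_h(z, q_n^-) \to 0$, and such that the crossing point $y_n^- := P_{q_n^-}(s)\in\bdy\mcl B_s^\bullet$ lies just to the left of $z$ in the cyclic order. Existence is obtained as follows: by Lemma~\ref{lem-separating-annuli} applied at $z$, there exist loops $\pi_k$ around $z$ with Euclidean radius, $D_h$-length, and $D_h$-distance from $z$ all tending to zero. Each $\pi_k$ must intersect $\bdy \mcl B_s^\bullet$ (otherwise $\pi_k \subset \mcl B_s^\bullet$ would place $z$ in the interior, contradicting $z \in \bdy\mcl B_s^\bullet$), and hence has non-rational points $w_k \in \BB C\setminus \mcl B_s^\bullet$ lying near $z$ in both metrics. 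Choosing $w_k$ on the appropriate plane-side of $\bdy\mcl B_s^\bullet$ near $z$ (well-defined locally since $\bdy\mcl B_s^\bullet$ is a Jordan curve) ensures that the crossing point of the geodesic from $0$ to any sufficiently nearby point lies on the correct side of $z$ in the cyclic order. Applying Lemma~\ref{lem-separating-annuli} once more around each (non-singular) $w_k$ then yields rational $q_n^-$ with the required properties. An analogous construction on the other side produces $q_n^+$ with $y_n^+$ just to the right of $z$.

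For the limit: each $P_{q_n^-}$ stays in the Euclidean-bounded set $\mcl B_{D_h(0,q_n^-)}$ (Lemma~\ref{lem-bounded}), and since $D_h(0,q_n^-) \to s$ these sets lie in a common Euclidean-bounded set. On this set Proposition~\ref{prop-holder} gives a uniform Euclidean H\"older estimate for the identity $(\BB C, D_h) \to \BB C$, so the restrictions $P_{q_n^-}|_{[0,s]}$ are uniformly Euclidean-equicontinuous in their $D_h$-length parametrizations. By Arzel\`a--Ascoli I pass to a uniformly convergent subsequence with limit $P$. Since $D_h(y_n^-, q_n^-) = D_h(0,q_n^-) - s \to 0$ and $q_n^- \to z$ in the Euclidean metric, H\"older continuity forces $y_n^- \to z$, so $P$ connects $0$ to $z$. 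Lower semicontinuity of $D_h$-length under uniform convergence then gives $\op{len}(P ; D_h) \leq s = D_h(0,z)$, so $P$ is a $D_h$-geodesic; set $P_z^- := P$.

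Finally I check the leftmost property. Let $P'$ be any $D_h$-geodesic from $0$ to $z$; by Lemma~\ref{lem-metric-bdy}, $P'$ and each $P_{q_n^-}|_{[0,s]}$ lie in the closed Jordan domain $\ol{U_s^\bullet(0)}$ with endpoints in $\bdy\mcl B_s^\bullet$. By Lemma~\ref{lem-non-cross}, $P_{q_n^-}$ and $P'$ meet only in an initial segment, so they do not cross in the planar sense. Since $y_n^-$ lies strictly to the left of $z$ in the cyclic order on $\bdy\mcl B_s^\bullet$, the Jordan curve theorem applied inside $\ol{U_s^\bullet(0)}$ forces $P_{q_n^-}|_{[0,s]}$ to lie weakly to the left of $P'$. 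Passing to the limit gives that $P_z^-$ lies weakly to the left of $P'$. This establishes both the leftmost property and uniqueness (two leftmost geodesics each lie weakly to the left of the other, hence coincide as simple curves ending at the same point), and also shows the limit $P_z^-$ is independent of the subsequence. The rightmost $P_z^+$ is handled symmetrically, and the final approximation assertion is immediate by construction. The chief obstacle is the second paragraph: verifying that rational approximants $q_n^\pm$ can be selected so that $y_n^\pm \to z$ from the prescribed side of $z$, which in the supercritical setting requires the two-step use of Lemma~\ref{lem-separating-annuli} rather than the direct H\"older argument available when $D_h$ induces the Euclidean topology.
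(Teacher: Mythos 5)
There is a genuine gap in your construction of the approximating sequence $q_n^\pm$ in the second paragraph. You assert that ``choosing $w_k$ on the appropriate plane-side of $\bdy\mcl B_s^\bullet$ near $z$ \dots\ ensures that the crossing point of the geodesic from $0$ to any sufficiently nearby point lies on the correct side of $z$ in the cyclic order.'' This does not follow. Being on a ``plane-side'' of the Jordan curve near $z$ just means being (locally) inside or outside $\mcl B_s^\bullet$; it gives no control over which cyclic side of $z$ the crossing point $y_n^- = P_{q_n^-}(s)$ lands on. What you \emph{can} get from $D_h(z,q_n^-)\to 0$ and the one-sided H\"older bound is that $y_n^-$ is Euclidean-close to $z$ (since $D_h(y_n^-,q_n^-)=D_h(0,q_n^-)-s\le D_h(z,q_n^-)$ and hence $D_h(z,y_n^-)\le 2D_h(z,q_n^-)\to 0$), but this leaves the cyclic side of $z$ completely undetermined: a priori the crossing points could oscillate between the two arcs, in which case your non-crossing argument in the final paragraph would not deliver a one-sided (leftmost) limit. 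You correctly flag this as ``the chief obstacle,'' but the proposed fix does not close it.

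The paper closes this gap differently: one first fixes a reference point $w\in\bdy\mcl B_s^\bullet\setminus\{z\}$ and chooses $z_n^-$ on the clockwise arc from $w$ to $z$ converging to $z$. Then, using the a.s.\ $D_h$-density of $\BB Q^2$ in the non-singular points, one picks $q_n^-\in\BB Q^2\setminus\mcl B_s^\bullet$ so that $D_h(q_n^-,z_n^-)$ is at most $\min\{1/n,\ \tfrac12 D_h(q_n^-,\,\text{clockwise arc from $z$ to $w$})\}$. This quantitative constraint is the key mechanism you are missing: since $D_h(y_n^-,q_n^-)\le D_h(q_n^-,z_n^-) < D_h(q_n^-,\text{opposite arc})$, the crossing point $y_n^-$ is forced onto the correct arc, which combined with Euclidean convergence to $z$ gives ``converges to $z$ from the left.'' Once the crossing points are pinned to one side, Lemma~\ref{lem-non-cross} gives the leftmost property for the subsequential limit exactly as you say. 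You should also note that the paper already provides Lemma~\ref{lem-aa} to handle the Arzel\`a--Ascoli extraction (your re-derivation via Proposition~\ref{prop-holder} is correct but redundant), and that the $D_h$-density of $\BB Q^2$ in the non-singular points can be cited directly rather than extracted via a second application of Lemma~\ref{lem-separating-annuli}.
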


See Figure~\ref{fig-leftmost} for an illustration of the statement and proof of Lemma~\ref{lem-leftmost-geodesic}. 
The proof of~\cite[Lemma 2.4]{gm-confluence} uses the Arz\'ela-Ascoli theorem and the continuity of the subcritical LQG metric with respect to the Euclidean metric to take limits of $D_h$-geodesics. In order to do this in the supercritical case, we need the following lemma.

\begin{figure}[t!]
 \begin{center}
\includegraphics[width=0.4\textwidth]{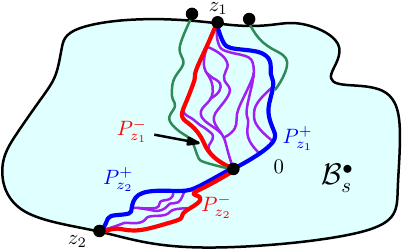}
\vspace{-0.01\textheight}
\caption{Two points $z_1,z_2 \in \bdy \mcl B_s^\bullet$ and their associated leftmost and rightmost $D_h$-geodesics (red and blue). Other $D_h$-geodesics from 0 to $z_1$ and $z_2$ are shown in purple. We have also shown two $D_h$-geodesics from 0 to points of $\BB Q^2\setminus \mcl B_s^\bullet$ (green) which approximate $P_{z_1}^-$ and $P_{z_1}^+$, respectively. 
Note that $P_{z_1}^-$ and $P_{z_1}^+$ intersect only at their endpoints, whereas $P_{z_2}^-$ and $P_{z_2}^+$ coincide for an initial time interval. Theorem~\ref{thm-clsce} implies that a.s.\ the latter situation holds simultaneously for every $z\in \bdy\mcl B_s^\bullet$, but this has not been established yet. A similar figure and caption appeared in~\cite{gm-confluence}.
}\label{fig-leftmost}
\end{center}
\vspace{-1em}
\end{figure}

\begin{lem} \label{lem-aa}
Almost surely, the following is true. Let $\{z_n\}_{n\in\BB N}$, $\{w_n\}_{n\in\BB N}$, $z$, and $w$ be non-singular points for $D_h$ such that $z_n \rta z$, $w_n \rta w$, and $\limsup_{n\rta\infty} D_h(z_n,w_n)  < \infty$. 
Let $\{P_n\}_{n\in\BB N}$ be a sequence of $D_h$-rectifiable paths from $z_n$ to $w_n$, each parametrized by $D_h$-length, such that $\op{len}(P_n ; D_h) - D_h(z_n,w_n) \rta 0$ as $n\rta\infty$, where $\op{len}(P_n ; D_h)$ denotes the $D_h$-length. There is a subsequence along which the paths $P_n$ converge uniformly w.r.t.\ the Euclidean metric to a $D_h$-rectifiable path $P$ from $z$ to $w$. If $\lim_{n\rta\infty} D_h(z_n,w_n) = D_h(z,w)$, then $P$ is a $D_h$-geodesic. 
\end{lem}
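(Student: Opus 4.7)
The plan is to run an Arzel\`a--Ascoli argument in the Euclidean topology on the paths $\wt P_n$ obtained by rescaling $P_n$ to $[0,1]$, then upgrade the Euclidean limit to a $D_h$-rectifiable path via lower semicontinuity of $D_h$. The only real departure from the subcritical argument in~\cite{gm-confluence} is that Proposition~\ref{prop-holder} only supplies H\"older continuity in the direction ``$|u-v| \leq C D_h(u,v)^\chi$''; parametrizing each $P_n$ by $D_h$-length from the start sidesteps the missing reverse H\"older bound, since we only ever convert $D_h$-oscillation into Euclidean oscillation.

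Set $L_n := \op{len}(P_n; D_h)$. Since $|L_n - D_h(z_n,w_n)| \rta 0$ and $\limsup_n D_h(z_n,w_n) < \infty$, we have $\limsup_n L_n < \infty$, and after passing to a subsequence (not relabeled) we may assume $L_n \rta L \in [0,\infty)$. Define $\wt P_n : [0,1] \rta \BB C$ by $\wt P_n(t) := P_n(L_n t)$, so that $D_h(\wt P_n(s), \wt P_n(t)) \leq L_n |s-t|$. For $n$ large enough, $z_n \in \ol{B_1(z)}$, and every point on $\wt P_n$ lies at $D_h$-distance at most $L_n \leq L+1$ from $z_n$, hence from $\ol{B_1(z)}$. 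By Lemma~\ref{lem-bounded} applied to the Euclidean-compact set $\ol{B_1(z)}$, I choose $R$ with $D_h(\ol{B_1(z)}, \bdy B_R(0)) > L+1$; any path of $D_h$-length at most $L+1$ starting in $\ol{B_1(z)}$ must remain in $B_R(0)$, so $\wt P_n \subset K := \ol{B_R(0)}$ for all large $n$.

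By Proposition~\ref{prop-holder}, there exist $\chi > 0$ and an a.s.-finite (random) constant $C$ such that $|u-v| \leq C D_h(u,v)^\chi$ for all $u,v \in K$. Combined with the Lipschitz estimate above,
\eqb
|\wt P_n(s) - \wt P_n(t)| \leq C (L+1)^\chi |s-t|^\chi, \qquad \forall\, s,t \in [0,1],
\eqe
so $\{\wt P_n\}$ is equicontinuous and uniformly bounded in the Euclidean metric. Arzel\`a--Ascoli produces a subsequence $\wt P_{n_k}$ converging uniformly in Euclidean norm to a continuous curve $P : [0,1] \rta K$, with $P(0) = \lim_k z_{n_k} = z$ and $P(1) = \lim_k w_{n_k} = w$.

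Finally, to bound the $D_h$-length of $P$, fix a partition $0 = t_0 < \cdots < t_J = 1$. By lower semicontinuity of $D_h$ applied term by term, together with the elementary inequality $\sum_j \liminf_k a_{k,j} \leq \liminf_k \sum_j a_{k,j}$ for finite sums,
\eqbn
\sum_{j=1}^J D_h(P(t_{j-1}), P(t_j)) \leq \liminf_{k\rta\infty} \sum_{j=1}^J D_h(\wt P_{n_k}(t_{j-1}), \wt P_{n_k}(t_j)) \leq \liminf_{k\rta\infty} L_{n_k} = L.
\eqen
Taking the supremum over partitions, $\op{len}(P; D_h) \leq L$, so $P$ is $D_h$-rectifiable. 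If in addition $D_h(z_n, w_n) \rta D_h(z, w)$, then $L = D_h(z, w)$, and the trivial bound $\op{len}(P; D_h) \geq D_h(P(0), P(1)) = D_h(z, w) = L$ forces equality, making $P$ a $D_h$-geodesic. The main obstacle is the very absence of Euclidean-to-$D_h$ H\"older continuity in the supercritical regime (obstructed by singular points), which is why it is crucial to parametrize by $D_h$-length rather than Euclidean arc length, and to confine the paths to a fixed Euclidean-compact $K$ via Lemma~\ref{lem-bounded} before invoking Proposition~\ref{prop-holder}.
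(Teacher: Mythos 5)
Your argument takes essentially the same route as the paper: confine the paths to a fixed Euclidean-compact set via Lemma~\ref{lem-bounded}, use the $D_h$-Lipschitz bound from arc-length parametrization together with Proposition~\ref{prop-holder} to extract a Euclidean-uniform subsequential limit via Arzel\`a--Ascoli, and then bound the $D_h$-length of the limit by lower semicontinuity. (The paper invokes only the qualitative continuity of $(\BB C,D_h)\to(\BB C,|\cdot|)$ on compacts rather than the quantitative H\"older bound, but that is a cosmetic difference.) One small bookkeeping point: the lemma, together with the convention stated immediately after it, asserts uniform convergence of the $P_n$ themselves on $[0,\infty)$ after extending by a constant past time $T_n$, whereas you prove uniform convergence of the affinely rescaled $\wt P_n=P_n(L_n\,\cdot)$ on $[0,1]$; since $L_n\to L$, this is equivalent, but the one-line conversion (using, e.g., the uniform continuity of the limit and $\min(t,L_n)/L_n\to\min(t,L)/L$ uniformly, with a separate remark for the degenerate case $L=0$, which forces $z=w$ and a constant limit) should be spelled out.
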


The statement of Lemma~\ref{lem-aa} allows for uniform convergence of paths which are defined on $[0,T_n]$ where $T_n$ possibly depends on $n$. To make sense of uniform convergence under these circumstances, we view all of our paths as being defined on $[0,\infty)$ by extending them to be constant after time $T_n$. 

\begin{proof}[Proof of Lemma~\ref{lem-aa}] 
Let $T_n := \op{len}(P_n;D_h)$, so that $P_n : [0,T_n] \rta \BB C$. 
Since $P_n$ is parametrized by $D_h$-length, for $0\leq t \leq s \leq T_n$, we have $D_h(P_n(s) , P_n(t)) \leq s-t$ and $D_h(z_n,w_n) \leq (T_n - s) + D_h(P_n(s) , P_n(t)) + t$. Therefore, 
\eqbn
T_n  - D_h(z_n,w_n) \geq s-t  -  D_h(P_n(s) , P_n(t))  \geq 0.
\eqen
Since $T_n - D_h(z_n,w_n) \rta 0$ as $n\rta\infty$ by hypothesis,  
\eqb \label{eqn-aa-dist-lim}
\lim_{n\rta\infty} \sup_{0\leq t \leq s \leq T_n} \left| s-t  -  D_h(P_n(s) , P_n(t)) \right| = 0 .
\eqe
In particular,~\eqref{eqn-aa-dist-lim} implies that the $P_n$'s are $D_h$-equicontinuous. 
 
Since $\limsup_{n\rta\infty} D_h(z_n,w_n) < \infty$ and $D_h$-metric balls are Euclidean-bounded, there is a bounded open subset of $\BB C$ which contains $P_n$ for each $n\in\BB N$. 
Since the identity mapping $(\BB C , D_h) \rta (\BB C, |\cdot|)$ is continuous and the $P_n$'s are $D_h$-equicontinuous, it follows that the $P_n$'s are Euclidean equicontinuous. Hence there is a sequence $\mcl N$ of positive integers tending to $\infty$ and a Euclidean-continuous path $P :[0, T] \rta \BB C$ from $z$ to $w$ such that $P_n \rta P$ uniformly w.r.t.\ the Euclidean topology along $\mcl N$. 

Since $D_h$ is lower semicontinuous w.r.t.\ the Euclidean metric,~\eqref{eqn-aa-dist-lim} implies that $D_h(P(s) , P(t)) \leq |s-t|$ for any two times $s,t \in [0,T]$. Consequently, $P$ is $D_h$-rectifiable and for any $0\leq t \leq s \leq T$, the $D_h$-length of $P([t,s])$ is at most $s-t$. If $\lim_{n\rta\infty} D_h(z_n,w_n) = D_h(z,w)$, then $T = D_h(z,w)$. Since the $D_h$-length of $P$ is at most $T$, it follows that the $D_h$-length of $P$ is exactly $T$ and $P$ is a $D_h$-geodesic. 
\end{proof}

\begin{proof}[Proof of Lemma~\ref{lem-leftmost-geodesic}]
The proof is essentially the same as~\cite[Lemma 2.4]{gm-confluence}, but there are a couple of minor differences so we will give the details.  
Fix a point $w \in \bdy\mcl B_s^\bullet\setminus \{z\}$. 
Let $A^-$ and $A^+$, respectively, be the clockwise and counterclockwise arcs of $\bdy\mcl B_s^\bullet$ from $w$ to $z$, not including $w$ and $z$ themselves. Note that these arcs are well-defined since $\bdy\mcl B_s^\bullet$ is a Jordan curve (Proposition~\ref{prop-jordan}). 
We can choose sequences $z_n^- \in A^-$  (resp.\ $z_n^+ \in A^+$) which converge to $z$ from the left (resp.\ right) w.r.t.\ the Euclidean topology (with ``left" and ``right" defined as in the lemma statement).
 
The set $\BB Q^2$ is a.s.\ $D_h$-dense in $\BB C\setminus \{\text{singular points}\}$~\cite[Proposition 1.13]{pfeffer-supercritical-lqg} and the set $\mcl B_\ep(z_n^\pm) \setminus \mcl B_s^\bullet$ contains a $D_h$-open set for each $\ep > 0$. Applying this with $\ep$ equal to the minimum of $1/n$ and $\frac14 D_h(z_n^\pm,A^\mp)$, we see that for $n \in \BB N$ we can find $q_n^\pm \in \BB Q^2\setminus\mcl B_s^\bullet$ such that  
\eqb \label{eqn-leftmost-rational}
D_h(q_n^\pm , z_n^\pm) \leq \min\left\{ \frac{1}{n} , \frac{1}{2} D_h(q_n^\pm , A^\mp) \right\} . 
\eqe 

Let $P_{q_n^\pm}$ be the (a.s.\ unique, by Lemma~\ref{lem-geo-unique}) $D_h$-geodesic from 0 to $q_n^\pm$. 
Then $P_{q_n^\pm}(s) \in \bdy \mcl B_s^\bullet$. 
Since $D_h(0,z_n^\pm) = s$ (Lemma~\ref{lem-outer-bdy-dist}), 
\eqbn
D_h(P_{q_n^\pm}(s) , q_n^\pm) = D_h(0, q_n^\pm) - s \leq D_h(q_n^\pm , z_n^\pm).
\eqen
From this and~\eqref{eqn-leftmost-rational}, 
\eqbn
D_h(P_{q_n^\pm}(s) , A^\mp) \geq  D_h(q_n^\pm , A^\mp) - D_h(P_{q_n^\pm}(s), q_n^\pm)  \geq \frac12 D_h(q_n^\pm , A^\mp) > 0 .
\eqen
Hence $P_{q_n^\pm}(s) \notin A^\mp$. From~\eqref{eqn-leftmost-rational} and since $z_n^- \to z$ from the left, we see that also $P_{q_n^-}(s) \to z$ from the left. The same is true for $z_n^+$, but with ``right" in place of ``left".

Since $D_h(0,z_n^\pm) = s$, we have $0\leq  D_h(0,q_n^\pm) - s \leq 1/n$. We may therefore apply Lemma~\ref{lem-aa} to get that after possibly passing to a subsequence, we can arrange that the paths $P_{q_n^\pm}$ converge uniformly w.r.t.\ the Euclidean metric to $D_h$-geodesics $P_z^\pm$ from 0 to $z$.
By Lemma~\ref{lem-non-cross}, no $D_h$-geodesic from 0 to $z$ can cross any of the geodesics $P_{q_n^\pm}$. 
If a geodesic from 0 to $z$ does not lie in the closure of the open subset of $\mcl B_s^\bullet$ lying to the right of $P_z^-$ and to the left of $P_z^+$, then it must cross $P_{q_n^-}$ or $P_{q_n^+}$ for some $n$. 
Hence each geodesic from 0 to $z$ lies to the right of $P_z^-$ and to the left of $P_z^+$.
\end{proof}

Our next lemma is used in the iterative argument used to prove confluence of geodesics (see step~\ref{item-step-iterate} of the outline at the beginning of this section).

\begin{lem} \label{lem-geo-arc}
Almost surely, the following is true for each $0 < s < s' < \infty$. 
Let $\mcl I$ be a finite collection of disjoint arcs of $\bdy\mcl B_s^\bullet$. 
For each $I\in\mcl I$, let $I'$ be the set of $z\in\bdy\mcl B_{s'}^\bullet$ such that the leftmost $D_h$-geodesic from 0 to $z$ passes through $I$.
Then each $I'$ is either empty or is a connected arc of $\bdy\mcl B_{s'}^\bullet$ and the arcs $I'$ for different choices of $I\in\mcl I$ are disjoint.
\end{lem}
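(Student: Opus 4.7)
The statement to prove has two parts: disjointness of the sets $I'$ for distinct $I\in\mcl I$, and the fact that each non-empty $I'$ is a connected arc of $\bdy\mcl B_{s'}^\bullet$.

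For disjointness, the plan is to show that for every $z\in\bdy\mcl B_{s'}^\bullet$, the leftmost geodesic $P_z^-$ from $0$ to $z$ (which exists by Lemma~\ref{lem-leftmost-geodesic}) crosses $\bdy\mcl B_s^\bullet$ at a unique point, namely $P_z^-(s)$. Parametrize $P_z^-$ by $D_h$-length so that $D_h(0,P_z^-(t))=t$; Lemma~\ref{lem-outer-bdy-dist} forces any crossing time $t$ to satisfy $t=s$. To be sure such a crossing occurs, I would first verify that $\mcl B_s^\bullet$ lies in the Euclidean interior of $\mcl B_{s'}^\bullet$: for each $w\in\bdy\mcl B_s^\bullet$, Lemma~\ref{lem-separating-annuli} produces $D_h$-loops around $w$ of arbitrarily small $D_h$-length, and the triangle inequality with $D_h(0,w)=s$ shows that any such loop with $D_h$-length less than $s'-s$ lies inside $\mcl B_{s'}^\bullet$. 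In particular, $z\in\bdy\mcl B_{s'}^\bullet$ must lie outside $\mcl B_s^\bullet$, so by Euclidean continuity of $P_z^-$ the geodesic must exit the closed set $\mcl B_s^\bullet$ through $\bdy\mcl B_s^\bullet$ (Lemma~\ref{lem-ball-closed}), and the above forces the exit point to be $P_z^-(s)$. Setting $\phi(z):=P_z^-(s)$, disjointness of the arcs in $\mcl I$ gives disjointness of the corresponding $I'$'s.

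For connectedness, the plan is to show that $\phi:\bdy\mcl B_{s'}^\bullet\to\bdy\mcl B_s^\bullet$ weakly preserves the cyclic orders induced by the two Jordan curves (Proposition~\ref{prop-jordan}): if $z_1,z_2,z_3$ appear in cyclic order on $\bdy\mcl B_{s'}^\bullet$, then $\phi(z_1),\phi(z_2),\phi(z_3)$ appear in the same cyclic order on $\bdy\mcl B_s^\bullet$, with coincidences allowed. The key input is that leftmost geodesics from $0$ cannot cross: approximating each $P_{z_i}^-$ by geodesics $P_{q_{i,n}}$ to rational points via Lemma~\ref{lem-leftmost-geodesic} and passing to limits using Lemma~\ref{lem-aa}, Lemma~\ref{lem-non-cross} yields that any two distinct leftmost geodesics from $0$ share only a common initial segment before becoming disjoint. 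This non-crossing property forces the angular order in which the geodesics emanate from $0$, and hence the cyclic order of their exit points from $\bdy\mcl B_s^\bullet$, to match the cyclic order of their endpoints on $\bdy\mcl B_{s'}^\bullet$ (with coincidences when two geodesics share an initial segment of length at least $s$). Monotonicity of $\phi$ then immediately shows that the preimage $I'=\phi^{-1}(I)$ of any connected arc $I\subset\bdy\mcl B_s^\bullet$ is empty or a connected arc of $\bdy\mcl B_{s'}^\bullet$.

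The main obstacle will be handling the coincidences $\phi(z_1)=\phi(z_2)$ for $z_1\ne z_2$ and the possible discontinuity of $\phi$ in the cyclic order preservation step. This is the same difficulty encountered in the subcritical case; since the argument in Section~2.1 of~\cite{gm-confluence} uses only non-crossing of geodesics, uniqueness and approximation of leftmost geodesics, and the Jordan curve property of filled metric ball boundaries, all of which are now available in the supercritical setting through Lemmas~\ref{lem-non-cross} and~\ref{lem-leftmost-geodesic} and Proposition~\ref{prop-jordan}, the argument transfers with minimal modification.
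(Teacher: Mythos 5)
Your proposal is correct and takes essentially the same approach as the paper: the paper's proof is the one-line observation that the arguments of~\cite[Lemmas 2.5--2.7]{gm-confluence} extend verbatim once one knows that $\bdy\mcl B_s^\bullet$ is a Jordan curve and $D_h(0,z)=s$ on it, and your outline correctly identifies and unpacks exactly those ingredients (well-definedness of the crossing map $\phi(z)=P_z^-(s)$, non-crossing of leftmost geodesics via Lemmas~\ref{lem-non-cross}, \ref{lem-leftmost-geodesic}, and \ref{lem-aa}, and cyclic-order monotonicity) before concluding that the subcritical argument transfers.
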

\begin{proof}
Since we know that each $\bdy\mcl B_s^\bullet$ is a Jordan curve and $D_h(0,z) = s$ for each $z\in\bdy\mcl B_s^\bullet$, the proofs of~\cite[Lemmas 2.6 and 2.7]{gm-confluence} extend verbatim to the supercritical case (note that~\cite[Lemma 2.5]{gm-confluence} is a deterministic statement which can be re-used in the supercritical case). In particular,~\cite[Lemma 2.7]{gm-confluence} gives precisely the statement of the present lemma. 
\end{proof}

Finally, we record an FKG inequality for the LQG metric, which is proven in exactly the same way as~\cite[Proposition 2.8]{gm-confluence}. For the statement, we note that if $D$ is a weak LQG metric with parameter $\xi$ as in Definition~\ref{def-metric}, $U\subset \BB C$ is open, and $\rng h$ is a zero-boundary GFF on $U$, then we can define $D_{\rng h}$ as a random lower semicontinuous metric on $U$ follows. Let $h$ be a whole-plane GFF. We can write $h|_U = \rng h + \frk h$, where $\frk h$ is a random harmonic function on $U$ (see, e.g.,~\cite[Lemma 2.2]{gms-harmonic}). We then define $D_{\rng h} = e^{-\xi \frk h} \cdot D_{\rng h}$, using the notation~\eqref{eqn-metric-f}. As explained in~\cite[Remark 1.2]{gm-confluence}, it is easily seen that $D_{\rng h}$ is a measurable function $\rng h$. 

\begin{prop}[FKG for the LQG metric] \label{prop-fkg-metric}
Let $\xi > 0$, let $U\subset\BB C$ be an open domain, let $\rng h$ be a zero-boundary GFF on $U$, and let $D$ be a weak LQG metric with parameter $\xi$.  
Let $\Phi$ and $\Psi$ be bounded, real-valued measurable functions on the space of lower semicontinuous metrics on $U$ which are non-decreasing in the sense that for any two such metrics $D_1,D_2$ with $D_1(z,w) \leq D_2(z,w) $ for all $z,w\in U$, one has $\Phi(D_1) \leq \Phi(D_2)$ and $\Psi(D_1)\leq \Psi(D_2)$. 
Suppose further that $\Phi$ and $\Psi$ are a.s.\ continuous at $D_{\rng h}$ in the sense that for every (possibly random) sequence of continuous functions $\{f^n\}_{n\in\BB N}$ which converges to zero uniformly on $U$, one has $\Phi(e^{\xi f^n} \cdot D_{\rng h} ) \rta \Phi(D_{\rng h})$ and $\Psi(e^{\xi f^n} \cdot D_{\rng h}) \rta \Psi(D_{\rng h})$.  
Then $\op{Cov}(\Phi(D_{\rng h}), \Psi(D_{\rng h})) \geq 0$.
\end{prop}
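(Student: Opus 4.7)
The plan is to reduce the inequality to the classical FKG inequality for the zero-boundary GFF on $U$, whose Green's function $G_U$ is non-negative, using Weyl scaling to lift monotonicity from the metric level to the field level.

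First, I would combine the monotonicity hypothesis on $\Phi, \Psi$ with Axiom~\ref{item-metric-f}. For any non-negative continuous function $f$ on $U$, Weyl scaling gives $D_{\rng h + f} = e^{\xi f} \cdot D_{\rng h} \geq D_{\rng h}$ pointwise on $U \times U$; together with the monotonicity of $\Phi$ and $\Psi$, this shows that the composed random variables $\widetilde\Phi(\rng h) := \Phi(D_{\rng h})$ and $\widetilde\Psi(\rng h) := \Psi(D_{\rng h})$ satisfy $\widetilde\Phi(\rng h + f) \geq \widetilde\Phi(\rng h)$ and analogously for $\widetilde\Psi$. Moreover, the assumed continuity of $\Phi, \Psi$ at $D_{\rng h}$ under uniform perturbations $f^n \to 0$, together with Weyl scaling, implies that $\widetilde\Phi, \widetilde\Psi$ are a.s.\ continuous at $\rng h$ under such uniformly convergent continuous perturbations.

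Next, I would invoke the FKG inequality for the GFF: bounded measurable functionals of $\rng h$ which are both monotone under non-negative continuous perturbations and a.s.\ continuous under uniform continuous perturbations are non-negatively correlated. This is standard and follows from Pitt's theorem (FKG for finite-dimensional Gaussian vectors with entry-wise non-negative covariance matrix): for $\delta > 0$ one considers a mollified field $\rng h_\delta := \rng h * p_\delta$, whose covariance kernel is non-negative because $G_U \geq 0$, applies Pitt's theorem to finite-dimensional marginals of $\rng h_\delta$ on finer and finer lattices, and then sends $\delta \to 0$. Applying this general inequality to $\widetilde\Phi$ and $\widetilde\Psi$ then yields $\operatorname{Cov}(\widetilde\Phi(\rng h), \widetilde\Psi(\rng h)) \geq 0$, which is exactly the desired conclusion.

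The main obstacle will be the mollification / limiting argument in the second step. While the idea is classical, care is needed to handle the fact that $\rng h$ is a distribution rather than a function, and that the functionals $\widetilde\Phi, \widetilde\Psi$ are only defined at the level of $D_{\rng h}$, not directly on smooth Gaussian fields. The continuity hypothesis on $\Phi, \Psi$ at $D_{\rng h}$ is precisely the right tool to bridge this gap: it permits one to approximate $\widetilde\Phi(\rng h)$ and $\widetilde\Psi(\rng h)$ by monotone functionals of smoother (or finite-dimensional) Gaussian fields to which Pitt's theorem applies directly, and then pass to the limit using dominated convergence (enabled by the assumed boundedness of $\Phi$ and $\Psi$).
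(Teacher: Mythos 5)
Your proposal is correct and matches the paper's approach: the paper's proof is exactly the reduction you describe — use Weyl scaling (Axiom~\ref{item-metric-f}) to show that $\rng h \mapsto \Phi(D_{\rng h})$ and $\rng h \mapsto \Psi(D_{\rng h})$ are monotone under addition of non-negative continuous functions, then invoke the FKG inequality for the zero-boundary GFF, citing the corresponding lemma and proposition from the subcritical confluence paper of Gwynne--Miller. One small caveat on your further sketch of the GFF FKG inequality itself (which the paper simply cites rather than re-proves): the mollification-plus-Pitt route is delicate at the step where one passes from coordinatewise-monotone functionals of the finite marginals $\bigl(\rng h * p_\delta(x_1), \dots, \rng h * p_\delta(x_n)\bigr)$ back to $\Phi(D_{\rng h})$, since the conditional mean shift of $\rng h$ given those marginals involves the inverse of the mollified covariance matrix, which is not entrywise non-negative, so the conditional expectation of $\Phi(D_{\rng h})$ need not be coordinatewise monotone; the cited proof instead uses a decomposition of the GFF with non-negative summands — but since your main reduction is what the paper's proof actually does, this does not affect the verdict.
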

\begin{proof}
This follows from Weyl scaling (Axiom~\ref{item-metric-f}) together with the FKG inequality for the GFF given in~\cite[Lemma 2.10]{gm-confluence}, via exactly the same argument as in the proof of~\cite[Proposition 2.8]{gm-confluence}. 
\end{proof}

\subsection{Finitely many leftmost geodesics across an LQG annulus}
\label{sec-finite-geo}

In this subsection we explain how to extend the core part of the argument in~\cite{gm-confluence}, corresponding to steps~\ref{item-step-shield} and~\ref{item-step-iterate} above, to the supercritical case. 
We start in Section~\ref{sec-good-annuli} by defining an event for a Euclidean annulus which will be used to build ``shields" which $D_h$-geodesics cannot cross.
Then, in Section~\ref{sec-geo-kill}, we explain how to use this event to ``kill off" all of the geodesics which pass through a given boundary arc of a filled $D_h$-metric ball. 
We will give most of the details of the arguments in these two subsections since non-trivial modifications are required as compared to the analogous arguments in~\cite{gm-confluence}.
In Section~\ref{sec-finite-geo-proof}, we state a more quantitative version of Theorem~\ref{thm-finite-geo0} (Theorem~\ref{thm-finite-geo-quant}) and explain why the theorem follows from the same proof as its subcritical analog from~\cite[Section 3.4]{gm-confluence}, except for one trivial modification. 

\subsubsection{Good annuli}
\label{sec-good-annuli}

We now define an event for a Euclidean annulus which will eventually be used to build ``shields" surrounding boundary arcs of a filled $D_h$-metric ball through which $D_h$-geodesics to 0 cannot pass. See Figure~\ref{fig-geo-event} for an illustration.

\begin{figure}[t!]
 \begin{center}
\includegraphics[scale=1]{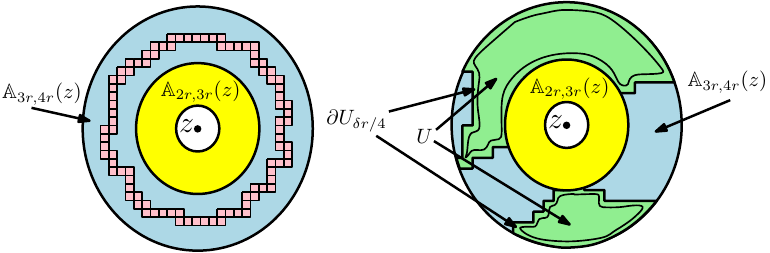}
\vspace{-0.01\textheight}
\caption{Illustration of the definitions in Section~\ref{sec-good-annuli}. The set $\mcl U_r(z) = \mcl U_r(z;\delta)$ consists of open subsets $U$ of $\BB A_{3r,4r}(z)$ such that  $\BB A_{3r,4r}(z) \setminus  U$ is a finite union of sets of the form $S \cap \BB A_{3r,4r}(z)$ for $\delta r\times\delta r$ squares $S\in   \mcl S_{\delta r}(\BB A_{3r,4r}(z))$ (i.e., with corners in $\delta \BB Z^2$). One such set is shown in light green in the right panel. For each $U\in\mcl U_r(z)$, $E_r^U(z)$ is the event that (1) the $D_h$-distance across the yellow annulus $\BB A_{2r,3r}(z)$ is bounded below, (2) there is a path of squares in $\BB A_{3r,4r}(z)$ which disconnects the inner and outer boundaries of this annulus, with the property that the $D_h$-distance around $B_{2\delta r}(S) \setminus B_{\delta r} (S )$ is small for each square $S$ in the path (the squares are shown in pink in the left panel), and (3) the harmonic part of $h|_U$ is bounded above on the set $U_{\delta r/4} \subset U$ (outlined in black in right panel). 
}\label{fig-geo-event}
\end{center}
\vspace{-1em}
\end{figure}

For $\ep > 0$, $z\in \BB C$, and a set $V\subset\BB C$, we define the collection of Euclidean squares
\eqb \label{eqn-square-def}
\mcl S_\ep^z(V) := \left\{ [x, x+\ep] \times [y,y+\ep] : (x,y)\in\ep\BB Z^2 +z , \, ([x, x+\ep] \times [y,y+\ep])\cap V \not=\emptyset\right\}. 
\eqe
Note that $\mcl S_\ep^z(V)$ depends only on the value of $z$ modulo $\ep\BB Z^2$ and that $\mcl S_\ep^z(V) - z  = \mcl S_\ep^0(V-z)$. 
 
For $z\in\BB C$, $r> 0$, and $\delta\in (0,1)$, we define $\mcl U_r(z) = \mcl U_r(z;\delta)$ to be the (finite) set of open subsets $U$ of the annulus $\BB A_{3r,4r}(z)$ such that $\BB A_{3r,4r}(z) \setminus  U$ is a finite union of sets of the form $S \cap \BB A_{3r,4r}(z)$ for squares $S \in \mcl S_{\delta r}^z(\BB A_{3r,4r}(z)) $.
For $U\in \mcl U_r(z;\delta)$ and $\ep > 0$, we define
\eqb \label{eqn-U-nbd-def}
U_\ep := \left\{ u \in U : \op{dist}(z , \bdy U) > \ep \right\}
\eqe 
where $\op{dist}$ denotes Euclidean distance.
 
For $z\in\BB C$, $r  > 0$, parameters $  c,\delta    \in (0,1)$ and $A > 0$, and $U\in \mcl U_r(z;\delta)$, we let $E_r^U(z) = E_r^U(z;  c , \delta,A) $ be the event that the following is true. 
\begin{enumerate}  
\item $D_h \left( \text{across $\BB A_{2r,3r}(z)$} \right) \geq c \frk c_r e^{\xi h_r(z)}$. \label{item-clsce-across}
\item There exists a collection of $\delta r \times \delta r$ squares $S_1,\dots, S_N \in \mcl S_{\delta r}(\BB A_{3.1 r, 3.9 r}(z))$ with the following properties. \label{item-clsce-ball}
\begin{enumerate}
\item The squares $S_{j-1}$ and $S_j$ share a side for each $j=1,\dots,N$, where here we set $S_0 = S_N$. \label{item-square-path-side0}
\item The union of the squares $S_1,\dots,S_N$ contains a path which disconnects the inner and outer boundaries of $\BB A_{3.1 r,3.9 r}(z)$. \label{item-square-path-dc0}
\item For each $j=1,\dots,N$, we have $D_h\left(\text{around $B_{2\delta r}(S_j) \setminus B_{\delta r}(S_j)$} \right) \leq \frac{c}{100} \frk c_r e^{\xi h_r(z)}$. \label{item-square-path-around0}
\end{enumerate}
\item Let $\frk h^U$ be the harmonic part of $h|_U$. Then, in the notation~\eqref{eqn-U-nbd-def}, \label{item-clsce-harmonic} 
\eqb 
\sup_{u   \in U_{\delta r / 4}} |\frk h^U(u) - h_r(z) |  \leq A  .
\eqe  
\end{enumerate}
We also define
\eqb \label{eqn-clsce-all-event}
E_r(z) = E_r(z;c,\delta,A) := \bigcap_{U\in \mcl U_r(z;\delta)} E_r^U(z) .
\eqe
The first two conditions in the definition of $E_r^U(z)$ do not depend on $U$, so the only difference between $E_r(z)$ and $E_r^U(z)$ is that for the former event, condition~\ref{item-clsce-harmonic} is required to hold for all choices of $U$ simultaneously.

The events $E_r(z)$ and $E_r^U(z)$ are defined in exactly the same manner as in~\cite[Section 3.2]{gm-confluence} except that in~\cite{gm-confluence}, condition~\ref{item-clsce-ball} is replaced by an upper bound for the $D_h$-diameters of the squares in $\mcl S_{\delta r}(\BB A_{3r,4r}(z))$. Of course, such a diameter upper bound does not hold in the supercritical case, which is the reason for the modification. 

The occurrence of $E_r^U(z)$ or $E_r(z)$ is unaffected by adding a constant to the field. By this and the locality of $D_h$ (Axiom~\ref{item-metric-local}), these events are determined by $h|_{\BB A_{2r,5r}(z)}$, viewed modulo additive constant.  

We think of annuli $\BB A_{2r,5r}(z)$ for which $E_r(z)$ occurs as ``good". 
We will show in Lemma~\ref{lem-clsce-event-pos} just below that $\BB P[E_r(z)]$ can be made close to 1 by choosing the parameters $\delta, c , A$ appropriately, in a manner which is uniform over the choices of $r$ and $z$, 
The reason for separating $E_r(z)$ and $E_r^U(z)$ is that conditioning on $E_r^U(z)$ is easier than conditioning on $E_r(z)$ (see Lemma~\ref{lem-cond-diam-small} just below). 

We will eventually apply condition~\ref{item-clsce-harmonic} with $U$ equal to $\BB A_{3r,4r}(z)$ minus the union of the set of squares in $\mcl S_\ep^z(\BB A_{3r,4r}(z))$ which intersect a filled $D_h$-metric ball $\mcl B_\tau^\bullet$, for an appropriate stopping time $\tau$. Condition~\ref{item-clsce-harmonic} together with the Markov property of $h$ allows us to show that with uniformly positive conditional probability given $h|_{\BB C\setminus U}$ and the event $E_r^U(z)$, the maximal $D_h$-distance between the centers of any two squares in $\mcl S_{\delta r}^z( U)$ which are contained in the same connected component of $U$ is small (see Lemma~\ref{lem-cond-diam-small}). 
This combined with condition~\ref{item-clsce-ball} will show that with uniformly positive conditional probability given $h|_{\BB C\setminus U}$ and $E_r^U(z)$, there is a collection of paths in $\BB A_{3r,4r}(z)$ which each have small $D_h$-length and whose union disconnects the inner and outer boundaries of $\BB A_{3r,4r}(z)$ in $\BB C\setminus \mcl B_\tau^\bullet$ (see Lemma~\ref{lem-geo-kill}). 
Due to condition~\ref{item-clsce-across}, we can arrange that $D_h$-distance from each point of each of these paths to $\bdy\mcl B_\tau^\bullet$ will be smaller than $D_h(\text{across $\BB A_{2r,3r}(z)$})$. This will show that no $D_h$-geodesic from a point outside of $B_{4r}(z) \cup \mcl B_\tau^\bullet$ can cross $\BB A_{2r,3r}(z)$ before entering $\mcl B_\tau^\bullet$.
See Figure~\ref{fig-geo-kill} for an illustration of how the events $E_r^U(z)$ will eventually be used.

\begin{lem} \label{lem-clsce-event-pos}
For each $p \in (0,1)$, we can find parameters $ c , \delta  \in (0,1)$ and $A>0$ such that, in the notation~\eqref{eqn-clsce-all-event}, we have $\BB P[E_r(z)] \geq p$ for each $z\in\BB C$ and $r > 0$.
\end{lem}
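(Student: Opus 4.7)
The plan is to verify the three defining conditions of $E_r^U(z)$ separately, choosing $c$, $\delta$, and $A$ so that each fails with probability at most $(1-p)/3$, uniformly in $r>0$ (and in $U$ for Condition~\ref{item-clsce-harmonic}). By translation invariance (Axiom~\ref{item-metric-translate}) we may take $z=0$; since Conditions~\ref{item-clsce-across} and~\ref{item-clsce-ball} do not depend on $U$, a union bound over all three then yields $\BB P[E_r(0)]\geq p$.

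Condition~\ref{item-clsce-across} is immediate from tightness across scales: for $c$ sufficiently small the event $\{D_h(\text{across }\BB A_{2r,3r}(0))\geq c\,\frk c_r e^{\xi h_r(0)}\}$ has probability at least $1-(1-p)/3$ by Axiom~\ref{item-metric-coord}, uniformly in $r$.

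For Condition~\ref{item-clsce-harmonic}, I would first fix $\delta$ (to be chosen small enough below for Condition~\ref{item-clsce-ball}). For any $U\in\mcl U_r(0;\delta)$, the harmonic function $\frk h^U$ is determined by $h|_{\BB C\setminus U}$; by standard Poisson-kernel and harmonicity estimates one can bound $|\frk h^U(u)-h_r(0)|$ for $u\in U_{\delta r/4}$ uniformly by $C_\delta\max_v|h_{\delta r/8}(v)-h_r(0)|$, where $v$ ranges over a polynomial-in-$1/\delta$-sized grid of test points in $\BB A_{3r,4r}(0)$ and $C_\delta$ depends only on $\delta$. Since the test grid is finite once $\delta$ is fixed and the circle averages $h_{\delta r/8}(v)-h_r(0)$ are centered Gaussians of variance $O(\log(1/\delta))$, a Gaussian union bound gives Condition~\ref{item-clsce-harmonic} with probability at least $1-(1-p)/3$ simultaneously for all $U\in\mcl U_r(0;\delta)$ provided $A$ is chosen large enough.

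The main obstacle is Condition~\ref{item-clsce-ball}, which requires the existence of a chain of adjacent ``good'' $\delta r$-squares encircling $\BB A_{3.1r,3.9r}(0)$, where $S$ is good if $D_h(\text{around }B_{2\delta r}(S)\setminus B_{\delta r}(S))\leq (c/100)\frk c_r e^{\xi h_r(0)}$. By Axiom~\ref{item-metric-coord} and Proposition~\ref{prop-scaling-constants}, the goodness of a single square $S$ essentially reduces to the Gaussian event $h_{\delta r}(\text{center}(S))-h_r(0)\leq Q\log(1/\delta)+O(1)$, which has probability about $1-\delta^{Q^2/2+o(1)}$. In the supercritical regime $Q<2$ this is too weak for a naive union bound over the $O(\delta^{-2})$ squares in $\BB A_{3.1r,3.9r}(0)$. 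The key point is that only \emph{one} chain of good squares is required, not all squares good, and I would obtain this by partitioning $\BB A_{3.1r,3.9r}(0)$ into many disjoint concentric sub-annuli of width much greater than $\delta r$ and applying a Peierls-type contour argument on the $\delta r$-grid to bound the probability of a ``bad radial crossing'' in any single sub-annulus by a constant strictly less than $1$; the near-independence of the GFF restricted to annular regions at appropriately separated scales (as exploited in Lemma~\ref{lem-annulus-iterate}) is then used to make the good-chain events for different sub-annuli approximately independent, so that taking enough sub-annuli boosts the probability that at least one of them admits a good encircling chain to at least $1-(1-p)/3$.
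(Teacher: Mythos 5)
Your treatment of Conditions~\ref{item-clsce-across} and~\ref{item-clsce-harmonic} is correct and matches the paper's: tightness across scales handles the first, and for the third the paper similarly exploits that $\mcl U_r(z;\delta)=r\,\mcl U_1(0;\delta)+z$ is a finite collection together with translation/scale invariance of $h$ modulo additive constant (the paper's argument is softer — just finiteness of $\sup_{u\in U_{\delta/4}}|\frk h^U(u)|$ for each of the finitely many $U$ — but your Poisson-kernel route also works).

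Where you and the paper diverge is Condition~\ref{item-clsce-ball}, which the paper isolates as Lemma~\ref{lem-square-path}. You correctly identify that a naive union bound fails for $Q<2$, but your Peierls-plus-near-independence sketch has two genuine gaps. First, a Peierls bound on bad radial crossings is not available in the form you need: the circle averages $h_{\delta r}(v)$ and $h_{\delta r}(v')$ for \emph{adjacent} square centers ($|v-v'|=\delta r$) differ by a Gaussian of $O(1)$ variance while each has variance $\log(1/\delta)$, so their correlation is $1-O(1/\log(1/\delta))$; conditioned on one square being bad, a neighboring square is bad with probability close to $1$, not $\delta^{Q^2/2}$, so the per-path probability does not decay geometrically and cannot beat the $3^{\ell}$ entropy. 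Second, your appeal to near-independence is not covered by Lemma~\ref{lem-annulus-iterate}: that lemma requires the annuli to have \emph{geometrically separated} radii ($r_{k+1}/r_k\leq\mu_1<1$), whereas your concentric sub-annuli of $\BB A_{3.1r,3.9r}(z)$ all sit at the same Euclidean scale $\asymp r$; the field restricted to such regions is not nearly independent in the needed quantitative sense without additional work. The paper's proof of Lemma~\ref{lem-square-path} avoids both problems with a different device: it introduces a second weak LQG metric $\wt D_h$ with a large parameter $\wt\xi$ (so $\wt Q$ is as small as desired), takes a $\wt D_h$-short path $\wt\pi$ around the annulus, and uses the $\wt D_h$-lower bound across the small sub-annuli $B_{2\delta r}(S)\setminus B_{\delta r}(S)$ to deduce $h_{\delta r}(v_{S_j})-h_r(z)\leq(\wt Q+2\zeta)\log(1/\delta)$ for every square $S_j$ hit by $\wt\pi$; feeding this back into the $D_h$-upper bound on each such annulus yields Condition~\ref{item-clsce-ball} for the squares along $\wt\pi$. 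The paper also remarks that a level-set argument (as in~\cite{lfpp-pos} or~\cite{ghpr-central-charge}) is an alternative, which is closer to your spirit, but those arguments exploit the geometry of GFF super-level sets rather than a square-by-square Peierls bound, precisely because of the strong local correlations noted above.
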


In order to show that condition~\ref{item-clsce-ball} in the definition of $E_r^U(z)$ occurs with high probability, we will use the following lemma.

\begin{lem} \label{lem-square-path}
Fix $\zeta > 0$ and $0 < a < b < \infty$. For each $z \in \BB C$ and $r  >0$, it holds with superpolynomially high probability as $\delta \rta 0$, uniformly over the choice of $z$ and $r$, that there exists a collection of $\delta r \times \delta r$ squares $S_1,\dots, S_N \in \mcl S_{\delta r}(\BB A_{a r, b r}(z))$ with the following properties.
\begin{enumerate}
\item The squares $S_{j-1}$ and $S_j$ share a side for each $j=1,\dots,N$, where here we set $S_0 = S_N$. \label{item-square-path-side}
\item The union of the squares $S_1,\dots,S_N$ contains a path which disconnects the inner and outer boundaries of $\BB A_{a r, b r}(z)$. \label{item-square-path-dc}
\item For each $j=1,\dots,N$, we have $D_h\left(\text{around $B_{2\delta r}(S_j) \setminus B_{\delta r}( S_j )$} \right) \leq \delta^{\xi Q - \zeta} \frk c_r e^{\xi h_r(z)}$. \label{item-square-path-around}
\end{enumerate}
\end{lem}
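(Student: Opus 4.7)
The plan is a direct two-ingredient union-bound argument. The cycle $S_1,\dots,S_N$ itself will be chosen deterministically and geometrically---for example, as the collection of $\delta r$-squares in $\mcl S_{\delta r}(\BB A_{ar,br}(z))$ traversed in order by a concentric circle of radius $(a+b)r/2$ around $z$, with minor adjustments to ensure consecutive squares share a side. Such a cycle exists and disconnects the inner and outer boundaries of $\BB A_{ar,br}(z)$ as soon as $\delta<(b-a)/3$. The content of the proof is therefore to establish the upper bound on $D_h(\text{around }B_{2\delta r}(S)\setminus B_{\delta r}(S))$ for \emph{every} $S\in\mcl S_{\delta r}(\BB A_{ar,br}(z))$ simultaneously, with superpolynomially high probability.

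First, I would apply Proposition~\ref{prop-two-set-dist} at scale $\delta r$ centered at $v_S$ (using Axiom~\ref{item-metric-translate}), via the standard two-fold construction described after that proposition in order to upper-bound an around-distance. Taking $A=A_\delta:=\delta^{-\zeta/4}$ and using that the failure probability is superpolynomial in $A$ uniformly in scale, a union bound over the $O(\delta^{-2})$ squares gives: for each $p>0$, with probability at least $1-O_\delta(\delta^p)$ one has
\[
D_h\!\left(\text{around }B_{2\delta r}(S)\setminus B_{\delta r}(S)\right)\leq \delta^{-\zeta/4}\,\frk c_{\delta r}\,e^{\xi h_{\delta r}(v_S)} \qquad\forall\, S\in\mcl S_{\delta r}(\BB A_{ar,br}(z)).
\]

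Next I would control the local circle averages. By the scale and translation invariance of the whole-plane GFF modulo additive constant, $h_{\delta r}(v_S)-h_r(z)$ is centered Gaussian with variance $\log(1/\delta)+O(1)$, uniformly in $v_S\in\BB A_{ar,br}(z)$ and in $z,r$. A Gaussian tail bound followed by a union bound over the $O(\delta^{-2})$ squares shows that for each $p>0$ one can choose a constant $K=K(p)$ with $K^2/2>p+2$ so that with probability at least $1-O_\delta(\delta^p)$ one has $h_{\delta r}(v_S)-h_r(z)\leq K\sqrt{\log(1/\delta)}$ for all $S$ simultaneously. On the intersection of the two events above, Proposition~\ref{prop-scaling-constants} (giving $\frk c_{\delta r}/\frk c_r=\delta^{\xi Q+o_\delta(1)}$) together with $e^{\xi K\sqrt{\log(1/\delta)}}=\delta^{o_\delta(1)}$ yields
\[
D_h(\text{around})\leq \delta^{\xi Q-\zeta/4+o_\delta(1)}\,\frk c_r\,e^{\xi h_r(z)}\leq \delta^{\xi Q-\zeta}\,\frk c_r\,e^{\xi h_r(z)}\qquad\forall\, S,
\]
for $\delta$ small enough (depending on $p,\zeta,K(p)$). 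Since $p$ is arbitrary, this is superpolynomially high probability as $\delta\to 0$, and the uniformity in $z,r$ is inherited from the input estimates.

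The argument is conceptually simple. The main observation---which is what makes it work for \emph{arbitrarily small} $\zeta>0$, and not merely large $\zeta$---is that a typical GFF circle-average fluctuation $h_{\delta r}(v_S)-h_r(z)$ has size only $\sqrt{\log(1/\delta)}$, so after exponentiation it contributes only a factor of $\delta^{o_\delta(1)}$, which is negligible compared to the polynomial slack $\delta^{-\zeta/4}$ taken in Proposition~\ref{prop-two-set-dist}. The main subtle points to verify are the uniformity in $z,r$ of the variance estimate $\mathrm{Var}(h_{\delta r}(v_S)-h_r(z))=\log(1/\delta)+O(1)$ on $v_S\in\BB A_{ar,br}(z)$ (which reduces, by scaling and translating the GFF and absorbing the resulting additive constant, to bounding the variance of $h_\delta(u)$ for $|u|\in[a,b]$), and the independence of the choices of the two parameters $A_\delta$ and $K=K(p)$ so that both high-probability events can be intersected cleanly.
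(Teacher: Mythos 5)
You claim that for each $p>0$ one can choose a constant $K=K(p)$ so that with probability $1-O_\delta(\delta^p)$ one has $h_{\delta r}(v_S)-h_r(z)\leq K\sqrt{\log(1/\delta)}$ simultaneously for all $O(\delta^{-2})$ squares $S$. But $h_{\delta r}(v_S)-h_r(z)$ is centered Gaussian with variance $\log(1/\delta)+O(1)$, so $K\sqrt{\log(1/\delta)}$ is roughly $K$ standard deviations and the single-square tail probability is $\approx e^{-K^2/2}$ --- a positive constant independent of $\delta$, not a power of $\delta$. After the union bound over $O(\delta^{-2})$ squares the failure probability is $\delta^{-2}e^{-K^2/2}$, which blows up rather than tending to zero. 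In fact, the maximum of $\sim\delta^{-2}$ approximately independent such Gaussians is, with high probability, of order $2\log(1/\delta)$ (this is the standard GFF thick-point phenomenon). To force the single-square tail down to $\delta^{p+2}$ you would need a cutoff of order $\sqrt{2(p+2)}\log(1/\delta)$, and then $e^{\xi\cdot\text{cutoff}}=\delta^{-\xi\sqrt{2(p+2)}}$ is a genuinely negative power of $\delta$ --- not $\delta^{o_\delta(1)}$ --- and cannot be absorbed by the polynomial slack $\delta^{-\zeta/4}$ when $\zeta$ is small. So the best your method can achieve is the bound for $\zeta$ exceeding some threshold depending on $\xi$ and $p$, and it cannot give superpolynomial probability for a fixed small $\zeta$.

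The issue is structural, not just a careless constant: a deterministically chosen cycle of squares will, with overwhelming probability, pass near $\alpha$-thick points with $\alpha$ close to $2$, and at such a square the right side of your around-distance estimate is off by $\delta^{-2\xi+o(1)}$. The paper's proof avoids this precisely by \emph{not} choosing the cycle deterministically. Instead, it introduces a second weak LQG metric $\wt D_h$ with a large parameter $\wt\xi$, finds a path $\wt\pi$ of small $\wt D_h$-length around the annulus (which exists with superpolynomially high probability by Proposition~\ref{prop-two-set-dist}), and takes the $S_j$'s to be the squares hit by $\wt\pi$. Because $\wt\pi$ is $\wt D_h$-short and $\wt D_h$-distances are large near thick points, the path automatically avoids the squares with large circle average: along $\wt\pi$ one extracts $e^{h_{\delta r}(v_{S_j})-h_r(z)}\leq\delta^{-(\wt Q+2\zeta)+o_\delta(1)}$, and since $\wt Q\rta 0$ as $\wt\xi\rta\infty$ this exponent can be made smaller than $3\zeta$. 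This is the key idea you are missing: the cycle must be random and adapted to the field so as to dodge the thick points.
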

\begin{proof}
This can be proven using level sets of the GFF (see, e.g., the arguments in~\cite[Section 2]{lfpp-pos} or~\cite[Section 5.1]{ghpr-central-charge}), but we will give a different argument based on estimates for weak LQG metrics with parameter $\wt\xi$, where $\wt\xi$ is large.   

Let $\wt\xi > \xi$ to be chosen later, in a manner depending on $\zeta$. Let $\wt D_h$ be a weak $\wt\xi$-LQG metric with respect to $h$ (e.g., a subsequential limit of LFPP with parameter $\wt\xi$). We denote objects associated with $\wt\xi$ and $\wt D_h$ with a tilde.

By Proposition~\ref{prop-two-set-dist} and a union bound, it holds with superpolynomially high probability as $\delta \rta 0$, uniformly over the choices of $z$ and $r$, that for each $S\in \mcl S_{\delta r}(\BB A_{a r, b r}(z))$, 
\eqbn
D_h\left(\text{around $B_{2\delta r}(S) \setminus B_{\delta r}(S)$} \right) \leq \delta^{- \xi \zeta } \frk c_{\delta r} e^{\xi h_{\delta r}(v_{S })} \quad \text{and} \quad
\wt D_h\left(\text{across $B_{2\delta r}(S) \setminus B_{\delta r}(S)$} \right) \geq \delta^{\wt\xi \zeta} \wt{\frk c}_{\delta r} e^{\wt \xi h_{\delta r}(v_{S})} .
\eqen
where $v_S$ is the center of $S$. 
Since $\frk c_{\delta r} = \delta^{\xi Q + o_\delta(1)} \frk c_r$ and similarly for $\wt{\frk c}_{\delta r}$, we can re-write this as
\allb \label{eqn-large-xi-across}
&D_h\left(\text{around $B_{2\delta r}(S) \setminus B_{\delta r}(S)$} \right) \leq \delta^{\xi (Q-\zeta) + o_\delta(1)} \frk c_r e^{\xi h_{\delta r}(v_S)} \quad \text{and} \notag\\
&\wt D_h\left(\text{across $B_{2\delta r}(S) \setminus B_{\delta r}(S)$} \right) \geq \delta^{\wt\xi (\wt Q +  \zeta)  + o_\delta(1)} \wt{\frk c}_{ r} e^{\wt \xi h_{\delta r}(v_S)} .
\alle

By another application of Proposition~\ref{prop-two-set-dist}, it holds with superpolynomially high probability as $\delta \rta 0$ that there is a path $\wt\pi$ in $\BB A_{a r, b r}(z)$ which disconnects the inner and outer boundaries of $\BB A_{a r, b r}(z)$ and has $\wt D_h$-length at most $\delta^{-\wt\xi \zeta } \wt{\frk c}_r e^{\wt\xi h_r(z)}$. Let $S_1,\dots,S_N$ be the squares in $\mcl S_{\delta r}(\BB A_{a r, b r}(z))$ which are hit by $\wt\pi$, listed in numerical order. Then $S_1,\dots,S_N$ satisfy properties~\ref{item-square-path-side} and~\ref{item-square-path-dc} in the lemma statement. 

For each $j$, the path $\wt\pi$ crosses between the inner and outer boundaries of $B_{2\delta r}(S_j) \setminus B_{\delta r}(S_j)$, so by~\eqref{eqn-large-xi-across},
\eqb
\delta^{\wt\xi (\wt Q +  \zeta) + o_\delta(1)} \wt{\frk c}_{ r} e^{\wt \xi h_{\delta r}(v_{S_j})}
\leq \wt D_h\left(\text{across $B_{2\delta r}(S_j) \setminus B_{\delta r}(S_j)$} \right) 
\leq \left(\text{$\wt D_h$-length of $\wt\pi$}\right)
\leq \delta^{-\wt\xi\zeta} \wt{\frk c}_r e^{\wt\xi h_r(z)} .
\eqe
Re-arranging this inequality, then taking the $1/\wt\xi$ power of both sides, gives 
\eqb
e^{ h_{\delta r}(v_{S_j}) - h_r(z) } \leq \delta^{ -   (\wt Q + 2 \zeta) }  .
\eqe
As $\wt\xi \rta \infty$, we have $\wt Q \rta 0$~\cite[Proposition 1.1]{dg-supercritical-lfpp}.
Hence, if $\wt\xi$ is chosen to be sufficiently large (depending on $\zeta$) then we can arrange that $\wt Q < \zeta$, so $e^{ h_{\delta r}(v_{S_j}) - h_r(z) } \leq \delta^{-3\zeta}$. Plugging this into the first inequality in~\eqref{eqn-large-xi-across} shows that for each $j$,  
\eqbn
D_h\left(\text{around $B_{2\delta r}(S_j) \setminus B_{\delta r}(S_j)$} \right) \leq \delta^{\xi (Q-4\zeta) + o_\delta(1)} \frk c_r e^{\xi h_{  r}(z)} .
\eqen
Since $\zeta$ is arbitrary, this implies that can arrange for the $S_j$'s to satisfy condition~\ref{item-square-path-around}.
\end{proof}

\begin{proof}[Proof of Lemma~\ref{lem-clsce-event-pos}]
By translation invariance and tightness across scales (Axioms~\ref{item-metric-translate} and~\ref{item-metric-coord}), the laws of the reciprocals of the scaled distances $\frk c_r^{-1} e^{-\xi h_r(z)} D_h( \text{across $\BB A_{2r,3r}(z)$} )$ for $z\in\BB C$ and $r>0$ are tight. Therefore, we can find $c = c(p) > 0$ such that for each $z\in\BB C$ and $r > 0$, condition~\ref{item-clsce-across} in the definition of $E_r^U(z)$ occurs with probability at least $1 - (1-p)/3$.
By Lemma~\ref{lem-square-path}, we can find $\delta = \delta(p , c)  \in (0,1)$ such that condition~\ref{item-clsce-ball} in the definition of $E_r(z)$ occurs with probability at least $1-(1-p)/3$. 
For a given choice of $\delta$, the collection of open sets $\mcl U_r(z;\delta)$ is finite, and is equal to $r \mcl U_1(0;\delta) + z$ (here we use the translation by $z$ in~\eqref{eqn-square-def}). 
Since $\frk h^U$ is continuous away from $\bdy U$, for any fixed choice of $U \in \mcl U_1(0;\delta)$, a.s.\ $\sup_{u  \in U_{\delta/4 }} |\frk h^U(u)  |  < \infty$. 
By combining this with the translation and scale invariance of the law of $h$, modulo additive constant, we find that there exists $A> 0$ (depending on $\delta$) such that with probability at least $1-(1-p)/3 $, condition~\ref{item-clsce-harmonic} in the definition of $E_r^U(z)$ holds simultaneously for every $U\in \mcl U_r(z;\delta)$.
\end{proof}

We now want to show that if we condition on $E_r^U(z)$, then with positive conditional probability the $D_h$-distances between certain points in $U$ are very small. 
For $r >0$, $z\in\BB C$, and $U\in\mcl U_r(z)$, let $\mcl V(U)$ be the set of connected components of $U$. Also let
\eqb
\mcl Z(U) :=  \left\{\text{center points of squares $S\in \mcl S_{\delta r}^z(U)$ with $S\subset \ol U$}\right\}
\eqe
be the set of centers of squares which are entirely contained in $\ol U$. 
We define the event  
\eqb \label{eqn-diam-small-event}
H_r^U(z) := \left\{\max_{V\in \mcl V(U)} \sup_{u,v \in V \cap \mcl Z(U)} D_h\left( u , v ; V \right) \leq \frac{c}{2} \frk c_r e^{\xi h_r(z)}\right\} ,
\eqe
i.e., $H_r^U(z)$ is the event that for any $V\in\mcl V(U)$, the $D_h$-internal distance in $V$ between any two of the centers of the squares which are entirely contained in $V$ is bounded above by $ \frac{c}{2} \frk c_r e^{\xi h_r(z)}$ (this quantity is relevant due to condition~\ref{item-clsce-across} in the definition of $E_r^U(z)$). We think of annuli $\BB A_{2r,5r}(z)$ for which $E_r^U(z) \cap H_r^U(z)$ occurs (for a suitable choice of $U$) as ``very good". 

We note that $H_r^U(z)$ does \emph{not} include an upper bound for the $D_h$-distance between two arbitrary points of $V$. This is because there are a.s.\ singular points contained in $V$, but a.s.\ none of the (finitely many) points in $\mcl Z(U)$ are singular points, so a.s.\ any two points in $\mcl Z(U)$ lie at finite $D_h$-distance from each other. 

The following is the analog of~\cite[Lemma 3.3]{gm-confluence} in our setting. It says that an annulus has positive conditional probability to be ``very good" given that it is ``good". 

\begin{lem} \label{lem-cond-diam-small}
For any choice of parameters $c,\delta, A$, there is a constant $\frk p = \frk p( c,\delta ,   A) > 0$ such that for each $r  > 0$, each $z\in\BB C$, and each $U \in \mcl U_r(z)$,  
\eqb  \label{eqn-cond-diam-small} 
\BB P\left[  H_r^U(z)   \,\big|\, h|_{\BB C\setminus U}  , E_r^U(z) \right]  \geq \frk p .
\eqe  
\end{lem}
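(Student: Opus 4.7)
The plan is to combine the Markov property of the GFF, the Weyl scaling axiom, and the FKG inequality for the zero-boundary GFF to reduce the conditional probability statement to a uniform positive-probability bound for an independent event $G$ depending only on the zero-boundary GFF on $U$.

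First I will apply the Markov decomposition to write $h|_U = \frk h^U + \rng h$, where $\frk h^U$ is the harmonic extension of $h|_{\bdy U}$ (measurable with respect to $h|_{\BB C\setminus U}$) and $\rng h$ is a zero-boundary GFF on $U$ independent of $h|_{\BB C\setminus U}$. An inspection of the defining conditions of $E_r^U(z)$ shows: condition~\ref{item-clsce-across} involves only $h|_{\BB A_{2r,3r}(z)}$ with $\BB A_{2r,3r}(z)\subset \BB C\setminus U$, and condition~\ref{item-clsce-harmonic} is a statement about $\frk h^U$, so both are $\sigma(h|_{\BB C\setminus U})$-measurable; only condition~\ref{item-clsce-ball} depends on $\rng h$, and it does so as a decreasing functional of $\rng h$, being an upper bound on a $D_h$-distance.

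On condition~\ref{item-clsce-harmonic}, $|\frk h^U-h_r(z)|\leq A$ on $U_{\delta r/4}$, so by Axiom~\ref{item-metric-f} (Weyl scaling) any path $P\subset U_{\delta r/4}$ satisfies $\op{len}(P;D_h)\leq e^{\xi A}e^{\xi h_r(z)}\op{len}(P;D_{\rng h})$. Each center $v\in\mcl Z(U)$ lies in $U_{\delta r/2}\subset U_{\delta r/4}$, and for a suitable choice of $\delta$ one arranges that each $V\in\mcl V(U)$ meets $U_{\delta r/4}$ in a connected open set containing every $v\in V\cap\mcl Z(U)$. The event $H_r^U(z)$ is then implied by the $\sigma(\rng h)$-measurable event
\eqb
G := \Bigl\{\max_{V\in\mcl V(U)}\sup_{u,v\in V\cap\mcl Z(U)} D_{\rng h}(u,v;V\cap U_{\delta r/4}) \leq \tfrac{c}{2}e^{-\xi A}\frk c_r\Bigr\},
\eqe
which is independent of $h|_{\BB C\setminus U}$. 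Since $\mcl V(U)$ and $\mcl Z(U)$ are finite and $\mcl U_r(z;\delta)=r\mcl U_1(0;\delta)+z$ is a rigid rescaling of the finite deterministic family $\mcl U_1(0;\delta)$, Lemma~\ref{lem-set-tightness} (applied to $D_h$ and converted to a bound on $D_{\rng h}$ via Weyl scaling using the bound on $\frk h^U$) combined with Axiom~\ref{item-metric-coord} yields a constant $\frk p_0=\frk p_0(c,\delta,A)>0$ with $\BB P[G]\geq \frk p_0$ uniformly in $r,z,U$.

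Finally, both $G$ and condition~\ref{item-clsce-ball} are decreasing functionals of $\rng h$, so Proposition~\ref{prop-fkg-metric} (FKG for the zero-boundary GFF) gives $\BB P[G\cap\text{cond.}~\ref{item-clsce-ball}\mid h|_{\BB C\setminus U}] \geq \BB P[G]\cdot \BB P[\text{cond.}~\ref{item-clsce-ball}\mid h|_{\BB C\setminus U}]$. Restricting to the $\sigma(h|_{\BB C\setminus U})$-measurable event that conditions~\ref{item-clsce-across} and~\ref{item-clsce-harmonic} hold (the only case where $\BB P[E_r^U(z)\mid h|_{\BB C\setminus U}]>0$), we have $\BB P[\text{cond.}~\ref{item-clsce-ball}\mid h|_{\BB C\setminus U}]=\BB P[E_r^U(z)\mid h|_{\BB C\setminus U}]$; combining with $G\subset H_r^U(z)$ and dividing by $\BB P[E_r^U(z)\mid h|_{\BB C\setminus U}]$ yields $\BB P[H_r^U(z)\mid h|_{\BB C\setminus U},E_r^U(z)]\geq \frk p_0=:\frk p$. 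The main obstacle is the geometric claim that $V\cap U_{\delta r/4}$ is connected and contains every $v\in V\cap\mcl Z(U)$, which can fail when $V$ is a thin chain of grid squares glued along edges; resolving it requires either a combinatorial inspection of the finitely many shapes in $\mcl U_1(0;\delta)$ together with a careful choice of $\delta$, or a modification of $G$ to tolerate short controlled excursions outside $U_{\delta r/4}$ where the Weyl-scaling bound on $\frk h^U$ does not directly apply.
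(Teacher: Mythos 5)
Your decomposition of $E_r^U(z)$ is correct — conditions~\ref{item-clsce-across} and~\ref{item-clsce-harmonic} are indeed $\sigma(h|_{\BB C\setminus U})$-measurable, only condition~\ref{item-clsce-ball} depends on the zero-boundary GFF $\rng h$ on $U$, and it is decreasing in $\rng h$ — and your use of the FKG inequality to reinstate the conditioning on $E_r^U(z)$ is in the same spirit as the paper. The gap is in your claimed lower bound $\BB P[G] \geq \frk p_0$. Lemma~\ref{lem-set-tightness} (and Axiom~\ref{item-metric-coord}) give \emph{tightness} of the scaled internal distances, i.e.\ that with probability $\geq p$ they lie in $[M^{-1}\frk c_r, M\frk c_r]$ for some $M=M(p)$. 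That is an upper bound by a \emph{large} constant times $\frk c_r$ with \emph{high} probability; it does not say that the distances are at most the \emph{small} constant $\tfrac{c}{2}e^{-\xi A}\frk c_r$ with \emph{positive} probability. Since $c$ is chosen at the outset in the construction of $E_r^U(z)$ and can be arbitrarily small, there is no way to force $M \leq \tfrac{c}{2}e^{-\xi A}$, so tightness alone cannot produce $\BB P[G]>0$.

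What is missing is the Cameron--Martin / bump-function step, which is exactly what the paper (and \cite[Lemma~3.3]{gm-confluence}) uses. Fix a smooth compactly supported $\phi \geq 0$ on $U$ with $\phi \geq M$ on a suitable neighborhood of the points in $\mcl Z(U)$. By Weyl scaling, $D_{\rng h} = e^{-\xi\phi}\cdot D_{\rng h + \phi}$, so on the relevant region $D_{\rng h}(u,v;\cdot) \leq e^{-\xi M}\,D_{\rng h + \phi}(u,v;\cdot)$. The law of $D_{\rng h+\phi}$ is that of $D_{\rng h}$ under a Cameron--Martin tilt, with Radon--Nikodym derivative bounded below on any event of probability bounded away from zero. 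Tightness across scales then gives that $D_{\rng h+\phi}(u,v;\cdot) \leq C\frk c_r$ with uniformly positive probability, and choosing $M$ large (depending on $c$, $A$, $\delta$, and $C$) makes $e^{-\xi M}C \leq \tfrac{c}{2}e^{-\xi A}$, yielding $\BB P[G] \geq \frk p_0(c,\delta,A)>0$. This step cannot be replaced by Lemma~\ref{lem-set-tightness}; it is the heart of the lemma. Your concern at the end about whether $V \cap U_{\delta r/4}$ is connected is a secondary issue and is precisely the sort of thing the paper sidesteps by only requiring bounds between centers of squares wholly contained in $\ol U$ (which sit at distance $\geq \delta r/2$ from $\bdy U$) rather than between points arbitrarily close to the boundary; this is the simplification the paper notes relative to \cite[Lemma~3.3]{gm-confluence}.
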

\begin{proof}
This is proven via essentially the same argument as~\cite[Lemma 3.3]{gm-confluence}: we subtract a large bump function from $  h|_U$ to get a lower bound for $\BB P[H_r^U(z) \,| \, h|_{\BB C\setminus U} ]$, then we use the FKG inequality (Proposition~\ref{prop-fkg-metric}) to add in the conditioning on $E_r^U(z)$ (we only need to use the FKG inequality for the second condition in the definition of $E_r^U(z)$ since the other two conditions are determined by $h|_{\BB C\setminus U}$). 
The proof is actually slightly simpler than that of~\cite[Lemma 3.3]{gm-confluence} since we are not trying to bound distances between points which are arbitrarily close to $\bdy V$, so unlike in~\cite{gm-confluence} we do not need to worry about the diameters of the squares in $\mcl S_{\delta r}^z(V)$.  
\end{proof}

\subsubsection{Cutting off geodesics from a boundary arc}
\label{sec-geo-kill}

For $c,\delta\in (0,1)$ and $A>0$, define $E_r(z) = E_r(z; c,\delta, A)$ as in~\eqref{eqn-clsce-all-event}. 
We will use the events $E_r(z)$ to build ``shields" which prevent $D_h$-geodesics from hitting a given arc of a filled metric ball.  
For $z\in\BB C$ and $\BB r >0$, let $\rho_{\BB r}^0(z) := \BB r$ and for $n\in\BB N$, inductively define
\eqb \label{eqn-good-radius-def}
\rho_{\BB r}^n(z) := \inf\left\{ r \geq 6\rho_{\BB r}^{n-1}(z) : \text{$r = 2^k \BB r$ for some $k\in\BB Z$}, \: E_r(z) \: \text{occurs} \right\} .
\eqe
Since $E_r(z)$ is determined by $h|_{\BB A_{2r,5r}(z)}$, it follows that $\rho_{\BB r}^n(z)$ is a stopping time for the filtration generated by $h|_{B_{5r}(z)}$ for $r \geq \BB r$. The following lemma allows us to produce lots of annuli for which $E_r(z)$ occurs.

\begin{lem} \label{lem-clsce-all}
There exists a choice of parameters $c,\delta\in(0,1)$ and $A >0$ and another parameter $\eta > 0$, depending only on the choice of metric $D$, such that the following is true. 
For each compact set $K\subset\BB C$, it holds with probability $1-O_\ep(\ep^2)$ (at a rate depending on $K$) that
\eqb
\rho_{\ep \BB r}^{\lfloor \eta\log\ep^{-1} \rfloor}(z) \leq   \ep^{1/2} \BB r , \quad\forall z\in \left(\frac{\ep \BB r}{4} \BB Z^2 \right) \cap B_{\ep \BB r}(\BB r K) .
\eqe
\end{lem}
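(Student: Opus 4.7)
The plan is to apply the annulus iteration tool (Lemma~\ref{lem-annulus-iterate}) on $O(\log \ep^{-1})$ disjoint concentric annuli around each $z$ in the mesh in order to find a logarithmic number of dyadic scales at which $E_r(z)$ occurs, and then observe that such ``good'' scales dominate the iteratively defined radii $\rho_{\ep\BB r}^n(z)$.

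First, fix $p \in (0,1)$ (close to $1$, to be chosen later) and apply Lemma~\ref{lem-clsce-event-pos} to select parameters $c,\delta\in (0,1)$ and $A>0$ with $\BB P[E_r(z)] \geq p$ for every $z\in \BB C$ and $r>0$. Set $J_\ep := \lfloor \tfrac{1}{6}\log_2\ep^{-1}\rfloor$ and $r_j := 2^{3j}\ep\BB r$ for $j\in [1,J_\ep]_{\BB Z}$. Each $r_j$ is a dyadic multiple of $\ep\BB r$, $r_j = 8 r_{j-1}$, the annuli $\BB A_{2 r_j,5 r_j}(z)$ on which the events $E_{r_j}(z)$ depend are pairwise disjoint, and $r_{J_\ep}\leq \ep^{1/2}\BB r$. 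Since $E_{r_j}(z)$ is determined by $h|_{\BB A_{2r_j,5r_j}(z)}$ modulo additive constant and $\BB A_{2 r_j, 5 r_j}(z) = \BB A_{(6r_j)/3,\,(5/6)(6 r_j)}(z)$, after rescaling by a factor of $6$ we are in the regime of Lemma~\ref{lem-annulus-iterate} with $\mu_1 = 1/3$ and $\mu_2 = 5/6$ (centered at $z$ rather than at $0$, which is harmless by translation invariance of the whole-plane GFF modulo additive constant). Since $(6 r_{j-1})/(6 r_j) = 1/8 < \mu_1$, the hypotheses of the lemma are satisfied, and by choosing $p$ close enough to $1$ we obtain, for any prescribed $a > 0$,
\[
\BB P\left[\,\#\{j\in [1,J_\ep]_{\BB Z} : E_{r_j}(z) \text{ occurs}\} < J_\ep/2\,\right] \leq c' e^{-a J_\ep}, \quad \forall z\in \BB C.
\]

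The key deterministic observation is that if $j_1 < j_2 < \dots < j_N$ are indices for which $E_{r_{j_k}}(z)$ occurs, then induction using $r_{j_k} \geq 8 r_{j_{k-1}} \geq 6 r_{j_{k-1}}$ and the definition~\eqref{eqn-good-radius-def} yields $\rho_{\ep\BB r}^k(z) \leq r_{j_k}$ for $k \leq N$, and in particular $\rho_{\ep\BB r}^N(z) \leq r_{J_\ep} \leq \ep^{1/2}\BB r$. Set $\eta := 1/(24\log 2)$, so that $\lfloor \eta\log\ep^{-1}\rfloor < J_\ep/2$ for small $\ep$. The mesh $(\tfrac{\ep\BB r}{4}\BB Z^2)\cap B_{\ep\BB r}(\BB r K)$ contains $O_\ep(\ep^{-2})$ points (with implicit constant depending on $K$), so union-bounding the displayed estimate over this mesh gives failure probability at most $O_\ep(\ep^{-2} e^{-a J_\ep}) = O_\ep(\ep^{a/(6\log 2) - 2})$, which is $O_\ep(\ep^2)$ provided $a$ (hence $p$) is chosen sufficiently large. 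This completes the proof. There is no substantive obstacle: the real work, namely constructing an event $E_r$ with uniformly positive probability all of whose defining conditions can be arranged simultaneously, was already carried out in Lemma~\ref{lem-clsce-event-pos}.
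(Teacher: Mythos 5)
Your proof is correct and follows essentially the same route as the paper: apply Lemma~\ref{lem-clsce-event-pos} to choose $c,\delta,A$ so that $\BB P[E_r(z)]\geq p$, run the annulus-iteration argument over $O(\log\ep^{-1})$ disjoint dyadic annuli to guarantee at least $\eta\log\ep^{-1}$ good scales with failure probability $O_\ep(\ep^4)$, and union-bound over the $O_\ep(\ep^{-2})$ mesh points. The one cosmetic difference is that the paper invokes the increasing-radii variant of the annulus-iteration lemma from \cite[Lemma 2.12]{gm-confluence}, whereas you apply the decreasing-radii Lemma~\ref{lem-annulus-iterate} to the reversed (and trivially extended) sequence $s_k = 6 r_{J_\ep-k+1}$; since the conclusion only concerns the first $J_\ep$ terms and the near-independence used in the proof is order-agnostic, this is a legitimate and self-contained substitute, though it would be cleaner to say explicitly that the index is being reversed.
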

\begin{proof}
This follows from the the variant of Lemma~\ref{lem-annulus-iterate} where our radii are increasing rather than decreasing~\cite[Lemma 2.12]{gm-confluence} together with a union bound, exactly as in the proof of~\cite[Lemma 3.4]{gm-confluence}. 
\end{proof}

We henceforth let $c,\delta,A$, and $\eta$ be as in Lemma~\ref{lem-clsce-all}. 
For $\ep > 0$, $\BB r > 0$, and a compact set $K\subset\BB C$, let 
\eqb \label{eqn-extra-radius-eucl}
R_{\BB r}^\ep(K) :=  6 \sup\left\{ \rho_{\ep \BB r}^{\lfloor \eta \log \ep^{-1} \rfloor}(z) : z\in \left( \frac{\ep \BB r}{4} \BB Z^2 \right) \cap B_{\ep \BB r}\left( K \right) \right\} +\ep\BB r ,
\eqe
so that each of the radii $\rho_{\ep \BB r}^n(z)$ for $z\in \left( \frac{\ep \BB r}{4} \BB Z^2 \right) \cap B_{\ep \BB r}\left( K \right)$ and $n\in[1,\eta\log\ep^{-1}]_{\BB Z}$ is determined by $R_{\BB r}^\ep(K)$ and $h|_{B_{R_{\BB r}^\ep(K)}(K)}$. 
Lemma~\ref{lem-clsce-all} shows that for each fixed choice of $K$, $\BB P[ R_{\BB r}^\ep(\BB r K) \leq (6\ep^{1/2} + \ep) \BB r ]$ tends to 1 as $\ep\rta 0$, at a rate which is uniform in $\BB r$.

For $s >0$, define
\eqb \label{eqn-extra-radius}
\sigma_{s,\BB r}^\ep := \inf\left\{ s' > s :   B_{R_{\BB r}^\ep(\mcl B_s^\bullet)}(\mcl B_s^\bullet) \subset \mcl B_{s'}^\bullet  \right\}   ,
\eqe  
so that $\mcl B_{\sigma_{s,\BB r}^\ep}^\bullet$ contains $B_{6\rho_{\ep \BB r}^{\lfloor\eta\log\ep^{-1}\rfloor}(z)}(z)$ for each $z\in B_{\ep \BB r}(\mcl B_s^\bullet)$. 
Since each $\rho_{\ep \BB r}^{\lfloor \eta \log \ep^{-1} \rfloor}(z)$ is a stopping time for the filtration generated by $h|_{B_{5r}(z)}$ for $r\geq \ep\BB r$, it follows that if $\tau$ is a stopping time for $\left\{ \left( \mcl B_t^\bullet , h|_{\mcl B_t^\bullet} \right) \right\}_{t\geq 0}$, then so is $\sigma_{\tau , \BB r}^\ep$ (this would still be true if we replaced 6 by 5 in~\eqref{eqn-extra-radius}). 
The following lemma, which is analogous to~\cite[Lemma 3.6]{gm-confluence}, will be used to ``kill off" the $D_h$-geodesics from 0 which hit a given boundary arc of a filled $D_h$-metric ball.

\begin{lem}
\label{lem-geo-kill-pt}
There exists $\alpha  >0$, depending only on the choice of metric, such that the following is true. 
Let $\BB r > 0$, let $\tau$ be a stopping time for the filtration generated by $\left\{ \left( \mcl B_s^\bullet , h|_{\mcl B_s^\bullet} \right) \right\}_{s \geq 0}$,
and let $x\in\bdy\mcl B_\tau^\bullet$ and $\ep\in (0,1)$ be chosen in a manner depending only on $( \mcl B_\tau^\bullet  , h|_{\mcl B_\tau^\bullet} )$. 
There is an event $ G_x^\ep \in \sigma\left(\mcl B_{  \sigma_{\tau,\BB r}^\ep}^\bullet  , h|_{B_{  \sigma_{\tau,\BB r}^\ep}^\bullet} \right)$ with the following properties.
\begin{enumerate}[A.]
\item If, in the notation~\eqref{eqn-extra-radius-eucl}, we have $ R_{\BB r}^\ep(\mcl B_\tau^\bullet) \leq   \op{diam} \mcl B_\tau^\bullet$ (where $\op{diam}$ denotes Euclidean diameter) and $G_x^\ep $ occurs, then no $D_h$-geodesic from 0 to a point in $\BB C\setminus B_{R_{\BB r}^\ep(\mcl B_\tau^\bullet)}(\mcl B_\tau^\bullet)$ can enter $B_{\ep \BB r}(x) \setminus \mcl B_\tau^\bullet$. \label{item-geo-event-kill-pt}
\item There is a deterministic constant $C_0 >1$ depending only on the choice of metric such that a.s. $\BB P\left[     G_x^\ep \,\big|\, \mcl B_\tau^\bullet  , h|_{\mcl B_\tau^\bullet} \right] \geq 1 -  C_0 \ep^\alpha$. \label{item-geo-event-prob-pt}
\end{enumerate}
\end{lem}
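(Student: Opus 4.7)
The plan is to adapt the ``shield'' strategy of~\cite[Lemma 3.6]{gm-confluence} using the events $E_r^U(z)$ and $H_r^U(z)$ from Section~\ref{sec-good-annuli}. First I will choose a grid point $z \in \bigl(\tfrac{\ep\BB r}{4}\BB Z^2\bigr) \cap B_{\ep\BB r}(x)$ in a $(\mcl B_\tau^\bullet, h|_{\mcl B_\tau^\bullet})$-measurable way, so that $B_{\ep\BB r}(x) \subset B_{2\ep\BB r}(z)$, and set $\rho_n := \rho_{\ep\BB r}^n(z)$ for $n = 1,\ldots,N := \lfloor\eta\log\ep^{-1}\rfloor$. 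For each such $n$, let $U_n \in \mcl U_{\rho_n}(z;\delta)$ be the open set obtained from $\BB A_{3\rho_n,4\rho_n}(z)$ by removing every square $S \in \mcl S_{\delta\rho_n}^z(\BB A_{3\rho_n,4\rho_n}(z))$ that meets $\mcl B_\tau^\bullet$. Then $U_n$ depends only on $(\mcl B_\tau^\bullet, \rho_n)$, is disjoint from $\mcl B_\tau^\bullet$, and since $\rho_n$ is a good radius by definition, $E_{\rho_n}(z) \subset E_{\rho_n}^{U_n}(z)$ occurs. I will define
\eqbn
G_x^\ep := \bigcup_{n=1}^N H_{\rho_n}^{U_n}(z),
\eqen
where $H_r^U(z)$ is the event~\eqref{eqn-diam-small-event}. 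The inclusions $U_n \subset B_{R_{\BB r}^\ep(\mcl B_\tau^\bullet)}(\mcl B_\tau^\bullet) \subset \mcl B_{\sigma_{\tau,\BB r}^\ep}^\bullet$ give the required $\sigma(\mcl B_{\sigma_{\tau,\BB r}^\ep}^\bullet, h|_{\mcl B_{\sigma_{\tau,\BB r}^\ep}^\bullet})$-measurability.

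For the probability estimate in property~\ref{item-geo-event-prob-pt}, the condition $\rho_{n+1}\geq 6\rho_n$ makes the annuli $\BB A_{3\rho_n,4\rho_n}(z)$, and hence the sets $U_n$, pairwise disjoint. Conditional on $h|_{\BB C\setminus\bigcup_n U_n}$, the local set $\mcl B_\tau^\bullet$, the radii $\rho_n$, the sets $U_n$, and the events $E_{\rho_n}^{U_n}(z)$ are all determined, while the restrictions $h|_{U_n}$ are independent (each being a zero-boundary GFF plus a harmonic extension of the outer boundary values). Applying Lemma~\ref{lem-cond-diam-small} to each $n$ yields $\BB P\bigl[H_{\rho_n}^{U_n}(z)\,\big|\,h|_{\BB C\setminus\bigcup_k U_k}\bigr] \geq \frk p$ on $E_{\rho_n}^{U_n}(z)$, so by conditional independence,
\eqbn
\BB P\bigl[(G_x^\ep)^c \,\big|\, \mcl B_\tau^\bullet, h|_{\mcl B_\tau^\bullet}\bigr] \leq (1-\frk p)^N \leq C_0\,\ep^\alpha
\eqen
with $\alpha := -\eta\log(1-\frk p) > 0$ and some universal constant $C_0$.

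To verify property~\ref{item-geo-event-kill-pt}, suppose $H_{\rho_n}^{U_n}(z)$ occurs for some $n$. I will combine three ingredients to build a loop $\Sigma_n \subset B_{4\rho_n}(z)$ around $z$: the small loops around each square $S_j$ from condition~2c of $E_{\rho_n}^{U_n}(z)$, which bridge across the ``gaps'' of $U_n$; the internal-distance bound from $H_{\rho_n}^{U_n}(z)$ to hop between centers of squares lying in the same connected component $V$ of $U_n$; and arcs of $\bdy\mcl B_\tau^\bullet$ (a Jordan curve by Proposition~\ref{prop-jordan}) to traverse long stretches of $\mcl B_\tau^\bullet \cap \BB A_{3\rho_n,4\rho_n}(z)$. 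The resulting $\Sigma_n$ will have its portion in $\BB C\setminus\mcl B_\tau^\bullet$ of total $D_h$-length strictly less than $c\frk c_{\rho_n}e^{\xi h_{\rho_n}(z)}$. Since $R_{\BB r}^\ep(\mcl B_\tau^\bullet) \geq 6\rho_N \geq 5\rho_n$, any $D_h$-geodesic $P$ from $0$ to a point $p \notin B_{R_{\BB r}^\ep(\mcl B_\tau^\bullet)}(\mcl B_\tau^\bullet)$ that enters $B_{\ep\BB r}(x)\setminus\mcl B_\tau^\bullet$ must, after leaving $\mcl B_\tau^\bullet$ at time $\tau$, cross $\BB A_{2\rho_n,3\rho_n}(z)$ outward, incurring length at least $c\frk c_{\rho_n}e^{\xi h_{\rho_n}(z)}$ by condition~1 of $E_{\rho_n}^{U_n}(z)$. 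Replacing the segment of $P$ between its first visit to $B_{4\rho_n}(z)$ and its last visit to $B_{2\rho_n}(z)$ by a detour along $\Sigma_n$---with an arc of $\bdy\mcl B_\tau^\bullet$ to reach the shield from the exit point of $P$ out of $\mcl B_\tau^\bullet$---then yields a strictly shorter path from $0$ to $p$, contradicting minimality.

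The main obstacle is the construction of $\Sigma_n$. In the supercritical setting, every square $S_j$ contains singular points, so we cannot use pointwise $D_h$-diameter bounds on squares as in~\cite{gm-confluence}. Instead I must rely on the internal-distance bound from $H_{\rho_n}^{U_n}(z)$, which applies only to the \emph{centers} of squares entirely contained in a given component $V$ of $U_n$, together with the small loops from condition~2c. Because $\bdy\mcl B_\tau^\bullet$ can cross $\BB A_{3\rho_n,4\rho_n}(z)$ arbitrarily many times, $U_n$ may itself decompose into many connected components, and the transitions between ``in $U_n$'' and ``in $\mcl B_\tau^\bullet$'' runs of the square chain must be handled so that the cumulative $D_h$-length of $\Sigma_n$ outside $\mcl B_\tau^\bullet$ stays below the critical threshold $c\frk c_{\rho_n}e^{\xi h_{\rho_n}(z)}$. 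Once the shield construction is carried out, the surgery and contradiction proceed along the same lines as in the subcritical case.
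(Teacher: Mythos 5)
Your overall architecture (pick a grid point $z$ near $x$, delete from $\BB A_{3r,4r}(z)$ the squares meeting $\mcl B_\tau^\bullet$ to get a domain in $\mcl U_r(z;\delta)$, take a union of $H$-events over a logarithmic number of good radii) is the same as the paper's, but there are two genuine gaps.

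First, you set $\rho_n := \rho_{\ep\BB r}^n(z)$, the radii built from the events $E_r(z) = \bigcap_{U'\in\mcl U_r(z;\delta)}E_r^{U'}(z)$. Lemma~\ref{lem-cond-diam-small} only gives a lower bound for $\BB P\left[H_r^U(z)\mid h|_{\BB C\setminus U}, E_r^U(z)\right]$: one conditions on the \emph{single-domain} event $E_r^U(z)$, whose condition~2 is handled by FKG and whose conditions~1 and~3 are determined by $h|_{\BB C\setminus U}$. This does not extend to conditioning on $E_r(z)\subset E_r^U(z)$, because the extra conditions bundled into $E_r(z)$ --- the harmonic-part bounds for every other $U'\in\mcl U_r(z;\delta)$ --- depend on $h|_U$ through two-sided (hence non-monotone) linear constraints, so neither ``determined by $h|_{\BB C\setminus U}$'' nor FKG applies. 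Your claim that ``conditional on $h|_{\BB C\setminus\bigcup_k U_k}$, the radii $\rho_n$, \dots\ are determined'' also does not hold, since $E_r(z)$ for intermediate dyadic $r$ involves $h$ on regions overlapping the $U_k$'s. This is precisely why the paper replaces $\rho_{\ep\BB r}^n(z)$ by the modified radii $\wt\rho^n$, defined from the events $E_r^{\wt U^r}(z)$ alone: that substitution makes the iterated conditioning for property~\ref{item-geo-event-prob-pt} compatible with Lemma~\ref{lem-cond-diam-small}, and since $\wt\rho^n\le\rho_{\ep\BB r}^n(z)$ it preserves the comparison with $R_{\BB r}^\ep(\mcl B_\tau^\bullet)$ and the measurability of $G_x^\ep$.

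Second, the shield. You propose a single loop $\Sigma_n$ around $z$ whose portion in $\BB C\setminus\mcl B_\tau^\bullet$ has \emph{total} $D_h$-length below $c\frk c_{\rho_n}e^{\xi h_{\rho_n}(z)}$, and you correctly flag that $U_n$ may have arbitrarily many connected components, which makes this cumulative bound unattainable: each excursion of the loop through $\BB C\setminus\mcl B_\tau^\bullet$ can cost up to $\approx \frac{c}{2}\frk c_{\rho_n}e^{\xi h_{\rho_n}(z)}$ and the number of excursions is uncontrolled. The fix is to abandon the cumulative requirement. For each square $S_{j_k}$ in the square chain of condition~2 that meets $\bdy\mcl B_\tau^\bullet$, construct a \emph{separate} arc $\Pi_k\subset\ol{\BB C\setminus\mcl B_\tau^\bullet}$ that bridges the adjacent components of $U_n$, has $D_h$-length strictly less than $c\frk c_{\wt\rho^n}e^{\xi h_{\wt\rho^n}(z)}$ individually, and touches $\bdy\mcl B_\tau^\bullet$; one then checks that the union of the $\Pi_k$'s disconnects $\BB A_{2\wt\rho^n,3\wt\rho^n}(z)\setminus\mcl B_\tau^\bullet$ from $\bdy B_{4\wt\rho^n}(z)\setminus\mcl B_\tau^\bullet$ in $\BB C\setminus\mcl B_\tau^\bullet$. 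A geodesic $P$ from $0$ to a far-away point that enters $B_{\ep\BB r}(x)\setminus\mcl B_\tau^\bullet$ must, before touching $\mcl B_\tau^\bullet$, hit some single $\Pi_k$ and must still cross $\BB A_{2\wt\rho^n,3\wt\rho^n}(z)$; hitting $\Pi_k$ puts $P$ within $D_h$-distance strictly less than $c\frk c_{\wt\rho^n}e^{\xi h_{\wt\rho^n}(z)}$ of $\bdy\mcl B_\tau^\bullet$, while crossing the inner annulus costs at least $c\frk c_{\wt\rho^n}e^{\xi h_{\wt\rho^n}(z)}$, and comparing $D_h(0,P(\sigma))$ with the time $\sigma$ gives the contradiction directly. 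No closed loop, no arcs of $\bdy\mcl B_\tau^\bullet$ (which may have infinite $D_h$-length), and no surgery on $P$ are needed --- the per-arc bound, not the cumulative one, is what closes the argument.
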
 
\begin{proof}
The proof is similar to that of the subcritical version~\cite[Lemma 3.6]{gm-confluence}, but the geometric part of the argument (i.e., the verification of Property~\ref{item-geo-event-kill-pt}) is slightly different due to the different way in which the events $E_r (z)$ are defined in the supercritical case. 
We will therefore repeat part of the proof in order to explain the details of this geometric argument. 
See Figure~\ref{fig-geo-kill} for an illustration.    
\medskip

\begin{figure}[t!]
 \begin{center}
\includegraphics[scale=.85]{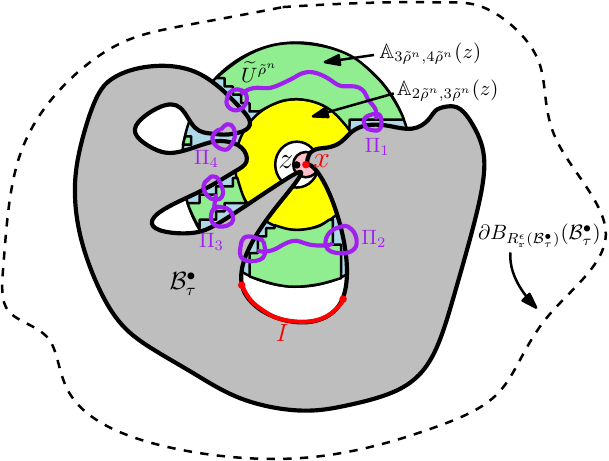}
\vspace{-0.01\textheight}
\caption{Illustration of the proof of Lemma~\ref{lem-geo-kill-pt}. The point $z \in \frac{\ep \BB r}{4} \BB Z^2$ is chosen so that $B_{\ep \BB r}(x) \subset B_{2\ep \BB r}(z)$. On the event $G_x^\ep$ defined in~\eqref{eqn-geo-event-def}, there is some $n \in [1,\eta\log\ep^{-1}]_{\BB Z}$ for which the event $H_{\wt\rho^n}^{\wt U^{\wt\rho^n}}(z)$ as defined in~\eqref{eqn-diam-small-event} occurs. 
For this choice of $n$, we can use the definition of $H_{\wt\rho^n}^{\wt U^{\wt\rho^n}}(z)$ together with condition~\ref{item-clsce-ball} in the definition of $E_{\wt\rho^n}^{\wt U^{\wt\rho^n}}(z)$ to build paths $\Pi_k$ (purple) in the connected components of $\BB A_{3\wt\rho^n,4\wt\rho^n}(z) \setminus \mcl B_\tau^\bullet$ which disconnect $\BB A_{2\wt\rho^n,3\wt\rho^n}(z)$ from $\infty$ in $\BB C\setminus\mcl B_\tau^\bullet$ and whose $D_h$-lengths are each less than $c \frk c_{\wt\rho^n} e^{\xi h_{\wt\rho^n}(z)}$. 
That is, each of the purple paths is $D_h$-shorter than the $D_h$-distance across $\BB A_{2\wt\rho^n,3\wt\rho^n}(z)$. In order for a path $P$ from a point outside of $B_{R_{\BB r}^\ep(\mcl B_s^\bullet)}(\mcl B_s^\bullet)$ to 0 to enter $B_{\ep \BB r}(x)\setminus \mcl B_\tau^\bullet$, it would first have to hit one of these purple paths, which would give us a path to 0 which is shorter than $P$. Hence such a path $P$ cannot be a $D_h$-geodesic.  
The condition that $ R_{\BB r}^\ep(\mcl B_\tau^\bullet) \leq  \op{diam} \mcl B_\tau^\bullet$ ensures that $\BB A_{3\wt\rho^n,4\wt\rho^n}(z)$ intersects $\mcl B_\tau^\bullet$. 
We can also prevent $D_h$-geodesics from hitting an arc $I$ of $\bdy\mcl B_\tau^\bullet$ by choosing $x$ so that $B_{\ep \BB r}(x)$ disconnects $I$ from $\infty$ in $\BB C\setminus\mcl B_\tau^\bullet$; see Lemma~\ref{lem-geo-kill}. 
}\label{fig-geo-kill}
\end{center}
\vspace{-1em}
\end{figure}

\noindent\textit{Step 1: setup.}
We can choose $z\in \left( \frac{\ep \BB r}{4} \BB Z^2 \right) \cap B_{\ep \BB r}\left( \mcl B_\tau^\bullet  \right)$ such that $B_{\ep \BB r}(x) \subset B_{2\ep \BB r}(z)$, in a manner depending only on $( \mcl B_\tau^\bullet  , h|_{\mcl B_\tau^\bullet} )$.  
Recalling the set of squares $\mcl S_{\delta r}^z(\cdot)$ from~\eqref{eqn-square-def}, for $r > 0$ we define
\eqb \label{eqn-ball-comp-set}
\wt U^r := \wt U^r(z) := \BB A_{3r,4r}(z) \setminus  \bigcup \left\{ S \in \mcl S_{\delta r}^z(\BB A_{3r,4r}(z)) : S \cap \mcl B_\tau^\bullet \not=\emptyset \right\} .
\eqe
Note that $\wt U^r$ belongs to the set $\mcl U_r(z)$ of Section~\ref{sec-good-annuli} and $\wt U^r$ is determined by $(\mcl B_\tau^\bullet , h|_{\mcl B_\tau^\bullet})$. 

Let $\wt\rho^0 := \ep \BB r$ and for $n\in\BB N$, inductively define
\eqb \label{eqn-good-radius-def'}
\wt\rho^n = \wt\rho_{\ep \BB r}^n(z) := \inf\left\{ r \geq 6\wt\rho^{n-1} : \text{$r = 2^k \BB r$ for some $k\in\BB Z$}, \: E_r^{\wt U^r}(z) \: \text{occurs} \right\} .
\eqe
In other words, $\wt\rho^n$ is defined in the same manner as $\rho_{\ep \BB r}^n(z)$ from~\eqref{eqn-good-radius-def} (with $\ep \BB r$ in place of $\BB r$) but with $E_r^{\wt U^r}(z)$ instead of $E_r(z)$.
This means that $E_{\wt\rho^n}^U(z)$ is only required to occur for $U = \wt U^{\wt\rho^n}$ instead of for every $U\in\mcl U_{\wt\rho^n}(z)$. 
By this and the definition~\eqref{eqn-extra-radius-eucl} of $R_{\BB r}^\ep(\mcl B_\tau^\bullet)$, 
\eqb \label{eqn-good-radii-compare}
\wt\rho^n \leq \rho_{\ep \BB r}^n(z) ,\: \forall n \in \BB N_0 \quad \text{and hence} \quad \wt\rho^{\lfloor \eta\log\ep^{-1}\rfloor} \leq \frac16 R_{\BB r}^\ep(\mcl B_\tau^\bullet).
\eqe
The reason for considering $\wt\rho^n$ instead of $\rho_{\ep \BB r}^n(z)$ is because we can only condition on $E_r^U(z)$, not on $E_r(z)$, in Lemma~\ref{lem-cond-diam-small}.
 
Recalling that $\mcl V(\wt U^{\wt\rho^n})$ denotes the set of connected components of $\wt U^{\wt\rho^n}$, we define 
\eqb \label{eqn-geo-event-def}
G_x^\ep := \left\{ \exists   n\in [1,\eta\log\ep^{-1}]_{\BB Z}  \: \text{such that $H_{\wt\rho^n}^{\wt U^{\wt\rho^n}}(z)$ occurs} \right\}  ,
\eqe
where $H_{\wt\rho^n}^{\wt U^{\wt\rho^n}}(z)$ is the event of~\eqref{eqn-diam-small-event} with $U = \wt U^{\wt\rho^n}$.    

Since $z$ and $\wt U^r$ for $r > 0$ are each determined by $(\mcl B_\tau^\bullet , h|_{\mcl B_\tau^\bullet})$, it follows that each $E^{\wt U^r}(z)$ is determined by $(\mcl B_\tau^\bullet , h|_{\mcl B_\tau^\bullet})$ and $h|_{\BB A_{2r, 5r}(z)}$. 
Hence $\wt\rho^n$ is a stopping time for the filtration generated by $h|_{B_{5r}(z)}$ for $r\geq \ep\BB r$ and $(\mcl B_\tau^\bullet , h|_{\mcl B_\tau^\bullet})$.
By~\eqref{eqn-good-radii-compare} and the definition~\eqref{eqn-extra-radius} of $\sigma_{\tau,\BB r}^\ep$, we have $B_{5\wt\rho^n}(z)  \subset \mcl B_{ \sigma_{\tau,\BB r}^\ep}^\bullet$.
By combining these statements with~\eqref{eqn-geo-event-def} and the locality of the metric (Axiom~\ref{item-metric-local}), we get that $G_x^\ep \in \sigma\left(\mcl B_{ \sigma_{\tau,\BB r}^\ep}^\bullet  , h|_{B_{  \sigma_{\tau,\BB r}^\ep}^\bullet} \right)$. 

We need to check properties~\ref{item-geo-event-kill-pt} and~\ref{item-geo-event-prob-pt} for the event $G_x^\ep$. 
\medskip

\noindent\textit{Step 2: proof that $G_x^\ep$ satisfies property~\ref{item-geo-event-kill-pt}.} 
Assume that $ R_{\BB r}^\ep(\mcl B_\tau^\bullet) \leq   \op{diam} \mcl B_\tau^\bullet$ and $G_x^\ep$ occurs. 
Choose $n\in [1,\eta\log\ep^{-1}]_{\BB Z}$ as in the definition~\eqref{eqn-geo-event-def} of $G_x^\ep$. 
Then 
\eqbn
\ep \BB r \leq \wt\rho^n \leq  \frac16  R_{\BB r}^\ep(\mcl B_\tau^\bullet)  \leq \frac16  \op{diam}\mcl B_\tau^\bullet  .
\eqen
By our choice of $z$, this means that both the inner and outer boundaries of $\BB A_{3\wt\rho^n , 4\wt\rho^n}(z)$ intersect $\mcl B_\tau^\bullet$ and $\BB A_{2\wt\rho^n , 3\wt\rho^n}(z)$ disconnects $B_{\ep \BB r}(x)$ from $\infty$. We will argue that no $D_h$-geodesic from a point outside of $\BB C\setminus B_{R_{\BB r}^\ep(\mcl B_\tau^\bullet)}(\mcl B_\tau^\bullet)$ to 0 can cross between the inner and outer boundaries of $\BB A_{2\wt\rho^n,3\wt\rho^n}(z)$ before hitting $\mcl B_\tau^\bullet$, which implies that no such $D_h$-geodesic can hit $B_{\ep\BB r}(x)$ before entering $\mcl B_\tau^\bullet$. The idea of the proof is that the definition~\eqref{eqn-diam-small-event} of $H_{\wt\rho^n}^{\wt U^{\wt\rho^n}}(z)$ together with condition~\ref{item-clsce-ball} in the definition of $E_{\wt\rho^n}^{\wt U^{\wt\rho^n}}(z)$ allow us to build a collection of paths in $\BB A_{3\wt\rho^n,4\wt\rho^n}(z)$ which act as ``shortcuts". Let us now explain the construction of these paths. 
\medskip

\noindent\textit{Step 2(a): constructing paths in $\BB A_{3\wt\rho^n,4\wt\rho^n}(z)$.}
Let $S_1,\dots,S_N$ be the path of squares in $\mcl S_{\delta\wt\rho^n}(\BB A_{3.1\wt\rho^n,3.9\wt\rho^n}(z))$ as in condition~\ref{item-clsce-ball} in the definition of $E_{\wt\rho^n}^{\wt U^{\wt\rho^n}}(z)$. Let $K$ be the number of squares in $\{S_1,\dots,S_N\}$ which intersect $\mcl B_\tau^\bullet$ (equivalently, the number of such squares which are not contained in $\wt U^{\wt\rho^n}$). For $k\in [1,K]_{\BB Z}$, let $j_k$ be the $k$th smallest value of $j\in[1,N]_{\BB Z}$ for which $S_j$ intersects $\mcl B_\tau^\bullet$. Also set $S_{j_0} =S_{j_K}$. 

For each $k\in [1,K]_{\BB Z}$ such that $S_{j_k}$ intersects $\bdy \mcl B_\tau^\bullet$, we will define a path $\Pi_k$ associated with $S_{j_k}$ in such a way that the following properties are satisfied. 
\begin{enumerate}[(i)]
\item Each $\Pi_k$ has $D_h$-length strictly less than $c \frk c_{\wt\rho^n} e^{\xi h_{\wt\rho^n}(z)}$ and intersects $\bdy \mcl B_\tau^\bullet$. \label{item-square-path-length}
\item The union of the paths $\Pi_k$ over all $k$ such that $S_k\cap \bdy\mcl B_\tau^\bullet\not=\emptyset$ disconnects $\BB A_{2\wt\rho^n,3\wt\rho^n}(z) \setminus \mcl B_\tau^\bullet$ from $\bdy B_{4\wt\rho^n}(z) \setminus \mcl B_\tau^\bullet$ in $\BB C\setminus \mcl B_\tau^\bullet$. \label{item-square-path-union}
\end{enumerate}
The paths $\Pi_k$ are shown in purple in Figure~\ref{fig-geo-kill}. 

To define these paths, let $k\in [1,K]_{\BB Z}$ such that $S_{j_k} \cap \bdy\mcl B_\tau^\bullet\not=\emptyset$. We consider two cases. 
If $j_{k-1} + 1 = j_k $, we let $\Pi_k$ be a path around $B_{2\delta \wt\rho^n}(S_{j_k}) \setminus B_{\delta \wt\rho^n} ( S_{j_k} ) $ whose $D_h$-length is at most $\frac{c}{100} \frk c_{\wt\rho^n} e^{\xi h_{\wt\rho^n}(z)}$, as afforded by condition~\ref{item-clsce-ball} in the definition of $E_{\wt\rho^n}^{\wt U^{\wt\rho^n}}(z)$.  

If $j_{k-1} +1 <  j_k$, then there is a connected component $V$ of $\wt U^{\wt\rho^n}$ whose boundary intersects the boundaries of each of $S_{j_{k-1}}$ and $S_{j_k}$ such that $S_{j_{k-1}+1} , \dots, S_{j_k-1} \subset \ol V$. Since the event $H_{\wt\rho^n}^{\wt U^{\wt\rho^n}}(z)$ of~\eqref{eqn-diam-small-event} occurs, there is a path $\pi_V$ in $V$ from the center point of $S_{j_{k-1}+1}$ to the center point of $S_{j_k-1}$ whose $D_h$-length is at most $\frac{c}{2} \frk c_{\wt\rho^n} e^{\xi h_{\wt\rho^n}(z)}$. By the last part of condition~\ref{item-clsce-ball} in the definition of $E_{\wt\rho^n}^{\wt U^{\wt\rho^n}}(z)$, there are paths $\pi_0$ and $\pi_1$ in the annular regions $B_{2\delta \wt\rho^n}(S_{j_{k-1}+1}) \setminus B_{\delta\wt\rho^n} (S_{j_{k-1}+1} ) $ and $B_{2\delta \wt\rho^n}(S_{j_k-1}) \setminus B_{\delta\wt\rho^n}( S_{j_k -1})$, respectively, which disconnect the inner and outer boundaries of these annular regions and whose $D_h$-lengths are each at most $\frac{c}{100} \frk c_{\wt\rho^n} e^{\xi h_{\wt\rho^n}(z)}$. 
Note that the paths $\pi_0$ and $\pi_1$ necessarily intersect both $\pi_V$ and $\bdy\mcl B_\tau^\bullet$. 
Let $\Pi_k$ be a concatenation of $  \pi_0, \pi_V , \pi_1$.  

It is clear from the above definitions that our desired property~\eqref{item-square-path-length} is satisfied. 
 To check property~\eqref{item-square-path-union}, consider a path $\frk P$ from a point of $\bdy B_{4\wt\rho^n}(z) \setminus \mcl B_\tau^\bullet$ to a point of $\BB A_{2\wt\rho^n,3\wt\rho^n}(z) \setminus \mcl B_\tau^\bullet$ in $\BB C\setminus \mcl B_\tau^\bullet$. There is a sub-path $\frk P'$ of $\frk P$ which is contained in $\ol{\BB A_{3\wt\rho^n,4\wt\rho^n}(z)}$ and whose endpoints lie on the inner and outer boundaries of $\BB A_{3\wt\rho^n,4\wt\rho^n}(z)$, respectively. 
Since the union of the squares $S_1,\dots,S_N$ contains a path which disconnects the inner and outer boundaries of $\BB A_{3\wt\rho^n,4\wt\rho^n}(z)$ (condition~\ref{item-clsce-ball} in the definition of $E_{\wt\rho^n}^{\wt U^{\wt\rho^n}}(z)$), there must be some $j$ such that $S_j \not\subset\mcl B_\tau^\bullet$ and $\frk P'$ intersects $S_j$. If $S_j \cap \bdy\mcl B_\tau^\bullet\not=\emptyset$, then $j = j_k$ for some $k$ and $\frk P'$ intersects the path $\Pi_k$, so we are done. Otherwise, there exists $k\in [1,K]_{\BB Z}$ for which $j \in [j_{k-1} + 1 , j_k - 1]_{\BB Z}$. Let $O$ be the connected component of $\BB A_{3\wt\rho^n , 4\wt\rho^n}(z) \setminus \mcl B_\tau^\bullet$ which contains $S_{j_{k-1}+1},\dots,S_{j_k-1}$. Then $\frk P' \subset O$. Furthermore, by construction, the path $\Pi_k$ disconnects $\bdy O \cap \bdy B_{3\wt\rho^n}(z)$ and $\bdy O\cap \bdy B_{4\wt\rho^n}(z)$ in $O$. Therefore, $\frk P'$ must intersect $\Pi_k$, as required. 
\medskip

\noindent\textit{Step 2(b): preventing a $D_h$-geodesic from crossing $\BB A_{2\wt\rho^n,3\wt\rho^n}(z)$.}
Due to Lemma~\ref{lem-outer-bdy-dist}, a $D_h$-geodesic from a point outside of $\mcl B_\tau^\bullet$ to $0$ hits $\bdy\mcl B_\tau^\bullet$ exactly once. So, if such a geodesic hits $B_{\ep \BB r}(x) \setminus \mcl B_\tau^\bullet$, then it hits $B_{\ep \BB r}(x)$ \emph{before} entering $\mcl B_\tau^\bullet$. 
Therefore, to prove property~\ref{item-geo-event-kill-pt}, it suffices to consider a path $P$ from a point outside of $\BB C\setminus B_{R_{\BB r}^\ep(\mcl B_\tau^\bullet)}(\mcl B_\tau^\bullet)$ to 0 which enters $B_{\ep \BB r}(x)$ before entering $\mcl B_\tau^\bullet$ and show that $P$ cannot be a $D_h$-geodesic.
 
 Since $\BB A_{2\wt\rho^n  , 3\wt\rho^n }(z)$ disconnects $B_{\ep \BB r}(x)$ from $\infty$, the path $P$ must cross from the outer boundary of $\BB A_{2\wt\rho^n,3\wt\rho^n}(z)$ to the inner boundary of $\BB A_{2\wt\rho^n,3\wt\rho^n}(z)$ before hitting $B_{\ep \BB r}(x)$, and hence also before hitting $\mcl B_\tau^\bullet$.  
By condition~\ref{item-clsce-across} in the definition of $E_{\wt\rho^n}^{\wt U^{\wt\rho^n}}(z)$, each path between the inner and outer boundaries of $\BB A_{2\wt\rho^n,3\wt\rho^n}(z)$ has $D_h$-length at least $c\frk c_{\wt\rho^n} e^{\xi h_{\wt\rho^n}(z)}$. Hence, the $D_h$-length of the segment of $P$ after the first time it enters $\BB A_{2\wt\rho^n , 3\wt\rho^n}(z)$ must be at least $c \frk c_{\wt\rho^n} e^{\xi h_{\wt\rho^n}(z)}   + \tau$.  

But, $P$ must cross between the inner and outer boundaries of $\BB A_{3\wt\rho^n , 4\wt\rho^n}(z)$ before entering $\BB A_{2\wt\rho^n , 3\wt\rho^n}(z)$, so $P$ must hit one of the paths $\Pi_k$ above before entering $\BB A_{2\wt\rho^n , 3\wt\rho^n}(z)$. Since $\Pi_k$ intersects $\bdy\mcl B_\tau^\bullet$ and has $D_h$-length strictly less than $c \frk c_{\wt\rho^n} e^{\xi h_{\wt\rho^n}(z)}$, it follows that each point of $\Pi_k$ lies at $D_h$-distance strictly less than $c\frk c_{\wt\rho^n} e^{\xi h_{\wt\rho^n}(z)} + \tau$ from 0. 
Combining this with the conclusion of the preceding paragraph shows that $P$ cannot be a $D_h$-geodesic to 0.  
\medskip

\noindent\textit{Step 3: proof that $G_x^\ep$ satisfies property~\ref{item-geo-event-prob-pt}.} 
Recall the definition of $G_x^\ep$ from~\eqref{eqn-geo-event-def}. 
From Lemma~\ref{lem-cond-diam-small} and an elementary conditioning argument, exactly as in the proof of~\cite[Lemma 3.6, Step 3]{gm-confluence}, we obtain that for every $n\in [1,\eta\log\ep^{-1}]_{\BB Z}$, a.s.\ 
\eqb \label{eqn-geo-event-prob-cond}
\BB P\left[ H_{\wt\rho^n}^{\wt U^{\wt\rho^n}}(z) \,|\, \wt U^{\wt\rho^n} , h|_{\BB C\setminus \wt U^{\wt\rho^n}} \right]
\geq \frk p,
\eqe 
where $\frk p > 0$ is as in Lemma~\ref{lem-cond-diam-small}. 
Note that by the definition~\eqref{eqn-good-radius-def'} of $\wt\rho^n$, it is automatically the case that the event $E_r^{\wt U^{\wt\rho^n}}(z)$ occurs. By the definition~\eqref{eqn-diam-small-event} and the locality property of $D_h$, the event $H_{\wt\rho^n}^{\wt U^{\wt \rho^n}}(z)$ is a.s.\ determined by $\wt U^{\wt\rho^n}$ and the restriction of $h$ to $\wt U^{\wt\rho^n}$. Since the open sets $\wt U^{\wt\rho^n}$ for different values of $n$ are disjoint from each other and from $\mcl B_\tau^\bullet$, we can apply~\eqref{eqn-geo-event-prob-cond} iteratively to get
\eqbn
\BB P[G_x^\ep \,|\, \mcl B_\tau^\bullet, h|_{\mcl B_\tau^\bullet}] \geq 1 - (1-\frk p)^{\lfloor \eta\log\ep^{-1}\rfloor} .
\eqen
See~\cite[Lemma 3.6, Step 3]{gm-confluence} for details. This last estimate gives Property~\ref{item-geo-event-prob-pt} for an appropriate choice of $C_0$ and $\alpha$.  
\end{proof}

Analogously to~\cite[Lemma 3.7]{gm-confluence}, we also have the following variant of Lemma~\ref{lem-geo-kill-pt} where we prevent $D_h$-geodesics from hitting a boundary arc rather than a neighborhood of a point.
 
\begin{lem} \label{lem-geo-kill}
Let $\alpha$ be as in Lemma~\ref{lem-geo-kill-pt}. 
Let $\BB r > 0$, let $\tau$ be a stopping time for the filtration generated by $\left\{ \left( \mcl B_s^\bullet , h|_{\mcl B_s^\bullet} \right) \right\}_{s \geq 0}$.
Also let $\ep \in (0,1)$ and $I\subset \bdy \mcl B_\tau^\bullet$ be an arc, each chosen in a manner depending only on $( \mcl B_\tau^\bullet  , h|_{\mcl B_\tau^\bullet} )$, such that $I$ can be disconnected from $\infty$ in $\BB C\setminus \mcl B_\tau^\bullet$ by a set of Euclidean diameter at most $\ep \BB r$.   
There is an event $G_I \in \sigma\left(\mcl B_{  \sigma_{\tau,\BB r}^\ep}^\bullet  , h|_{B_{  \sigma_{\tau,\BB r}^\ep}^\bullet} \right)$ with the following properties.
\begin{enumerate}[A.]
\item If $ R_{\BB r}^\ep(\mcl B_\tau^\bullet) \leq  \op{diam} \mcl B_\tau^\bullet$ and $G_I  $ occurs, then no $D_h$-geodesic from 0 to a point in $\BB C\setminus  \mcl B_{ \sigma_{\tau,\BB r}^\ep}^\bullet $ can pass through $I$. \label{item-geo-event-kill}
\item There is a deterministic constant $C_0 >1$ depending only on the choice of metric such that a.s. $\BB P\left[     G_I \,\big|\, \mcl B_\tau^\bullet  , h|_{\mcl B_\tau^\bullet} \right] \geq 1 -  C_0 \ep^\alpha$. \label{item-geo-event-prob}
\end{enumerate}
\end{lem}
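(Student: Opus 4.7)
The proof follows the same strategy as Lemma~\ref{lem-geo-kill-pt} and indeed reduces to it. The plan is: since any $D_h$-geodesic from $0$ to a point outside $\mcl B_{\sigma_{\tau,\BB r}^\ep}^\bullet$ that passes through $I$ must first exit $\mcl B_\tau^\bullet$ through $I$ and then cross the disconnecting set $Y$, it suffices to select a point $x \in \bdy\mcl B_\tau^\bullet$ (in an $(\mcl B_\tau^\bullet, h|_{\mcl B_\tau^\bullet})$-measurable way) such that $Y$ is contained in a small Euclidean ball around $x$, and then invoke Lemma~\ref{lem-geo-kill-pt} at that point.

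Concretely, first make an $(\mcl B_\tau^\bullet, h|_{\mcl B_\tau^\bullet})$-measurable choice of a disconnecting set $Y$ (e.g.\ the minimizer of some canonical functional among all candidates of Euclidean diameter at most $\ep\BB r$), pick some canonical $y_0 \in Y$, and let $x$ be a Euclidean-nearest point on $\bdy\mcl B_\tau^\bullet$ to $y_0$, with ties broken canonically. Apply Lemma~\ref{lem-geo-kill-pt} with the same stopping time $\tau$, this point $x$, and with $\ep$ replaced by $C\ep$ for a suitable absolute constant $C > 1$; this produces an event $G_x^{C\ep}$, and we set $G_I := G_x^{C\ep}$. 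Membership $G_I \in \sigma(\mcl B_{\sigma_{\tau,\BB r}^\ep}^\bullet, h|_{\mcl B_{\sigma_{\tau,\BB r}^\ep}^\bullet})$ follows from the analogous measurability in Lemma~\ref{lem-geo-kill-pt}, once we compare $\sigma_{\tau,\BB r}^{C\ep}$ with $\sigma_{\tau,\BB r}^\ep$ via the definitions~\eqref{eqn-extra-radius-eucl}--\eqref{eqn-extra-radius}; this comparison can be arranged by adjusting $C$ (or the constant $\eta$ from Lemma~\ref{lem-clsce-all}) so that $B_{R_{\BB r}^{C\ep}(\mcl B_\tau^\bullet)}(\mcl B_\tau^\bullet) \subset B_{R_{\BB r}^\ep(\mcl B_\tau^\bullet)}(\mcl B_\tau^\bullet)$. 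The probability bound in Property~\ref{item-geo-event-prob} is then immediate from the analogous bound in Lemma~\ref{lem-geo-kill-pt}, after absorbing the factor $C^\alpha$ into the constant $C_0$.

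For Property~\ref{item-geo-event-kill}, assume $R_{\BB r}^\ep(\mcl B_\tau^\bullet) \leq \op{diam}\mcl B_\tau^\bullet$ and that $G_I$ occurs. The key geometric claim is that the chosen $x$ satisfies $Y \subset B_{C\ep\BB r}(x)$; equivalently, $Y$ lies within Euclidean distance $O(\ep\BB r)$ of $\bdy\mcl B_\tau^\bullet$. This follows from the topological fact that, since $\bdy\mcl B_\tau^\bullet$ is a Jordan curve (Proposition~\ref{prop-jordan}), $\BB C\setminus\mcl B_\tau^\bullet$ is homeomorphic to an open annulus (with $\infty$ on one side), and since $\op{diam}(Y) \leq \ep\BB r \leq R_{\BB r}^\ep(\mcl B_\tau^\bullet) \leq \op{diam}\mcl B_\tau^\bullet$ the set $Y$ cannot wind around $\mcl B_\tau^\bullet$; consequently the subset of $Y$ which actually performs the disconnection must include a ``crosscut" of this annulus close to $\bdy\mcl B_\tau^\bullet$. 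Given the containment $Y \subset B_{C\ep\BB r}(x)$, the rest of the argument mimics Step~2 of the proof of Lemma~\ref{lem-geo-kill-pt}: any $D_h$-geodesic $P$ from 0 to a point of $\BB C\setminus\mcl B_{\sigma_{\tau,\BB r}^\ep}^\bullet$ that passes through $I$ exits $\mcl B_\tau^\bullet$ exactly once (by Lemma~\ref{lem-outer-bdy-dist}), at a point of $I$, and its post-exit portion lies in $\BB C\setminus\mcl B_\tau^\bullet$; since the endpoint of $P$ lies outside $\mcl B_{\sigma_{\tau,\BB r}^\ep}^\bullet \supset B_{R_{\BB r}^\ep(\mcl B_\tau^\bullet)}(\mcl B_\tau^\bullet)$, this post-exit portion must cross $Y$ and hence enter $B_{C\ep\BB r}(x)\setminus\mcl B_\tau^\bullet$, contradicting Property~\ref{item-geo-event-kill-pt} of Lemma~\ref{lem-geo-kill-pt}.

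The main obstacle is the topological fact in the third paragraph: while intuitively clear once one knows $\bdy\mcl B_\tau^\bullet$ is a Jordan curve, the disconnecting set $Y$ is allowed to be disconnected, so one must argue carefully that any component of $Y$ situated ``far'' from $\bdy\mcl B_\tau^\bullet$ is inessential for the disconnection property (so that after replacing $Y$ by its sub-collection near $\bdy\mcl B_\tau^\bullet$, the disconnection is preserved). This can be done by a standard component-by-component analysis using that the unbounded component of $\BB C\setminus\mcl B_\tau^\bullet$ is an open topological annulus and that $Y$ is too small to enclose $\mcl B_\tau^\bullet$. Beyond this point, everything is a direct reduction to Lemma~\ref{lem-geo-kill-pt}.
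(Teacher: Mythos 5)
Your overall strategy --- reduce to Lemma~\ref{lem-geo-kill-pt} at a point $x\in\bdy\mcl B_\tau^\bullet$ chosen near the disconnecting set $Y$, using the topological fact that $Y$ must lie close to $\bdy\mcl B_\tau^\bullet$ --- is the right one, and it is indeed the approach the paper takes (deferring to [gm-confluence, Lemma~3.7]). The topological claim is also correct: since $\mcl B_\tau^\bullet$ alone cannot separate nearby points of $\BB C\setminus\mcl B_\tau^\bullet$ from $\infty$, and $\op{diam} Y\le\ep\BB r\le R_{\BB r}^\ep(\mcl B_\tau^\bullet)\le\op{diam}\mcl B_\tau^\bullet$ forbids $Y$ from surrounding $\mcl B_\tau^\bullet$, Janiszewski's theorem (or a direct connectedness argument) shows $\ol Y$ must actually meet $\bdy\mcl B_\tau^\bullet$.

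However, the specific implementation has a genuine gap. You replace $\ep$ by $C\ep$ when invoking Lemma~\ref{lem-geo-kill-pt}, obtaining an event in $\sigma\bigl(\mcl B_{\sigma_{\tau,\BB r}^{C\ep}}^\bullet,h|_{\mcl B_{\sigma_{\tau,\BB r}^{C\ep}}^\bullet}\bigr)$ whose killing property is stated relative to $B_{R_{\BB r}^{C\ep}(\mcl B_\tau^\bullet)}(\mcl B_\tau^\bullet)$, and you assert that the needed containments $\sigma_{\tau,\BB r}^{C\ep}\le\sigma_{\tau,\BB r}^\ep$ and $B_{R_{\BB r}^{C\ep}(\mcl B_\tau^\bullet)}(\mcl B_\tau^\bullet)\subset B_{R_{\BB r}^\ep(\mcl B_\tau^\bullet)}(\mcl B_\tau^\bullet)$ can be ``arranged by adjusting $C$ or $\eta$.'' They cannot: from~\eqref{eqn-good-radius-def} and~\eqref{eqn-extra-radius-eucl}, $R_{\BB r}^\ep(\mcl B_\tau^\bullet)$ is increasing in $\ep$ (the iterated radii $\rho_{\ep\BB r}^n(z)$ are non-decreasing in the initial scale $\ep\BB r$, and by Lemma~\ref{lem-clsce-all} one has $R_{\BB r}^\ep\approx\ep^{1/2}\BB r$), so for $C>1$ both inclusions go the \emph{wrong} way, regardless of how you tune $C$ or $\eta$. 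This is precisely why you cannot choose $x$ as the nearest boundary point to a canonical $y_0\in Y$ --- that choice only yields $Y\subset B_{C\ep\BB r}(x)$ with $C$ around $3$.

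The clean fix is to keep the same $\ep$ and take $x\in\ol Y\cap\bdy\mcl B_\tau^\bullet$ directly (a canonical choice of which is measurable w.r.t.\ $(\mcl B_\tau^\bullet,h|_{\mcl B_\tau^\bullet})$). Then $Y\subset\ol{B_{\ep\BB r}(x)}$ with no extra factor. The only remaining bookkeeping issue is that Lemma~\ref{lem-geo-kill-pt} as \emph{stated} refers to the open ball $B_{\ep\BB r}(x)$, but its proof gives more: the mesh point $z$ is chosen with $|z-x|\le\ep\BB r/2$, so $\ol{B_{\ep\BB r}(x)}\subset B_{2\ep\BB r}(z)$, and since $\wt\rho^n\ge\ep\BB r$ the separating annulus $\BB A_{2\wt\rho^n,3\wt\rho^n}(z)$ disconnects $\ol{B_{\ep\BB r}(x)}\supset Y$ from $\infty$. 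Hence on $G_x^\ep$ no $D_h$-geodesic from $0$ to a point of $\BB C\setminus B_{R_{\BB r}^\ep(\mcl B_\tau^\bullet)}(\mcl B_\tau^\bullet)$ can meet $Y\setminus\mcl B_\tau^\bullet$, and the rest of your argument goes through: such a geodesic passing through $I$ exits $\mcl B_\tau^\bullet$ exactly once (Lemma~\ref{lem-outer-bdy-dist}), at a point of $I$, and must then cross $Y$ before reaching $\BB C\setminus B_{R_{\BB r}^\ep(\mcl B_\tau^\bullet)}(\mcl B_\tau^\bullet)\supset\BB C\setminus\mcl B_{\sigma_{\tau,\BB r}^\ep}^\bullet$. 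Finally, your concern about $Y$ being disconnected is unnecessary once $\op{diam} Y\le\ep\BB r$ and $\ol Y\cap\bdy\mcl B_\tau^\bullet\neq\emptyset$ are established: these two facts alone already confine every component of $Y$ to $\ol{B_{\ep\BB r}(x)}$.
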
 
\begin{proof}
This follows from Lemma~\ref{lem-geo-kill-pt} via exactly the same argument as in the proof of~\cite[Lemma 3.7]{gm-confluence}. 
\end{proof}

\subsubsection{Proof of Theorem~\ref{thm-finite-geo0}}
\label{sec-finite-geo-proof}

To prove Theorem~\ref{thm-finite-geo0}, it remains to carry out Step~\ref{item-step-iterate} in the outline at the beginning of this subsection. For this step, the argument from~\cite{gm-confluence} carries over almost verbatim so we will not give details.

We first define the regularity event that we will work on. 
Fix $\BB r > 0$ and define $\tau_{\BB r}$ as in~\eqref{eqn-tau_r-def}. 
Also let $\beta >0$ be the parameter from Lemma~\ref{lem-bdy-dist0}.
For $a \in (0,1)$, we define $\mcl E_{\BB r}(a)$ to be the event that the following is true.
\begin{enumerate}
\item $B_{a \BB r}(0) \subset \mcl B_{\tau_{\BB r}}^\bullet$.  \label{item-quantum-ball-contained}
\item $ \tau_{3\BB r} - \tau_{2\BB r} \geq a \frk c_{\BB r} e^{\xi h_{\BB r}(0)}$. \label{item-quantum-ball-compare}
\item For each $s,t\in [\tau_{2\BB r} , \tau_{3\BB r} ]$ with $|s-t| \leq a \frk c_{\BB r} e^{\xi h_{\BB r}(0)}$, we have
\eqbn
\frac{1}{\BB r} \op{dist}\left( \bdy \mcl B_s^\bullet, \bdy \mcl B_t^\bullet \right) \geq \left(\frac{|t-s|}{\frk c_{\BB r} e^{\xi h_{\BB r}(0)}} \right)^{1/\beta } ,
\eqen 
where $\op{dist}$ denotes Euclidean distance.   \label{item-holder-cont}
\item In the notation~\eqref{eqn-good-radius-def}, we have $\rho_{\ep \BB r}^{\lfloor \eta \log \ep^{-1} \rfloor}(z) \leq \ep^{1/2} \BB r $ for each $ z\in \left( \frac{\ep \BB r}{4} \BB Z^2 \right) \cap B_{4 \BB r}(0) $ and each $\ep \in (0,a] \cap \{2^{-k}\}_{k\in\BB N}$ (here $\eta$ is as in Lemma~\ref{lem-clsce-all}).    \label{item-good-radii-small}
\end{enumerate}
Our above definition of $\mcl E_{\BB r}(a)$ is identical to the analogous definition in~\cite[Section 3.4]{gm-confluence} except that in~\cite{gm-confluence}, condition~\ref{item-holder-cont} is replaced by a H\"older continuity condition for $D_h$ w.r.t.\ the Euclidean metric. This condition is of course not true in the supercritical case. 

\begin{lem} \label{lem-finite-geo-reg}
For each $p\in (0,1)$, there exists $a = a(p) > 0$ such that $\BB P[\mcl E_{\BB r}(a)] \geq p$ for every $\BB r >0$.
\end{lem}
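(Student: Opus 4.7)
The plan is to verify each of the four conditions defining $\mcl E_{\BB r}(a)$ separately, showing that each holds with probability at least $1 - (1-p)/4$ uniformly in $\BB r$ provided $a$ is sufficiently small; the lemma then follows from a union bound. All of the probabilistic input has been assembled in the preceding subsections, so the argument is essentially bookkeeping, with the only subtlety being that we must check each bound is uniform in $\BB r$.

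Conditions~\ref{item-quantum-ball-contained} and~\ref{item-good-radii-small} are essentially already stated in earlier lemmas. Condition~\ref{item-quantum-ball-contained} is the content of Lemma~\ref{lem-ball-contain}, which gives polynomially high probability as $a \to 0$, uniform in $\BB r$. For condition~\ref{item-good-radii-small}, we apply Lemma~\ref{lem-clsce-all} with $K = \ol{B_4(0)}$: for each dyadic $\ep \in (0,a]$, the failure probability is $O_\ep(\ep^2)$ uniformly in $\BB r$, so a union bound over dyadic $\ep \leq a$ gives total failure probability bounded by $O(a^2)$, which is below $(1-p)/4$ for $a$ small.

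For condition~\ref{item-quantum-ball-compare}, a $D_h$-geodesic from $0$ to a closest point of $\bdy B_{3\BB r}(0)$ (which exists by Lemma~\ref{lem-geodesic-cont}) must first cross $\bdy B_{2\BB r}(0)$; the time of this first crossing is at least $\tau_{2\BB r}$, and after that time the geodesic still has to reach $\bdy B_{3\BB r}(0)$. Hence
\eqb
\tau_{3\BB r} - \tau_{2\BB r} \geq D_h(\bdy B_{2\BB r}(0),\bdy B_{3\BB r}(0)) .
\eqe
Applying Proposition~\ref{prop-two-set-dist} with $K_1 = \bdy B_2(0)$, $K_2 = \bdy B_3(0)$, and $U = \BB C$ (rescaled by $\BB r$), the right-hand side is at least $a \frk c_{\BB r} e^{\xi h_{\BB r}(0)}$ with probability tending to $1$ as $a \to 0$, uniformly in $\BB r$.

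Finally, condition~\ref{item-holder-cont} is a direct application of Lemma~\ref{lem-bdy-dist0} with $b = 3$ and $\delta = a$: the lemma (applied at scale $\BB r$) asserts that with probability tending to $1$ as $a \to 0$, uniformly in $\BB r$, the desired lower bound on $\op{dist}(\bdy\mcl B_s^\bullet,\bdy\mcl B_t^\bullet)/\BB r$ holds for all $s,t \in [\tau_{\BB r}, \tau_{3\BB r}]$ with $|s-t| \leq a \frk c_{\BB r} e^{\xi h_{\BB r}(0)}$; since $[\tau_{2\BB r},\tau_{3\BB r}] \subset [\tau_{\BB r}, \tau_{3\BB r}]$, this implies condition~\ref{item-holder-cont}. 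Combining the four probability bounds via a union bound gives $\BB P[\mcl E_{\BB r}(a)] \geq p$ for $a = a(p)$ small, uniformly in $\BB r$, as required.
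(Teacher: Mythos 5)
Your proof is correct and takes essentially the same route as the paper: verify each of the four conditions has probability at least $1-(1-p)/4$ for small $a$ (uniformly in $\BB r$) and take a union bound, using Lemma~\ref{lem-ball-contain}, Axiom~\ref{item-metric-coord} / Proposition~\ref{prop-two-set-dist}, Lemma~\ref{lem-bdy-dist0}, and Lemma~\ref{lem-clsce-all} respectively. You in fact supply more detail than the paper does on condition~\ref{item-quantum-ball-compare} (spelling out the lower bound $\tau_{3\BB r}-\tau_{2\BB r}\geq D_h(\text{across }\BB A_{2\BB r,3\BB r}(0))$), and you correct a small typo in the paper, which lists condition~\ref{item-quantum-ball-contained} twice in place of condition~\ref{item-holder-cont}.
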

\begin{proof}
By Lemma~\ref{lem-ball-contain}, if $a$ is chosen to be sufficiently small then the probability of condition~\ref{item-quantum-ball-contained} is at least $1-(1-p)/4$.  
By tightness across scales (Axiom~\ref{item-metric-coord}), after possibly decreasing $a$ we can arrange that the probability of condition~\ref{item-quantum-ball-compare} is also at least $1-(1-p)/4$.
By Lemma~\ref{lem-bdy-dist0}, after possibly shrinking $a$ we can arrange that the probability that condition~\ref{item-quantum-ball-contained} holds is at least $1- (1-p)/4$.   
By Lemma~\ref{lem-clsce-all} and a union bound over dyadic values of $\ep$ with $\ep \in (0, a]$, the probability of condition~\ref{item-good-radii-small} is at least $1-(1-p)/4$.
Combining these estimates shows that $\BB P[\mcl E_{\BB r}(a) ] \geq p$. 
\end{proof}

The following is a more quantitative version of Theorem~\ref{thm-finite-geo0}, analogous to~\cite[Theorem 3.9]{gm-confluence}. 

\begin{thm} \label{thm-finite-geo-quant}
For each $a \in (0,1)$, there is a constant $b_0 > 0$ depending only on $a$ and constants $b_1 , \alpha > 0$ depending only on the choice of metric $D$ such that the following is true. 
For each $\BB r>0$, each $N\in\BB N$, and each stopping time $\tau$ for $\{(\mcl B_s^\bullet , h|_{\mcl B_s^\bullet})\}_{s\geq 0}$ with $\tau \in [\tau_{\BB r} ,\tau_{2\BB r}]$ a.s., the probability that $\mcl E_{\BB r}(a)$ occurs and there are more than $N$ points of $\bdy\mcl B_{\tau}^\bullet$ which are hit by leftmost $D_h$-geodesics from 0 to $\bdy\mcl B_{\tau +  N^{-\alpha} \frk c_{\BB r} e^{\xi h_{\BB r}(0)} }^\bullet$ is at most $b_0 e^{-b_1 N^\alpha}$. 
\end{thm}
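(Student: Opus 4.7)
The plan is to follow the proof of Theorem~3.9 in~\cite{gm-confluence} essentially verbatim, with the sole non-trivial modification being the replacement of the one use of H\"older continuity of $D_h$ with respect to the Euclidean metric (which fails in the supercritical case) by condition~3 in the definition of $\mcl E_{\BB r}(a)$, i.e., by the bound from Lemma~\ref{lem-bdy-dist0}. The key observation is that both estimates give the same qualitative implication, namely that a small Euclidean distance between two concentric filled metric ball boundaries forces a small increment of the radius parameter in the filled-ball family, and this is the only use of H\"older continuity in the subcritical argument of~\cite[Section 3.4]{gm-confluence}.

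More concretely, I would work throughout on $\mcl E_{\BB r}(a)$ and choose a small exponent $\alpha' > 0$ (depending on $a$, on the parameter $\alpha$ from Lemma~\ref{lem-geo-kill-pt}, and on the parameter $\beta$ from Lemma~\ref{lem-bdy-dist0}) together with an iteration count $M$ of moderate order in $N$. Set $\delta_N := N^{-\alpha'} \frk c_{\BB r} e^{\xi h_{\BB r}(0)}$ and introduce stopping times $\tau = s_0 < s_1 < \cdots < s_M = \tau + \delta_N$ with $s_{i+1} - s_i = \delta_N / M$. At each step I would maintain, by induction on $i$, a family $\mcl I_i$ of disjoint arcs of $\bdy\mcl B_{s_i}^\bullet$ such that every leftmost $D_h$-geodesic from $0$ to $\bdy\mcl B_{\tau + \delta_N}^\bullet$ passes through exactly one arc of $\mcl I_i$; by Lemma~\ref{lem-geo-arc}, the arcs of $\mcl I_{i+1}$ can be taken as sub-arcs of the images of the arcs of $\mcl I_i$ under leftmost geodesic flow to $\bdy\mcl B_{s_{i+1}}^\bullet$. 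To each arc of $\mcl I_i$ I would apply Lemma~\ref{lem-geo-kill} with parameter $\ep_N := c(\delta_N/M)^{2/\beta}/\BB r$, using that condition~3 and condition~4 of $\mcl E_{\BB r}(a)$ guarantee both $\sigma_{s_i,\BB r}^{\ep_N} \leq s_{i+1}$ and that each arc is disconnectable from $\infty$ in $\BB C \setminus \mcl B_{s_i}^\bullet$ by a set of Euclidean diameter at most $\ep_N \BB r$ (via a ``sausage'' around its image in $\bdy\mcl B_{s_{i+1}}^\bullet$). Each arc is then killed with conditional probability at least $1 - C_0 \ep_N^\alpha$, and successive steps are conditionally independent given the filtrations $(\mcl B_{s_i}^\bullet, h|_{\mcl B_{s_i}^\bullet})$ by the local set property (Lemma~\ref{lem-ball-local}). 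A Chernoff-type concentration bound on the number of surviving arcs across the $M$ iterations then yields the tail $b_0 e^{-b_1 N^\alpha}$.

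The main obstacle is the parameter bookkeeping needed to choose $\alpha'$, $M$, and $\ep_N$ so that simultaneously: (i) $\sigma_{s_i,\BB r}^{\ep_N} \leq s_{i+1}$, which via condition~3 and the bound $R_{\BB r}^{\ep_N}(\mcl B_{s_i}^\bullet) \leq 7\ep_N^{1/2}\BB r$ from condition~4 becomes an inequality of the form $\ep_N^{\beta/2} \lesssim \delta_N/M$; (ii) the Euclidean-disconnection hypothesis of Lemma~\ref{lem-geo-kill} holds for every arc of $\mcl I_i$ at every step, where only condition~3 is available to convert an increment $s_{i+1} - s_i$ into an $\ep_N \BB r$ Euclidean bound; and (iii) the final tail has the correct exponential form $e^{-b_1 N^\alpha}$ with $b_1$ independent of $a$. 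Each of these verifications is carried out in~\cite[Section 3.4]{gm-confluence} for the subcritical case, and once the H\"older continuity bound is replaced by the condition~3 estimate in $\mcl E_{\BB r}(a)$, the remaining arguments apply without further change.
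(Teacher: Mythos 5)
Your overall strategy matches the paper's exactly: follow the proof of Theorem~3.9 in~\cite{gm-confluence} verbatim, replacing the single use of H\"older continuity of $D_h$ w.r.t.\ the Euclidean metric by condition~3 in the definition of $\mcl E_{\BB r}(a)$. Your point~(i) is also the correct identification of where that replacement occurs: the H\"older estimate appears once, in~\cite[Lemma~3.11]{gm-confluence}, to show that a small Euclidean neighborhood of a filled metric ball is contained in a slightly larger filled metric ball, i.e.\ to bound $\sigma_{s_i,\BB r}^{\ep_N}$; and condition~3 (a lower bound on $\op{dist}(\bdy\mcl B_s^\bullet,\bdy\mcl B_t^\bullet)$ together with the bound $R_{\BB r}^{\ep}(\mcl B_{s}^\bullet)\lesssim \ep^{1/2}\BB r$ from condition~4) does exactly this job.

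Your point~(ii) is a misconception and would not go through as stated. Condition~3 is a \emph{lower} bound on the Euclidean distance between nested filled-ball boundaries; it cannot ``convert an increment $s_{i+1}-s_i$ into an $\ep_N\BB r$ Euclidean bound'' in the direction you need, which is an \emph{upper} bound (small $D_h$-increment $\Rightarrow$ small Euclidean diameter). That upper bound is precisely what fails to hold uniformly in the supercritical regime. The ``sausage'' heuristic suffers the same defect: it would require controlling the Euclidean diameter of an arc of $\bdy\mcl B_{s_{i+1}}^\bullet$ in terms of the radius increment, which is not available. In fact, it is \emph{not} true that every arc of $\mcl I_i$ satisfies the Euclidean-disconnection hypothesis of Lemma~\ref{lem-geo-kill}. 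The correct mechanism, both in~\cite{gm-confluence} and here, is the deterministic pigeonhole estimate~\cite[Lemma~2.15]{gm-confluence}, which shows that for any collection of $N$ disjoint arcs of a Jordan curve, all but an $N$-independent constant of them can be disconnected from $\infty$ by a set of the required diameter; the Chernoff-type concentration then proceeds by applying Lemma~\ref{lem-geo-kill} only to those arcs. The paper flags this explicitly (``re-uses the deterministic estimate from~\cite[Lemma~2.15]{gm-confluence}'') and also emphasizes that the H\"older/condition-3 estimate enters \emph{only once}, so it cannot simultaneously be the tool for both (i) and (ii). Once you swap your point~(ii) for this pigeonhole step, the rest of your sketch, including the conditional independence via Lemma~\ref{lem-ball-local} and the concentration bound, agrees with the paper's argument.
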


It is easy to see that Theorem~\ref{thm-finite-geo-quant} implies Theorem~\ref{thm-finite-geo0}; see the beginning of~\cite[Section 3]{gm-confluence} for a proof of this in the subcritical case. The supercritical case is identical, with the caveat that we use Lemma~\ref{lem-bdy-dist0} to show that $r\mapsto \tau_r$ is continuous and surjective.

The proof of Theorem~\ref{thm-finite-geo-quant} is identical to the proof of its subcritical analog, which is given in~\cite[Section 3.4]{gm-confluence}, with one minor exception, which we discuss just below.

For the sake of completeness, we provide a short outline of the argument; see~\cite[Section 3.4]{gm-confluence} for details. We work on the event $\mcl E_{\BB r}(a)$ defined at the beginning of this subsection. Start with an arbitrary initial collection $\mcl I_0$ of arcs of $\bdy\mcl B_\tau^\bullet$ which cover $\bdy\mcl B_\tau^\bullet$ and intersect only at their endpoints. By a deterministic geometric lemma~\cite[Lemma 2.14]{gm-confluence}, if $\#\mcl I_0$ is large then at least half of the arcs in $\mcl I_0$ can be disconnected from $\infty$ in $\BB C\setminus \mcl B_\tau^\bullet$ by a set of Euclidean diameter at most a constant times $(\#\mcl I_0)^{-1/2}$. We apply Lemma~\ref{lem-geo-kill-pt} (with $\ep \asymp (\#\mcl I_0)^{-1/2}$) to each of these arcs to get that with high probability, the following is true. For at least $1/4 $ of the arcs $I\in\mcl I_0$, there is no $D_h$-geodesic from 0 to a point outside of the Euclidean $r \BB r$-neighborhood of $\mcl B_\tau^\bullet$ which passes through $I$; here $r > 0$ is related to the number $R_{\BB r}^\ep(\mcl B_\tau^\bullet)$ from Lemma~\ref{lem-geo-kill-pt} and can be bounded above by a negative power of $\#\mcl I_0$. 

We then choose a new radius $s_1 > \tau$ so that $B_{r\BB r}(\mcl B_\tau^\bullet) \subset \mcl B_{s_1}^\bullet$. By Condition~\ref{item-holder-cont} in the definition of $\mcl E_{\BB r}(a)$, we have that $s_1 - \tau$ is small if $r$ is small, and hence if $\#\mcl I_0$ is large. 
We apply the same argument with $\tau$ replaced by $s_1$ and with $\mcl I_0$ replaced by the set $\mcl I_1$ of arcs of $\bdy\mcl B_{s_1}^\bullet$ defined so that for each $I\in\mcl I_1$, all of the leftmost geodesics from 0 to points of $I$ pass through the same arc in $\mcl I_0$ (see Lemma~\ref{lem-geo-arc}). With high probability we have $\#\mcl I_1 \leq \frac34\#\mcl I_0$. We then iterate this procedure, defining radii $\tau < s_1 < s_2 \dots$. At each step we typically reduce the number of surviving arcs by a constant factor. Moreover, since the increase in the radius at each step is bounded above by a negative power of the number of surviving arcs, the total increase in the radius of the metric ball needed to get down to $N$ surviving arcs can be bounded above independently of the choice of $\mcl I_0$. To conclude, we apply this to a sequence of initial arc collections $\{\mcl I_0^k\}_{k\in\BB N}$ such that $\#\mcl I_0^k \to \infty$ and the maximal Euclidean diameter of the arcs in $\mcl I_0^k$ tends to zero.

Roughly speaking, the reason why we get the quantitative estimate $b_0 e^{-b_1 N^\alpha}$ is that for each step of the iteration, if we condition on the previous steps, there is a positive conditional probability to kill off a positive fraction of the remaining arcs; and we only need to ``succeed" for a positive fraction of the steps. See~\cite[Lemma 3.10]{gm-confluence}.

The one minor point where the argument in the supercritical case differs from the argument of~\cite[Section 3.4]{gm-confluence} is as follows. In~\cite{gm-confluence} condition~\ref{item-holder-cont} in the definition of $\mcl E_{\BB r}(a)$ is replaced by a H\"older continuity condition. However, this condition is only used once in~\cite{gm-confluence}, in the proof of~\cite[Lemma 3.11]{gm-confluence}, in order to prove that a filled $D_h$-metric ball contains a small Euclidean neighborhood of a smaller filled $D_h$-metric ball. Condition~\ref{item-holder-cont} can be used in place of the H\"older continuity condition from~\cite{gm-confluence} for this purpose. 

We note that the proof of Theorem~\ref{thm-finite-geo-quant} uses Lemmas~\ref{lem-geo-arc} and~\ref{lem-geo-kill} and also re-uses the deterministic estimate from~\cite[Lemma 2.15]{gm-confluence}.

\subsection{Reducing to a single geodesic}
\label{sec-one-geo}

In this subsection we will explain how to deduce Theorem~\ref{thm-clsce} from Theorem~\ref{thm-finite-geo0}. That is, we will explain how to go from finitely many points on the boundary of a filled metric ball which are hit by $D_h$-geodesics, to just one such point. 
The main tool which allows us to do this is Lemma~\ref{lem-pos-kill} just below, which says that for certain appropriately chosen arcs $I$ of the boundary of a filled metric ball centered at 0, there is a positive chance that \emph{every} $D_h$-geodesic from 0 to a point sufficiently far away from the filled metric ball passes through $I$. 

To state this result precisely, we need to introduce a particular way of measuring (Euclidean) distances in a planar domain. 
Let $O \subset\BB C$ be a domain bounded by a Jordan curve.  
Following~\cite[Equation (2.18)]{gm-confluence}, for $z,w\in \ol O$, we define 
\eqb \label{eqn-d^U-def} 
d^O(z,w) = \inf\left\{\op{diam}(X) : \text{$X$ is a connected subset of $O$ with $z,w\in \ol X $}\right\} ,
\eqe
where here $\op{diam}$ denotes the Euclidean diameter. Then $d^O$ is a metric on $\ol O$ which is bounded below by the Euclidean metric on $\BB C$ restricted to $\ol O$ and bounded above by the internal Euclidean metric on $\ol O$. Note that $d^O$ is not a length metric.

\begin{lem} \label{lem-pos-kill}
For each $A > 1$, $\ep \in (0,(A-1)/100)$, and $p \in (0,1)$, there exists $\frk p = \frk p(A , \ep , p) > 0$ such that the following is true. 
Let $\BB r > 0$ and let $\tau_{\BB r} = D_h(0,\bdy B_{\BB r}(0))$ be as in~\eqref{eqn-tau_r-def}. 
Let $I\subset \bdy \mcl B_{\tau_{\BB r}}^\bullet$ be a closed boundary arc, chosen in a manner depending only on $( \mcl B_{\tau_{\BB r}}^\bullet  , h|_{\mcl B_{\tau_{\BB r}}^\bullet} )$, with the property that the $d^{\BB C\setminus \mcl B_{\tau_{\BB r}}^\bullet}$-neighborhood of radius $\ep\BB r$ of $\bdy\mcl B_{\tau_{\BB r}}^\bullet \setminus I $ (with $d^{\BB C\setminus \mcl B_{\tau_{\BB r}}^\bullet}$ defined as in~\eqref{eqn-d^U-def}) does not disconnect $I$ from $\infty$ in $\BB C\setminus\mcl B_{\tau_{\BB r}}^\bullet$. 
With probability at least $p$, it holds with conditional probability at least $\frk p$ given $( \mcl B_{\tau_{\BB r}}^\bullet  , h|_{\mcl B_{\tau_{\BB r}}^\bullet} )$ that every $D_h$-geodesic from 0 to a point of $\BB C\setminus B_{A  \BB r }(0)$ passes through $I$.
\end{lem}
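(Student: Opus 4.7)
The strategy is to prevent every $D_h$-geodesic from $0$ to a point of $\BB C\setminus B_{A\BB r}(0)$ from crossing the complementary boundary arc $J := \bdy\mcl B_{\tau_{\BB r}}^\bullet \setminus I$, by covering $J$ with finitely many sub-arcs $J_1,\ldots, J_M$ of small Euclidean diameter and applying Lemma~\ref{lem-geo-kill} separately to each. The main technical point is that Lemma~\ref{lem-geo-kill} only gives shielding probability $1 - C_0 (\ep')^\alpha$ per arc (where $\ep'$ is the disconnection scale), so the number $M$ of arcs must be uniformly bounded in order for a union bound to yield a positive conditional probability.

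To that end, I would first construct a $\sigma(\mcl B_{\tau_{\BB r}}^\bullet , h|_{\mcl B_{\tau_{\BB r}}^\bullet})$-measurable event $F_0$ with $\BB P[F_0] \geq p$, a deterministic integer $M_0 = M_0(A,\ep,p)$, and a deterministic scale $\ep' = \ep'(A,\ep,p) \in (0,\ep)$, so that on $F_0$: (a) $\bdy\mcl B_{\tau_{\BB r}}^\bullet$ admits a cover by at most $M_0$ closed arcs of Euclidean diameter at most $\ep'\BB r$; and (b) $\mcl B_{\sigma_{\tau_{\BB r},\BB r}^{\ep'}}^\bullet \subset B_{A\BB r}(0)$ and $R_{\BB r}^{\ep'}(\mcl B_{\tau_{\BB r}}^\bullet) \leq \op{diam}\mcl B_{\tau_{\BB r}}^\bullet$. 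Condition (b) follows from Lemma~\ref{lem-clsce-all} together with the fact that $\mcl B_{\tau_{\BB r}}^\bullet \subset \ol{B_{\BB r}(0)}$ (a consequence of the definition of $\tau_{\BB r}$ and lower semicontinuity of $D_h$). Condition (a) is the subtle one: since $\bdy\mcl B_{\tau_{\BB r}}^\bullet$ is a.s.\ a Jordan curve contained in $\ol{B_{\BB r}(0)}$ (Theorem~\ref{thm-outer-bdy}), its covering number by Euclidean-$\ep'\BB r$-small arcs is a.s.\ finite, and tightness of this covering number as $\BB r$ varies can be deduced from Axioms~\ref{item-metric-translate} and~\ref{item-metric-coord}.

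Next, on $F_0$ let $J_1,\ldots,J_M$ with $M \leq M_0$ be the sub-arcs in the cover that intersect $J$, chosen measurably with respect to $(\mcl B_{\tau_{\BB r}}^\bullet , h|_{\mcl B_{\tau_{\BB r}}^\bullet})$. Each $J_j$, having Euclidean diameter at most $\ep'\BB r$, can be disconnected from $\infty$ in $\BB C\setminus\mcl B_{\tau_{\BB r}}^\bullet$ by a closed Euclidean disk of diameter $\ep'\BB r$, so Lemma~\ref{lem-geo-kill} applies and produces a shield event $G_{J_j}$ with $\BB P[G_{J_j} \mid \mcl B_{\tau_{\BB r}}^\bullet, h|_{\mcl B_{\tau_{\BB r}}^\bullet}] \geq 1 - C_0(\ep')^\alpha$. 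A union bound gives
\[
\BB P\!\left[ \bigcap_{j=1}^M G_{J_j} \,\Big|\, \mcl B_{\tau_{\BB r}}^\bullet, h|_{\mcl B_{\tau_{\BB r}}^\bullet}\right] \geq 1 - M_0 C_0 (\ep')^\alpha,
\]
which can be made at least $1/2 =: \frk p$ by taking $\ep'$ small enough (and $M_0$ correspondingly adjusted). On $F_0 \cap \bigcap_j G_{J_j}$, every $D_h$-geodesic from $0$ to a point of $\BB C\setminus B_{A\BB r}(0)$ targets a point outside of $\mcl B_{\sigma_{\tau_{\BB r},\BB r}^{\ep'}}^\bullet$, hence by Lemma~\ref{lem-geo-kill} avoids every $J_j$ and therefore all of $J$. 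Since the geodesic is Euclidean-continuous and $\bdy\mcl B_{\tau_{\BB r}}^\bullet$ is a Jordan curve separating $0$ from the target, it must cross $\bdy\mcl B_{\tau_{\BB r}}^\bullet$, and by Lemma~\ref{lem-outer-bdy-dist} at a unique point, which therefore lies in $I$. The hypothesis on the $d^{\BB C\setminus \mcl B_{\tau_{\BB r}}^\bullet}$-neighborhood of $J$ is used to ensure that this conclusion is not vacuous: it guarantees that $I$ remains accessible from $\infty$ even after the shields are built, so there genuinely are $D_h$-geodesics from $0$ to points outside $B_{A\BB r}(0)$ of the type we want (their existence as such being Lemma~\ref{lem-geodesic-cont}).

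The main obstacle I expect is the tightness of the covering number in step (a), which is not directly among the estimates developed in Section~\ref{sec-bdy-estimates}. I anticipate establishing it via a separate lemma combining the Jordan curve property of $\bdy\mcl B_{\tau_{\BB r}}^\bullet$ (Theorem~\ref{thm-outer-bdy}) with the scaling tightness of Axiom~\ref{item-metric-coord}; the finite-dimensionality of the metric net (Theorem~\ref{thm-net-dim}) may also help in controlling how irregular a Jordan parametrization of $\bdy\mcl B_{\tau_{\BB r}}^\bullet$ can be.
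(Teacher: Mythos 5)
Your approach --- covering $J := \bdy\mcl B_{\tau_{\BB r}}^\bullet \setminus I$ by small arcs and applying Lemma~\ref{lem-geo-kill} to each one, then taking a union bound --- is genuinely different from the paper's, and unfortunately it has a gap that I do not think can be repaired. The union bound gives a conditional probability at least $1 - M C_0(\ep')^\alpha$, where $M$ is the number of arcs of Euclidean diameter at most $\ep'\BB r$ required to cover $J$. For this to be bounded away from zero (let alone close to $1$) we would need $M(\ep')^\alpha$ to be small. But $\bdy\mcl B_{\tau_{\BB r}}^\bullet$ is a Jordan curve in $\ol{B_{\BB r}(0)}$ of Euclidean Minkowski dimension at least $1$, so already on deterministic grounds $M \gtrsim (\ep')^{-1}$, and on the event that the boundary is genuinely fractal (which is what one expects in the supercritical regime) $M$ grows even faster. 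Meanwhile $\alpha$ is the fixed small exponent from Lemma~\ref{lem-geo-kill-pt}, which comes from iterating the near-independence of the GFF across $\eta\log\ep^{-1}$ scales; there is no mechanism in the paper to push $\alpha$ above $1$, let alone above the Euclidean dimension of the boundary. So $M(\ep')^\alpha$ does not tend to zero, and the union bound is vacuous. The remedies you propose for this --- the Jordan curve property, scaling tightness, and the finite $D_h$-Hausdorff dimension of the metric net --- do not help, because the relevant quantity is a \emph{Euclidean} covering number, which Theorem~\ref{thm-net-dim} (a statement about the $D_h$-metric) does not control, and which tightness across scales makes uniform in $\BB r$ but does not make small.

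The paper avoids this entirely by building a \emph{single} shield. It uses the hypothesis on the $d^{\BB C\setminus \mcl B_{\tau_{\BB r}}^\bullet}$-neighborhood of $J$ --- not merely to ensure the conclusion is non-vacuous, as you suggest, but as the structural input that allows one to choose a path $\frk P$ from $I$ out to $\bdy B_{(A+1)\BB r/2}(0)$ staying at $d^{\cdot}$-distance at least $\ep\BB r$ from $J$. Tiling a neighborhood of $\frk P\cup\BB A_{(A+1)\BB r/2,A\BB r}(0)$ by $\delta\BB r$-squares that avoid $\mcl B_{\tau_{\BB r}}^\bullet$ produces a single domain $U\in\mcl U_{\BB r}$ disjoint from both $\mcl B_{\tau_{\BB r}}^\bullet$ and the $(\ep\BB r/4)$-neighborhood $\mcl B^*$ of $J$. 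An FKG-plus-bump-function argument (the analogue of your Lemma~\ref{lem-cond-diam-small}, here Lemma~\ref{lem-cond-diam-pos}) then shows that, conditionally on $h|_{\BB C\setminus U}$ and the regularity event $E_{\BB r}^U$, there is a uniformly positive probability of a path $\Pi$ in $U$ that disconnects $0$ from $\infty$, passes through a square adjacent to $I$, and has $D_h$-length small compared to the crossing distance from $J$ to $\BB C\setminus(\mcl B_{\tau_{\BB r}}^\bullet\cup\mcl B^*)$ given by the H\"older estimate~\eqref{eqn-pos-kill-across}. This single shortcut, together with an initial arc $\pi$ linking $\Pi$ to $I$, forces every $D_h$-geodesic from $0$ to $\BB C\setminus B_{A\BB r}(0)$ to exit through $I$. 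Because only one conditioning step is involved (one domain $U$, chosen measurably from $(\mcl B_{\tau_{\BB r}}^\bullet,h|_{\mcl B_{\tau_{\BB r}}^\bullet})$), there is no union over arcs and hence no covering-number obstruction. I would recommend you abandon the multi-shield union bound and instead adapt this single-shield construction, mirroring the structure of Lemmas~\ref{lem-cond-diam-small} and~\ref{lem-geo-kill-pt} from Section~\ref{sec-geo-kill} with the disconnecting neighborhood replaced by $\mcl B^*$ and the square path replaced by the path $\frk P$.
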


The proof of Lemma~\ref{lem-pos-kill} is very similar to the proof of its subcritical analog,~\cite[Lemma 4.1]{gm-confluence}. 
However, just like in the setting of Sections~\ref{sec-good-annuli} and~\ref{sec-geo-kill}, we need to make some non-trivial changes to the definitions of the events involved so we will explain most of the details of the proof. 

The proof of Lemma~\ref{lem-pos-kill} is similar to the proof of Lemma~\ref{lem-geo-kill}, but simpler since we only need something to happen with positive probability, not probability close to 1, and this probability is allowed to depend on the parameter $\ep$. We will define ``good" events $E_{\BB r}^U$ for certain domains $U$, which occur with high probability (see~\eqref{eqn-pos-kill-prob}). We will then argue that if $E_{\BB r}^U$ occurs, then there is a positive chance that the distances between certain points in $U$ are very small (Lemma~\ref{lem-cond-diam-pos}). We will then choose $U$ in a manner which depends on $\mcl B_{\tau_{\BB r}}^\bullet $ and $I$. We will use Lemma~\ref{lem-cond-diam-pos} to argue that with positive conditional probability given  $( \mcl B_{\tau_{\BB r}}^\bullet  , h|_{\mcl B_{\tau_{\BB r}}^\bullet} )$, there is a ``shortcut" in $U$ which prevents $D_h$-geodesics from 0 to points of $\BB C\setminus B_{A\BB r}(0)$ from hitting $\bdy\mcl B_{\tau_{\BB r}}^\bullet \setminus I$. 
 
To lighten notation, let
\eqb \label{eqn-diam-ball-def}
\mcl B^* := \left\{ z \in \ol{\BB C\setminus\mcl B_{\tau_{\BB r}}^\bullet} :  d^{\BB C\setminus \mcl B_{\tau_{\BB r}}^\bullet}\left(z ,  \bdy\mcl B_{\tau_{\BB r}}^\bullet \setminus I \right) <  \ep \BB r / 4   \right\}
\eqe 
be a slightly smaller $ d^{\BB C\setminus \mcl B_{\tau_{\BB r}}^\bullet}$-neighborhood of $\bdy\mcl B_{\tau_{\BB r}}^\bullet\setminus I$ than the one appearing in Lemma~\ref{lem-pos-kill}. 

By the H\"older continuity of the Euclidean metric w.r.t.\ $D_h$ (Proposition~\ref{prop-holder}), we can find $c = c(A,\ep , p)  >0$ such that with probability at least $1 - (1-p)/3$, each subset of $B_{A \BB r}(0)$ with Euclidean diameter at least $\ep \BB r /4$ has $D_h$-diameter at least $c \frk c_{\BB r} e^{\xi h_{\BB r}(0)}$. By the definition~\eqref{eqn-diam-ball-def} of $\mcl B^*$, each path in $\BB C\setminus \mcl B_{\tau_{\BB r}}^\bullet$ from $\bdy\mcl B_{\tau_{\BB r}}^\bullet \setminus I$ to a point of $\BB C\setminus (\mcl B_{\tau_{\BB r}}^\bullet \cup \mcl B^*)$ has Euclidean diameter at least $\ep \BB r /4$. 
Hence, with probability at least $1-(1-p)/3$, 
\eqb \label{eqn-pos-kill-across}
D_h\left(   \bdy\mcl B_{\tau_{\BB r}}^\bullet \setminus I , \BB C\setminus (\mcl B_{\tau_{\BB r}}^\bullet \cup \mcl B^*)  ; \BB C\setminus\mcl B_{\tau_{\BB r}}^\bullet \right) \geq c \frk c_{\BB r} e^{\xi h_{\BB r}(0)}    .
\eqe

Define the collection of $\delta \BB r\times \delta \BB r$ squares $\mcl S_{\delta \BB r}(B_{A\BB r}(0)) = \mcl S_{\delta \BB r}^0(B_{A\BB r}(0))$ with corners in $\delta \BB r\BB Z^2$ as in~\eqref{eqn-square-def} with $z=0$. 
By Lemma~\ref{lem-set-tightness}, the random variables $\frk c_{\BB r}^{-1} e^{-\xi h_{\BB r}(0)} \tau_{\BB r}$ and their reciprocals are tight. By combining this with Corollary~\ref{cor-hit-ball'}, we can find $\delta =\delta(c,A,\ep) \in (0,\ep^2/100) $ such that with probability at least $1 - (1-p)/3$, 
\eqb \label{eqn-pos-kill-ball}
D_h\left(\text{around $B_{\delta^{1/2}\BB r}(S) \setminus B_{\delta\BB r}(S)$}\right) \leq \frac{c}{100} \frk c_{\BB r} e^{\xi h_{\BB r}(0)} ,\quad\forall S \in \mcl S_{\delta \BB r}\left(\bdy\mcl B_{\tau_{\BB r}}^\bullet \right) .
\eqe

Let $\mcl U_{\BB r} $ be the (finite) set of sub-domains $U$ of $B_{ A \BB r}(0)$ such that $B_{ A \BB r}(0)  \setminus U$ is a finite union of sets of the form $S \cap B_{ A \BB r}(0)$ for $S \in \mcl S_{\delta \BB r}(B_{ A \BB r}(0))$. 
For $U\in\mcl U_r $, let $\frk h^U$ be the harmonic part of $h|_U$. Also let $U_{\delta \BB r/4}$ be the set of points in $U$ which lie at Euclidean distance at least $\delta \BB r/4$ from $\bdy U$. 
Since there are only finitely many sets in $\mcl U_{\BB r} $ and by the translation and scale invariance of the law of $h$, modulo additive constant, we can find $C = C(\delta , A , \ep ) > 0$ such that with probability at least $1 - (1-p)/3$, it holds simultaneously for each $U\in\mcl U_{\BB r}$ that
\eqb \label{eqn-pos-kill-harmonic}
\sup_{u   \in U_{\delta  \BB r  / 4}} |\frk h^U(u) - h_{\BB r}(0) |  \leq C    .
\eqe

For a given choice of $U\in \mcl U_{\BB r}$, let $E_{\BB r}^U$ be the event that~\eqref{eqn-pos-kill-across}, \eqref{eqn-pos-kill-ball}, and~\eqref{eqn-pos-kill-harmonic} all hold, so that 
\eqb \label{eqn-pos-kill-prob}
\BB P\left[ \bigcap_{U\in\mcl U_{\BB r}} E_{\BB r}^U \right] \geq p . 
\eqe
The reason for considering $E_{\BB r}^U$ instead of $\bigcap_{U\in\mcl U_{\BB r}} E_{\BB r}^U$ is the same as in Section~\ref{sec-good-annuli}: it is easier to condition on $E_{\BB r}^U$ than on $\bigcap_{U\in\mcl U_{\BB r}} E_{\BB r}^U$ (see Lemma~\ref{lem-cond-diam-pos} just below).

We note that the definition of $E_{\BB r}^U$ given just above is identical to the definition of the analogous event in~\cite[Section 4.1]{gm-confluence}, except that in~\cite{gm-confluence} the condition~\eqref{eqn-pos-kill-ball} is replaced by an upper bound for the $D_h$-diameters of the squares $S\in \mcl S_{\delta\BB r}(B_{A\BB r}(0))$. Such an upper bound does not hold in the supercritical case. 

For $U\in \mcl U_{\BB r}$, let
\eqb \label{eqn-cond-diam-set}
\mcl Z(U) := \left\{\text{centers of squares $S \in \mcl S_{\delta\BB r}(U)$ with $S\subset \ol U$}\right\}
\eqe
and let $H_{\BB r}^U$ to be the event that for each $z\in \mcl Z(U)$, there is a path $\Pi = \Pi_z$ in $U$ which disconnects 0 from $\infty$ and hits $z$ and which has $D_h$-length at most $\frac{c}{2} \frk c_{\BB r} e^{\xi h_{\BB r}(0)}$.  
Note that for some choices of $U \in \mcl U_{\BB r}$ and $z\in \mcl Z(U)$, there is \emph{no} path in $U$ which disconnects 0 from $\infty$ and hits $z$. For such a choice of $U$ we have $\BB P[H_{\BB r}^U] = 0$. The following lemma will play an analogous role to Lemma~\ref{lem-cond-diam-small} from Section~\ref{sec-good-annuli}. 

\begin{lem} \label{lem-cond-diam-pos}
There is a constant $\frk p = \frk p(A,\ep , p) > 0$ such that the following is true.   
Suppose $U \in \mcl U_{\BB r}$ is connected and contains a path which disconnects 0 from $\infty$.  
On the event that $U\cap \left( \mcl B_{\tau_{\BB r}}^\bullet \cup \mcl B^* \right) = \emptyset$, a.s.\  
\eqb  \label{eqn-cond-diam-pos'} 
\BB P\left[  H_{\BB r}^U   \,\big|\,h|_{\BB C\setminus U} , E_{\BB r}^U  \right] \geq \frk p .
\eqe  
\end{lem}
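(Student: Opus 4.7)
The plan is to mimic the proof of Lemma~\ref{lem-cond-diam-small}: I will first show that conditional on $h|_{\BB C\setminus U}$ alone (without the conditioning on $E_{\BB r}^U$), $H_{\BB r}^U$ has conditional probability bounded below by a positive constant $\frk p$, and then invoke the FKG inequality (Proposition~\ref{prop-fkg-metric}) to absorb the extra conditioning on $E_{\BB r}^U$. Throughout, I work on the event $\{U\cap(\mcl B_{\tau_{\BB r}}^\bullet\cup\mcl B^*)=\emptyset\}$, which is $h|_{\BB C\setminus U}$-measurable by Lemma~\ref{lem-ball-local} and the fact that $\mcl B^*$ is a deterministic function of $\mcl B_{\tau_{\BB r}}^\bullet$ and $I$. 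By the Markov property of the GFF, conditional on $h|_{\BB C\setminus U}$, the restriction $h|_U$ has the law of $\frk h^U+\rng h$ for an independent zero-boundary GFF $\rng h$ on $U$.

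For the FKG step, I observe that on our event, condition~\eqref{eqn-pos-kill-across} and condition~\eqref{eqn-pos-kill-harmonic} in the definition of $E_{\BB r}^U$ are $h|_{\BB C\setminus U}$-measurable. For~\eqref{eqn-pos-kill-across} this is because the infimum defining the internal distance is attained by paths contained in $\ol{\mcl B^*}\setminus\mcl B_{\tau_{\BB r}}^\bullet$ (any minimizing path exits $\mcl B^*$ only at its terminal endpoint), which is disjoint from $U$, so locality (Axiom~\ref{item-metric-local}) applies; for~\eqref{eqn-pos-kill-harmonic}, $\frk h^U$ is determined by $h|_{\BB C\setminus U}$. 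Condition~\eqref{eqn-pos-kill-ball} is a decreasing function of $h|_U$ by Weyl scaling (Axiom~\ref{item-metric-f}), and $H_{\BB r}^U$ is also decreasing in $h|_U$ for the same reason. Proposition~\ref{prop-fkg-metric}, applied to $\rng h$, then yields
\[
\BB P[H_{\BB r}^U \mid h|_{\BB C\setminus U}, E_{\BB r}^U] \geq \BB P[H_{\BB r}^U \mid h|_{\BB C\setminus U}],
\]
so it suffices to bound the right-hand side below by a positive constant.

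To bound $\BB P[H_{\BB r}^U\mid h|_{\BB C\setminus U}]$ below, I employ a bump-function argument. Pick a smooth $\varphi:U\to(-\infty,0]$ with $\varphi\equiv-M$ on a compact set $U_*\subset U$ chosen to contain every closed square $S\subset\ol U$ whose center lies in $\mcl Z(U)$ together with at least one path in $U$ that disconnects $0$ from $\infty$ (which exists by hypothesis); such a $U_*$ can be chosen with $\#\mcl Z(U)\leq C(A,\delta)$. By Weyl scaling, $D_h=e^{\xi\varphi}\cdot D_{h-\varphi}$, so paths in $U_*$ have $D_h$-length equal to $e^{-\xi M}$ times their $D_{h-\varphi}$-length. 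By Proposition~\ref{prop-two-set-dist} (applied to the shifted field and to finitely many pairs of compact sets), with probability bounded away from zero there exist circuits in $U_*$ through each $z\in\mcl Z(U)$ with $D_{h-\varphi}$-length at most some deterministic constant depending only on $A$, $\delta$, $\BB r$, and $h|_{\BB C\setminus U}$. Choosing $M$ large enough---depending on $c$, $A$, $\ep$, and the bound for $\frk h^U$ (hence for $h_{\BB r}(0)$) provided by condition~\eqref{eqn-pos-kill-harmonic}---forces these circuits to have $D_h$-length at most $\tfrac{c}{2}\frk c_{\BB r}e^{\xi h_{\BB r}(0)}$, so $H_{\BB r}^U$ occurs. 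Cameron--Martin then gives mutual absolute continuity between the laws of $\rng h$ and $\rng h+\varphi$ with a Radon--Nikodym derivative whose positivity depends only on the Dirichlet energy of $\varphi$, transferring the positive lower bound from the tilted law back to the original one.

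The main obstacle I anticipate is the mild non-monotonicity caused by the threshold $\tfrac{c}{2}\frk c_{\BB r}e^{\xi h_{\BB r}(0)}$ depending on $h_{\BB r}(0)$, which in turn can depend on $h|_U$ when $\bdy B_{\BB r}(0)\cap U\neq\emptyset$. I plan to handle this by first restricting to the high-probability event (using tightness) on which $h_{\BB r}(0)$ is pinned to within $O(1)$ of a deterministic value, so that the threshold is comparable to a deterministic quantity and both $H_{\BB r}^U$ and condition~\eqref{eqn-pos-kill-ball} become genuinely decreasing in $h|_U$ (up to a harmless constant factor absorbed into $c$ and $M$). A secondary subtlety is the simultaneous construction of circuits through every $z\in\mcl Z(U)$; since $\#\mcl Z(U)$ is bounded by a constant depending only on $A$ and $\delta$, this is a finite union-bound over centers, or alternatively a single application of Proposition~\ref{prop-two-set-dist} to a carefully chosen connected set.
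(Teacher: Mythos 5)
Your plan has the same skeleton as the paper's (and as the cited argument from \cite{gm-confluence}): Markov property of the GFF, a bump-function/Cameron--Martin lower bound for $\BB P[H_{\BB r}^U\mid h|_{\BB C\setminus U}]$, and an FKG step to absorb the conditioning on $E_{\BB r}^U$. However, several of the measurability and monotonicity claims you make are not correct as stated, and the fix you gesture at does not resolve them.

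The central problem is that $h_{\BB r}(0)$ is \emph{not} determined by $h|_{\BB C\setminus U}$ in general. Unlike the setting of Lemma~\ref{lem-cond-diam-small}, where $U\subset\BB A_{3r,4r}(z)$ is disjoint from $\bdy B_r(z)$, here $U\subset B_{A\BB r}(0)$ can (and in the application to Lemma~\ref{lem-pos-kill} typically does) intersect $\bdy B_{\BB r}(0)$: the path $\frk P$ starts on $\bdy\mcl B_{\tau_{\BB r}}^\bullet\subset\ol{B_{\BB r}(0)}$ and ends in $\BB A_{(A+1)\BB r/2,A\BB r}(0)$, so it crosses $\bdy B_{\BB r}(0)$, and nearby squares are included in $U$. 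Consequently $h_{\BB r}(0)$ has a nontrivial $\rng h$-dependent component. This invalidates your assertion that condition~\eqref{eqn-pos-kill-harmonic} is $h|_{\BB C\setminus U}$-measurable ($\frk h^U$ is, but $h_{\BB r}(0)$ is not), and it also invalidates the $h|_{\BB C\setminus U}$-measurability of condition~\eqref{eqn-pos-kill-across}, because even if the minimizing paths stay in $\ol{\mcl B^*}$ (your observation about where minimizers live is reasonable), the right-hand side $c\,\frk c_{\BB r}e^{\xi h_{\BB r}(0)}$ still depends on $\rng h$.

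The same issue undermines the claimed monotonicity of $H_{\BB r}^U$ and condition~\eqref{eqn-pos-kill-ball}: when $h|_U$ increases, both $D_h$-lengths and the threshold $e^{\xi h_{\BB r}(0)}$ increase, so neither event is a monotone function of the metric $D_{\rng h}$ (or even of $\rng h$): raising $\rng h$ only on an arc of $\bdy B_{\BB r}(0)\cap U$ far from the relevant annular regions makes both events \emph{easier}, while raising $\rng h$ elsewhere makes them harder. You acknowledge this in your final paragraph, but the proposed remedy---restrict to a high-probability event on which $h_{\BB r}(0)$ is ``pinned to within $O(1)$ of a deterministic value''---does not make sense as written ($h_{\BB r}(0)$ has a Gaussian law of non-small variance), and more importantly, conditioning on an event like $\{|h_{\BB r}(0)-a|\le C\}$ is itself non-monotone, so it cannot be fed into FKG without further work. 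A rigorous version would need, for instance, to replace the thresholds $\frk c_{\BB r}e^{\xi h_{\BB r}(0)}$ by the $h|_{\BB C\setminus U}$-measurable quantities $\frk c_{\BB r}e^{\xi\frk h^U(u_0)}$ (using condition~\eqref{eqn-pos-kill-harmonic} of $E_{\BB r}^U$ to compare the two up to a factor $e^{\xi A}$) so that the strengthened event $\wt H$ with deterministic-given-$h|_{\BB C\setminus U}$ threshold is genuinely a decreasing functional of $D_{\rng h}$, and then redo the bookkeeping so that $\wt H\cap E_{\BB r}^U\subset H_{\BB r}^U$; a similar replacement is needed in~\eqref{eqn-pos-kill-ball} before FKG can be applied. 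As it stands, your plan has a genuine gap precisely at the FKG step that is the crux of the proof.

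One smaller remark: your final step applies Proposition~\ref{prop-two-set-dist} to the law of $h-\varphi$, but that proposition is stated for a whole-plane GFF (or GFF plus bounded continuous function) with the \emph{unconditional} law; after conditioning on $h|_{\BB C\setminus U}$ the field in $U$ is a zero-boundary GFF plus the harmonic extension $\frk h^U$, which is generally unbounded near $\bdy U$. You do restrict attention to a compact $U_*\subset U$, which is the right idea, but you then need to use the bound~\eqref{eqn-pos-kill-harmonic} on $\frk h^U|_{U_{\delta\BB r/4}}$ (not just tightness across scales for the unconditional field) to control the contribution of $\frk h^U$; this is another reason you cannot simply condition away~\eqref{eqn-pos-kill-harmonic} as if it were $h|_{\BB C\setminus U}$-measurable.
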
 
\begin{proof}
This follows from the Markov property of the GFF and the FKG inequality (Proposition~\ref{prop-fkg-metric}), via exactly the same argument as in the proof of~\cite[Lemma 3.3 or Lemma 4.2]{gm-confluence}. 
\end{proof}

\begin{proof}[Proof of Lemma~\ref{lem-pos-kill}]
Most of the proof is exactly the same as the the proof of~\cite[Lemma 4.1]{gm-confluence}, but the geometric part of the argument is slightly different so we will repeat part of the argument to explain the differences. See Figure~\ref{fig-one-geo} for an illustration of the proof. 
\medskip

\begin{figure}[t!]
 \begin{center}
\includegraphics[scale=.8]{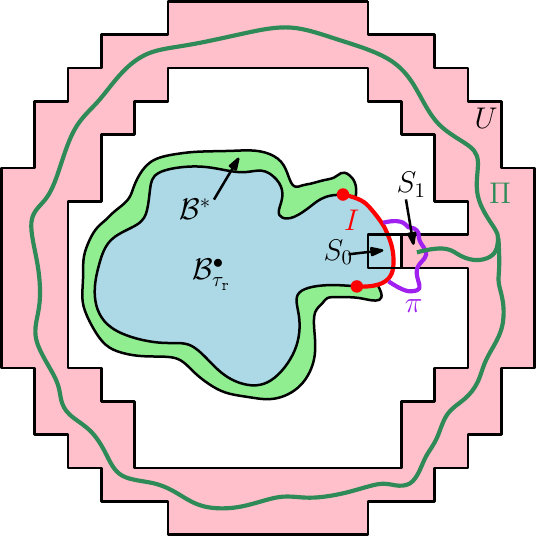}
\vspace{-0.01\textheight}
\caption{Illustration of the proof of Lemma~\ref{lem-pos-kill}. If $E_{\BB r}^U \cap H_{\BB r}^U$ occurs for the domain $U\in\mcl U_{\BB r}$ defined in the proof, then by the definition of $H_{\BB r}^U$ we can find a path $\Pi$ in $U$ which contains the center of the square $S_1$, disconnects $\mcl B_{\tau_r}^\bullet \cup \mcl B^*$ from $\infty$, and whose $D_h$-length is small. Furthermore, we can find a path $\pi \subset (\BB C\setminus \mcl B_{\tau_{\BB r}}^\bullet \cup \mcl B_*)$ --- a segment of the path around $B_{2\delta\BB r}(S_0) \setminus B_{\delta\BB r}(S_0)$ given by~\eqref{eqn-pos-kill-ball} --- which intersects both $I$ and $\Pi$. By~\eqref{eqn-pos-kill-across}, the sum of the $D_h$-lengths of $\pi$ and $\Pi$ is smaller than the $D_h$-distance from $\BB C\setminus (\mcl B_{\tau_{\BB r}}^\bullet \cup \mcl B^*)$ to $\bdy\mcl B_{\tau_{\BB r}}^\bullet \setminus I$ restricted to paths which do not enter $\mcl B_{\tau_{\BB r}}^\bullet$. This prevents a $D_h$-geodesic from 0 to a point outside of $B_{A\BB r}(0)$ from hitting $\mcl B_{\tau_{\BB r}}^\bullet \setminus I$. 
}\label{fig-one-geo}
\end{center}
\vspace{-1em}
\end{figure} 

\noindent\textit{Step 1: choosing a random domain $U$.}
We first choose the domain $U$ to which we will apply Lemma~\ref{lem-cond-diam-pos}. 
The choice will depend on $\mcl B_{\tau_{\BB r}}^\bullet$ and $I$, which is why we need a lower bound for the probability of the intersection of all of the $E_{\BB r}^U$'s in~\eqref{eqn-pos-kill-prob}.

Since $\mcl B_{\tau_{\BB r}}^\bullet\subset \ol{B_{\BB r}(0)}$ and $\ep < (A-1)/100$, we have $\mcl B_{\tau_{\BB r}}^\bullet \cup \mcl B^* \subset B_{(A+1)\BB r/2}(0)$. 
By hypothesis, the $d^{\BB C\setminus \mcl B_{\tau_{\BB r}^\bullet}}$-neighborhood of $\bdy \mcl B_{\tau_{\BB r}}^\bullet \setminus I$ of radius $\ep\BB r$ does not disconnect $I$ from $\infty$ in $\BB C\setminus \mcl B_{\tau_{\BB r}}^\bullet$.
Hence we can choose, in a manner depending only on $\mcl B_{\tau_{\BB r}}^\bullet$ and $I$, a path $\frk P$ in $B_{A\BB r}(0) \setminus  \mcl B_{\tau_{\BB r}}^\bullet  $ from a point of $I$ to a point of $B_{(A+1)\BB r/2}(0)$ such that each point of $\frk P$ lies at $d^{\BB C \setminus \mcl B_{\tau_{\BB R}}^\bullet}$-distance at least $\ep\BB r  $ from $\bdy \mcl B_{\tau_{\BB r}}^\bullet \setminus I$. By slightly perturbing $\frk P$ if necessary, we can assume that $\frk P$ does not hit any of the corners of any of the squares in $\mcl S_{\delta\BB r}(B_{A\BB r}(0))$. 

Let $\wt U$ be the interior of the union of all of the $\delta \BB r\times \delta \BB r$ squares $S\in \mcl S_{\delta \BB r}(B_{A \BB r}(0))$ which intersect $\frk P \cup \BB A_{(A+1)\BB r/2 , A\BB r}(0)$ but do not intersect $\mcl B_{\tau_{\BB r}}^\bullet$ or $\bdy B_{A\BB r}(0)$. 
Let $U$ be the connected component of $\wt U$ which intersects $ \BB A_{(A+1)\BB r/2 , A\BB r}(0)$. 
Then $U\in\mcl U_{\BB r}$, as defined just above~\eqref{eqn-pos-kill-harmonic}. 
  
By definition, $U\cap \mcl B_{\tau_{\BB r}}^\bullet=\emptyset$. We claim that also $U\cap \mcl B^* =\emptyset$. Indeed, each of the $\delta \BB r\times\delta \BB r$ squares $S$ in the union defining $U$ is contained in $\BB C\setminus \mcl B_{\tau_{\BB r}}^\bullet$ and has Euclidean diameter at most $\sqrt 2 \delta \BB r < \ep \BB r/4 $. If one of these squares intersected $\mcl B^*$, then by the triangle inequality and the definition~\ref{eqn-d^U-def} of $d^{\BB C\setminus\mcl B_{\tau_{\BB r}}^\bullet}$, the $d^{\BB C\setminus\mcl B_{\tau_{\BB r}}^\bullet}$-distance from $\frk P$ to $\bdy\mcl B_{\tau_{\BB r}}^\bullet\setminus I$ would be at most $\ep \BB r/2$, contrary to the definition of $\frk P$. 

Hence $U\cap \left( \mcl B_{\tau_{\BB r}}^\bullet \cup \mcl B^* \right) = \emptyset$. 
Since $\mcl B_{\tau_{\BB r}}^\bullet$ is a local set for $h$ (Lemma~\ref{lem-ball-local}) and $\frk P$ is determined by $(\mcl B_{\tau_{\BB r}}^\bullet, h|_{\mcl B_{\tau_{\BB r}}^\bullet})$, for each deterministic $\frk U \in \mcl U_{\BB r}$, the event $\{U = \frk U\}$ is determined by $h|_{\BB C\setminus \frk U}$. 
Furthermore, by definition the set $U$ is connected and contains a path which disconnects 0 from $\infty$. 
Therefore, the bound~\eqref{eqn-cond-diam-pos'} of Lemma~\ref{lem-cond-diam-pos} holds a.s.\ for our (random) choice of $U$. 
\medskip

\noindent\textit{Step 2: bounding conditional probabilities.} By~\eqref{eqn-pos-kill-prob} and Markov's inequality
\eqb \label{eqn-use-pos-kill-prob}
\BB P\left[ \BB P\left[ E_{\BB r}^U \,\big|\, \mcl B_{\tau_{\BB r}}^\bullet, h|_{\mcl B_{\tau_{\BB r}}^\bullet} \right] \geq 1-(1-p)^{1/2} \right] \geq 1- (1-p)^{1/2}  .
\eqe  
By this together with the bound~\eqref{eqn-cond-diam-pos'}, and since $p$ can be made arbitrarily close to 1, to conclude the proof of the lemma we only need to show that if $E_{\BB r}^U \cap H_{\BB r}^U$ occurs, then every $D_h$-geodesic from 0 to a point of $\BB C\setminus B_{A \BB r}(0)$ passes through $I$.
This will be accomplished via a similar argument to the proof of Lemma~\ref{lem-geo-kill-pt}, as we now explain.
\medskip

\noindent\textit{Step 3: preventing $D_h$-geodesics from hitting $\bdy\mcl B_{\tau_{\BB r}}^\bullet\setminus I$.}
Let $S_1$ be the first square in $ \mcl S_{\delta \BB r}(B_{A \BB r}(0))$ hit by $\frk P$ whose interior is contained in $U$. 
Since $\frk P$ starts from a point of $I \subset\bdy\mcl B_{\tau_{\BB r}}^\bullet$ and $U\cap \bdy\mcl B_{\tau_{\BB r}}^\bullet = \emptyset$, $S_1$ is not the first square of $\mcl S_{\delta \BB r}(B_{A \BB r}(0))$ hit by $\frk P$. Hence, there is a square $S_0$ which is hit by $\frk P$ prior to the first time $\frk P$ hits $S_1$ such that $S_0$ and $S_1$ share a side (here we use that $\frk P$ does not hit any of the four corners of $S_1$). By the definition of $S_1$, we have $S_0\cap \bdy\mcl B_{\tau_{\BB r}}^\bullet \not=\emptyset$. 

By~\eqref{eqn-pos-kill-ball}, there is a path $\wt\pi$ in the annular region $B_{\delta^{1/2}\BB r}(S_0) \setminus B_{\delta\BB r}(S_0)$ which disconnects the inner and outer boundaries of this annular region and has $D_h$-length at most $\frac{c}{100} \frk c_{\BB r} e^{\xi h_{\BB r}(0)}$. Since $S_1\subset B_{\delta^{1/2}\BB r}(S_0) \setminus B_{\delta\BB r}(S_0)$ and $S_1$ is disjoint from $\mcl B_{\tau_{\BB r}}^\bullet$, there is a sub-path $\pi$ of $\wt\pi$ which is contained in $\ol{\BB C\setminus \mcl B_{\tau_{\BB r}}^\bullet}$ and which disconnects $S_1$ from $\infty$ in $\ol{\BB C\setminus \mcl B_{\tau_{\BB r}}^\bullet}$. Since $S_0$ intersects $\pi \cap I$, which is disjoint from $\mcl B_*$, and $\delta^{1/2} \leq \ep/10$, the definition~\eqref{eqn-d^U-def} of $d^{\BB C\setminus \mcl B_{\tau_{\BB r}}^\bullet}$ shows that the path $\pi$ cannot intersect $\bdy\mcl B_{\tau_{\BB r}}^\bullet \setminus I$. Hence $\pi$ must intersect $I$. 

By the definition of $H_{\BB r}^U$ (just below~\eqref{eqn-cond-diam-set}), there is a path $\Pi$ in $U$ which contains a point of $S_1$ and which disconnects 0 from $\infty$ (and hence also $\mcl B_{\tau_{\BB r}}^\bullet$ from $\BB C\setminus B_{A\BB r}(0)$. The union of $\pi$ and $\Pi$ is connected, has $D_h$-length strictly less than $c\frk c_{\BB r} e^{\xi h_{\BB r}(0)}$, intersects $I$, and disconnects $\mcl B_{\tau_{\BB r}}^\bullet$ from $\BB C\setminus B_{A\BB r}(0)$. 

Any path $P$ from a point of $B_{A\BB r}(0)$ to 0 which first hits $\bdy\mcl B_{\tau_{\BB r}}^\bullet$ at a point not in $I$ must hit $ \Pi$ and then must subsequently cross from a point of $\BB C\setminus (\mcl B_*\cup \mcl B_{\tau_{\BB r}}^\bullet)$ to $\bdy\mcl B_{\tau_{\BB r}}^\bullet \setminus I$. 
By the preceding paragraph, the $D_h$-distance from the first point of $\Pi$ hit by $P$ to 0 is strictly smaller than $\tau_{\BB r} + c \frk c_{\BB r} e^{\xi h_{\BB r}(0)} $. On the other hand,~\eqref{eqn-pos-kill-across} shows that the $D_h$-length of the segment of $P$ which crosses from $\BB C\setminus (\mcl B_*\cup \mcl B_{\tau_{\BB r}}^\bullet)$ to $\bdy\mcl B_{\tau_{\BB r}}^\bullet \setminus I$ is at least $ c \frk c_{\BB r} e^{\xi h_{\BB r}(0)} $, so the $D_h$-length of the segment of $P$ after it first hits $U$ is at least $\tau_{\BB r} +  c \frk c_{\BB r} e^{\xi h_{\BB r}(0)} $. 
Therefore, $P$ cannot be a $D_h$-geodesic. 
\end{proof}

Now that Lemma~\ref{lem-pos-kill} is established, we can conclude the proof of Theorem~\ref{thm-clsce} in exactly the same way as in the subcritical case (see~\cite[Section 4.2]{gm-confluence}). The proof of~\cite[Lemma 4.3]{gm-confluence} requires some containment relations between filled $D_h$-metric balls and Euclidean balls, but these are easily supplied by Lemma~\ref{lem-ball-contain} and Proposition~\ref{prop-holder}.

\bibliography{cibib}
\bibliographystyle{hmralphaabbrv}

\end{document}